\newcommand{\Rl}{\mathbb{R}}
\newcommand{\Cplx}{\mathbb{C}}
\newcommand{\Itgr}{\mathbb{Z}}
\newcommand{\Ntrl}{\mathbb{N}}
\newcommand{\Bc}{\mathcal{B}}
\newcommand{\Kc}{\mathcal{K}}
\newcommand{\Lc}{\mathcal{L}}
\newcommand{\Mv}{\mathcal{M}}
\newcommand{\Sc}{\mathcal{S}}
\newcommand{\Tr}{\mathrm{Tr}}
\newcommand{\dom}{\mathrm{dom}}
\newcommand{\loc}{\mathrm{loc}}
\newcommand{\gf}{\mathfrak{g}}
\newcommand{\Heis}{\mathbb{H}}
\newcommand{\VN}{\mathrm{VN}}
\def\XXint#1#2#3{{\setbox0=\hbox{$#1{#2#3}{\int}$ }
\vcenter{\hbox{$#2#3$ }}\kern-.6\wd0}}
\numberwithin{equation}{section}
\newtheorem{theorem}{Theorem}[section]
\newtheorem{proposition}[theorem]{Proposition}
\newtheorem{corollary}[theorem]{Corollary}
\newtheorem{definition}[theorem]{Definition}
\newtheorem{lemma}[theorem]{Lemma}
\theoremstyle{remark}
\newtheorem{remark}[theorem]{Remark}
\title{Spectral estimates and asymptotics for stratified Lie groups}
\author[]{E. McDonald}
\address{School of Mathematics and Statistics, University of New South Wales, Kensington,  2052, Australia}
\email{edward.mcdonald@unsw.edu.au}
\author[]{F. Sukochev}
\address{School of Mathematics and Statistics, University of New South Wales, Kensington,  2052, Australia}
\email{f.sukochev@unsw.edu.au}
\author[]{D. Zanin}
\address{School of Mathematics and Statistics, University of New South Wales, Kensington,  2052, Australia}
\email{d.zanin@unsw.edu.au}
\date{\today}
\begin{document}

\maketitle

\begin{abstract}
We study Cwikel-type estimates for the singular values and Schatten $\mathcal{L}_p$-norms of compositions of multiplication and convolution operators acting on stratified Lie groups. This enables us to obtain novel spectral asymptotic formulas for certain operators derived from sub-Laplacians.   
\end{abstract}

\section{Introduction}
The so-called \emph{Cwikel estimates} concern the singular value and Schatten-class estimates for operators of the form
\[
    M_fg(-i\nabla):L_2(\Rl^d)\to L_2(\Rl^d).
\]
Here, $f \in L_{\infty}(\Rl^d)$ is a bounded function acting as a pointwise multiplier $(M_f\xi)(x) = f(x)\xi(x)$ and $g \in L_{\infty}(\Rl^d)$ acts as a Fourier multiplier. 
That is,
\[
    g(-i\nabla)\xi(x) = (2\pi)^{-d}\int_{\Rl^d} g(\omega)e^{i(x,\omega)}\widehat{\xi}(\omega)\,d\omega,\quad x \in \mathbb{R}^d.
\]  
We will call operators of the type $M_fg(-i\nabla)$ as \emph{product-convolution} operators, in view of the fact that $g(-i\nabla)$ acts as convolution by the inverse Fourier transform of $g.$
Product-convolution operators may be directly generalised to operators on the Hilbert space $L_2(G),$ where $G$ is a locally compact abelian group. Here, we consider
\[
    M_fT_g:L_2(G)\to L_2(G).
\]
In this case, $f \in L_{\infty}(G)$ acts on $L_2(G)$ by pointwise multiplication, and $g \in L_\infty(\widehat{G})$ acts on $L_2(G)$ by the Fourier multiplier
\[
    T_g\xi(\gamma) = \int_{\widehat{G}} g(\chi)\chi(\gamma)\widehat{\xi}(\chi)\,d\chi,\quad \gamma\in G.
\]
This may be further generalised to operators on $L_2(G),$ where $G$ is a locally compact group which need not be abelian, but merely unimodular.
The group von Neumann algebra $\VN(G)$ is the non-abelian generalisation of the algebra $L_{\infty}(\widehat{G}),$ and its representation as bounded linear operators on $L_2(G)$ generalises the representation of $L_{\infty}(\widehat{G})$ 
on $L_2(G)$ by Fourier multipliers.
In this case, the appropriate generalisation of product-convolution operators are of the form
\[
    M_fA:L_2(G)\to L_2(G).
\]
Again, $f \in L_{\infty}(G)$ acts on $L_2(G)$ by pointwise multiplication, and now $A$ belongs to the group von Neumann algebra $\VN(G).$ The group von Neumann algebra is generated by the set of all convolution operators
\[
    \lambda(h)\xi(\gamma) = \int_G h(\gamma\eta^{-1})\xi(\eta)\,d\eta,\quad h\in L_1(G)
\]
where $d\eta$ denotes the Haar measure on $G.$

A natural class of unimodular locally compact groups are the so-called stratified Lie groups. 

Let $G$ be a stratified $d$-dimensional real Lie group whose Lie algebra $\gf$ is stratified as $\bigoplus_{n=1}^\infty \gf_n$ and
has homogeneous dimension $d_{\hom}.$ Fix a basis set $\{X_1,\ldots,X_m\}$ for $\gf_1,$ realised as antisymmetric operators on $L_2(G),$ 
and denote by $\Delta$ the corresponding sub-Laplacian
\[
    \Delta = \sum_{k=1}^m X_k^2.
\]
The spectral theory of $\Delta$ has been studied by numerous authors \cite{CorwinGreenleaf1990,BLU-stratified-2007,Folland1975}. In particular, $\Delta$ is negative semidefinite and essentially self-adjoint on $L_2(G)$ with core $C^\infty_c(G).$ Equivalently, $\Delta$ may be defined via the quadratic form
\[
    Q(u) = \sum_{k=1}^m \int_{G} |X_ku(\gamma)|^2\,d\gamma,\quad u \in C^\infty_c(G).
\]
H\"ormander's criterion \cite[Theorem 22.2.1]{Hormander-III} implies
that $\Delta$ is subelliptic.

The operator $\Delta$ is affiliated with the group von Neumann algebra $\VN(G),$ and hence if $g$ is a bounded Borel function on the semiaxis $\Rl_+$ the operator $g(-\Delta)$
defined by functional calculus belongs to $\VN(G).$ It follows that the operator
\[
    M_fg(-\Delta):L_2(G)\to L_2(G)
\]
may be considered from the point of view of Cwikel's estimates.

We can illustrate our discussion with the case of the prototypical stratified Lie group: the three-dimensional Heisenberg group $\Heis^1.$ The underlying Lie algebra of this Lie group can be realised as the linear span of the following three vector fields
on $\Rl^3:$
\[
X = \partial_x - 2M_y\partial_t,\quad  Y = \partial_y+2M_x\partial_t,\quad T = \partial_t.
\]
Here, $x,y$ and $t$ are the coordinates of $\Rl^3.$ In this case,
\[
\Delta = X^2+Y^2 = \partial_x^2+\partial_y^2+4M_{x^2+y^2}\partial_t^2+4(M_x\partial_y-M_y\partial_x)\partial_t.
\]
Another example arising from a stratified Lie group is the operator on $L_2(\Rl^{2+N})$, where $N\geq 1,$ with coordinates $(t,s,x_1,x_2,\ldots,x_N)$ given by
\[
    \Delta = \partial_t^2+(\partial_s+M_t \partial_{x_1}+M_{t^2}\partial_{x_2}+\cdots+M_{t^N}\partial_{x_N})^2.
\]
This is a Bony-type sub-Laplacian corresponding to a particular stratified Lie group structure of $\Rl^{2+N},$ see \cite[Section 4.3.3]{BLU-stratified-2007}. 
Another example is the operator on $L_2(\Rl^N)$ (with coordinates $(x_1,\ldots,x_N)$ 
given by
\[
    \Delta = \partial_{x_2}^2+ (\partial_{x_1}+M_{x_2}\partial_{x_3}+M_{x_3}\partial_{x_4}+\cdots+M_{x_{N-1}}\partial_{x_N})^2.
\]  
For details on the Lie group underlying this example, see \cite[Section 4.3.5]{BLU-stratified-2007}. Further examples are listed in \cite[Chapter 4]{BLU-stratified-2007}.


The following is our main result, which in $p>2$ is a form of specific Cwikel estimate for stratified Lie groups and $p\leq 2$ are related to estimates due to Birman and Solomyak. All unexplained notations and terminology will be introduced in the next section.
\begin{theorem}\label{main_nontrivial_cwikel_theorem}
Let $G$ be a stratified Lie group with stratification $\gf = \bigoplus_{n=1}^\infty \gf_n,$ homogeneous dimension $d_{\hom} = \sum_{n=1}^\infty n\cdot \mathrm{dim}(\gf_n)$
and a fixed sub-Laplacian $\Delta = \sum_{j=1}^m X_j^2,$ where $\{X_j\}_{j=1}^m$ is a basis for $\gf_1.$ 
\begin{enumerate}[{\rm (i)}]
\item\label{mncta} if $p>2,$ then
$$\|M_f(-\Delta)^{-\frac{d_{\hom}}{2p}}\|_{p,\infty}\leq c_p\|f\|_{L_p(G)}$$
\item\label{mnctb} if $p<2$ and $q>2,$ then
$$\|M_f(1-\Delta)^{-\frac{d_{\hom}}{2p}}\|_{p,\infty}\leq c_{p,q}\|f\|_{\ell_p(L_q)(G)}.$$ 
\item\label{mnctc} if $p=2$ and $q>2,$ then
$$\|M_f(1-\Delta)^{-\frac{d_{\hom}}{2p}}\|_{p,\infty}\leq c_q\|f\|_{\ell_{2,\log}(L_q)(G)}.$$ 
\end{enumerate}
\end{theorem}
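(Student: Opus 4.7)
The starting ingredient for all three parts is the classical Gaussian estimate for the sub-Laplacian heat kernel $p_t = e^{t\Delta}\delta_e$ on a stratified group (Varopoulos--Saloff-Coste): one has $\|p_t\|_{L_\infty(G)} \lesssim t^{-d_{\hom}/2}$ together with off-diagonal decay $p_t(\gamma) \lesssim t^{-d_{\hom}/2}\exp(-c|\gamma|^2/t)$ in the Carnot--Carath\'eodory distance $|\gamma|$. This yields the basic Hilbert--Schmidt identity $\|M_f e^{t\Delta}\|_2^2 = p_{2t}(e)\,\|f\|_{L_2(G)}^2 \lesssim t^{-d_{\hom}/2}\|f\|_{L_2(G)}^2$, which plays the role that the Fourier-$L_2$ bound plays in the Euclidean Cwikel argument.

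For part (\ref{mncta}), the plan is to adapt the standard dyadic Cwikel argument to the stratified setting. Use the subordination formula $(-\Delta)^{-d_{\hom}/(2p)} = \Gamma(d_{\hom}/(2p))^{-1}\int_0^\infty t^{d_{\hom}/(2p)-1} e^{t\Delta}\,dt$ and decompose $f$ dyadically as $f = \sum_k f_k$, with $f_k$ essentially supported on $\{|f|\sim 2^k\}$. Estimate each piece by combining the Hilbert--Schmidt bound above with the trivial operator-norm bound $\|M_f\|_\infty=\|f\|_{L_\infty}$, and then assemble the weak-Schatten bound on $M_f(-\Delta)^{-d_{\hom}/(2p)}$ via the quasi-triangle inequality for $\|\cdot\|_{p,\infty}$ on the scales of $k$ and of the truncation parameter in the $t$-integral. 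Equivalently, one can view this as real interpolation between the weak $p=2$ endpoint (coming from the Hilbert--Schmidt bound) and the trivial $p=\infty$ endpoint.

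For parts (\ref{mnctb}) and (\ref{mnctc}) I would carry out a Birman--Solomyak-type localisation. Choose a locally finite cover of $G$ by unit balls $\{B_n\}$ in the homogeneous sub-Riemannian metric, and write $f = \sum_n f\chi_{B_n}$. For each localised piece, establish the local Cwikel bound
\[
\|M_{f\chi_{B_n}}(1-\Delta)^{-d_{\hom}/(2p)}\|_{p,\infty} \lesssim_{p,q} \|f\chi_{B_n}\|_{L_q(G)},
\]
using sub-elliptic Sobolev embedding on the unit ball together with the Gaussian off-diagonal decay of $(1-\Delta)^{-d_{\hom}/(2p)}$ to handle the pieces of $(1-\Delta)^{-d_{\hom}/(2p)}\chi_{B_n}$ supported far from $B_n$. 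The global bound then follows by summing: for $p<2$ the $p$-convexity of $\|\cdot\|_{p,\infty}$ produces exactly the $\ell_p$ sum on the right, while for $p=2$ the triangle inequality for $\|\cdot\|_{2,\infty}$ only holds up to a logarithmic defect (a Kalton--Sukochev-type summation lemma for weak Schatten classes), which is precisely encoded by the $\ell_{2,\log}$ factor in part (\ref{mnctc}).

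The principal obstacle is the local Cwikel bound on each unit ball in the stratified setting, since one cannot diagonalise $\Delta$ by a plain Fourier transform. I would handle this by a near-diagonal/far-diagonal split of the integral kernel of $(1-\Delta)^{-d_{\hom}/(2p)}$: the near-diagonal part is controlled via the heat subordination representation and the Hilbert--Schmidt estimate above, while the far-diagonal part is absorbed by the exponential Gaussian tail, leaving a convergent geometric sum over neighbouring balls. A secondary technical issue is the passage between $(-\Delta)^{-\alpha}$ (used in (\ref{mncta})) and $(1-\Delta)^{-\alpha}$ (used in (\ref{mnctb})--(\ref{mnctc})), which should be routine once the near-diagonal estimate is in place since the spectral singularity at zero is exactly what necessitates the regularisation for $p\leq 2$.
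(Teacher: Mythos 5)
Your part (\ref{mncta}) plan matches the paper's in spirit: both rest on the Hilbert--Schmidt identity for product-convolution operators (which in the paper comes packaged as Christ's multiplier theorem, Lemma \ref{christ_restatement}, giving the exact weighted-$L_2$ norm of the convolution kernel of $g(-\Delta)$) and then interpolate between that $\mathcal{L}_2$ endpoint and the trivial $\mathcal{L}_\infty$ bound. The paper does this abstractly by invoking the general interpolation theorem (Theorem~\ref{general_cwikel_theorem}) and then simply checking that $g(t)=t^{-d_{\hom}/(2p)}$ lies in $L_{p,\infty}(\Rl_+,t^{d_{\hom}/2-1}\,dt)$; your dyadic-subordination version should also work, it is just less economical.

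The real difficulty is in your local Cwikel bound for parts (\ref{mnctb}) and (\ref{mnctc}), and this is where your sketch has a genuine gap. You propose to get, for $f$ supported in a unit ball and $p\leq 2$, the bound $\|M_f(1-\Delta)^{-d_{\hom}/(2p)}\|_{p,\infty}\lesssim\|f\|_{L_q}$ ``using sub-elliptic Sobolev embedding on the unit ball together with the Gaussian off-diagonal decay of $(1-\Delta)^{-d_{\hom}/(2p)}$''. This will not work as stated. In the critical range $p\leq 2$ (equivalently, the potential exponent $\alpha=d_{\hom}/(2p)\geq d_{\hom}/4$), the abstract interpolation estimate of part (\ref{mncta}) is \emph{false} even for compactly supported $f$ unless one upgrades $\|f\|_{L_p}$ to $\|f\|_{L_q}$ with $q>2$; the paper notes the Euclidean counterexample. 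Upgrading the norm is not a matter of Gaussian decay of the kernel, which only tells you that the operator is trace-class in a crude sense and gives no control at the weak-Schatten exponent $p<2$. What the paper actually does (Lemma~\ref{less_simple_cwikel}) is a \emph{factorisation through a smooth cut-off}: writing $M_f = M_f M_\psi$ with $\psi\in C^\infty_c$ identically one on the support, one splits
\[
M_f(1-\Delta)^{-\frac{d_{\hom}}{2p}} = \bigl(M_f(1-\Delta)^{-\frac{d_{\hom}}{2q}}\bigr)\cdot\bigl((1-\Delta)^{\frac{d_{\hom}}{2q}}M_\psi(1-\Delta)^{-\frac{d_{\hom}}{2p}}\bigr),
\]
applies part (\ref{mncta}) to the first factor, and controls the second factor by the conjugated-multiplier estimate $(1-\Delta)^{\beta}M_\psi(1-\Delta)^{-\gamma}\in\mathcal{L}_{d_{\hom}/(\gamma-\beta),\infty}$ for smooth compactly supported $\psi$ and $\gamma>\beta$. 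That estimate (Theorem~\ref{MSX_commutator_estimate}.\eqref{msxc}) is proved by an inductive Leibniz/commutator argument at integer Sobolev orders, upgraded to all real orders by Phragm\'en--Lindel\"of interpolation, and then to large gaps $\gamma-\beta\geq d_{\hom}/2$ by factoring $\psi$ as a product of bumps. Your near/far-diagonal kernel split does not produce an analogue of this smoothing conjugation bound, and without it the summation step (which you describe correctly, including the logarithmic defect at $p=2$) has nothing to sum.

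A secondary, smaller remark: in part (\ref{mncta}) the operator is $(-\Delta)^{-d_{\hom}/(2p)}$ with no Bessel regularisation, and care is needed to check the subordination integral converges at both $t\to 0$ and $t\to\infty$ against the weak-$\mathcal{L}_p$ target; in the paper this comes for free from the explicit verification that $t\mapsto t^{-d_{\hom}/(2p)}$ is weak-$L_p$ for the weight $t^{d_{\hom}/2-1}\,dt$, while your dyadic version needs to track both tails explicitly.
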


Having Theorem \ref{main_nontrivial_cwikel_theorem} at hands and using H\"older's inequality, we immediately obtain the following corollary.

\begin{corollary}\label{main_nontrivial_cwikel_corollary} Let $G$ be a stratified Lie group with stratification $\gf = \bigoplus_{n=1}^\infty \gf_n,$ homogeneous dimension $d_{\hom} = \sum_{n=1}^\infty n\cdot \mathrm{dim}(\gf_n)$
and a fixed sub-Laplacian $\Delta = \sum_{j=1}^m X_j^2,$ where $\{X_j\}_{j=1}^m$ is a basis for $\gf_1.$ 
\begin{enumerate}[{\rm (i)}]
\item if $p>1,$ then
$$\|(-\Delta)^{-\frac{d_{\hom}}{4p}}M_f(-\Delta)^{-\frac{d_{\hom}}{4p}}\|_{p,\infty}\leq c_p\|f\|_{L_p(G)}.$$
\item if $p<1$ and $q>1,$ then
$$\|(1-\Delta)^{-\frac{d_{\hom}}{4p}}M_f(1-\Delta)^{-\frac{d_{\hom}}{4p}}\|_{p,\infty}\leq c_{p,q}\|f\|_{\ell_p(L_q)(G)}.$$ 
\item if $p=1$ and $q>1,$ then
$$\|(1-\Delta)^{-\frac{d_{\hom}}{4p}}M_f(1-\Delta)^{-\frac{d_{\hom}}{4p}}\|_{p,\infty}\leq c_q\|f\|_{\ell_{1,\log}(L_q)(G)}.$$ 
\end{enumerate}	
\end{corollary}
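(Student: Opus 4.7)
\textbf{Proof plan for Corollary \ref{main_nontrivial_cwikel_corollary}.}
The strategy is to split the central multiplication into two ``halves'' of $|f|^{1/2}$ so that each half can be absorbed into one of the two copies of the resolvent, and then to apply Theorem~\ref{main_nontrivial_cwikel_theorem} with the parameter doubled. Concretely, write $f = \sgn(f) \cdot |f|$ (with the convention $\sgn(f)=0$ where $f=0$), so that $M_f = M_{|f|^{1/2}} \, M_{\sgn(f)} \, M_{|f|^{1/2}}$. Setting $\alpha = d_{\hom}/(4p)$, the operator of interest factors as
\[
(-\Delta)^{-\alpha} M_f (-\Delta)^{-\alpha} \;=\; \bigl[(-\Delta)^{-\alpha} M_{|f|^{1/2}}\bigr] \, M_{\sgn(f)} \, \bigl[M_{|f|^{1/2}} (-\Delta)^{-\alpha}\bigr],
\]
and analogously with $(1-\Delta)$ in place of $(-\Delta)$ in cases (ii)--(iii). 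Since $M_{\sgn(f)}$ is a contraction and the first bracket is the adjoint of $M_{|f|^{1/2}}(-\Delta)^{-\alpha}$ (so that both brackets have identical singular values), the quasi-norm Hölder inequality for the weak Schatten ideals, with $\tfrac1p = \tfrac{1}{2p}+\tfrac{1}{2p}$, yields
\[
\bigl\|(-\Delta)^{-\alpha} M_f (-\Delta)^{-\alpha}\bigr\|_{p,\infty} \;\leq\; C_p \, \bigl\|M_{|f|^{1/2}} (-\Delta)^{-\alpha}\bigr\|_{2p,\infty}^{2},
\]
for some constant $C_p$ depending only on $p$.

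Now apply Theorem~\ref{main_nontrivial_cwikel_theorem} with the parameter $2p$ in place of $p$: this is exactly compatible with the exponent, since $d_{\hom}/(2 \cdot 2p) = d_{\hom}/(4p) = \alpha$. In case (i) we have $p>1$, so $2p>2$ and part~\eqref{mncta} of the theorem applies, giving
\[
\bigl\|M_{|f|^{1/2}}(-\Delta)^{-\alpha}\bigr\|_{2p,\infty} \;\leq\; c_{2p} \, \bigl\||f|^{1/2}\bigr\|_{L_{2p}(G)} \;=\; c_{2p} \, \|f\|_{L_p(G)}^{1/2},
\]
and substituting this into the Hölder estimate produces the claimed inequality. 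Cases (ii) and (iii) go the same way, replacing $(-\Delta)$ by $(1-\Delta)$ and using parts~\eqref{mnctb} and~\eqref{mnctc} respectively, with the exponent $q$ also doubled to $2q>2$.

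The remaining verification is the identity $\| |f|^{1/2} \|_{\ell_{2p}(L_{2q})(G)} = \|f\|_{\ell_p(L_q)(G)}^{1/2}$ (and its analogue for the $\ell_{2,\log}$-norm). This follows from the definition of the mixed-norm spaces: inside each cell the inner identity is $\||f|^{1/2}\|_{L_{2q}} = \|f\|_{L_q}^{1/2}$, and squaring the resulting $\ell_{2p}$-sequence returns the $\ell_p$-norm. For the logarithmic case one uses the same principle together with the fact that $\ell_{2,\log}$ is built on $\ell_2$, so squaring the norm recovers $\ell_{1,\log}$.

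\textbf{Expected main obstacle.} The only nontrivial point is the last one, the bookkeeping for the mixed-norm identities under $f \mapsto |f|^{1/2}$ for the logarithmically-refined space $\ell_{2,\log}(L_{2q})$; for the ordinary $\ell_p(L_q)$ scale it is immediate. Everything else---the polar factorisation, Hölder for weak Schatten ideals (which extends to the quasi-Banach range $p<1$), and the substitution $p \leadsto 2p$---is standard.
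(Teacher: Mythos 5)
Your argument is correct and is exactly the intended one: the paper gives only the one-line remark ``using H\"older's inequality, we immediately obtain,'' and your factorisation $M_f = M_{|f|^{1/2}}M_{\sgn(f)}M_{|f|^{1/2}}$ followed by the weak-Schatten H\"older inequality with $\tfrac1p=\tfrac1{2p}+\tfrac1{2p}$ and Theorem~\ref{main_nontrivial_cwikel_theorem} applied at exponents $2p,2q$ is precisely how that remark is meant to be unpacked. The mixed-norm bookkeeping you verified (including $\||f|^{1/2}\|_{\ell_{2,\log}(L_{2q})}^2=\|f\|_{\ell_{1,\log}(L_q)}$, which follows since the decreasing rearrangement satisfies $\mu(n,b^{1/2})=\mu(n,b)^{1/2}$) is accurate.
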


Of course, a similar result holds for Schatten ideals.

\begin{theorem}\label{cwikel schatten_theorem}
Let $G$ be a stratified Lie group with stratification $\gf = \bigoplus_{n=1}^\infty \gf_n,$ homogeneous dimension $d_{\hom} = \sum_{n=1}^\infty n\cdot \mathrm{dim}(\gf_n)$ and a fixed sub-Laplacian $\Delta = \sum_{j=1}^m X_j^2,$ where $\{X_j\}_{j=1}^m$ is a basis for $\gf_1.$
\begin{enumerate}[{\rm (i)}]
\item if $p>2$ and $r>\frac{d_{\hom}}{p},$ then
$$\|M_f(-\Delta)^{-\frac{r}{2}}\|_p\leq c_{p,r}\|f\|_{L_p(G)}.$$
\item if $p=2$ and $r>\frac{d_{\hom}}{p},$ then
$$\|M_f(1-\Delta)^{-\frac{r}{2}}\|_p=c_{p,r}\|f\|_{L_p(G)}.$$
\item if $p<2,$ $r>\frac{d_{\hom}}{p}$and $q>2,$ then
$$\|M_f(1-\Delta)^{-\frac{r}{2}}\|_p\leq c_{p,q,r}\|f\|_{\ell_p(L_q)(G)}.$$ 
\end{enumerate}
\end{theorem}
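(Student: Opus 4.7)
The strategy is to derive each bound from the corresponding weak-type estimate in Theorem~\ref{main_nontrivial_cwikel_theorem} using spectral dyadic decomposition together with real interpolation, with part (ii) handled as a direct computation.

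Part (ii) (the Hilbert--Schmidt case) should be disposed of first by an exact trace computation. Since $(1-\Delta)^{-r}\in\VN(G)$ is realised as left convolution by a kernel $K_r$, and the group von Neumann algebra trace satisfies $\Tr(M_g\lambda(h))=h(e)\int_G g\,d\gamma$, we obtain
\[
\|M_f(1-\Delta)^{-r/2}\|_2^2=\Tr\bigl(M_{|f|^2}(1-\Delta)^{-r}\bigr)=K_r(e)\,\|f\|_{L_2(G)}^2.
\]
A spectral computation (using that the spectral density of $-\Delta$ at the identity is comparable to $\lambda^{d_{\hom}/2-1}\,d\lambda$) shows that $K_r(e)<\infty$ precisely when $r>d_{\hom}/2$, producing both the equality and the explicit constant $c_{2,r}=\sqrt{K_r(e)}$.

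For parts (i) and (iii), fix a nonnegative $\phi\in C_c^\infty((0,\infty))$ with $\supp\phi\subset[1/2,2]$ and $\sum_{k\in\mathbb{Z}}\phi(2^{-k}\lambda)=1$ for $\lambda>0$. Set $\psi_k(\lambda):=\lambda^{-r/2}\phi(2^{-k}\lambda)$, so that $M_f(-\Delta)^{-r/2}=\sum_k M_f\psi_k(-\Delta)$ (analogously with $1-\Delta$ for (iii)). The key observation for each dyadic block is the factorisation
\[
\psi_k(-\Delta)=(-\Delta)^{-d_{\hom}/(2p')}\,h_k(-\Delta),\qquad h_k(\lambda)=\lambda^{d_{\hom}/(2p')}\psi_k(\lambda),
\]
where $\|h_k\|_\infty\leq C\,2^{-k\alpha(p')}$ with $\alpha(p'):=r/2-d_{\hom}/(2p')$. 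Invoking Theorem~\ref{main_nontrivial_cwikel_theorem} at the auxiliary exponent $p'$ (in the regime relevant to the part being proved) then yields weak-$\mathcal{L}_{p',\infty}$ bounds on $M_f\psi_k(-\Delta)$ carrying the extra geometric factor $2^{-k\alpha(p')}$, provided $p'>d_{\hom}/r$ (which is compatible with the hypothesis $r>d_{\hom}/p$). Choosing two nearby exponents $p_0<p<p_1$ both lying in the valid range of Theorem~\ref{main_nontrivial_cwikel_theorem} and both exceeding $d_{\hom}/r$, I apply real interpolation of Schatten quasi-norms, $(\mathcal{L}_{p_0,\infty},\mathcal{L}_{p_1,\infty})_{\theta,p}=\mathcal{L}_p$, jointly with the matching interpolation of the function-space norms ($L_{p_i}$ or $\ell_{p_i}(L_q)$), obtaining strong $\mathcal{L}_p$ control of each block, still decorated by a geometric factor $2^{-k\beta}$ with $\beta>0$.

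Summation over $k$ then closes the argument. The principal obstacle is the low-frequency regime $k\to-\infty$. In case (iii), and implicitly in case (ii), the multiplier $(1-\Delta)^{-r/2}$ is bounded by $1$, so the dyadic sum is effectively over $k\geq 0$ and the geometric decay suffices. Case (i) is the delicate one, since $(-\Delta)^{-r/2}$ is spectrally unbounded near zero; here one must either tune the interpolation exponents $p_0,p_1$ on opposite sides of $d_{\hom}/r$ so that the geometric factor decays at \emph{both} endpoints, or split the spectral range into high- and low-frequency parts and estimate each separately. Keeping the function-space norm on the right-hand side pinned to $\|f\|_{L_p(G)}$ (respectively $\|f\|_{\ell_p(L_q)(G)}$) uniformly throughout the interpolation and dyadic summation is the main bookkeeping difficulty.
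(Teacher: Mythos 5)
Your treatment of part~(ii) is essentially identical to the paper's machinery (Lemma~\ref{L_2_cwikel} together with Christ's theorem as packaged in Lemma~\ref{christ_restatement} and Corollary~\ref{christ_trace_formula}), and is fine.

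For parts~(i) and~(iii) you have reinvented the wheel and, more seriously, left the central difficulty unresolved. The paper has already done the interpolation once and for all: Corollary~\ref{L_p_specific_cwikel} gives, for $p\geq 2$ and any Borel $g$,
\[
\|M_f g(-\Delta)\|_{\mathcal{L}_p}\leq c_{p,G}\|f\|_{L_p(G)}\,\|g\|_{L_p(\mathbb{R}_+,\,t^{d_{\hom}/2-1}\,dt)},
\]
so part~(i) should simply be $g(t)=(1+t)^{-r/2}$ evaluated in that weighted $L_p$ space, where one checks $\|g\|_{L_p(\mathbb{R}_+,t^{d_{\hom}/2-1}\,dt)}<\infty$ precisely when $r>d_{\hom}/p$ -- no dyadic decomposition, no Littlewood--Paley partition, no secondary real interpolation. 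This also exposes a genuine problem with your plan: for the Riesz potential $g(t)=t^{-r/2}$ appearing in the statement, the integral $\int_0^1 t^{-rp/2}t^{d_{\hom}/2-1}\,dt$ \emph{diverges} under the hypothesis $r>d_{\hom}/p$, and the same divergence re-appears in your dyadic scheme as $k\to-\infty$: there is no auxiliary exponent $p'$ (let alone two exponents on opposite sides of $d_{\hom}/r$) for which the low-frequency blocks decay while the right-hand side stays pinned to $\|f\|_{L_p(G)}$, because the $L_{p'}$ and $L_p$ norms of $f$ are genuinely incomparable. You correctly flag this as ``the delicate one'' and ``the main bookkeeping difficulty,'' but it is not bookkeeping -- it is a real obstruction, consistent with the result only making sense for the Bessel potential $(1-\Delta)^{-r/2}$ (which is almost certainly what the paper intends, and what is actually used downstream in the scattering section with $(A+i)^{-p}$). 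Finally, for part~(iii) your interpolation of $\ell_{p_0}(L_q)$ and $\ell_{p_1}(L_q)$ spaces with matching Schatten endpoints is asserted rather than demonstrated and is subtler than it looks; the paper's implicit route is much more concrete: repeat the factorisation of Lemma~\ref{less_simple_cwikel} with $M_f(1-\Delta)^{-s/2}\in\mathcal{L}_q$ (the already-proved part~(i)) and $(1-\Delta)^{s/2}M_\psi(1-\Delta)^{-r/2}\in\mathcal{L}_{d_{\hom}/(r-s),\infty}$ from Theorem~\ref{MSX_commutator_estimate}.\eqref{msxc}, choose $s=r-d_{\hom}(\tfrac1p-\tfrac1q)>d_{\hom}/q$ so H\"older lands in $\mathcal{L}_p$, and then run the covering and direct-sum argument of Lemma~\ref{direct_sum_lemma} exactly as in the proof of Theorem~\ref{main_nontrivial_cwikel_theorem}.\eqref{mnctb} to pass from compactly supported $f$ to $f\in\ell_p(L_q)(G)$.
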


Here, the notation $\|\cdot\|_{p}$ and $\|\cdot\|_{p,\infty}$ denote the $p$-Schatten and weak $p$-Schatten quasi-norms respectively. The notation $\ell_{\frac{d_{\hom}}{\beta}}(L_q)(G)$ refers to a particular function
space on $G$ defined carefully below. This is a generalisation of the mixed $\ell_p(L_q)(\mathbb{R}^d)$ spaces of Birman and Solomyak, see \cite[Chapter 4]{Simon-trace-ideals-2005}.

In the particular case when $G=\Rl^d,$ the sub-Laplacian reduces to the classical Laplacian $\Delta_{\mathbb{R}^d} = \sum_{j=1}^d \partial_j^2.$ The estimates in Theorem \ref{main_nontrivial_cwikel_theorem}
are not as strong as the best known estimates for product-convolution operators in $\Rl^d$ see e.g. \cite[Theorem 4.6]{Simon-trace-ideals-2005}, \cite[Corollary 4.6]{LeSZ-cwikel}. When specialised to $G = \Rl^d,$ the $\Lc_{2,\infty}$ estimate in Theorem \ref{main_nontrivial_cwikel_theorem} is not more or less general than \cite[Theorem 1.3]{LeSZ-cwikel}, as it applies to a wider class of multipliers $f$ but a smaller class of functions of $-\Delta.$ Nevertheless it is less
general than \cite[Theorem 1.3]{SZ-critical}. The stronger estimates that have been obtained in the Euclidean (or toroidal) case are made possible due to the special properties of the Fourier basis in $L_2(\mathbb{T}^d)$ and those proofs do not appear to be directly applicable in the general stratified case.

Schatten and weak-Schatten estimates for product-convolution operators have numerous applications in mathematical physics, some of which include:
\begin{itemize}
    \item{} Schatten ideal estimates such as those in Theorem \ref{cwikel schatten_theorem} readily imply certain results in scattering theory. For example, Theorem \ref{main_nontrivial_cwikel_theorem} implies the existence and completeness of the wave operators $\Omega_{\pm}(-\Delta+M_V,-\Delta)$ for certain classes of potentials $V.$
We discuss this in Section \ref{scattering_section}. 
    \item{} Weak Schatten ideal estimates such as in Corollary \ref{main_nontrivial_cwikel_corollary} are used in proving the Cwikel--Lieb--Rozenblum inequality, which estimates the number of bound states of a Schr\"odinger operator.
    \item{} Operators of the form $(1-\Delta)^{-\frac{d}{4}}M_f(1-\Delta)^{-\frac{d}{4}}$ are used in the formulation of Connes integration formula \cite{Connes-Action}. We prove a strong version of Connes' integration formula for stratified Lie groups below, see Remark \ref{cif_remark}.
    \item{} More generally, Schatten and weak-Schatten estimates for product-convolution operators are used to prove semiclassical Weyl laws for Schr\"odinger operators under minimal smoothness assumptions, see Corollary \ref{semi-classical corollary} below.
\end{itemize}

Related to these estimates we have spectral asymptotics. In the following theorem, $\mu$ denotes the singular value function. In particular, the sequence $\{\mu(n,T)\}_{n=0}^\infty$ is the sequence of singular values of a compact operator $T.$ We give a precise definition of $\mu$ in the next section.
\begin{theorem}\label{main_asymptotic_formula}
Let $G$ be a non-abelian stratified Lie group with stratification $\gf = \bigoplus_{n=1}^\infty \gf_n,$ homogeneous dimension $d_{\hom} = \sum_{n=1}^\infty n\cdot \mathrm{dim}(\gf_n)$
and a fixed sub-Laplacian $\Delta = \sum_{j=1}^m X_j^2,$ where $\{X_j\}_{j=1}^m$ is a basis for $\gf_1.$ Let $k\in\mathbb{N}$ and let $p=\frac{d_{\hom}}{k}.$ If one of the following conditions holds
\begin{enumerate}[{\rm (i)}]
\item{} $p>1$ and $0\leq f \in L_p(G);$ 
\item{} $p<1$ and $0\leq f\in \ell_p(L_q)(G)$ for some $q>1;$
\item{} $p=1$ and $0\leq f\in \ell_{1,\log}(L_q)(G)$ for some $q>1;$
\end{enumerate}
then there exists the limit
$$\lim_{t\to\infty} t\mu(t,(1-\Delta)^{-\frac{k}{4}}M_f(1-\Delta)^{-\frac{k}{4}})^p=c_G\int_G f^p.$$
Moreover, if $k<d_{\hom},$ then we also have the existence of the limit
$$\lim_{t\to\infty} t\mu(t,(-\Delta)^{-\frac{k}{4}}M_f(-\Delta)^{-\frac{k}{4}})^p=c_G\int_G f^p.$$
Here, the constant $c_G>0$ depends on the stratification and also on the particular choice of the basis in $\gf_1.$
\end{theorem}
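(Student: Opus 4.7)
Write $T_f := (1-\Delta)^{-k/4} M_f (1-\Delta)^{-k/4}$ and $T_f^0 := (-\Delta)^{-k/4} M_f (-\Delta)^{-k/4}$, both self-adjoint and non-negative for $f \geq 0$. The overall plan has two phases: first prove the asymptotic formula for a dense geometric subclass of $f$ using heat-kernel and Tauberian arguments, then extend to the full class by a density argument powered by the weak Schatten bounds of Corollary \ref{main_nontrivial_cwikel_corollary}.

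For the density reduction: by Corollary \ref{main_nontrivial_cwikel_corollary} the map $f \mapsto T_f$ is bounded from the relevant function space into $\Lc_{p,\infty}$. The sub-additivity $\mu(s+t, A+B) \leq \mu(s,A) + \mu(t,B)$, combined with the fact that the functional $T \mapsto \lim_{t \to \infty} t \mu(t,T)^p$ is continuous on $\Lc_{p,\infty}$ modulo the separable part (the $(p,\infty)$-infinitesimals), yields a standard approximation lemma: if the limit exists with value $c_G \int f_n^p$ for a sequence $f_n \to f$ in the norm of the relevant space, then the same holds for $f$. It therefore suffices to establish the formula on any dense subclass, and I take this to be non-negative step functions adapted to small dilation-balls $B(g_0,r) \subset G$ in the Carnot--Carathéodory metric.

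The computation on a dilation-ball $B = B(g_0, r)$ proceeds via heat-kernel asymptotics. Under the stratified dilations $\{\delta_r\}_{r>0}$ one has $\delta_r^* \Delta \delta_r = r^2 \Delta$, so the heat kernel of $-\Delta$ satisfies $h_s(e) = c_G s^{-d_{\hom}/2}$ with $c_G := h_1(e)$. For $f = \chi_B$ the trace identity
\[
\Tr\bigl(M_f e^{s \Delta} M_f\bigr) = h_s(e)\, \Vol(B) \sim c_G \Vol(B)\, s^{-d_{\hom}/2}, \quad s \to 0^+,
\]
combined with Karamata's Tauberian theorem, yields a spectral counting asymptotic for $M_f (-\Delta)^{-k/2} M_f$; passing through the $p$-th power then gives $\lim_t t \mu(t, T_f^0)^p = c_G \Vol(B) = c_G \int f^p$. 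Additivity over disjoint supports---based on the fact that cross terms $(-\Delta)^{-k/4} M_{f_1} (-\Delta)^{-k/2} M_{f_2} (-\Delta)^{-k/4}$ with $\supp f_1 \cap \supp f_2 = \emptyset$ are $(p,\infty)$-infinitesimal via off-diagonal kernel estimates on $(-\Delta)^{-k/2}$---extends the formula to step functions. The reduction from $T_f^0$ to $T_f$ under the hypothesis $k < d_{\hom}$ uses the resolvent identity to express $(1-\Delta)^{-k/2} - (-\Delta)^{-k/2}$ as a lower-order operator whose conjugation by $M_{f^{1/2}}$ is $(p,\infty)$-infinitesimal; the non-abelian assumption is essential here to control the low-frequency spectral contribution of $\Delta$.

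The main obstacle is the Tauberian upgrade: Karamata's theorem only delivers the Ces\`aro (logarithmic) mean of $t \mu(t,\cdot)^p$, so establishing existence of a genuine limit requires Dixmier-measurability of $T_f^0$. In the stratified setting this measurability follows from dilation-homogeneity combined with the uniform weak Schatten bound of Theorem \ref{main_nontrivial_cwikel_theorem}: the automorphism $\delta_r$ intertwines $\Delta$ with $r^2 \Delta$ in a way that forces the spectral asymptotic to be invariant under dyadic rescaling. Making this invariance precise and uniform over the dense subclass from the density reduction---together with the identification of the universal constant $c_G$---constitutes the central technical task.
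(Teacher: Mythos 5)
Your outline correctly identifies the right closure argument (Proposition~\ref{advanced_perturbation}-type density reduction, and replacing $(1-\Delta)$ with $(-\Delta)$ via a $(p,\infty)$-infinitesimal perturbation) but the core of the proposal — heat kernel plus Karamata applied to a dense class of step functions — has a genuine gap that the vague appeal to dilation-homogeneity does not close.

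The first problem is that $\Tr(M_f e^{s\Delta} M_f)=\Tr(M_{f^2}e^{s\Delta})$ is \emph{not} the Laplace transform of the spectral measure of $M_f(-\Delta)^{-k/2}M_f$. For it to be, one would need $M_f$ to commute with $\Delta$, which is false. The counting function of the compressed operator $A^{1/2}BA^{1/2}$ (with $A=M_{f^2}$, $B=(1-\Delta)^{-1/2}$) is governed by $\Tr\bigl((A^{1/2}BA^{1/2})^z\bigr)$, which is a priori very different from the ``uncompressed'' quantity $\Tr(A^zB^z)$ that the trace formula actually computes. The paper's central technical step is precisely to control the discrepancy between these two via the commutator estimates of Lemma~\ref{asterisque_conditions} and \cite[Theorem~5.4.2]{SZ-asterisque}, and this step is absent from your plan. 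Without it, the heat trace computation tells you nothing about the eigenvalues of the compressed operator.

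The second problem is your own concern about the ``Tauberian upgrade,'' which you correctly flag but do not resolve. Once one has a meromorphic $\zeta$-function $z\mapsto\Tr\bigl((A^{1/2}BA^{1/2})^z\bigr)$ with a simple pole at $z=d_{\hom}$, the correct tool to extract a \emph{genuine} limit $\lim_{t\to\infty}t\,\mu(t,\cdot)^p$ is the Wiener--Ikehara theorem (Theorem~\ref{wiener-ikehara theorem} in the paper), which needs continuity of $\zeta(z)-\frac{c}{z-p}$ on the \emph{closed} half-plane $\{\Re z\ge p\}$ — a strictly stronger input than an asymptotic on the real axis. Your fallback of ``dilation-homogeneity implies Dixmier-measurability'' only addresses agreement of Dixmier traces (equivalently the logarithmic/Ces\`aro limit), which is weaker than the claimed convergence. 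Moreover $\delta_r$ does not stabilize the operator $(-\Delta)^{-k/4}M_f(-\Delta)^{-k/4}$; it relates it to the operator built from $f\circ\delta_r$, so the asserted scale-invariance of a single operator's spectral asymptotics does not hold as stated, and even if it did, scale-invariance of a Ces\`aro limit does not upgrade it to an ordinary limit. Finally, using step functions as your dense class is problematic for the same reason: the commutator machinery (Theorem~\ref{MSX_commutator_estimate}, Lemma~\ref{asterisque_conditions}) that powers the analytic continuation requires $f\in C^\infty_c(G)$, and your unproved assertion that cross terms with disjointly supported $f_1,f_2$ are $(p,\infty)$-infinitesimal also calls for kernel estimates that are not trivial in this generality.
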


\begin{remark}\label{cif_remark} For $k=d_{\hom},$ Theorem \ref{main_asymptotic_formula} implies a form of Connes' Integration Formula \cite{Connes-Action} for stratified Lie groups. For all continuous normalised traces $\varphi$ on $\Lc_{1,\infty},$ the asymptotic formula given in Theorem \ref{main_asymptotic_formula} yields
$$\varphi((1-\Delta)^{-\frac{d_{\hom}}{4}}M_f(1-\Delta)^{-\frac{d_{\hom}}{4}}) = c_G\int_G f,\quad f \in \ell_{1,\log}(L_q)(G),\; q>1.$$
\end{remark}

Theorem \ref{main_asymptotic_formula} is proved under the perhaps unnatural assumption that $k>0$ is an integer, we conjecture that this assumption 
is unnecessary. Similarly, it is reasonable to expect that the result admits nontrivial extension to non-positive $f,$ or matrix-valued $f.$

In the abelian case, that is when $G = \Rl^d$ and $\Delta = \sum_{j=1}^d \partial_j^2$ is the usual Laplace operator, asymptotics of this nature are
due to Birman and Solomyak see e.g. \cite{BirmanSolomyakWeaklyPolar1970,BirmanSolomyakNonSmooth1970}. The connection between Birman--Solomyak's asymptotic formulae
and Connes' integration formula was first publicized by Rozenblum \cite{Rozenblum2021arxiv}. 

The main difference to the elliptic case is that $d_{\hom}$ is typically larger than the dimension of $G$, and so Theorem \ref{main_asymptotic_formula} could be considered a non-Weylian asymptotic formula. 

A very closely related theme is the spectral theory of Heisenberg elliptic operators on Heisenberg manfolds, of which the sub-Laplacian of the Heisenberg group $\Heis^1$ is a very special case. The theory of pseudodifferential operators adapted to a Heisenberg manifold was originally developed by Beals-Greiner \cite{BealsGreiner1982,BealsGreiner1988} and Taylor \cite{Taylor1984}. Further developments include the ``intrinsic" calculus of Ponge \cite{PongeAMS2008}. The advantage of a general
pseudodifferential calculus is in its versatility. For example, using an intrinstic Heisenberg calculus Ponge obtained spectral asymptotics for a very broad class of hypoelliptic pseudodifferential operators on Heisenberg manifolds \cite[Chapter 6]{PongeAMS2008}. For some more recent work detailing the explicit relation to noncommutative geometry, see \cite{GimperleinGoffeng2017}. 

The present work is orthogonal to the pseudodifferential calculi of Beals-Greiner and Taylor in the sense we aim for far less generality in terms of operators and manifolds, but the nature of the underlying sub-Riemannian structure is more general. Our contribution is more in line with the spectral theory of differential operators associated to manifolds with a Carnot structure (in the sense of \cite{ChoiPonge2019}), or some other sub-Riemannian structure. As early as 1976, M\'etivier \cite{Metivier1976} proved pointwise asymptotics for the spectral function in this setting. Research at this level of generality continues to the present day, for example in the recent work of Colin de Verdi\`{e}re, Hillairet and Tr\'{e}lat \cite{CdVHT2018,CdVHTarxiv}.

The constant $c_G$ 
is unspecified in the statement of Theorem \ref{main_asymptotic_formula}, but may be computed in terms of the convolution kernel $h_t$ of the semigroup $e^{t\Delta}$ generated by $\Delta.$
The relation is that
\[
    c_G = \frac{1}{\Gamma(\frac{d_{\hom}}{2}+1)}t^{\frac{d_{\hom}}{2}}h_t(1),\quad t > 0.
\]
Here, $1\in G$ denotes the identity element of $G.$
The value of $t$ is arbitrary. For the case where $G = \Rl^d$ and $\Delta$ is the Laplacian, the classical formula $h_t(1) = (4\pi t)^{-\frac{d}{2}}$ recovers the value
\[
    c_{\Rl^d} = \frac{\mathrm{Vol}(\mathbb{S}^{d-1})}{d(2\pi)^d}.
\]
If $G = \Heis^n$ and $\Delta$ is the standard sub-Laplacian, we instead have
\[
    c_{\Heis^n} = \frac{1}{(n+1)!}(4\pi)^{-n-1}\int_{0}^\infty \left(\frac{\lambda}{\sinh(\lambda)}\right)^n\,d\lambda.
\]
This follows from a known explicit form of the heat kernel on $\Heis^n,$ \cite[Equation (1.73)]{Gaveau1977}. 
In general, the heat kernel $h_t$ can be given in terms of the unitary dual of $G,$ see e.g. \cite[Theorem 26(iii)]{ABGR2009}.

Via an argument using the Birman-Schwinger principle, Theorem \ref{main_asymptotic_formula} implies the following semiclassical Weyl law:
\begin{corollary}\label{semi-classical corollary}
    Let $G\neq\mathbb{H}^1$ be a stratified Lie group with stratification $\gf = \bigoplus_{n=1}^\infty \gf_n,$ homogeneous dimension $d_{\hom} = \sum_{n=1}^\infty n\cdot \mathrm{dim}(\gf_n)$
    and a fixed sub-Laplacian $\Delta = \sum_{j=1}^m X_j^2,$ where $\{X_j\}_{j=1}^m$ is a basis for $\gf_1.$

    Assume that $V\in L_{\frac{d_{\hom}}{2}}(G)$ is real-valued. For $h>0,$ the operator $-h^2\Delta\dot{+}M_V$ can be defined in the sense of quadratic forms.
    There exists a constant $c_{G}>0$ such that
    \[
        \lim_{h\to 0} h^{d_{\hom}}\Tr(\chi_{(-\infty,0)}(-h^2\Delta\dot{+}M_V)) = c_{G}\int_{G} V_-^{\frac{d_{\hom}}{2}}.
    \]
    Here, $V_- = \frac{1}{2}(|V|-V)$ is the negative part of $V.$
\end{corollary}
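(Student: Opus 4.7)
The plan is to combine the Birman--Schwinger principle with Theorem \ref{main_asymptotic_formula} applied to $f=V_-$. Writing $V=V_+-V_-$ and interpreting $-h^2\Delta\dot{+}M_V$ as the form sum, the Birman--Schwinger identity gives
\[
\Tr(\chi_{(-\infty,0)}(-h^2\Delta\dot{+}M_V))=\Tr\bigl(\chi_{(1,\infty)}\bigl(M_{V_-^{1/2}}(-h^2\Delta\dot{+}M_{V_+})^{-1}M_{V_-^{1/2}}\bigr)\bigr).
\]
For the upper bound I would use operator monotonicity $(-h^2\Delta\dot{+}M_{V_+})^{-1}\le(-h^2\Delta)^{-1}$ together with the polar-decomposition identity $\mu(n,AA^*)=\mu(n,A^*A)$ (with $A=M_{V_-^{1/2}}(-h^2\Delta)^{-1/2}$) to see that the Birman--Schwinger operator is dominated, in singular values, by
\[
(-h^2\Delta)^{-1/2}M_{V_-}(-h^2\Delta)^{-1/2}=h^{-2}T,\qquad T:=(-\Delta)^{-1/2}M_{V_-}(-\Delta)^{-1/2}.
\]

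Next I would invoke the $(-\Delta)$-variant of Theorem \ref{main_asymptotic_formula} with $k=2$, $p=d_{\hom}/2$ and $f=V_-\in L_p(G)$. Since $G$ is non-abelian one has $d_{\hom}>2$, so $p>1$ and $k<d_{\hom}$, the hypotheses are satisfied, and
\[
\lim_{t\to\infty}t\,\mu(t,T)^{p}=c_G\int_G V_-^{p}.
\]
A standard Abelian/Tauberian rearrangement converts this singular-value asymptotic into the distribution-function asymptotic $s^p\,\Tr(\chi_{(s,\infty)}(T))\to c_G\int_G V_-^{p}$ as $s\to 0^+$. Specializing $s=h^2$ (noting that the eigenvalues of $h^{-2}T$ exceed $1$ precisely when those of $T$ exceed $h^2$) produces
\[
\lim_{h\to 0^+}h^{d_{\hom}}\,\Tr\bigl(\chi_{(1,\infty)}(h^{-2}T)\bigr)=c_G\int_G V_-^{d_{\hom}/2},
\]
and the Birman--Schwinger comparison above yields $\limsup_{h\to 0^+}h^{d_{\hom}}\Tr(\chi_{(-\infty,0)}(-h^2\Delta\dot{+}M_V))\le c_G\int_G V_-^{d_{\hom}/2}$.

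The matching lower bound is the main obstacle, because dropping $V_+$ from the Birman--Schwinger operator only inflates it. I would handle it by retaining $V_+$ and using the resolvent identity
\[
(-h^2\Delta\dot{+}M_{V_+})^{-1}=(-h^2\Delta)^{-1}-(-h^2\Delta)^{-1}M_{V_+}(-h^2\Delta\dot{+}M_{V_+})^{-1},
\]
estimating the perturbation term in a suitable Schatten quasi-norm via Theorem \ref{cwikel schatten_theorem} applied to $V_+\in L_{d_{\hom}/2}(G)$, and invoking Ky Fan type inequalities to conclude that the counting functions of the two Birman--Schwinger operators agree to leading order as $h\to 0$. Alternatively, one can approximate $V_-$ in $L_{d_{\hom}/2}(G)$ by smooth compactly supported functions and construct an explicit trial subspace in the form domain of $-h^2\Delta\dot{+}M_V$ using test functions localized in sub-Riemannian cells of variable radius $\delta(x)\asymp h/\sqrt{V_-(x)}$; the cardinality of such a system is $\asymp h^{-d_{\hom}}\int_G V_-^{d_{\hom}/2}$, which matches the upper bound. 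Either route presents the only real technical difficulty; once in place, the equality of limits follows by combining the two bounds, with the constant $c_G$ being exactly the one in Theorem \ref{main_asymptotic_formula}.
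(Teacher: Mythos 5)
Your upper bound is in the right spirit: both you and the paper apply the Birman--Schwinger principle, replace $V$ monotonically by $-V_-$, and invoke the singular-value asymptotics of Theorem~\ref{main_asymptotic_formula} (with $k=2$, $p=d_{\hom}/2$, $f=V_-$) for $(-\Delta)^{-1/2}M_{V_-}(-\Delta)^{-1/2}$, converting between counting-function and singular-value asymptotics via Lemma~\ref{trivial spectral lemma}. One technical caution: the paper works exclusively with the symmetrised Birman--Schwinger operator $-(T+\lambda)^{-\frac12}V(T+\lambda)^{-\frac12}$ and lets $\lambda\downarrow 0$ (Corollary~\ref{bs corollary}, Lemma~\ref{lap lemma}); your $M_{V_-^{1/2}}(-h^2\Delta\dot+M_{V_+})^{-1}M_{V_-^{1/2}}$ form involves an unbounded resolvent (since $0\in\sigma(-h^2\Delta\dot+M_{V_+})$) and would need a separate justification.

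The genuine gap is the lower bound, which you rightly flag as the main difficulty but do not close. Your resolvent-identity route runs into an $h$-dependence that does not obviously disappear: after rescaling, the correction $(-h^2\Delta)^{-1}M_{V_+}(-h^2\Delta\dot+M_{V_+})^{-1}$ is not small in any Schatten quasi-norm uniformly as $h\to 0$, so ``Ky Fan type inequalities'' alone will not deliver asymptotic equality of the two counting functions; your alternative trial-subspace route is an entirely independent Weyl-counting argument with no details in the sub-Riemannian setting. The paper circumvents the issue by keeping $V=V_+-V_-$ inside the \emph{symmetrised} Birman--Schwinger operator and proving the $h$-independent inclusion
\[
(T^{-\frac12}VT^{-\frac12})_- - V_-^{\frac12}T^{-1}V_-^{\frac12}\in(\mathcal{L}_{p/2,\infty})_0,
\]
which is Lemma~\ref{last section commutator lemma}. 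This rests on two ideas your proposal is missing: (a) the commutator identity $T^{-\frac12}V_\pm T^{-\frac12}-V_\pm^{\frac12}T^{-1}V_\pm^{\frac12}=[T^{-\frac12},V_\pm^{\frac12}]V_\pm^{\frac12}T^{-\frac12}-V_\pm^{\frac12}T^{-\frac12}[T^{-\frac12},V_\pm^{\frac12}]$, controlled via $\Im(V_\pm^{\frac12}T^{-\frac12})\in(\mathcal{L}_{p,\infty})_0$ (verified in Lemma~\ref{semi-classical verification lemma} using Theorem~\ref{MSX_commutator_estimate}); and (b) the orthogonality of the ranges of $V_+$ and $V_-$, forcing $\bigl(V_-^{\frac12}T^{-1}V_-^{\frac12}-V_+^{\frac12}T^{-1}V_+^{\frac12}\bigr)_+=V_-^{\frac12}T^{-1}V_-^{\frac12}$. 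Once the difference is in the separable part, asymptotic stability (Lemma~\ref{fedor's fix lemma}) yields the lower bound. This packaged statement is Theorem~\ref{bs asymptotic}, and without it (or an equivalent $h$-independent comparison) your argument does not establish the matching lower bound.
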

The constant $c_G$ in the above corollary is the same as in Theorem \ref{main_asymptotic_formula}. The assumption that $V_+ \in L_{\frac{d_{\hom}}{2}}(G)$ is likely to be unnecessarily
strong, and is only made here for convenience and to simplify the exposition.
Semiclassical Weyl estimates for the classical Laplacian are well-known see e.g. \cite{Rozenbluym1972}. As far as the authors are aware, semiclassical asymptotics
of these nature for operators such as $\Delta$ on $\Heis^n,$ $n>1,$ are novel.

We wish to extend our gratitude to Professor G.~Rozenblum for helpful comments and discussion relating to this paper, and to J.~Li and Z.~Fan for introducing us to this topic.

\section{Notations and conventions}
A real Lie algebra $\gf$ is said to be {graded} if $\gf$ can be decomposed as a direct sum $\gf = \bigoplus_{n=1}^\infty \gf_n,$ where $[\gf_n,\gf_m]\subset \gf_{n+m},$
and further is said to be {stratified} if $\gf_1$ generates $\gf$ as a Lie algebra \cite[Section 3.1]{FischerRuzhansky2016}. The homogeneous dimension $d_{\hom}$ of a graded
Lie algebra is defined to be
\[
    d_{\hom} := \sum_{n=1}^\infty n \cdot \mathrm{dim}(\gf_n).
\]    
We exclusively consider finite dimensional Lie algebras, and hence a graded Lie algebra is nilpotent and the above sum is finite.
A connected, simply connected Lie group $G$ is called stratified if its Lie algebra $\gf$ is stratified. A stratified Lie group
$G$ is diffeomorphic to $\gf$, via the exponential mapping $\exp$ \cite[Proposition 1.2]{FollandStein1982}, and the Haar measure of $G$ is identical
to the pushforward of the Lebesgue measure on $\gf$ under $\exp.$ We will throughout this paper identify $G$ with $\mathfrak{g}.$ The Lie algebra $\gf$ may also be identified with the set
of all vector fields on $G$ commuting with the action of $G$ on itself by right translation. 
 

Let $G$ be a stratified $d$-dimensional Lie group, with stratification $\gf = \bigoplus_{n=1}^\infty \gf_n,$ and with a particular fixed sub-Laplacian $\Delta = \sum_{k=1}^m X_k^2$
corresponding to a basis $\{X_1,\ldots,X_m\}$ of $\gf_1.$ Since the elements $\{X_1,\ldots,X_m\}$ are represented here as vector fields on $G$ commuting with right translation, it follows that $\Delta$
commutes with right translations.

As we identify $G$ with $\gf,$ each $X_k$
may be considered as a vector field on $\Rl^d$. The nilpotency of $G$ implies that the coefficients of each $X_k$ are polynomials in the coordinate variables. Similarly, $\Delta$ is a second order
linear differential operator on $\Rl^d$ with polynomial coefficients. The Haar measure on $G$ is identical to the Lebesgue measure on $\gf,$ and when writing $L_p$-spaces
such as $L_p(G)$ these are always defined with the Haar measure. We omit the measure when writing integrals of $f \in L_1(G).$ That is,
\[
    \int_G f := \int_{\gf} f(x)\,dx.
\]

Throughout we will denote constants depending on $G$ by $c_G,$ or other parameters $p,q,\ldots$ by $c_{p,q,G},$ etc. The value of the constant may change from line to line.
Estimates stated as depending on $G$ may also depend on the choice of generating set $\{X_1,\ldots,X_m\},$ although in general the same stratified Lie group can have more than one generating set. Throughout this paper, $G$ is a fixed
stratified Lie group with fixed generating set $\{X_1,\ldots,X_m\}.$

It follows from the characterisation of Sobolev spaces on stratified Lie groups \cite[Corollary 4.13]{Folland1975} that
\begin{equation}\label{sobolev_equivalence}
(1-\Delta)^{\frac{\alpha}{2}}X_l(1-\Delta)^{-\frac{\beta}{2}} \in \Bc(L_2(G)),\quad \alpha,\beta\in\mathbb{R},\quad \alpha+1\leq\beta.
\end{equation}

For a Hilbert space $H$, denote by $\Bc(H)$ the algebra of all bounded linear endomorphisms of $H.$ The ideal of all compact
linear maps is denoted $\Kc(H).$ The operator norm on $\Bc(H)$ is denoted $\|\cdot\|_{\infty}.$ Given $T \in \Kc(H),$ the singular value sequence $\mu(T):=\{\mu(n,T)\}_{n=0}^\infty$ is defined in terms of the singular value function
\[
    \mu(t,T) := \inf\{\|T-R\|_{\infty}\;:\; \mathrm{rank}(R)\leq t\},\quad t\geq 0.
\]
Equivalently, $\mu(n,T)$ is the $(n+1)$-st eigenvalue of $|T|$, arranged in decreasing order with multiplicities.

For $0 < p< \infty,$ the Schatten ideal $\Lc_p(H)$ consists of all compact linear operators $T$ such that
\[
    \|T\|_p := \left(\sum_{n=0}^\infty \mu(n,T)^p\right)^{\frac1p}<\infty. 
\]
We also make extensive use of the weak Schatten ideals, defined with the weak Schatten (quasi)-norms
\[
    \|T\|_{p,\infty} := \sup_{n\geq 0} (n+1)^{\frac{1}{p}}\mu(n,T).
\]

We have the H\"older-type inequalities
\[
    \|TS\|_r \leq \|T\|_p\|S\|_q,\quad \|TS\|_{r,\infty} \leq c_{p,q,r}\|T\|_{p,\infty}\|S\|_{q,\infty},\quad \frac{1}{r} = \frac{1}{p}+\frac{1}{q}.
\]
In this paper, the choice of Hilbert space is always clear from context; hence we abbreviate $\Lc_p(H)$ as $\Lc_p,$ similarly $\Lc_{p,\infty}(H)$ as $\Lc_{p,\infty}$ and so on.

For $0 < p < \infty,$ denote by $(\Lc_{p,\infty})_0$ the separable part of $\Lc_{p,\infty},$ defined
as the closure of the space of finite rank operators on $\Lc_{p,\infty},$ or equivalently the subset of $T \in \Lc_{p,\infty}$ such that
\[
    \lim_{n\to\infty} (n+1)^{\frac{1}{p}}\mu(n,T) = 0.
\]

An important feature of $(\Lc_{p,\infty})_0$ we use repeatedly is the following:
\begin{proposition}\label{elementary_perturbation} Let $T_1,T_2\in \Lc_{p,\infty}$ be such that $T_1-T_2\in(\Lc_{p,\infty})_0.$ If there exists the limit
$$\lim_{t\to\infty}t^{\frac1p}\mu(t,T_1) = c,$$
then there exists the limit
$$\lim_{t\to\infty}t^{\frac1p}\mu(t,T_2) = c.$$
\end{proposition}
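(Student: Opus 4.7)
The plan is to reduce the proposition to the standard sub-additivity inequality for singular values, namely $\mu(s+t, A+B) \le \mu(s,A) + \mu(t,B)$ for compact operators $A, B$ and $s, t \ge 0$, applied to the decomposition $T_2 = T_1 + (T_2 - T_1)$. The hypothesis that $T_2 - T_1 \in (\Lc_{p,\infty})_0$ is equivalent to $t^{1/p}\mu(t, T_2 - T_1) \to 0$ as $t \to \infty$, which will allow the perturbation term to be absorbed asymptotically.

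First I would fix $\epsilon \in (0, 1)$ and write $t = (1-\epsilon)t + \epsilon t$, so that
$$
\mu(t, T_2) \le \mu\bigl((1-\epsilon)t,\, T_1\bigr) + \mu\bigl(\epsilon t,\, T_2 - T_1\bigr).
$$
Multiplying by $t^{1/p}$ and rewriting the first term on the right as $(1-\epsilon)^{-1/p}\bigl((1-\epsilon)t\bigr)^{1/p}\mu((1-\epsilon)t, T_1)$ and the second similarly, then taking $\limsup_{t\to\infty}$ and using the hypothesis together with $s^{1/p}\mu(s, T_2 - T_1) \to 0$, gives
$$
\limsup_{t\to\infty} t^{1/p}\mu(t, T_2) \le (1-\epsilon)^{-1/p} c.
$$
Sending $\epsilon \to 0^+$ yields the upper bound $\limsup_{t\to\infty} t^{1/p}\mu(t, T_2) \le c$.

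For the matching lower bound, I would reverse the roles of $T_1$ and $T_2$: writing $T_1 = T_2 + (T_1 - T_2)$ and repeating the same splitting gives
$$
\mu(t, T_1) \le \mu\bigl((1-\epsilon)t,\, T_2\bigr) + \mu\bigl(\epsilon t,\, T_1 - T_2\bigr),
$$
and taking $\liminf_{t\to\infty}$ of $t^{1/p}$ times this inequality (using that $c = \lim t^{1/p}\mu(t, T_1)$ and that the perturbation term vanishes) produces
$$
c \le (1-\epsilon)^{-1/p} \liminf_{t\to\infty} t^{1/p}\mu(t, T_2),
$$
so $\liminf_{t\to\infty} t^{1/p}\mu(t, T_2) \ge (1-\epsilon)^{1/p} c$, and again letting $\epsilon \to 0^+$ yields $\liminf \ge c$. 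Combining the two bounds gives the claimed limit.

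There is no serious obstacle here; the result is essentially a packaging of sub-additivity of $\mu$ together with the asymptotic smallness encoded in $(\Lc_{p,\infty})_0$. The only small technical care needed is to verify the rescaling $s = (1-\epsilon)t$ (resp.\ $s = \epsilon t$) correctly converts the limit along $t$ into the limit along $s$, but this is immediate since both sequences tend to infinity for fixed $\epsilon > 0$.
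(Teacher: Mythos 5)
The paper does not actually give a proof of Proposition~\ref{elementary_perturbation}; it simply attributes the result to K.~Fan and cites \cite[Chapter~II, Theorem~4.1]{GohbergKrein} and \cite[Section~11]{RozenblumShubinSolomyak1989}. Your proof is correct and is the standard argument one would find in those references: the only ingredient is the Ky Fan subadditivity inequality $\mu(s+t,A+B)\le\mu(s,A)+\mu(t,B)$ (which holds for the continuous-parameter singular value function used here, since a sum of finite-rank operators of ranks $\le\lfloor s\rfloor$ and $\le\lfloor t\rfloor$ has rank $\le s+t$), applied with the $(1-\epsilon)t + \epsilon t$ split, together with the characterisation of $(\Lc_{p,\infty})_0$ as the operators with $t^{1/p}\mu(t,\cdot)\to 0$. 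The bookkeeping in both directions — rewriting $t^{1/p}\mu((1-\epsilon)t,T_i)$ as $(1-\epsilon)^{-1/p}((1-\epsilon)t)^{1/p}\mu((1-\epsilon)t,T_i)$, absorbing the perturbation term which tends to $0$ for fixed $\epsilon$, then letting $\epsilon\to 0^+$ — is carried out correctly in both the upper and lower estimates, so the argument is complete.
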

This elementary fact is due to K.~Fan, see e.g. \cite[Chapter II, Theorem 4.1]{GohbergKrein}. Proposition \ref{elementary_perturbation}  a special case of stronger asymptotic stability results, see e.g. \cite[Section 11]{RozenblumShubinSolomyak1989}. In fact, a generalisation
of Proposition \ref{elementary_perturbation} due to Birman and Solomyak is the following:
\begin{proposition}\label{advanced_perturbation}\cite[Lemma 11.2]{RozenblumShubinSolomyak1989} Let $p>0$ and let $\{T_m\}_{m\geq0}\subset\Lc_{p,\infty}$ be such that for every $m\geq0$ there exists a limit
$$\lim_{t\to\infty}t^{\frac1p}\mu(t,T_m)=c_m.$$
If $T_m\to T$ in $\mathcal{L}_{p,\infty},$ then the following limits exist and are equal
$$\lim_{t\to\infty}t^{\frac1p}\mu(t,T)=\lim_{m\to\infty}c_m.$$ 
\end{proposition}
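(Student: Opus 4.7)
The plan is to build the argument around the sub-additivity of singular values, $\mu(s+t,A+B)\leq \mu(s,A)+\mu(t,B)$ for $s,t\geq 0$ and compact $A,B$, applied with $A$ equal to one of $T_m$ or $T$ and $B$ a small $\Lc_{p,\infty}$-perturbation.

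First I would show that $\{c_m\}_{m\geq 0}$ is Cauchy, so that the right-hand limit exists. Writing $T_m=T_{m'}+(T_m-T_{m'})$ and applying sub-additivity with $s=t$, then multiplying by $(2t)^{1/p}$ and using $(2t)^{1/p}\mu(t,T_m-T_{m'})\leq 2^{1/p}\|T_m-T_{m'}\|_{p,\infty}$, the limit $t\to\infty$ gives $2^{1/p}c_m \leq 2^{1/p}c_{m'}+2^{1/p}\|T_m-T_{m'}\|_{p,\infty}$, hence $|c_m-c_{m'}|\leq \|T_m-T_{m'}\|_{p,\infty}$. Since $\{T_m\}$ is Cauchy in $\Lc_{p,\infty}$, the sequence $\{c_m\}$ converges to some $c\geq 0$.

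Next I would establish matching $\limsup$ and $\liminf$ bounds for $t^{1/p}\mu(t,T)$ via an asymmetric split with a free parameter $\epsilon>0$. For the upper bound, sub-additivity gives
\[
\mu((1+\epsilon)t,T)\leq \mu(t,T_m)+\mu(\epsilon t,T-T_m).
\]
Multiplying by $((1+\epsilon)t)^{1/p}$ and using $(\epsilon t)^{1/p}\mu(\epsilon t,T-T_m)\leq \|T-T_m\|_{p,\infty}$, taking $t\to\infty$ yields
\[
\limsup_{s\to\infty}s^{1/p}\mu(s,T)\leq (1+\epsilon)^{1/p}\bigl(c_m+\epsilon^{-1/p}\|T-T_m\|_{p,\infty}\bigr).
\]
Sending $m\to\infty$ first kills the perturbation and replaces $c_m$ by $c$; letting $\epsilon\to 0$ then gives $\limsup\leq c$. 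A symmetric argument, starting from $\mu((1+\epsilon)t,T_m)\leq \mu(t,T)+\mu(\epsilon t,T_m-T)$ rearranged as a lower bound on $\mu(t,T)$, produces
\[
\liminf_{t\to\infty}t^{1/p}\mu(t,T)\geq (1+\epsilon)^{-1/p}c_m-\epsilon^{-1/p}\|T-T_m\|_{p,\infty},
\]
and the same two limits deliver $\liminf\geq c$.

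The main obstacle is the constant $2^{1/p}$ that would appear if one naively split $\mu(2t,A+B)\leq \mu(t,A)+\mu(t,B)$: this constant is bounded away from $1$ for $p<\infty$ and would obstruct equality of the two limits. The asymmetric split with parameter $\epsilon$ circumvents this by producing factors $(1+\epsilon)^{1/p}$ and $\epsilon^{-1/p}\|T-T_m\|_{p,\infty}$ whose effect is neutralised by sending $m\to\infty$ first (which makes the second factor vanish because $\|T-T_m\|_{p,\infty}\to 0$) and then $\epsilon\to 0$. It is crucial that the order of limits be $m\to\infty$ before $\epsilon\to 0$; reversing the order would leave an uncontrollable $\epsilon^{-1/p}$ factor.
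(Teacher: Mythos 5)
The paper does not give a proof of this proposition --- it cites it directly from \cite[Lemma 11.2]{RozenblumShubinSolomyak1989} --- so there is no in-paper argument to compare against. Your asymmetric-split strategy, using Ky Fan's inequality $\mu(s+t,A+B)\leq\mu(s,A)+\mu(t,B)$ with a free parameter $\epsilon$ and the limit order $m\to\infty$ before $\epsilon\to 0$, is a standard and correct way to prove stability results of this type, and the heart of your argument is sound.

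However, the preliminary step, proving $\{c_m\}$ Cauchy by a \emph{symmetric} split, contains an algebraic slip and, more importantly, runs into precisely the $2^{1/p}$ obstruction you yourself point out afterwards. Multiplying $\mu(2t,T_m)\leq\mu(t,T_{m'})+\mu(t,T_m-T_{m'})$ by $(2t)^{1/p}$ and letting $t\to\infty$ gives
\[
c_m \leq 2^{1/p}c_{m'}+2^{1/p}\|T_m-T_{m'}\|_{p,\infty},
\]
not $2^{1/p}c_m \leq 2^{1/p}c_{m'}+2^{1/p}\|T_m-T_{m'}\|_{p,\infty}$: on the left, $(2t)^{1/p}\mu(2t,T_m)$ is simply $s^{1/p}\mu(s,T_m)$ after the substitution $s=2t$, which tends to $c_m$ with no extra factor. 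The factor $2^{1/p}$ therefore does not cancel, and the claimed Lipschitz estimate $|c_m-c_{m'}|\leq\|T_m-T_{m'}\|_{p,\infty}$ is not established by this computation. This is the same $2^{1/p}$ issue that motivates the asymmetric split in the main part; it afflicts the symmetric split in the Cauchy step just as much.

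Fortunately the Cauchy step is unnecessary: your two inequalities already produce the full conclusion. Taking $\liminf_{m\to\infty}$ in the upper bound and $\limsup_{m\to\infty}$ in the lower bound (rather than a genuine limit, which is not yet available), and then letting $\epsilon\to 0$, you obtain
\[
\limsup_{m\to\infty}c_m \;\leq\; \liminf_{t\to\infty}t^{\frac1p}\mu(t,T)\;\leq\;\limsup_{t\to\infty}t^{\frac1p}\mu(t,T)\;\leq\;\liminf_{m\to\infty}c_m,
\]
which forces all four quantities to coincide; hence both $\lim_m c_m$ and $\lim_t t^{1/p}\mu(t,T)$ exist and are equal, with no separate Cauchy argument. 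If you do want the Cauchy step, it can be repaired by running the asymmetric $\epsilon$-split between $T_m$ and $T_{m'}$ and using that the sequence $c_m\leq\|T_m\|_{p,\infty}$ is bounded, but the observation above is shorter.
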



The operator trace on $\Bc(H)$ is denoted $\Tr.$ Note that
\[
    \mu(t,T) = \inf\{s\geq 0\;:\; \Tr(\chi_{(s,\infty)}(|T|))\leq t\},\quad T \in \Kc.
\]
From this relation it follows that the asymptotics of $\mu(t,T)$ as $t\to\infty$ are equivalent to those of $\Tr(\chi_{(s,\infty)}(T))$ as $s\downarrow 0.$ To be precise,
we have the following
\begin{lemma}\label{trivial spectral lemma} 
    Let $p>0$ and let $0\leq A\in\mathcal{L}_{p,\infty}.$ We have
    $$
    \limsup_{h\downarrow0}h^{p}\Tr(\chi_{(h,\infty)}(A))=\limsup_{h\downarrow0}h^{p}\Tr(\chi_{[h,\infty)}(A))=\limsup_{t\to\infty}t\mu(t,A)^{p}.
    $$
    Similarly,
    $$
    \liminf_{h\downarrow0}h^{p}\Tr(\chi_{(h,\infty)}(A))=\liminf_{h\downarrow0}h^{p}\Tr(\chi_{[h,\infty)}(A))=\liminf_{t\to\infty}t\mu(t,A)^{p}.
    $$
    Moreover,
    $$
    \lim_{h\downarrow0}h^{p}\Tr(\chi_{(h,\infty)}(A))=\lim_{h\downarrow0}h^{p}\Tr(\chi_{[h,\infty)}(A))=\lim_{t\to\infty}t\mu(t,A)^{p}
    $$
    if any of the limits exist.
\end{lemma}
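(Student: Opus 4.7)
The plan is to exploit the pseudoinverse relation between the singular value function $\mu(\cdot,A)$ and the eigenvalue counting function. Set
\[
N(s):=\Tr(\chi_{(s,\infty)}(A)),\qquad N'(s):=\Tr(\chi_{[s,\infty)}(A)), \qquad s>0.
\]
First I would verify the duality $\mu(t,A)\le s \iff N(s)\le t$ for all $t\ge 0$ and $s>0$. This is immediate from the interpretation of $\mu(n,A)$ as the $(n+1)$-st eigenvalue of $A$ arranged in decreasing order: $N(s)$ counts the eigenvalues strictly greater than $s$, so $\mu(t,A)>s$ precisely when there are more than $t$ such eigenvalues.

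The main step is to translate this equivalence into the comparison of the two asymptotics involving $N$. Given any $C>\limsup_{t\to\infty}t\mu(t,A)^{p}$, for all sufficiently large $t$ one has $\mu(t,A)<(C/t)^{1/p}$. Setting $t:=\lceil C/h^{p}\rceil$ for small $h>0$ gives $\mu(t,A)<h$, whence by the duality $N(h)\le t\le C/h^{p}+1$, so $\limsup_{h\downarrow 0}h^{p}N(h)\le C$. The reverse direction is symmetric: if $C>\limsup_{h\downarrow 0}h^{p}N(h)$, then for small $h$ one has $N(h)<C/h^{p}$, and choosing $h:=(C/t)^{1/p}$ for large $t$ yields $N(h)<t$, hence $\mu(t,A)\le h$ and $t\mu(t,A)^{p}\le C$. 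Exactly the same argument with strict and non-strict inequalities swapped handles the liminf identity.

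Next I would deal with the distinction between $N$ and $N'$. Since $A\in\Lc_{p,\infty}$ is compact, each positive eigenvalue has finite multiplicity and the eigenvalues accumulate only at $0$; in particular, for every $h>0$ the function $N$ is locally constant on $[h-\delta,h)$ for some $\delta=\delta(h)>0$, and $N(h-)=N'(h)$. Given a sequence $h_{n}\downarrow 0$ realising $\limsup_{h\downarrow 0}h^{p}N'(h)$, I would pick $h_{n}'\in[(1-1/n)h_{n},h_{n})$ with $N(h_{n}')=N'(h_{n})$; then $(h_{n}')^{p}N(h_{n}')\to\limsup_{h\downarrow 0}h^{p}N'(h)$, which together with the trivial inequality $N\le N'$ proves that the limsup of $h^{p}N(h)$ and of $h^{p}N'(h)$ coincide. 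The liminf version is identical.

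Combining the two previous steps gives the equality of the three limsups and of the three liminfs stated in the lemma. The final assertion about the existence of limits is then automatic: for a $[0,\infty]$-valued quantity, a limit exists if and only if its limsup and liminf agree, and the three quantities share both. No step presents a serious obstacle; the only care required is in the bookkeeping of strict versus non-strict inequalities when transporting through the duality $\mu(t,A)\le s\iff N(s)\le t$.
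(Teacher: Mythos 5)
The paper states this lemma without proof, pointing only to the pseudoinverse identity $\mu(t,T)=\inf\{s\geq 0:\Tr(\chi_{(s,\infty)}(|T|))\leq t\}$; your argument is the natural unpacking of exactly that relation, via the duality $\mu(t,A)\le s\iff N(s)\le t$ together with discreteness of the positive spectrum away from $0$, and it is correct. One small correction in wording: for the $\liminf$ comparison of $N$ and $N'$ you need to approximate the extremizing sequence from above (pick $h_n''>h_n$ with $h_n''/h_n\to 1$ and $N'(h_n'')=N(h_n)$, using that $(h_n,h_n+\delta)$ is eigenvalue-free for small $\delta$), rather than from below as in the $\limsup$ case, so the two arguments are analogous but not literally identical.
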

For further details on trace ideals and the singular value function see \cite{LSZ,GohbergKrein,Simon-trace-ideals-2005}.

The space $\Sc(G)$ of Schwartz class functions on $G$ is defined via the identification
of $G$ with $\Rl^d,$ and $\Sc(G)$ is the usual Schwartz space $\Sc(\Rl^d)$ with its canonical Fr\'echet topology. The space of tempered distributions $\Sc'(G)$ is the topological dual of $\Sc(G).$

%

For $n\in\Ntrl$ define the Sobolev space $W^n_p(G)$ as the space of tempered distributions $f$ on $G$ such that
$$\|f\|_{W^n_p} := \|f\|_{L_p(G)}+\sum_{k=1}^n\sum_{1\leq j_1,\ldots,j_k\leq m} \|X_{j_1}X_{j_2}\cdots X_{j_k}f\|_{L_p(G)}.$$

For $s\in\mathbb{R},$ define Bessel-potential Sobolev space $H^s(G)$ as the space of tempered distributions $f$ on $G$ such that
$$\|f\|_{H^s}=\|(1-\Delta)^{\frac{s}{2}}f\|_{L_2(G)}<\infty.$$
By Corollary 4.13 in \cite{Folland1975}, we have $W^n_2=H^n$ with equivalent norms for every $n\in\mathbb{N}.$

\section{Singular value estimates for product-convolution operators in Group von Neumann algebras}\label{elementary_cwikel_section}
In this section we discuss Cwikel type estimates for $\Lc_p$, $2\leq p < \infty$ and $\Lc_{p,\infty}$, $2 < p < \infty.$ These are the simplest cases that are well-covered
by the abstract theory developed in \cite{LeSZ-cwikel}. 

To take the most advantage of that theory, for this section and only this section we will work in a much more general context than the rest of the paper. Here,
$G$ denotes an arbitrary unimodular locally compact group. Recall that a locally compact group is said to be unimodular if its left-invariant Haar measure coincides
with its right-invariant Haar measure, or equivalently if its left-invariant Haar measure $\nu$ satisfies
\[
    \nu(X) = \nu(X^{-1})
\]
for all measurable subsets $X$ of $G.$ All nilpotent Lie groups are unimodular,
as the Lebesgue measure is bivariant \cite[Proposition 1.2(c)]{FollandStein1982}, and so the results of this section apply to the stratified Lie groups considered in the remainder of the paper.

We write $L_2(G)$ for $L_2(G,\nu)$, and $L_{\infty}(G)$ for $L_\infty(G,\nu)$ and so forth.

Given $f \in C_c(G)$, write $\lambda(f)$ for the operator on $L_2(G)$ given by
\[
    (\lambda(f)u)(\gamma) = \int_{G} f(\gamma \eta^{-1})u(\eta)\,d\nu(\eta),\quad u \in L_2(G),\; \gamma \in G.
\]
Observe that $\lambda(f)\lambda(g)=\lambda(f\ast g),$ where $f\ast g \in C_c(G)$ is the convolution
\[
        (f\ast g)(\gamma) := \int_{G} f(\gamma \eta^{-1})g(\eta)\,d\nu(\eta),\quad f,g \in C_c(G),\;\gamma \in G.
\]
With this notation, $\lambda(f)u := f\ast u$ for $u\in L_2(G).$

On the algebra $\lambda(C_c(G)),$ the Plancherel weight $\tau$ is the functional
\[
    \tau(\lambda(f)) := f(1),\quad f \in C_c(G).
\]
where $1$ is the identity element of $G.$
It is not obvious that $\lambda$ is injective or that $\tau$ is well-defined. The fact that $\lambda$ is injective follows from \cite[Chapter VII, Theorem 3.4]{Takesaki_2}, and hence $\tau$ is well-defined on the algebra $\lambda(C_c(G)).$ See also \cite{Stinespring-tams-1959}.

The unimodularity of $G$ implies that $\tau$ is a trace, i.e. that
\[
    \tau(\lambda(f)\lambda(g)) = \tau(\lambda(g)\lambda(f)),\quad f,g \in C_c(G).
\]
Note also that
\begin{equation}\label{plancherel_identity}
    \tau(\lambda(f)^*\lambda(g)) = \int_G \overline{f(\gamma)}g(\gamma)\,d\nu(\gamma),\quad f,g \in C_c(G).
\end{equation}

The group von Neumann algebra $\VN(G)$ is defined to be closure of $\lambda(C_c(G))$
in the weak operator topology of $\mathcal{B}(L_2(G)).$ Equivalently, $\VN(G)$
is the weak operator topology closure of the linear span of the family $\{\lambda(\gamma)\}_{\gamma \in G}$ where
\[
    (\lambda(\gamma)u)(\eta) := u(\gamma^{-1}\eta),\quad u \in L_2(G),\; \gamma, \eta \in G.
\]
That is,
\[
    \VN(G) = \lambda(G)'' \subseteq \mathcal{B}(L_2(G))
\]
where $''$ denotes the double commutant \cite[Section 7.2.1]{Pedersen2018}. The commutant of $\VN(G)$ in $\Bc(L_2(G))$ is the closure of the right regular representation
of $G$, and it follows that a linear operator $T \in \Bc(L_2(G))$ belongs to $\VN(G)$ if and only if $T$
commutes with the action of $G$ on $L_2(G)$ by right multiplication.
For further details see e.g. \cite[Section 7.2.1]{Pedersen2018}, \cite[Chapter VII, Section 3]{Takesaki_2}, \cite[Section 18.8.1]{DixmierCStar1977}.

%

\begin{theorem}\cite[Theorem 7.2.7 \& Proposition 7.2.8]{Pedersen2018}
    The functional $\tau$ uniquely extends to a semifinite faithful normal trace on $\VN(G).$
\end{theorem}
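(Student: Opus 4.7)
The strategy is to recognise that the Plancherel identity \eqref{plancherel_identity} endows $C_c(G)$ with the structure of a left Hilbert algebra whose associated von Neumann algebra is exactly $\VN(G)$; once this is in place, each of the asserted properties of the extension follows from the standard theory (Tomita's theorem for achieved left Hilbert algebras). Concretely, I would equip $C_c(G)$ with the product $f\cdot g := f\ast g$ and involution $f^\sharp(\gamma) := \overline{f(\gamma^{-1})}$, and view it as a dense subspace of $L_2(G)$. Identity \eqref{plancherel_identity} then rewrites as $\tau(\lambda(f)^*\lambda(g)) = \langle f,g\rangle_{L_2(G)}$, so the GNS completion of $\lambda(C_c(G))$ with respect to $\tau$ is canonically identified with $L_2(G)$ via $\lambda(f)\mapsto f$, with the left action $\lambda$ on the GNS space corresponding to convolution.

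The extension itself I would construct as follows. For $T\in\VN(G)$, call $T$ \emph{left-bounded with symbol} $\xi_T\in L_2(G)$ if $Tf = \xi_T\ast f$ for every $f\in C_c(G)$. The collection $\mathfrak{n}_\tau$ of such operators is a left ideal containing $\lambda(C_c(G))$ (with $\xi_{\lambda(f)} = f$), and I set $\tau(T^*T) := \|\xi_T\|_{L_2(G)}^2$ for $T\in\mathfrak{n}_\tau$, extending to $\VN(G)_+$ by $\tau(S) := +\infty$ whenever $S^{1/2}\notin\mathfrak{n}_\tau$. Agreement with the original $\tau$ on $\lambda(C_c(G))$ is then exactly the Plancherel identity.

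With this extension in hand, the remaining verifications are relatively routine. The trace property descends from the unimodularity-based identity $\tau(\lambda(f)\lambda(g)) = \tau(\lambda(g)\lambda(f))$ by a density-and-normality argument; faithfulness follows since $\tau(T^*T) = 0$ forces $\xi_T = 0$, hence $T = 0$ on a dense subspace; semifiniteness is immediate from $\lambda(C_c(G)) \subset \mathfrak{n}_\tau$ combined with the weak-operator density of $\lambda(C_c(G))$ in $\VN(G)$; normality is inherited from the normality of the map $T\mapsto\|\xi_T\|^2$; and uniqueness holds because any two semifinite normal traces agreeing on a weak-operator-dense $*$-subalgebra must coincide on the whole algebra.

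The principal obstacle is making the symbol map $T\mapsto\xi_T$ rigorous and showing that $\mathfrak{n}_\tau$ is genuinely a weak-operator-dense left ideal --- in particular, one must produce enough left-bounded operators beyond $\lambda(C_c(G))$, and check the compatibility $\lambda(f)T = \lambda(\xi_T\ast f)$ so that the ideal structure matches. This is precisely the technical heart of Tomita's theorem for achieved left Hilbert algebras; a self-contained argument would parallel the classical proof of the Plancherel theorem in the abelian case, replacing Fourier analysis with the commutant theorem $\VN(G) = \lambda(G)''$ together with the unimodular trace property of $\tau$ on $\lambda(C_c(G))$.
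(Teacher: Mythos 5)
The paper does not prove this statement; it is quoted verbatim from Pedersen, whose own argument is the left Hilbert algebra (Tomita--Dixmier) route you sketch, so your proposal essentially reproduces the cited proof: equip $C_c(G)$ with convolution and $f^\sharp(\gamma)=\overline{f(\gamma^{-1})}$, identify the associated left von Neumann algebra with $\VN(G)$ via \eqref{plancherel_identity}, and extend $\tau$ through left-bounded vectors. Two small points to tighten. First, the symbol of $\lambda(f)T$ is $f\ast\xi_T$, not $\xi_T\ast f$: since $(\lambda(f)T)u=f\ast(\xi_T\ast u)=(f\ast\xi_T)\ast u$ and $G$ is noncommutative, the order matters when checking that $\mathfrak{n}_\tau$ is a \emph{left} ideal. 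Second, the uniqueness step is stated too loosely: "two semifinite normal traces agreeing on a weak-operator-dense $*$-subalgebra coincide" is not true in that generality (such traces are weights, not bounded functionals, and on $\Bc(H)$ all positive multiples of $\Tr$ agree at $+\infty$ off the trace class). What one actually uses is that a normal semifinite trace is determined by its restriction to the positive cone of a weak-operator-dense two-sided ideal on which it is finite, combined with the normalisation $\tau(\lambda(f))=f(1)$ --- or equivalently the Radon--Nikodym theorem for normal semifinite traces. Neither issue affects the overall viability of the approach, which is the standard one.
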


By \eqref{plancherel_identity}, the mapping $f\mapsto \lambda(f)$ extends to an isometry
\[
    \lambda:L_2(G) \to L_2(\VN(G),\tau).
\]
This isometry is surjective \cite[Corollary 9.3]{Pedersen2018}. If $G$ is abelian, then $\VN(G)$ is isomorphic to the algebra of $L_{\infty}$-functions on the dual group $\widehat{G}$ and $\tau$ is integration with respect to the Haar measure on $\widehat{G}.$ In the abelian case, the isometric isomorphism $L_2(G)\cong L_2(\VN(G),\tau)$ is the Plancherel theorem. 

The simplest estimate for product-convolution operators in this setting is the following:
\begin{lemma}\label{L_2_cwikel}
For all $f \in L_2(G)$ and $T \in L_2(\VN(G),\tau),$ the composition
$
    M_f T
$
is meaningful in the sense that $T$ maps $L_2(G)$ into the domain of $M_f,$ and 
belongs to the Hilbert-Schmidt class $\mathcal{L}_2(L_2(G)).$ We have the norm identity
\[
    \|M_f T\|_{\mathcal{L}_2(L_2(G))} = \|f\|_{L_2(G)}\|T\|_{L_2(\VN(G),\tau)}.
\]
\end{lemma}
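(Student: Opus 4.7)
\medskip

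\noindent\textbf{Proof proposal.} The plan is to reduce to the case where $T = \lambda(h)$ for $h \in L_2(G)$, then compute the Hilbert-Schmidt norm of $M_f\lambda(h)$ from its integral kernel, with unimodularity doing the essential work. Since the mapping $\lambda\colon L_2(G)\to L_2(\VN(G),\tau)$ is a surjective isometric isomorphism, every $T\in L_2(\VN(G),\tau)$ has the form $T=\lambda(h)$ for a unique $h\in L_2(G)$ with $\|h\|_{L_2(G)}=\|T\|_{L_2(\VN(G),\tau)}$, so the statement becomes
\[
    \|M_f\lambda(h)\|_{\Lc_2(L_2(G))} \;=\; \|f\|_{L_2(G)}\|h\|_{L_2(G)}.
\]

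First I would treat the ``nice'' case $f,h\in C_c(G)$, where $M_f$ is a bounded multiplication operator and $\lambda(h)$ is a bounded convolution operator. For $u\in L_2(G)$ one has
\[
    (M_f\lambda(h)u)(\gamma) \;=\; f(\gamma)\int_G h(\gamma\eta^{-1})u(\eta)\,d\nu(\eta) \;=\; \int_G K(\gamma,\eta)u(\eta)\,d\nu(\eta),
\]
where $K(\gamma,\eta) := f(\gamma)h(\gamma\eta^{-1})$. The standard identification of Hilbert-Schmidt operators with $L_2$-kernels gives $\|M_f\lambda(h)\|_{\Lc_2}^2 = \|K\|_{L_2(G\times G)}^2$, and Tonelli yields
\[
    \|K\|_{L_2(G\times G)}^2 \;=\; \int_G |f(\gamma)|^2\left(\int_G |h(\gamma\eta^{-1})|^2\,d\nu(\eta)\right)d\nu(\gamma).
\]
The inner integral equals $\|h\|_{L_2(G)}^2$ by the substitution $\zeta = \gamma\eta^{-1}$, which preserves the measure precisely because $G$ is unimodular (so that $d\nu(\eta^{-1})=d\nu(\eta)$, combined with left-invariance). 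This yields the claimed norm identity on $C_c(G)\times C_c(G)$.

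Next I would extend by density. Given $f\in L_2(G)$ and $h\in L_2(G)$, choose sequences $f_n,h_n\in C_c(G)$ with $f_n\to f$ and $h_n\to h$ in $L_2(G)$. By the isometry identity already established, $\{M_{f_n}\lambda(h_n)\}_n$ is Cauchy in $\Lc_2(L_2(G))$; let $S$ denote its limit. One then verifies on a suitable dense set (e.g.\ $u\in C_c(G)$) that $Su = f\cdot(h\ast u)$, which simultaneously shows that $\lambda(h)u\in\dom(M_f)$ for such $u$ and identifies $S$ with the closure of $M_f\lambda(h)$; since $S$ is Hilbert-Schmidt and therefore closed, the full composition $M_f\lambda(h)$ agrees with $S$ on all of $L_2(G)$. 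Passing to the limit in the norm equality completes the proof.

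The main obstacle to watch is the \emph{a priori} meaning of $M_fT$, since $f\in L_2$ need not be bounded and $T\in L_2(\VN(G),\tau)$ need not be a bounded operator on $L_2(G)$ when $G$ is non-abelian. The integral-kernel viewpoint resolves this cleanly: the kernel $K$ lies in $L_2(G\times G)$ by the unimodularity calculation, so the associated Hilbert-Schmidt operator is automatically bounded and its range lies in $\dom(M_f)$ almost everywhere, which is exactly what the statement asserts.
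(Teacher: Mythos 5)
Your computation is the same one the paper uses: identify $T = \lambda(h)$ with $h \in L_2(G)$, recognise $M_f\lambda(h)$ as an integral operator with kernel $K(\gamma,\eta) = f(\gamma)h(\gamma\eta^{-1})$, and use unimodularity to factor $\|K\|_{L_2(G\times G)}^2 = \|f\|_{L_2}^2\|h\|_{L_2}^2$. Where you diverge from the paper is in how the a priori meaning of the composition is settled. The paper observes up front, via Young's convolution inequality, that $\lambda(h)\colon L_2(G)\to L_\infty(G)$ is bounded when $h \in L_2(G)$; combined with $f \in L_2(G)$ this gives $M_f\lambda(h)\colon L_2(G)\to L_2(G)$ bounded directly, after which the kernel computation is just a norm identity. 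Your density step is both unnecessary and, as written, has a gap: the assertion that ``since $S$ is Hilbert-Schmidt and therefore closed, the full composition $M_f\lambda(h)$ agrees with $S$ on all of $L_2(G)$'' does not follow from closedness alone --- to upgrade $Su = f\cdot(h\ast u)$ from $u\in C_c(G)$ to arbitrary $u\in L_2(G)$ you must already know that $h\ast u$ is a function (not just a distribution) and that $f\cdot(h\ast u)\in L_2$, which is exactly what Young's inequality supplies. Your closing paragraph does correctly note that the $L_2(G\times G)$ membership of the kernel resolves the domain issue for general $u$ (for a.e.\ $\gamma$, $K(\gamma,\cdot)\in L_2(G)$ by Fubini, so $\int K(\gamma,\eta)u(\eta)\,d\nu(\eta)$ converges absolutely and lands in $L_2$); had you led with that observation, or invoked Young as the paper does, the density detour and its unjustified closedness step could have been deleted entirely.
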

\begin{proof}
Note that if $T \in L_2(\VN(G),\tau),$ then $T = \lambda(g),$ where $g \in L_2(G)$.
By Young's convolution inequality \cite[Theorem 1.2.12]{Grafakos-1}, we have
\[
    \lambda(g):L_2(G)\to L_{\infty}(G)
\]
and therefore $M_f\lambda(g)$ makes sense as a bounded operator
\[
    M_f\lambda(g):L_2(G)\to L_2(G).
\]
Given that the Hilbert-Schmidt norm of $M_f\lambda(g)$ is the $L_2(G\times G)$ norm
of its kernel, we have
\[
    \|M_f \lambda(g)\|_{\mathcal{L}_2(L_2(G))}^2 = \int_{G\times G} |f(\gamma)g(\gamma \eta^{-1})|^2\,d\nu(\gamma)d\nu(\eta) = \int_G |f(\gamma)|^2\,d\nu(\gamma) \int_{G} |g(\gamma)|^2\,d\nu(\gamma).
\]
Thus, for all $f,g\in L_2(G).$
\[
    \|M_f \lambda(g)\|_{\mathcal{L}_2(L_2(G))} = \|f\|_{L_2(G)}\|g\|_{L_2(G)} = \|f\|_{L_2(G)}\|\lambda(g)\|_{L_2(\VN(G),\tau)}.
\]
\end{proof}

Lemma \ref{L_2_cwikel} can be combined with the obvious estimate
\[
    \|M_fT\|_{\infty} \leq \|f\|_{L_{\infty}(G)}\|T\|_{\VN(G)}
\]
to obtain Cwikel-type estimates for interpolation spaces between $\Lc_2(L_2(G))$ and $\Bc(L_2(G)).$

As already noted above in the proof of Lemma \ref{L_2_cwikel}, if $\xi\in L_2(G)$ we have
\[
    \|T\xi\|_{L_{\infty}(G)} \leq \|T\|_{L_2(\VN(G))}\|\xi\|_{L_2(G)}.
\]
and we have by definition,
\[
    \|T\xi\|_{L_2(G)} \leq \|T\|_{\infty}\|\xi\|_{L_2(G)}.
\]
By complex interpolation of noncommutative $L_p$ spaces, it follows that for all $p>2$ we have
\[
    \|T\xi\|_{L_q(G)} \leq \|T\|_{L_p(\VN(G))}\|\xi\|_{L_2(G)}
\]
where $q>2$ and
\[
    \frac12 = \frac1q+\frac1p.
\]
By the H\"older inequality, if $f \in L_p(G),$ we have
\[
    M_f\in \Bc(L_q(G), L_2(G))
\]
and therefore
\[
    M_fT\in \Bc(L_2(G),L_2(G)).
\]
Similarly, if $T \in L_{p,\infty}(\VN(G)),$ then
\[
    \|T\xi\|_{L_{q,\infty}(G)} \leq \|T\|_{L_{p,\infty}(\VN(G))}\|\xi\|_{L_2(G)}
\]
and therefore for $f \in L_p(G),$
\[
    M_fT \in \Bc(L_2(G),L_{2,\infty}(G)).
\]

The interpolation argument was carried out in an abstract setting in \cite{LeSZ-cwikel}.
The result is best stated in terms of the Calkin correspondence, which associates to a given rearrangement-invariant function space $E(0,\infty)$ on the semiaxis $(0,\infty)$
a unitarily invariant ideal $E(\Mv,\tau),$ where $(\Mv,\tau)$ is a semifinite von Neumann algebra. For further details on the Calkin correspondence in the semifinite setting, we direct the reader to \cite{LSZ}.

\begin{theorem}\label{general_cwikel_theorem}
Let $E(0,\infty)$ be a rearrangement invariant function space on $(0,\infty)$ which is an interpolation space for the couple $(L_2(0,\infty),L_{\infty}(0,\infty)).$ If $f\otimes T \in E(L_\infty(G)\bar{\otimes} \VN(G))$ then $M_f T \in E(\mathcal{B}(L_2(G)))$ and there exists a constant $C_E>0$ such that
\[
    \|M_f T\|_{E(\mathcal{B}(L_2(G)))} \leq C_E\|f\otimes T\|_{E(L_\infty(G)\bar{\otimes}\VN(G))}.
\]
In particular, let $2 < p < \infty,$ and let $f \in L_p(G).$ If $T\in L_p(\VN(G),\tau),$ then $M_f T\in \mathcal{L}_p(L_2(G))$ and there is a constant $C$ such that
\[
    \|M_f T\|_{\mathcal{L}_p(L_2(G))} \leq C\|f\|_{L_p(G)}\|T\|_{L_p(\VN(G),\tau)}.
\]
If instead $T\in L_{p,\infty}(\VN(G),\tau),$ then $M_fT \in \mathcal{L}_{p,\infty}(L_2(G))$ and there is a constant $C_p > 0$ potentially depending on $p$ such that
\[
    \|M_f T\|_{\mathcal{L}_{p,\infty}(L_2(G))} \leq C_p\|f\|_{L_p(G)}\|T\|_{L_{p,\infty}(\VN(G),\tau)}.
\]
%
\end{theorem}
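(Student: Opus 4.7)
The plan is to reduce the general assertion to the two endpoint cases $E=L_2$ and $E=L_\infty$ and then invoke the abstract interpolation machinery of \cite{LeSZ-cwikel}. Concretely, I would consider the bilinear map
\[
    B\colon L_\infty(G)\times \VN(G)\to \Bc(L_2(G)),\quad B(f,T) := M_fT,
\]
and show that it extends to a bounded linear map from the tensor product $L_\infty(G)\bar\otimes \VN(G)$ into $\Bc(L_2(G))$, with the extra quantitative information supplied by the two endpoints. The key observation is that the tensor product von Neumann algebra $\Mv := L_\infty(G)\bar\otimes \VN(G)$ carries the natural tensor trace $\nu\otimes\tau$, and for an elementary tensor one has
\[
    \|f\otimes T\|_{L_2(\Mv,\nu\otimes\tau)} = \|f\|_{L_2(G)}\|T\|_{L_2(\VN(G),\tau)},\qquad \|f\otimes T\|_{\Mv}=\|f\|_{L_\infty(G)}\|T\|_{\VN(G)}.
\]

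First I would handle the endpoints. The $L_2$ endpoint is Lemma \ref{L_2_cwikel}, which asserts that $B$ extends to an \emph{isometry}
\[
    B\colon L_2(\Mv,\nu\otimes\tau)\to \Lc_2(L_2(G)).
\]
(One needs only to observe that rank-one tensors $f\otimes T$ are dense in $L_2(\Mv,\nu\otimes\tau)$, and that on these the map is defined and isometric by the lemma.) The $L_\infty$ endpoint is the elementary estimate $\|M_fT\|_\infty\leq\|f\|_{L_\infty(G)}\|T\|_{\VN(G)}$, which means $B$ extends to a contraction from $\Mv$ to $\Bc(L_2(G))$.

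Next, invoking the Calkin correspondence, the assumption that $E(0,\infty)$ is an interpolation space between $L_2(0,\infty)$ and $L_\infty(0,\infty)$ lifts to the statement that the noncommutative symmetric space $E(\Mv,\nu\otimes\tau)$ is an interpolation space between $L_2(\Mv,\nu\otimes\tau)$ and $\Mv$, and similarly $E(\Bc(L_2(G)))$ is an interpolation space between $\Lc_2(L_2(G))$ and $\Bc(L_2(G))$ (this is precisely the abstract framework of \cite{LeSZ-cwikel}). Applying interpolation to the two endpoint bounds on $B$ yields a bounded linear map
\[
    B\colon E(\Mv,\nu\otimes\tau)\to E(\Bc(L_2(G))),
\]
which is the content of the first assertion, with constant $C_E$ controlled by the interpolation norm of $E$.

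For the two concrete statements, one specialises $E = L_p(0,\infty)$ and $E=L_{p,\infty}(0,\infty)$ with $p>2$; both are interpolation spaces for $(L_2,L_\infty)$. On an elementary tensor one computes
\[
    \|f\otimes T\|_{L_p(\Mv)} = \|f\|_{L_p(G)}\|T\|_{L_p(\VN(G),\tau)},
\]
and an analogous (quasi-norm) estimate holds in weak $L_p$ up to a constant $c_p$; substituting into the general bound produces the claimed Schatten and weak-Schatten inequalities. The main technical obstacle is not any single calculation but rather the bookkeeping needed to justify that the Calkin/interpolation formalism of \cite{LeSZ-cwikel} genuinely applies at the two required endpoints; once Lemma \ref{L_2_cwikel} has been used to pin down the isometric $L_2$ endpoint, the rest of the argument is a direct citation of the abstract theorem.
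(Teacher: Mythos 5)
The paper itself does not prove this theorem --- the statement is imported wholesale from \cite{LeSZ-cwikel}, and the surrounding discussion in Section~\ref{elementary_cwikel_section} only sketches an interpolation argument for the \emph{operator-norm} boundedness of $M_fT$ (by interpolating the mapping $\xi\mapsto T\xi$ between $L_2\to L_\infty$ and $L_2\to L_2$), not for the Schatten estimate. Your proposal therefore goes beyond the paper in trying to reconstruct the interpolation argument, and in doing so it runs into a genuine and well-known obstacle.

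The gap is in your $L_\infty$ endpoint. You assert that the estimate $\|M_fT\|_\infty\leq\|f\|_{L_\infty}\|T\|_{\VN(G)}$ on elementary tensors implies that $B$ extends to a contraction $L_\infty(G)\bar\otimes\VN(G)\to\Bc(L_2(G))$. This fails for two separate reasons. First, finite sums of elementary tensors are not norm-dense in the von Neumann tensor product $\Mv=L_\infty(G)\bar\otimes\VN(G)$ (they are only dense in the weak/strong operator topology), so there is no ``extension by density'' available. Second, and more fundamentally, the map $B$ is simply \emph{not} bounded from $\Mv$ to $\Bc(L_2(G))$. Specialising to $G=\Rl^d$, one has $\Mv\cong L_\infty(\Rl^d\times\widehat{\Rl^d})$ and $B$ becomes the Kohn--Nirenberg quantisation $a\mapsto a(x,-i\nabla)$; it is a classical fact that an $L_\infty$ symbol need not give a bounded operator on $L_2(\Rl^d)$ (Calder\'on--Vaillancourt-type theorems require control of derivatives). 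Concretely, for $a(x,\xi)=e^{ix\xi}$ the quantisation is the unbounded dilation generator $e^{x\partial_x}$, and it can be approximated in the $\Mv$-norm, without control of the operator norm, by finite sums of elementary tensors. So the $L_\infty$ endpoint bound does not hold for $B$, and naive linear interpolation between $(L_2,L_\infty)$ cannot be applied.

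This is precisely the subtlety that makes Cwikel's theorem nontrivial: the $L_2$ endpoint is an isometry (your reading of Lemma~\ref{L_2_cwikel} is correct), but the ``obvious'' $L_\infty$ endpoint fails, and Simon's earlier attempts produced weaker results for exactly this reason. What \cite{LeSZ-cwikel} actually proves is a $K$-functional estimate for the map $\Psi\colon L_2(\Mv)\to\Lc_2$ relative to the couples $(L_2(\Mv),\Mv)$ and $(\Lc_2,\Bc)$ that is \emph{not} a consequence of boundedness at the two endpoints; it exploits the special (essentially ``rank-one-on-the-diagonal'') structure of the operator $\Psi$. Your proposal identifies the right endpoint ingredients and the right reference, but the step ``the two endpoint bounds plus interpolation give the theorem'' has a hole: one of the two endpoint bounds is false, and the substitute argument in \cite{LeSZ-cwikel} is a substantively different piece of analysis.
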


\section{Functional calculus and specific Cwikel estimates}
In this section, we return to the setting of stratified Lie group with a fixed generating set $\{X_1,\ldots,X_m\}$ and sub-Laplacian $\Delta = \sum_{j=1}^m X_j^2.$

The operator $T$ in Theorem \ref{general_cwikel_theorem} is very general, and our primary interest is in operators of the form
\[
    M_fg(-\Delta)
\]
where $g$ is some bounded function on the positive semiaxis. 

\begin{lemma}
    If $g \in L_{\infty}(\Rl),$ then $g(-\Delta) \in \VN(G).$
\end{lemma}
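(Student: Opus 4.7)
The plan is to exploit the fact that $\VN(G)$, as discussed in the preceding section, is characterised as the commutant of the right regular representation of $G$ on $L_2(G)$. So it suffices to show that $g(-\Delta)$ commutes with right translations $\rho(\gamma)$, $\gamma\in G$, where $\rho(\gamma)u(\eta) := u(\eta\gamma)$.

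First I would recall that each basis vector field $X_k$ of $\gf_1$ has been realised as a left-invariant vector field on $G$, hence commutes with the action of $G$ on itself by right translation on a suitable dense invariant core, for instance $C_c^\infty(G)$ (or the Schwartz space $\Sc(G)$). Consequently the differential operator $-\Delta = -\sum_{k=1}^m X_k^2$ commutes with each $\rho(\gamma)$ on that core. Since the introduction already records that $-\Delta$ is essentially self-adjoint with core $C_c^\infty(G)$, its unique self-adjoint closure (which I denote again by $-\Delta$) still intertwines with the unitaries $\rho(\gamma)$: if $u$ lies in the domain of the closure and is approximated in graph norm by $u_n \in C_c^\infty(G)$, then $\rho(\gamma)u_n$ approximates $\rho(\gamma)u$ in graph norm and $-\Delta\rho(\gamma)u_n = \rho(\gamma)(-\Delta u_n)$, so $\rho(\gamma)u\in\dom(-\Delta)$ and $-\Delta\rho(\gamma)u=\rho(\gamma)(-\Delta)u$.

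Next I would invoke the standard consequence of the spectral theorem for self-adjoint operators: a unitary $U$ commutes with a self-adjoint operator $A$ in the sense $UA\subset AU$ if and only if $U$ commutes with every spectral projection $E_A(B)$, $B\subset\Rl$ Borel, and therefore with every bounded Borel function $g(A)$ defined through the Borel functional calculus. Applying this with $U=\rho(\gamma)$ and $A=-\Delta$ yields $\rho(\gamma)g(-\Delta)=g(-\Delta)\rho(\gamma)$ for all $\gamma\in G$ and all bounded Borel $g$.

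Finally, $g(-\Delta)$ is bounded on $L_2(G)$ (by $\|g\|_{L_\infty(\Rl)}$), and by the characterisation recalled from \cite[Section 7.2.1]{Pedersen2018} the bounded operators commuting with the right regular representation are exactly the elements of $\VN(G)$, giving $g(-\Delta)\in\VN(G)$ as required. The only mildly subtle point is the passage from commutation on the smooth core to commutation of spectral projections, but this is routine once essential self-adjointness on $C_c^\infty(G)$ is in hand, so no real obstacle arises.
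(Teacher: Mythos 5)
Your proof takes the same route as the paper's: the paper's argument is simply that since $-\Delta$ commutes with all right translations and $\VN(G)$ equals the commutant of the right regular representation, $g(-\Delta)\in\VN(G)$, and you spell out the standard intermediate steps (commutation on the core $C^\infty_c(G)$, passage to the closure via essential self-adjointness, then the spectral theorem to pass from $-\Delta$ to $g(-\Delta)$). One small terminological slip: left-invariant vector fields commute with the \emph{left} regular representation, not with right translations; the $X_k$ in this paper are realised as vector fields commuting with right translations (equivalently, right-invariant in the usual convention), which is what actually makes the argument go through, and which the paper states explicitly.
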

\begin{proof}
    The statement of the lemma is that $-\Delta$ is affiliated with the von Neumann algebra $\VN(G).$ Since the commutant of $\VN(G)$
    is generated by the right regular representation of $G$ on $L_2(G),$ it suffices that $g(-\Delta)$ commutes with all right translations. Since $-\Delta$
    commutes with all right translations, this completes the proof.
\end{proof}

The fundamental fact about $g(-\Delta)$ we apply is a theorem of M.~Christ \cite[Proposition 3]{Christ1991}, which asserts that if $\check{g}$ is the convolution kernel of $g(-\Delta),$
then $\check{g} \in L_2(G)$ if and only if $g \in L_2(\Rl_+,t^{\frac{d_{\hom}-2}{2}}),$ and there exists a constant $c_G>0$ such that
\[
    \|\check{g}\|_{L_2(G)} = c_G\|g\|_{L_2(\Rl_+,t^{\frac{d_{\hom}-2}{2})}}.
\]
We place this into the context of noncommutative integration theory in the next lemma.
\begin{lemma}\label{christ_restatement}
    If $g$ is a Borel function on the semiaxis $\Rl_+,$ the operator $g(-\Delta)$ belongs to the noncommutative $L_2$ space $L_2(\VN(G),\tau)$ if and only
    if $g \in L_2(\Rl_+,t^{\frac{d_{\hom}-2}{2}}\,dt).$ Moreover, there exists a constant $c_G>0$ such that
    \[
        \|g(-\Delta)\|_{L_2(\VN(G),\tau)} = c_G\|g\|_{L_2(\Rl_+,t^{\frac{d_{\hom}-2}{2}}\,dt)}.
    \]
\end{lemma}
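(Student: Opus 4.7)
The plan is to combine the isometric Plancherel isomorphism $\lambda:L_2(G) \to L_2(\VN(G),\tau)$ established earlier (via the identity \eqref{plancherel_identity} together with surjectivity from \cite[Corollary 9.3]{Pedersen2018}) with Christ's theorem on the convolution kernel of $g(-\Delta)$. The bridge between the two ingredients is the observation that $g(-\Delta) \in \VN(G)$ (by the preceding lemma), so that if $g(-\Delta) \in L_2(\VN(G),\tau)$ then the Plancherel isometry produces a unique $\check g \in L_2(G)$ with $g(-\Delta) = \lambda(\check g)$ and
\[
   \|g(-\Delta)\|_{L_2(\VN(G),\tau)} = \|\check g\|_{L_2(G)}.
\]
Conversely, if $\check g \in L_2(G)$ exists with $g(-\Delta) = \lambda(\check g)$, then $g(-\Delta)$ lies in $L_2(\VN(G),\tau)$ with the same norm identity.

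Next I would identify this $\check g$ with the convolution kernel of $g(-\Delta)$ appearing in Christ's result \cite[Proposition 3]{Christ1991}. To do this cleanly I would first restrict to a dense subclass of well-behaved $g$ (for instance, Schwartz functions supported away from $0$), so that on $C_c^\infty(G)$ the operator $g(-\Delta)$ acts by convolution against a Schwartz function on $G$, and the $\check g$ coming from the Plancherel isometry agrees with the convolution kernel by uniqueness. Christ's theorem then supplies the equality
\[
   \|\check g\|_{L_2(G)} = c_G\|g\|_{L_2(\Rl_+,\,t^{(d_{\hom}-2)/2}\,dt)}
\]
for such $g$. The general case follows by a density argument: the isometry $g\mapsto \check g$ extends uniquely from this dense class to the full weighted $L_2$-space, and the corresponding $\lambda(\check g)$ coincides with $g(-\Delta)$ defined via functional calculus by continuity of both sides in $g$.

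The two directions then combine as follows: if $g \in L_2(\Rl_+, t^{(d_{\hom}-2)/2}\,dt)$, Christ's theorem supplies $\check g \in L_2(G)$ and the Plancherel isometry places $g(-\Delta)=\lambda(\check g)$ in $L_2(\VN(G),\tau)$ with the asserted norm; conversely, if $g(-\Delta) \in L_2(\VN(G),\tau)$, we obtain $\check g \in L_2(G)$ from the Plancherel isometry, and Christ's theorem (applied in the other direction) forces $g \in L_2(\Rl_+, t^{(d_{\hom}-2)/2}\,dt)$ with the matching norm.

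The main obstacle I anticipate is not the analytic content of either ingredient, both of which are quoted, but the bookkeeping needed to verify that the $\check g$ produced by the Plancherel surjection genuinely coincides with the convolution kernel in the sense of Christ. This amounts to checking that functional calculus for $-\Delta$ and the identification $\lambda:L_2(G)\to L_2(\VN(G),\tau)$ are compatible on a dense class of $g$, after which continuity of both maps closes the argument.
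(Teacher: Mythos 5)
Your proposal follows essentially the same route as the paper: identify $L_2(\VN(G),\tau)$ with $L_2(G)$ via the Plancherel isometry, invoke Christ's theorem for the norm identity on a dense class of bounded $g$, and extend by density. The "bookkeeping obstacle" you flag — matching the Plancherel preimage $\check g$ with Christ's convolution kernel — is actually immediate, since "$g(-\Delta)$ has square-integrable convolution kernel $\check g$" means by definition that $g(-\Delta)=\lambda(\check g)$ with $\check g\in L_2(G)$, which is exactly the Plancherel characterisation of $L_2(\VN(G),\tau)$; the paper resolves this simply by citing \cite[Theorem 7.2.7]{Pedersen2018}.
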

\begin{proof}
    This is simply a restatement of Christ's theorem recalled above. Indeed, according to \cite[Theorem 7.2.7]{Pedersen2018}, an element $x \in \VN(G)$ satisfies $\tau(|x|^2) < \infty$
    if and only if there exists $f\in L_2(G)$ such that $x = \lambda(f),$ and
    \[
        \tau(|x|^2) = \|f\|_{2}^2.
    \]  
    Taking $x = g(-\Delta),$ where $g \in L_{\infty}(\Rl_+),$ this result shows that $g(-\Delta) \in L_2(\VN(G),\tau)$ if and only if there exists $\check{g} \in L_2(G)$ such that $g(-\Delta)=\lambda(\check{g}),$ and
    in that case we have
    \[
        \tau(|g(-\Delta)|^2) = \|\check{g}\|_2^2.
    \]    
    Christ's result \cite[Proposition 3]{Christ1991} asserts
    that $g(-\Delta)$ has square integrable convolution kernel (i.e., $g(-\Delta) = \lambda(\check{g})$ for some $\check{g} \in L_2(G)$) if and only if
    \[
        \|\check{g}\|_{L_2(G)} = c_G\|g\|_{L_2(\Rl_+,t^{\frac{d_{\hom}-2}{2}}\,dt)} < \infty.
    \]
    Combining these results, it follows that for all $g \in L_{\infty}(\Rl_+),$ we have $g(-\Delta) \in L_2(\VN(G),\tau)$ if and only if $g \in L_2(\Rl_+,t^{\frac{d_{\hom}-2}{2}}\,dt)$
    and
    \[
        \|g(-\Delta)\|_{L_2(\VN(G),\tau)} = c_G\|g\|_{L_2(\Rl_+,t^{\frac{d_{\hom}-2}{2}}\,dt)}.
    \]
    Hence, the linear map $g\mapsto g(-\Delta)$ from $(L_2\cap L_{\infty})(\Rl_+,t^{\frac{d_{\hom}-2}{2}}\,dt)$ into $L_2(\VN(G),\tau)$ uniquely extends to an isometry from $L_2(\Rl_+,t^{\frac{d_{\hom}-2}{2}}\,dt),$
    and this completes the proof.
\end{proof}

\begin{corollary}\label{christ_trace_formula}
    For all $g \in L_1(\Rl_+,t^{\frac{d_{\hom}}{2}-1}\,dt),$ we have $g(-\Delta) \in L_1(\VN(G),\tau)$ and there exists a constant $c_G>0$ such that
    \[
        \tau(g(-\Delta)) = c_G\int_0^\infty g(t)t^{\frac{d_{\hom}}{2}-1}dt.
    \]
\end{corollary}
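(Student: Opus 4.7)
The plan is to reduce the statement to Lemma \ref{christ_restatement} via a polarisation argument. Given $g \in L_1(\Rl_+, t^{d_{\hom}/2 - 1}\,dt)$, I would factorise $g = \overline{g_1} \cdot g_2$ pointwise by setting $g_1(t) := \overline{\sgn(g(t))}|g(t)|^{1/2}$ and $g_2(t) := |g(t)|^{1/2}$. Both factors lie in $L_2(\Rl_+, t^{d_{\hom}/2-1}\,dt)$ with squared norm equal to $\|g\|_{L_1(\Rl_+, t^{d_{\hom}/2-1}\,dt)}$, so by Lemma \ref{christ_restatement} each $g_i(-\Delta)$ belongs to $L_2(\VN(G), \tau)$.

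Next, I invoke the Borel functional calculus for the self-adjoint operator $-\Delta$: one has $g_1(-\Delta)^* = \overline{g_1}(-\Delta)$, and the multiplicativity of functional calculus for $\tau$-measurable operators gives $g(-\Delta) = \overline{g_1}(-\Delta) \cdot g_2(-\Delta) = g_1(-\Delta)^* g_2(-\Delta)$ as affiliated operators. The noncommutative Hölder inequality then produces $g(-\Delta) \in L_1(\VN(G), \tau)$ together with the bound $\|g(-\Delta)\|_{L_1(\VN)} \leq \|g_1(-\Delta)\|_{L_2(\VN)}\|g_2(-\Delta)\|_{L_2(\VN)}$.

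To evaluate the trace, I would use the noncommutative $L_2$ inner product:
\[
\tau(g(-\Delta)) = \tau\bigl(g_1(-\Delta)^* g_2(-\Delta)\bigr) = \bigl\langle g_1(-\Delta), g_2(-\Delta)\bigr\rangle_{L_2(\VN(G), \tau)}.
\]
Lemma \ref{christ_restatement} states that $h \mapsto h(-\Delta)$ is $c_G$ times an isometry from $L_2(\Rl_+, t^{d_{\hom}/2-1}\,dt)$ into $L_2(\VN(G), \tau)$; polarising the norm identity gives
\[
\bigl\langle g_1(-\Delta), g_2(-\Delta)\bigr\rangle_{L_2(\VN(G), \tau)} = c_G^2 \int_0^\infty \overline{g_1(t)} g_2(t)\, t^{d_{\hom}/2-1}\,dt = c_G^2 \int_0^\infty g(t)\, t^{d_{\hom}/2-1}\,dt,
\]
from which the corollary follows once $c_G^2$ is relabelled as $c_G$.

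The only technical subtlety I foresee is the multiplicative identity $\overline{g_1}(-\Delta)\, g_2(-\Delta) = g(-\Delta)$ when $g_1, g_2$ are merely square integrable with respect to the weighted measure rather than bounded. This is handled by the standard Borel functional calculus for positive self-adjoint operators affiliated with a semifinite von Neumann algebra: approximate $g_1, g_2$ by truncations $g_i \chi_{\{|g_i|\leq N\}}$ of bounded Borel functions, for which the identity is trivial, and pass to the limit using the $L_2(\VN(G), \tau)$-convergence guaranteed by Lemma \ref{christ_restatement} together with the fact that all quantities concerned are $\tau$-measurable.
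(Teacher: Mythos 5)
Your proof is correct and uses essentially the same core idea as the paper: reducing the $L_1$ statement to the $L_2$ isometry of Lemma \ref{christ_restatement} by factoring $g$ as a product of two functions square-integrable against $t^{d_{\hom}/2-1}\,dt$. The route differs in packaging. The paper first proves the formula for $0 \leq g$ by writing $g = |h|^2$ with $h = g^{1/2}$, so that $g(-\Delta) = |h(-\Delta)|^2$ and the trace follows directly from the norm identity in Lemma \ref{christ_restatement}; it then extends to general complex $g$ by decomposing $g$ into (at most) four positive pieces and invoking linearity. You instead handle general $g$ in one step, factoring $g = \overline{g_1}\,g_2$ with $g_1 = \overline{\sgn(g)}\,|g|^{1/2}$ and $g_2 = |g|^{1/2}$, and polarising the isometry to compute $\tau(g(-\Delta)) = \langle g_1(-\Delta), g_2(-\Delta)\rangle_{L_2(\VN(G),\tau)}$. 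The two are mathematically equivalent; your polarisation variant avoids the final linearity step, while the paper's version is more elementary in that it needs only the positive-case calculation plus obvious linearity. You are also more explicit than the paper about the technical point that multiplicativity of the Borel functional calculus for $L_2$ (rather than bounded) symbols requires a truncation/limiting argument; the paper leaves this implicit. One small bookkeeping remark: the paper's intermediate display drops the constant $c_G^2$ that should appear after applying Lemma \ref{christ_restatement}, whereas you carry it correctly; this does not affect the final statement since the constant is relabelled at the end.
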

\begin{proof}
    Note that we have already proved in Lemma \ref{christ_restatement} that 
    \[
        \tau(|h(-\Delta)|^2) = \int_0^\infty |h(t)|^2 t^{\frac{d_{\hom}}{2}-1}dt,\quad h \in L_2(\Rl_+,t^{\frac{d_{\hom}}{2}-1}\,dt).
    \]
    Replacing the function $|h|^2$ with $g,$ we conclude that for all $0\leq g \in L_1(\Rl_+,t^{\frac{d_{\hom}}{2}-1}\,dt)$ we have $g(-\Delta)\in L_1(\VN(G),\tau)$ and
    \[
        \tau(g(-\Delta)) = \int_0^\infty g(t)t^{\frac{d_{\hom}}{2}-1}dt.
    \]
    Both sides of the above identity are linear functionals in $g.$ Given that every $g \in L_1(\Rl_+,t^{\frac{d_{\hom}}{2}-1}dt)$ is a linear combination
    of at most four positive functions, the result follows by linearity.    
\end{proof}

The next lemma shows that when $g$ has suitable integrability properties, $g(-\Delta)$ belongs to the noncommutative $L_p$-space $L_p(\VN(G),\tau).$
\begin{lemma}\label{g_Delta_L_p_bounds}
Let $0 < p \leq \infty.$ If $g \in L_p(\mathbb{R}_+,t^{\frac{d_{\hom}-2}{2}}\,dt)$ then $g(-\Delta) \in L_p(\VN(G),\tau)$ and there is a constant $c$ (implicitly depending on $\Delta$) such that
$$\|g(-\Delta)\|_{L_p(\VN(G),\tau)}=c_G^{\frac{2}{p}}\|g\|_{L_p(\mathbb{R}_+,t^{\frac{d_{\hom}-2}{2}}\,dt)}.$$
Similarly, if $g \in L_{p,\infty}(\mathbb{R}_+,t^{\frac{d_{\hom}-2}{2}}\,dt)),$ then $g(-\Delta) \in L_{p,\infty}(\VN(G),\tau)$ and
$$\|g(-\Delta)\|_{L_{p,\infty}(\VN(G),\tau)}=c_G^{\frac2p} \|g\|_{L_{p,\infty}(\mathbb{R}_+,t^{\frac{d_{\hom}-2}{2}}\,dt)}.$$
\end{lemma}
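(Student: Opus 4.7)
The plan is to push the trace identity of Corollary \ref{christ_trace_formula} through Borel functional calculus, treating $g\mapsto g(-\Delta)$ as a trace-preserving $\ast$-homomorphism from functions on $(\mathbb{R}_+, t^{(d_{\hom}-2)/2}\,dt)$ into $(\VN(G),\tau)$, after which every claimed identity reduces to a distribution-function computation on each side. To pin down constants, squaring the $p=2$ identity in Lemma \ref{christ_restatement} and comparing with Corollary \ref{christ_trace_formula} forces the precise normalisation
\[
\tau(h(-\Delta)) = c_G^2 \int_0^\infty h(t)\,t^{\frac{d_{\hom}-2}{2}}\,dt,\qquad 0\leq h\in L_1(\mathbb{R}_+, t^{\frac{d_{\hom}-2}{2}}\,dt),
\]
where $c_G$ is exactly the constant of Lemma \ref{christ_restatement}. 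Equivalently, $h\mapsto h(-\Delta)$ preserves the trace between $(L_\infty\cap L_1)(\mathbb{R}_+, c_G^2\, t^{(d_{\hom}-2)/2}\,dt)$ and $(\VN(G),\tau)$.

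For the $L_p$ identity with $0<p<\infty$ and $g\in L_p(\mathbb{R}_+, t^{(d_{\hom}-2)/2}\,dt)$, Borel functional calculus gives $|g(-\Delta)|^p = (|g|^p)(-\Delta)$, and $|g|^p\in L_1(\mathbb{R}_+, t^{(d_{\hom}-2)/2}\,dt)$. Applying the trace formula to $h=|g|^p$ yields
\[
\|g(-\Delta)\|_{L_p(\VN(G),\tau)}^p = \tau(|g(-\Delta)|^p) = c_G^2\,\|g\|_{L_p(\mathbb{R}_+, t^{(d_{\hom}-2)/2}\,dt)}^p,
\]
whence the constant $c_G^{2/p}$ after taking $p$-th roots.

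For the weak-type identity, fix $s>0$ and apply the trace formula to $h=\chi_{(s,\infty)}\circ |g|$; Chebyshev's inequality ensures $h\in L_1(\mathbb{R}_+, t^{(d_{\hom}-2)/2}\,dt)$ whenever $g\in L_{p,\infty}$ with $p<\infty$, giving
\[
\tau(\chi_{(s,\infty)}(|g(-\Delta)|)) = c_G^2\,\bigl|\{t>0:|g(t)|>s\}\bigr|_{t^{(d_{\hom}-2)/2}\,dt}.
\]
Both weak-$L_p$ quasi-norms are expressed as $\sup_{s>0}\bigl(s\cdot(\text{distribution function})^{1/p}\bigr)$, so the identity follows after factoring out $c_G^{2/p}$. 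For $p=\infty$, faithfulness of $\tau$ together with the trace formula forces the spectral measure of $-\Delta$ to have the same null sets as $t^{(d_{\hom}-2)/2}\,dt$, and Borel functional calculus then gives $\|g(-\Delta)\|_{\VN(G)}=\|g\|_{L_\infty}$. There is essentially no obstacle beyond bookkeeping; the only point requiring explicit verification is the integrability of $\chi_{(s,\infty)}\circ|g|$ in the weak-$L_p$ case, which is immediate from Chebyshev.
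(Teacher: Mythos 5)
Your proposal is correct and follows the paper's approach: both identify $g\mapsto g(-\Delta)$ as a trace-preserving $\ast$-homomorphism from $L_\infty\bigl(\mathbb{R}_+,\,c_G^2\,t^{(d_{\hom}-2)/2}\,dt\bigr)$ into $(\VN(G),\tau)$ and deduce the $L_p$ and weak-$L_p$ isometries from that. The paper simply invokes the general principle that such a map induces isometries on all (weak) $L_p$-spaces, whereas you re-derive that principle in this instance by computing $\tau\bigl(|g|^p(-\Delta)\bigr)$ and $\tau\bigl(\chi_{(s,\infty)}(|g(-\Delta)|)\bigr)$ via the trace formula; you are also more careful than the printed text in tracking the normalisation $c_G^2$ (the paper's convention lets $c_G$ change from line to line), which correctly recovers the stated constant $c_G^{2/p}$.
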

\begin{proof} Consider the linear mapping $g\mapsto g(-\Delta)$ from $(L_{\infty}(0,\infty),c_G^2t^{\frac{d_{\hom}}{2}-1}dt)$ to $({\rm VN}(G),\tau).$ Clearly, this mapping is a $\ast$-homomorphism. By Corollary \ref{christ_trace_formula}, this $\ast$-isomorphism preserves the trace, in particular it is injective. Since the mapping preserves the trace, it induces an isometry on all $L_p$ spaces and weak $L_p$ spaces.
\end{proof}

The preceding lemma combines with Lemma \ref{L_2_cwikel} and Theorem \ref{general_cwikel_theorem} (with $T = g(-\Delta)$) to give the following special case.
\begin{corollary}\label{L_p_specific_cwikel}
    Let $p \geq 2$ and $f \in L_p(G)$. If $g\in L_p(\mathbb{R}_+,t^{\frac{d_{\hom}-2}{2}}\,dt)$ then we have
    \[
        \|M_fg(-\Delta)\|_{\mathcal{L}_p(L_2(G))} \leq c_{p,G}\|f\|_{L_p(G)}\|g\|_{L_p(\mathbb{R}_+,t^{\frac{d_{\hom}-2}{2}}\,dt)}.
    \]
    For the $p=2$ case, we have
    \[
        \|M_fg(-\Delta)\|_{\mathcal{L}_2(L_2(G))} = c_G\|f\|_{L_2(G)}\|g\|_{L_2(\Rl_+,t^{\frac{d_{\hom}-2}{2}}\,dt)}.
    \]   
    If $p>2$ and $g \in L_{p,\infty}(\mathbb{R}_+,t^{\frac{d_{\hom}-2}{2}}\,dt),$ then
    \[
        \|M_fg(-\Delta)\|_{\mathcal{L}_{p,\infty}(L_2(G))} \leq c_{p,G}\|f\|_{L_p({G})}\|g\|_{L_{p,\infty}(\mathbb{R}_+,t^{\frac{d_{\hom}-2}{2}}\,dt)}.
    \]
\end{corollary}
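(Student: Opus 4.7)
The plan is to simply chain together the three results that immediately precede the statement: Lemma \ref{L_2_cwikel} (or its strengthening in Theorem \ref{general_cwikel_theorem}) reduces the bound to a tensor-product estimate involving the operator $T = g(-\Delta)$ viewed as an element of a noncommutative $L_p$-space, and Lemma \ref{g_Delta_L_p_bounds} computes that noncommutative norm in terms of the classical weighted $L_p$-norm of $g$ on the semiaxis.

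First, I would handle the $p=2$ case, which is the core of the argument. Set $T = g(-\Delta)$. By Lemma \ref{christ_restatement} (equivalently, the $p=2$ case of Lemma \ref{g_Delta_L_p_bounds}), we have $T \in L_2(\VN(G),\tau)$ with
\[
    \|T\|_{L_2(\VN(G),\tau)} = c_G\|g\|_{L_2(\mathbb{R}_+,t^{\frac{d_{\hom}-2}{2}}\,dt)}.
\]
Lemma \ref{L_2_cwikel} then gives the exact identity
\[
    \|M_f T\|_{\mathcal{L}_2(L_2(G))} = \|f\|_{L_2(G)}\|T\|_{L_2(\VN(G),\tau)},
\]
and combining these two yields the claimed equality for $p=2$.

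Next, for $p>2$ I would apply Theorem \ref{general_cwikel_theorem} with the same choice $T = g(-\Delta)$. For the strong $\mathcal{L}_p$ version, Lemma \ref{g_Delta_L_p_bounds} gives $T \in L_p(\VN(G),\tau)$ with
\[
    \|T\|_{L_p(\VN(G),\tau)} = c_G^{2/p}\|g\|_{L_p(\mathbb{R}_+,t^{\frac{d_{\hom}-2}{2}}\,dt)},
\]
and Theorem \ref{general_cwikel_theorem} produces the Schatten bound. For the weak version, Lemma \ref{g_Delta_L_p_bounds} likewise gives the analogous $L_{p,\infty}(\VN(G),\tau)$ bound on $T$, and the weak case of Theorem \ref{general_cwikel_theorem} delivers the $\mathcal{L}_{p,\infty}$ estimate. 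All constants absorb into $c_{p,G}$.

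There is essentially no obstacle: this corollary is a packaging step, explicitly specialising the general $T \in \VN(G)$ appearing in Theorem \ref{general_cwikel_theorem} to operators of the functional calculus form $g(-\Delta)$ via the measure-preserving $\ast$-isomorphism established in Lemma \ref{g_Delta_L_p_bounds}. The only point to be slightly careful about is that the $p=2$ case asserts an equality rather than an inequality, so one must invoke Lemma \ref{L_2_cwikel} directly (which gives the sharp norm identity) rather than the general interpolation statement in Theorem \ref{general_cwikel_theorem} (which would only yield an inequality with a constant).
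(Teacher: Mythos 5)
Your proposal is correct and follows the same route as the paper, which in fact declares this corollary to be the immediate combination of Lemma \ref{L_2_cwikel}, Theorem \ref{general_cwikel_theorem} (specialised to $T = g(-\Delta)$), and Lemma \ref{g_Delta_L_p_bounds}. Your observation that the $p=2$ equality must come from Lemma \ref{L_2_cwikel} directly rather than the interpolation statement is exactly the right point of care.
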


Of particular interest will be the cases where $g(-\Delta)$ is a ``Riesz potential" $(-\Delta)^{-\frac{\beta}{2}}$ or a ``Bessel potential" $(1-\Delta)^{-\frac{\beta}{2}}.$ 

\begin{proof}[Proof of Theorem \ref{main_nontrivial_cwikel_theorem}.\eqref{mncta}] Let $p>2$ and set $g(t)=t^{-\frac{d_{\hom}}{2p}}.$ We claim that the function $g$ belongs to the space $ L_{p,\infty}(\mathbb{R}_+,t^{\frac{d_{\hom}}{2}-1}dt).$  Indeed, we have
$$\{s>0:\ g(s)>u\}=\{s>0:\ s^{-\frac{d_{\hom}}{2p}}>u\}=(0,u^{-\frac{2p}{d_{\hom}}}).$$
Therefore, for all $u>0$ we have
$$\int_{0}^\infty \chi_{\{s>0\;:\; g(s)>u\}}s^{\frac{d_{\hom}}{2}-1}\,ds=\int_0^{u^{-\frac{2p}{d_{\hom}}}}s^{\frac{d_{\hom}}{2}-1}ds=\frac{2}{d_{\hom}}u^{-p}.$$
Therefore
\[
    \|g\|_{L_{p,\infty}(\Rl_+,t^{\frac{d_{\hom}}{2}-1}\,dt)}^p = \sup_{u>0} u^p\int_{0}^\infty \chi_{\{s>0\;:\; g(s)>u\}}s^{\frac{d_{\hom}}{2}-1}\,ds = \frac{2}{d_{\hom}}.
\]
That is, $g \in L_{p,\infty}(\Rl_+,t^{\frac{d_{\hom}}{2}-1}\,dt)$ and thus Corollary \ref{L_p_specific_cwikel} implies the result.

\end{proof}

\section{Specific Cwikel-type theorems for $f\in C^{\infty}_c(G)$}

Theorem \ref{main_nontrivial_cwikel_theorem}.\eqref{mncta} with $\beta := \frac{d_{\hom}}{p}$ yields
$$f \in L_{\frac{d_{\hom}}{\beta}}(G)\Longrightarrow M_f(1-\Delta)^{-\frac{\beta}{2}} \in \Lc_{\frac{d_{\hom}}{\beta},\infty}(L_2(G))$$
provided that $\beta < \frac{d_{\hom}}{2}$ (i.e., $p > 2$).
In general this implication is false when $\beta\geq \frac{d_{\hom}}{2}.$ Even in the case where $G = \Rl,$ there are known counterexamples \cite[Theorem 5.1]{LeSZ-cwikel}.

Nevertheless it is possible to achieve results for $\beta \geq \frac{d_{\hom}}{2}$ by placing further restrictions on $f.$ We will begin with highly restrictive assumptions on $f,$ which will be weakened in the next section.

For brevity, denote by $J$ the operator
$$J = (1-\Delta)^{\frac12}.$$

In this section we prove the following assertion:
\begin{theorem}\label{MSX_commutator_estimate} Let $\alpha,\beta,\gamma\in \Rl$ and let $f \in C^\infty_c(G).$ 
\begin{enumerate}[{\rm (i)}]
\item\label{msxa} if $\alpha+\gamma=0,$ then
$$J^{-\alpha}M_fJ^{-\gamma}\in\Bc(L_2(G)).$$
\item\label{msxb} if $\alpha+\gamma+1=\beta,$ then
$$J^{-\alpha}[J^{\beta},M_f]J^{-\gamma} \in \Bc(L_2(G)).$$
\item\label{msxc} if $\alpha+\gamma>0,$ then
$$J^{-\alpha}M_fJ^{-\gamma}\in\Lc_{\frac{d_{\hom}}{\alpha+\gamma},\infty}.$$
\item\label{msxd} if $\alpha+\gamma+1>\beta,$ then
$$ J^{-\alpha}[J^{\beta},M_f]J^{-\gamma} \in\Lc_{\frac{d_{\hom}}{\alpha+\gamma+1-\beta},\infty}(L_2(G)).$$
\end{enumerate}	
\end{theorem}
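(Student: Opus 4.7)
The plan is to establish the boundedness statements (i) and (ii) first---which express that $M_f$ acts as an order-zero operator on the Bessel-potential scale and that $[J^\beta, M_f]$ has ``order $\beta - 1$''---and then combine them with Theorem \ref{main_nontrivial_cwikel_theorem} to obtain the weak-Schatten estimates (iii) and (iv). For (i), nonnegative integer $\alpha = n$ reduces via the equivalence $H^n = W^n_2$ recalled in the Notations section to showing $M_f : W^n_2 \to W^n_2$ is bounded; this follows by iterating the Leibniz rule $[X_k, M_f] = M_{X_k f}$ and using that $f$ and all its $X$-derivatives lie in $L_\infty(G)$. Negative integer $\alpha$ comes by duality, and real $\alpha$ by complex interpolation in the Bessel-potential scale.

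For (ii), the base case $\beta = 2$ follows from the direct computation
\[
    [J^2, M_f] = -[\Delta, M_f] = -\sum_{k=1}^m \bigl(X_k M_{X_k f} + M_{X_k f} X_k\bigr).
\]
Under the constraint $\alpha + \gamma = 1$, each summand factors as
\[
    J^{-\alpha} X_k M_{X_k f} J^{-\gamma} = (J^{-\alpha} X_k J^{-\gamma})(J^\gamma M_{X_k f} J^{-\gamma}),
\]
where the first factor is bounded by \eqref{sobolev_equivalence} (the condition $-\alpha + 1 \leq \gamma$ holds with equality) and the second by (i); a symmetric argument handles $M_{X_k f} X_k$. Higher even integer $\beta$ follows by induction using $[J^{\beta+2}, M_f] = J^2[J^\beta, M_f] + [J^2, M_f] J^\beta$, and odd or real $\beta$ by complex interpolation of the holomorphic family $z \mapsto J^{-\alpha(z)}[J^{2z}, M_f] J^{-\gamma(z)}$ between integer endpoints.

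For (iii) and (iv), the base ingredient is Theorem \ref{main_nontrivial_cwikel_theorem}. When $\alpha, \gamma \geq 0$ in (iii), factor $f = g_1 g_2$ with $g_j \in C^\infty_c(G)$ and write $J^{-\alpha} M_f J^{-\gamma} = (J^{-\alpha} M_{g_1})(M_{g_2} J^{-\gamma})$; each factor lies in the appropriate weak Schatten ideal by Theorem \ref{main_nontrivial_cwikel_theorem} (and its adjoint), and H\"older's inequality for weak Schatten quasi-norms concludes. For mixed-sign parameters, use
\[
    J^{-\alpha} M_f J^{-\gamma} = M_f J^{-(\alpha+\gamma)} + [J^{-\alpha}, M_f] J^{-\gamma};
\]
the first term lies in $\Lc_{d_{\hom}/(\alpha+\gamma),\infty}$ directly, while the second sits in the strictly smaller ideal $\Lc_{d_{\hom}/(\alpha+\gamma+1),\infty}$ by (iv). Part (iv) for integer $\beta$ is proved by the same induction as (ii): each term in the expansion of $[J^\beta, M_f]$ factors as a piece bounded by \eqref{sobolev_equivalence} times a piece of the shape $J^{-p'} M_g J^{-q'}$ with $g \in C^\infty_c(G)$ and $p' + q' > 0$, handled by the restricted case of (iii). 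The main obstacle is the apparent circularity between (iii) and (iv), which I resolve by the sequence: (i); (ii) for integer $\beta$; (iii) restricted to $\alpha, \gamma \geq 0$; (iv) for integer $\beta$; (iii) in full generality; and finally (ii), (iv) for real $\beta$ via analytic interpolation---delicate because $J^\beta$ is unbounded for $\beta > 0$, so the analytic family must be regularised (for instance by inserting factors $(1 + \epsilon J^2)^{-N}$ and letting $\epsilon \to 0$) before invoking Stein's interpolation theorem.
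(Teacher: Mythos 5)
Your overall architecture -- elementary Leibniz-rule commutator identities, Theorem \ref{main_nontrivial_cwikel_theorem} for the weak-Schatten base case, H\"older's inequality to combine factors, and complex interpolation between integer exponents -- is the right idea, and parts \eqref{msxa} and the $\beta=2$ case of \eqref{msxb} are essentially what the paper does. However, there are two genuine gaps.

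First, your restricted case of \eqref{msxc} does not cover the full claim. The two-factor decomposition $J^{-\alpha}M_{g_1}\cdot M_{g_2}J^{-\gamma}$ only produces an estimate when both $\alpha<\frac{d_{\hom}}{2}$ and $\gamma<\frac{d_{\hom}}{2}$ (otherwise Theorem \ref{main_nontrivial_cwikel_theorem}.\eqref{mncta} does not apply to the individual factors), and your reduction of the mixed-sign case needs $\alpha+\gamma<\frac{d_{\hom}}{2}$ so that the leading term $M_fJ^{-(\alpha+\gamma)}$ is handled ``directly.'' The paper resolves this by two steps: it first proves \eqref{msxc} for \emph{arbitrary} $\alpha,\gamma$ with $0<\alpha+\gamma<\frac{d_{\hom}}{2}$ by interpolating the analytic family $F(z)=J^zM_fJ^{-z-\theta}$ between even-integer endpoints (Lemma \ref{msx integer estimate}), and then handles general $\alpha+\gamma>0$ by factoring $f=\prod_{k=1}^r f_k$ and inserting intermediate powers $J^{\pm\theta_k}$ so that each factor has exponent-sum below $\frac{d_{\hom}}{2}$. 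Without this chaining your \eqref{msxc} stops short of the stated range, and since \eqref{msxd} leans on \eqref{msxc}, the gap propagates.

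Second, and more seriously, the extension of \eqref{msxb} (hence \eqref{msxd}) to non-integer $\beta$ by analytic interpolation is precisely the hard point, and you flag it but do not resolve it. There is no obvious way to realise $z\mapsto J^{-\alpha(z)}[J^{2z},M_f]J^{-\gamma(z)}$ as an analytic family of operators satisfying the admissibility hypotheses of Stein interpolation: $J^{2z}$ is unbounded with $z$-dependent domain, the commutator is a difference of unbounded compositions, and the regularisation $(1+\epsilon J^2)^{-N}$ destroys the commutator identity you want to exploit. The paper bypasses interpolation entirely and instead uses double operator integrals. The key observation (Lemma \ref{phibeta lemma}) is that the symbol
\[
\phi_\beta(\lambda,\mu)=\frac{\lambda^\beta-\mu^\beta}{\lambda^2-\mu^2}\,\lambda^{1-\frac{\beta}{2}}\mu^{1-\frac{\beta}{2}}
\]
is a function of $\lambda/\mu$ whose logarithmic profile $\sinh(\tfrac{\beta}{2}t)/\sinh(t)$ is Schwartz, hence has integrable Fourier transform, so $T^{J,J}_{\phi_\beta}$ is bounded on $\Bc(H)$. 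Lemma \ref{jsquare commutator lemma} then gives
\[
\bigl\|J^{-\alpha}[J^\beta,M_f]J^{-\gamma}\bigr\|_\infty \leq c_\beta\bigl\|J^{\frac{\beta}{2}-\alpha-1}[J^2,M_f]J^{\frac{\beta}{2}-\gamma-1}\bigr\|_\infty
\]
for $0<\beta<2$, which reduces everything to the $\beta=2$ case you already handled. General $\beta>0$ then follows from the telescoping Leibniz identity $[J^\beta,M_f]=\sum_{k=0}^{n-1}J^{\frac{\beta k}{n}}[J^{\frac{\beta}{n}},M_f]J^{\frac{\beta(n-1-k)}{n}}$ with $n$ large enough that $\frac{\beta}{n}<1$, and $\beta<0$ by the symmetry $J^{-\alpha}[J^\beta,M_f]J^{-\gamma}=-J^{\beta-\alpha}[J^{-\beta},M_f]J^{\beta-\gamma}$. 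You should either adopt this DOI mechanism or supply a concrete, verifiable interpolation lemma; as written, the final step of your plan is not a proof.
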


Theorem \ref{MSX_commutator_estimate} is proved in several steps. We begin with the following Lemma, which can be viewed as a quantitative form of a special case of part \eqref{msxc} of the Theorem. We will state the result in terms
of Sobolev spaces. Following the definition in \cite{Folland1975}, for $s \in \Rl$ and $1\leq p \leq \infty$ we define
\[
    \|f\|_{W^s_p(G)} := \|(1-\Delta)^{\frac{s}{2}}f\|_{L_p(G)}.
\]
It was proved by Folland in \cite[Corollary 4.13]{Folland1975} that for $s \in \mathbb{N}$ and $1 < p < \infty$ this norm is equivalent to
\[
    \|f\|_{L_p(G)}+\sum_{k=1}^s\sum_{1\leq j_1,\ldots,j_k\leq m} \|X_{j_1}X_{j_2}\cdots X_{j_k}f\|_{L_p(G)}.
\]

\begin{lemma}\label{msx integer estimate} Let $n\in\mathbb{Z}_+,$ $\alpha\in[0,\frac{d_{\hom}}{2})$ and let $f\in C^{\infty}_c(G).$ We have
$$\|J^{2n}M_fJ^{-2n-\alpha}\|_{\frac{d_{\hom}}{\alpha},\infty}\leq c_{n,d_{\hom},\alpha}\|f\|_{W^{2n}_{\frac{d_{\hom}}{\alpha}}(G)}.$$
\end{lemma}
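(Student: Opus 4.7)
The plan is to decompose $J^{2n}M_f J^{-2n-\alpha}$ into a sum of operator products, each of the form (weak-Schatten factor)$\cdot$(bounded Sobolev factor), where the weak-Schatten factor is controlled by Corollary \ref{L_p_specific_cwikel} applied with $p = d_{\hom}/\alpha > 2$ (this is where the hypothesis $\alpha < d_{\hom}/2$ is used) and the bounded Sobolev factor is controlled by iterating \eqref{sobolev_equivalence}.

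Concretely, $J^{2n} = (1-\Delta)^n$ expands as a constant-coefficient polynomial in $\Delta = \sum_{k=1}^m X_k^2,$ so that
$$J^{2n} = \sum_{|I|\leq 2n} c_I X_I,$$
where each $X_I = X_{i_1}\cdots X_{i_{|I|}}$ is a word in the generators. Iterating the first-order Leibniz identity $XM_f = M_{Xf}+M_f X$ yields
$$X_I M_f = \sum_{I = I_1\sqcup I_2} M_{X_{I_1}f}\,X_{I_2},$$
summed over ordered partitions of the word $I$ into two subwords preserving the original order, so that $|I_1|+|I_2|=|I|\leq 2n.$ Multiplying by $J^{-2n-\alpha}$ on the right and inserting $J^{\alpha}J^{-\alpha}=1$ produces the key factorisation
$$M_{X_{I_1}f}\,X_{I_2}\,J^{-2n-\alpha} = \bigl(M_{X_{I_1}f} J^{-\alpha}\bigr)\bigl(J^{\alpha} X_{I_2} J^{-2n-\alpha}\bigr).$$

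The right-hand factor is bounded on $L_2(G)$: iterating \eqref{sobolev_equivalence} shows that $J^a X_I J^{-b}$ extends to a bounded operator whenever $b-a\geq|I|,$ and here $b - a = 2n \geq |I_2|.$ For the left-hand factor I would invoke Corollary \ref{L_p_specific_cwikel} with $p = d_{\hom}/\alpha$ and the function $g(t) = (1+t)^{-\alpha/2};$ this $g$ lies in $L_{p,\infty}(\Rl_+,t^{(d_{\hom}-2)/2}\,dt)$ by essentially the same distribution-function computation as in the proof of Theorem \ref{main_nontrivial_cwikel_theorem}.\eqref{mncta} (the estimate $u^p\,\mathrm{meas}\{g>u\} \leq \tfrac{2}{d_{\hom}}(1-u^{2/\alpha})^{d_{\hom}/2}$ for $u\in(0,1]$ is uniformly bounded), giving
$$\|M_{X_{I_1}f}J^{-\alpha}\|_{d_{\hom}/\alpha,\infty}\leq c_{\alpha,G}\|X_{I_1}f\|_{L_{d_{\hom}/\alpha}(G)}.$$

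To conclude I would apply the H\"older-type inequality $\|AB\|_{p,\infty}\leq\|A\|_{p,\infty}\|B\|_{\infty}$ to each summand, sum over the finite collection of words $I,I_1,I_2,$ and then invoke Folland's characterisation \cite[Corollary 4.13]{Folland1975} of $W^{2n}_p(G)$ to bound $\sum_{|I_1|\leq 2n}\|X_{I_1}f\|_{L_p(G)}$ by a constant times $\|f\|_{W^{2n}_p(G)},$ completing the proof for $\alpha>0.$ The boundary case $\alpha=0$ (which places the conclusion in $\Lc_{\infty,\infty}=\Bc$) is handled by the same decomposition with the first factor now estimated by the trivial bound $\|M_g\|_{\infty}=\|g\|_{\infty}.$ The main obstacle is really just bookkeeping in the noncommutative Leibniz expansion: one must verify that every resulting term satisfies $|I_2|\leq 2n$ (so the Sobolev factor is bounded) and involves derivatives of $f$ of total order at most $2n$ (so the final sum is controlled by $\|f\|_{W^{2n}_p}$).
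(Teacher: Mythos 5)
Your argument is correct and arrives at the same conclusion as the paper's proof, but via a genuinely different organization. The paper argues by induction on $n$: the base case $n=0$ is Theorem~\ref{main_nontrivial_cwikel_theorem}.\eqref{mncta}, and the inductive step uses the identity $J^{2n+2}M_fJ^{-2n-2-\alpha}=J^{2n}M_fJ^{-2n-\alpha}+J^{2n}[J^2,M_f]J^{-2n-2-\alpha}$ together with the first-order commutator $[J^2,M_f]=-2\sum_l M_{X_lf}X_l-M_{\Delta f}$, feeding the derivatives $X_lf,\ \Delta f$ back into the inductive hypothesis via the quasi-triangle and H\"older inequalities. You instead unfold the induction: you expand $(1-\Delta)^n$ directly as a noncommutative polynomial $\sum_{|I|\leq 2n}c_IX_I$, carry out the fully iterated Leibniz expansion $X_IM_f=\sum_{I=I_1\sqcup I_2}M_{X_{I_1}f}X_{I_2}$, and then factor each term as $(M_{X_{I_1}f}J^{-\alpha})\cdot(J^{\alpha}X_{I_2}J^{-2n-\alpha})$, applying Corollary~\ref{L_p_specific_cwikel} to the first factor and iterating \eqref{sobolev_equivalence} on the second. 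The ingredients are identical; what differs is that the paper's induction trades explicit combinatorial bookkeeping (your ordered subword decomposition and the verification $|I_1|,|I_2|\le 2n$) for repeated applications of the first-order Leibniz rule, which makes the paper's presentation slightly shorter. A minor point in your favour: your choice $g(t)=(1+t)^{-\alpha/2}$ targets the Bessel potential $J^{-\alpha}=(1-\Delta)^{-\alpha/2}$ directly, whereas the paper's citation of Theorem~\ref{main_nontrivial_cwikel_theorem}.\eqref{mncta} at the base step is stated for $(-\Delta)^{-\alpha/2}$ and implicitly uses boundedness of $(1-\Delta)^{-\alpha/2}(-\Delta)^{\alpha/2}$. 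Your distribution-function computation and your handling of the $\alpha=0$ endpoint are both correct.
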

\begin{proof} We prove the assertion by induction on $n.$ The base of induction (i.e. $n=0$) follows directly from Theorem \ref{main_nontrivial_cwikel_theorem}.\eqref{mncta}. We concentrate on the step of induction. Denote for brevity
$$|||f|||_{(n,\alpha)}=\|J^{2n}M_fJ^{-2n-\alpha}\|_{\frac{d_{\hom}}{\alpha},\infty}.$$
	
We write
$$J^{2n+2}M_fJ^{-2n-2-\alpha}=J^{2n}M_fJ^{-2n-\alpha}+J^{2n}[J^2,M_f]J^{-2n-2-\alpha}.$$
Clearly,
$$[J^2,M_f]=-\sum_{l=1}^m[X_l^2,M_f]=-\sum_{l=1}^m(X_lM_{X_lf}+M_{X_lf}X_l)=-2\sum_{l=1}^mM_{X_lf}X_l-M_{\Delta f}.$$
Thus,
\begin{align*}
J^{2n+2}M_fJ^{-2n-2}&=J^{2n}M_fJ^{-2n-\alpha}-2\sum_{l=1}^mJ^{2n}M_{X_lf}J^{-2n-\alpha}\cdot J^{2n+\alpha}X_lJ^{-2n-2-\alpha}\\
                    &\quad -J^{2n}M_{\Delta f}J^{-2n-\alpha}\cdot J^{-2}.
\end{align*}
By quasi-triangle inequality and H\"older inequality, we have
\begin{align*}
\|J^{2n+2}M_fJ^{-2n-2}\|_{\frac{d_{\hom}}{\alpha},\infty}&\leq c'_{n,d_{\hom},\alpha}\cdot\Big(\|J^{2n}M_fJ^{-2n-\alpha}\|_{\frac{d_{\hom}}{\alpha},\infty}\\
&\quad +2\sum_{l=1}^m\|J^{2n}M_{X_lf}J^{-2n-\alpha}\|_{\frac{d_{\hom}}{\alpha},\infty}\cdot\|J^{2n+\alpha}X_lJ^{-2n-2-\alpha}\|_{\infty}\\
&\quad +\|J^{2n}M_{\Delta f}J^{-2n-\alpha}\|_{\frac{d_{\hom}}{\alpha},\infty}\Big).
\end{align*}
Equivalently,
$$\|f\|_{(n+1,\alpha)}\leq c''_{n,d_{\hom},\alpha}\cdot\Big(\|f\|_{(n,\alpha)}+\sum_{l=1}^m\|X_lf\|_{(n,\alpha)}+\|\Delta f\|_{(n,\alpha)}\Big).$$
By inductive assumption, we have
\begin{align*}
\|f\|_{(n,\alpha)}&\leq c_{n,d_{\hom},\alpha}\|f\|_{W^{2n}_{\frac{d_{\hom}}{\alpha}}},\\
\|X_lf\|_{(n,\alpha)}&\leq c_{n,d_{\hom},\alpha}\|X_lf\|_{W^{2n}_{\frac{d_{\hom}}{\alpha}}},\quad 1\leq l\leq m,\\
\|\Delta f\|_{(n,\alpha)}&\leq c_{n,d_{\hom},\alpha}\|f\|_{W^{2n}_{\frac{d_{\hom}}{\alpha}}}.
\end{align*}
Thus,
$$\|f\|_{(n+1,\alpha)}\leq c''_{n,d_{\hom},\alpha}c_{n,d_{\hom},\alpha}\cdot\Big(\|f\|_{W^{2n}_{\frac{d_{\hom}}{\alpha}}}+\sum_{l=1}^m\|X_lf\|_{W^{2n}_{\frac{d_{\hom}}{\alpha}}}+\|\Delta f\|_{W^{2n}_{\frac{d_{\hom}}{\alpha}}}\Big).$$
This establishes the step of induction and, hence, the assertion.
\end{proof}

\begin{proof}[Proof of Theorem \ref{MSX_commutator_estimate}.\eqref{msxa}] Without loss of generality, $\alpha>0.$ Fix $n\in\mathbb{Z}_+$ such that $\alpha\in[2n,2n+2].$ Define an analytic operator-valued function $F$ on the strip $\{2n\leq\Re(z)\leq 2n+2\}$ by setting
$$F(z)=J^zM_fJ^{-z},\quad \Re(z)\in [2n,2n+2].$$
By Lemma \ref{msx integer estimate}, we have
$$\sup_{\Re(z)=2n}\|F(z)\|_{\infty}=\|F(2n)\|_{\infty}\leq c_{n,d_{\hom}}\|f\|_{W^{2n}_{\infty}}\leq c_{n,d_{\hom}}\|f\|_{W^{2n+2}_{\infty}}$$
and
$$\sup_{\Re(z)=2n+2}\|F(z)\|_{\infty}=\|F(2n+2)\|_{\infty}\leq c_{n+1,d_{\hom}}\|f\|_{W^{2n+2}_{\infty}}.$$
Observe also that the function $\|F(z)\|_{\infty}$ is trivially uniformly bounded for $\Re(z)\in [2n,2n+2].$ This permits the application of the Phragm\`{e}n-Lindel\"{o}f maximum modulus principle, which gives the upper bound
$$\|F(\alpha)\|_{\infty}\leq \max\{c_{n,d_{\hom}},c_{n+1,d_{\hom}}\}\|f\|_{W^{2n+2}_{\infty}}.$$
\end{proof}

\begin{proof}[Proof of Theorem \ref{MSX_commutator_estimate}.\eqref{msxc}] Suppose first that $\alpha+\gamma<\frac{d_{\hom}}{2}.$ Without loss of generality, $\alpha>0.$ Fix $n\in\mathbb{Z}_+$ such that $\alpha\in[2n,2n+2].$ Set $\theta=\alpha+\gamma.$ Define analytic operator-valued function $F$ on the strip $\{2n\leq\Re(z)\leq 2n+2\}$ by setting
$$F(z)=J^zM_fJ^{-z-\theta},\quad z\in [2n,2n+2].$$
By Lemma \ref{msx integer estimate}, we have
$$\sup_{\Re(z)=2n}\|F(z)\|_{\frac{d_{\hom}}{\theta},\infty}=\|F(2n)\|_{\frac{d_{\hom}}{\theta},\infty}\leq c_{n,d_{\hom},\theta}\|f\|_{W^{2n}_{\frac{d_{\hom}}{\theta}}}\leq c_{n,d_{\hom},\theta}\|f\|_{W^{2n+2}_{\frac{d_{\hom}}{\theta}}}$$
and
$$\sup_{\Re(z)=2n+2}\|F(z)\|_{\frac{d_{\hom}}{\theta},\infty}=\|F(2n+2)\|_{\frac{d_{\hom}}{\theta},\infty}\leq c_{n+1,d_{\hom},\theta}\|f\|_{W^{2n+2}_{\frac{d_{\hom}}{\theta}}}.$$
As in the proof of Theorem \ref{MSX_commutator_estimate}.\eqref{msxa}, is is trivial that $\|F(z)\|_{\frac{d_{\hom}}{\theta},\infty}$ is uniformly bounded for $\Re(z) \in [2n,2n+2].$
This again pemits the application of the Phragm\`{e}n-Lindel\"{o}f maximum modulus principle, which yields
$$\|F(\alpha)\|_{\infty}\leq \max\{c_{n,d_{\hom},\theta},c_{n+1,d_{\hom},\theta}\}\|f\|_{W^{2n+2}_{\frac{d_{\hom}}{\theta}}}.$$
This proves the assertion when $0 < \alpha+\gamma<\frac{d_{\hom}}{2}.$ We now remove the upper bound on $\alpha+\gamma.$
%

Now define a sequence $\{\theta_k\}_{k=1}^r$ as follows. Take
\[
    \theta_k = \frac{k d_{\hom}}{4}-\alpha,\quad 1\leq k\leq r
\]
where $r$ is chosen as the least integer such that that $0 < \gamma +\alpha-\frac{rd_{\hom}}{4} < \frac{d_{\hom}}{2}.$
By construction,
\[
    0 < \alpha+\theta_1 < \frac{d_{\hom}}{2},\quad 0<\gamma-\theta_r<\frac{d_{\hom}}{2}
\]
and
\[
    0 < -\theta_k+\theta_{k+1} < \frac{d_{\hom}}{2},\quad 1\leq k<r.
\]
Write $f=\prod_{k=1}^rf_k,$ where $f_k\in C^{\infty}_c(G).$ We factorise $J^{-\alpha}M_fJ^{-\gamma}$ as follows:
\[
    J^{-\alpha}M_fJ^{-\gamma}= J^{-\alpha} M_{f_1} J^{-\theta_1}\cdot J^{\theta_1}M_{f_2}J^{-\theta_2} \cdot J^{\theta_2}M_{f_2}J^{-\theta_3} \cdots J^{\theta_r}M_{f_r}J^{-\gamma}.
\]
We have
\[
    J^{-\alpha}M_{f_1}J^{-\theta_1} \in \Lc_{\frac{d_{\hom}}{\theta_1+\alpha},\infty},\quad J^{\theta_r}M_{f_r}J^{-\gamma} \in \Lc_{\frac{d_{\hom}}{\gamma-\theta_r},\infty}.
\]
and for all $1\leq k\leq r-1$ we have
\[
    J^{\theta_k}M_{f_k}J^{-\theta_{k+1}} \in \Lc_{\frac{d_{\hom}}{\theta_{k+1}-\theta_k},\infty}.
\]
By H\"older's inequality, we get
\[
    J^{-\alpha}M_fJ^{-\gamma} \in \Lc_{\frac{d_{\hom}}{\alpha+\theta_1},\infty}\cdot \Lc_{\frac{d_{\hom}}{\theta_2-\theta_1},\infty}\cdots \Lc_{\frac{d_{\hom}}{\gamma-\theta_{r}},\infty} \subseteq \Lc_{\frac{d_{\hom}}{\alpha+\gamma},\infty}.
\]
\end{proof}

In the next lemma, we use the notion of double operator integrals as specifically used in \cite{DDSZ-2020-II} and \cite{Hiai-Kosaki-book-2003}. For extra details concerning
double operator integration theory, see \cite{PS-crelle,SkripkaTomskova}. In the following lemmas, $H$ is an arbitrary separable Hilbert space.
\begin{lemma}\label{phibeta lemma} Let $0<\beta<2$ and let
$$\phi_{\beta}(\lambda,\mu)=\frac{\lambda^{\beta}-\mu^{\beta}}{\lambda^2-\mu^2}\lambda^{1-\frac{\beta}{2}}\mu^{1-\frac{\beta}{2}},\quad \lambda,\mu>0.$$
Let $B$ be a potentially unbounded positive self-adjoint operator on $H$ with trivial kernel. We have
$$\|T^{B,B}_{\phi_{\beta}}\|_{\Bc(H)\to\Bc(H)}\leq c_{\beta}.$$
\end{lemma}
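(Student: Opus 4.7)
My plan is to reduce the double operator integral with symbol $\phi_\beta$ to one with a kernel depending only on the difference of the spectral variables in logarithmic coordinates, and then to apply the classical Fourier/Bochner representation of difference-type double operator integrals. The key preliminary observation is that $\phi_\beta$ is homogeneous of degree zero: setting $\lambda=e^{s}$ and $\mu=e^{t}$ and using the identity $e^{\alpha s}-e^{\alpha t}=2e^{\alpha(s+t)/2}\sinh(\alpha(s-t)/2)$ with $\alpha\in\{\beta,2\}$, the factor $e^{(1-\beta/2)(s+t)}$ in $\phi_\beta(e^{s},e^{t})$ cancels the $(s+t)$-dependence produced by the quotient, leaving
$$\phi_\beta(e^{s},e^{t})=g(s-t),\qquad g(v):=\frac{\sinh(\beta v/2)}{\sinh(v)}.$$
Since $B$ is positive with trivial kernel, $A:=\log B$ is a well-defined (possibly unbounded) self-adjoint operator whose spectral measure satisfies $E^{B}_{(a,b)}=E^{A}_{(\log a,\log b)}$ for $0<a<b$. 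The change-of-variable formula for double operator integrals then gives $T^{B,B}_{\phi_\beta}=T^{A,A}_{\tilde g}$ with $\tilde g(s,t):=g(s-t)$, so it suffices to bound $T^{A,A}_{\tilde g}$ on $\Bc(H)$ by a constant depending only on $\beta$.

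The next step is to show that $g\in\mathcal{F}L^{1}(\mathbb{R})$. For $0<\beta<2$ the function $g$ is continuous and even, with $g(0)=\beta/2$ and $g(v)\sim e^{-(1-\beta/2)|v|}$ as $|v|\to\infty$, so $g\in L^{1}\cap L^{\infty}$. Its Fourier transform can be computed by a standard contour-integration argument (or read off from tables), giving
$$\hat g(\xi)=\frac{\pi\sin(\pi\beta/2)}{\cosh(\pi\xi)+\cos(\pi\beta/2)}.$$
Since $0<\beta<2$ we have $\sin(\pi\beta/2)>0$ and $\cos(\pi\beta/2)>-1$, so $\hat g$ is strictly positive, smooth, and decays like $e^{-\pi|\xi|}$; hence $\|\hat g\|_{L^{1}(\mathbb{R})}$ is finite and depends only on $\beta$.

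Finally, the classical Fourier representation of difference-type double operator integrals applies: if $g=\widehat{\nu}$ for a finite complex measure $\nu$ on $\mathbb{R}$, then for any self-adjoint $A$ and every $X\in\Bc(H)$,
$$T^{A,A}_{\tilde g}(X)=\int_{\mathbb{R}}e^{i\xi A}\,X\,e^{-i\xi A}\,d\nu(\xi),$$
with the integral converging in the strong operator topology. Since conjugation by the unitaries $e^{i\xi A}$ is isometric on $\Bc(H)$, one obtains $\|T^{A,A}_{\tilde g}\|_{\Bc(H)\to\Bc(H)}\leq\|\nu\|$. Taking $d\nu=(2\pi)^{-1}\hat g(\xi)\,d\xi$ yields the required estimate with $c_\beta=(2\pi)^{-1}\|\hat g\|_{L^{1}(\mathbb{R})}$, and the proof is complete.

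The main conceptual step is the observation in the first paragraph that $\phi_\beta$ depends only on $\log\lambda-\log\mu$; once this is noticed, the remainder is a routine application of well-developed Fourier-multiplier techniques for double operator integrals. The only technical subtlety I foresee is the rigorous treatment of the DOI when $A=\log B$ is unbounded, but the Fourier/Bochner representation in the third paragraph directly produces a bounded linear operator on all of $\Bc(H)$ and can be taken as the working definition, so this is not a genuine obstacle.
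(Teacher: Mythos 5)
Your proposal is correct and follows essentially the same route as the paper: the key step in both is the observation that $\phi_\beta(\lambda,\mu)=g(\log\lambda-\log\mu)$ with $g(v)=\sinh(\beta v/2)/\sinh(v)$, and then the boundedness follows because $g$ has integrable Fourier transform. The paper simply cites a reference for the final Fourier-multiplier criterion for double operator integrals with symbols of the form $f(\lambda/\mu)$ rather than writing out the Bochner representation and the explicit formula for $\hat g$ as you do, but these are presentational differences rather than a different argument.
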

\begin{proof} By assumption,
$$\phi_{\beta}(\lambda,\mu)=f(\frac{\lambda}{\mu}),\quad\lambda,\mu>0,$$
where
$$f(e^t)=\frac{e^{\beta t}-1}{e^{2t}-1}e^{(1-\frac{\beta}{2})t} = \frac{\sinh(\frac{\beta}{2}t)}{\sinh(t)},\quad t\in\mathbb{R}.$$ 
Thus, $f\circ\exp$ is a Schwartz class function on $\Rl,$ and hence has integrable Fourier transform. The boundedness on $\Bc(H)$ of double operator integrals with symbol $f(\frac{\lambda}{\mu})$ where $f$ has integrable Fourier transform
is proved in e.g. \cite[Lemma 9]{PS-crelle}.

\end{proof}

The proof of the next lemma uses some of the functional calculus properties of the double operator integral transformer $T^{B,B}_{\phi_{\beta}},$ for this we refer to \cite{DDSZ-2020-II}.
\begin{lemma}\label{doi bounded above and below}
Let $A,B\in\Bc(H)$ and let there exists a constant $c>0$ such that $c\leq B\leq c^{-1}.$ We have
$$B^{-\alpha}[B^{\beta},A]B^{-\gamma}=T^{B,B}_{\phi_{\beta}}\Big(B^{\frac{\beta}{2}-\alpha-1}[B^2,A]B^{\frac{\beta}{2}-\gamma-1}\Big),\quad 0<\beta<2.$$
\end{lemma}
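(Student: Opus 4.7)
The plan is to reduce the identity to standard algebraic manipulations of double operator integrals, as developed in \cite{DDSZ-2020-II} and \cite{Hiai-Kosaki-book-2003}. The key observation is the factorisation
\[
\phi_{\beta}(\lambda,\mu) = \lambda^{1-\beta/2}\cdot\tilde{\phi}(\lambda,\mu)\cdot\mu^{1-\beta/2}, \qquad \tilde{\phi}(\lambda,\mu) := \frac{\lambda^{\beta}-\mu^{\beta}}{\lambda^{2}-\mu^{2}}.
\]
The hypothesis $c\leq B\leq c^{-1}$ ensures that every real power of $B$ is a bounded, boundedly invertible operator on $H$, so all intermediate expressions are well-defined bounded operators and the purely algebraic rules for DOIs apply without any analytic subtlety.

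The proof would proceed in two steps. First, I would establish the intermediate identity $[B^{\beta},A] = T^{B,B}_{\tilde{\phi}}([B^{2},A])$. This follows from the classical divided-difference identity $[f(B),A] = T^{B,B}_{\Psi_{f}}([B,A])$ with $\Psi_{f}(\lambda,\mu) := (f(\lambda)-f(\mu))/(\lambda-\mu)$, applied to $f(\lambda)=\lambda^{2}$ (giving $[B^{2},A] = T^{B,B}_{\lambda+\mu}([B,A])$) and to $f(\lambda)=\lambda^{\beta}$, combined with the composition rule $T^{B,B}_{\phi_{1}}T^{B,B}_{\phi_{2}} = T^{B,B}_{\phi_{1}\phi_{2}}$. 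Since $\tilde{\phi}(\lambda,\mu)\cdot(\lambda+\mu) = (\lambda^{\beta}-\mu^{\beta})/(\lambda-\mu) = \Psi_{\lambda^{\beta}}(\lambda,\mu)$, we obtain $T^{B,B}_{\tilde{\phi}}([B^{2},A]) = T^{B,B}_{\tilde{\phi}}T^{B,B}_{\lambda+\mu}([B,A]) = [B^{\beta},A]$.

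Second, I would invoke the ``sandwich rule'' for DOIs: because $\lambda^{s}\,dE_{B}(\lambda) = dE_{B}(\lambda)\,B^{s}$ and similarly on the right, one has
\[
T^{B,B}_{\lambda^{s}\phi(\lambda,\mu)\mu^{t}}(Y) = T^{B,B}_{\phi}(B^{s}YB^{t}) = B^{s}\,T^{B,B}_{\phi}(Y)\,B^{t}.
\]
Applying this with $s=t=1-\beta/2$, $\phi=\tilde{\phi}$, and $Y = B^{\beta/2-\alpha-1}[B^{2},A]B^{\beta/2-\gamma-1}$, the outer factors absorb into the inner powers to give
\[
T^{B,B}_{\phi_{\beta}}(Y) = T^{B,B}_{\tilde{\phi}}(B^{-\alpha}[B^{2},A]B^{-\gamma}) = B^{-\alpha}\,T^{B,B}_{\tilde{\phi}}([B^{2},A])\,B^{-\gamma} = B^{-\alpha}[B^{\beta},A]B^{-\gamma},
\]
which is the claimed identity. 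There is essentially no substantive obstacle: the only real work is bookkeeping, and the hard analytic content (the boundedness of $T^{B,B}_{\phi_{\beta}}$) has been relegated to Lemma~\ref{phibeta lemma} via the $\sinh$-computation.
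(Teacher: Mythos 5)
Your argument is correct, and it is the same kind of purely algebraic symbol manipulation as the paper's, but your decomposition is organised differently. The paper writes \emph{both} sides of the target identity as double operator integrals applied directly to $A$, using the elementary facts $[B^{s},A]=T^{B,B}_{\lambda^{s}-\mu^{s}}(A)$ and $B^{u}T^{B,B}_{\phi}(A)B^{v}=T^{B,B}_{\lambda^{u}\phi\mu^{v}}(A)$, and then records the one-line symbol factorisation
$$\frac{\lambda^{\beta}-\mu^{\beta}}{\lambda^{\alpha}\mu^{\gamma}}=\phi_{\beta}(\lambda,\mu)\cdot\lambda^{\frac{\beta}{2}-\alpha-1}(\lambda^{2}-\mu^{2})\mu^{\frac{\beta}{2}-\gamma-1},$$
from which the claim is an immediate consequence of the multiplicativity $T^{B,B}_{\phi_{1}\phi_{2}}=T^{B,B}_{\phi_{1}}T^{B,B}_{\phi_{2}}.$ You instead factor the kernel $\phi_{\beta}$ itself as $\lambda^{1-\beta/2}\tilde\phi(\lambda,\mu)\mu^{1-\beta/2},$ prove the intermediate identity $[B^{\beta},A]=T^{B,B}_{\tilde\phi}([B^{2},A])$ by passing through the first-order commutator $[B,A]$ via the divided-difference rule, and then apply the sandwich rule twice to move the powers of $B$ across $T^{B,B}_{\tilde\phi}.$ Both routes are valid under the standing hypothesis $c\leq B\leq c^{-1}$; the paper's is marginally more economical in that it needs a single factorisation and a single invocation of the composition rule, whereas yours introduces the extra divided-difference lemma, but the mathematical content is the same and either phrasing would serve.
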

\begin{proof} Note that
$$B^{-\alpha}[B^{\beta},A]B^{-\gamma}=T^{B,B}_{\frac{\lambda^{\beta}-\mu^{\beta}}{\lambda^{\alpha}\mu^{\gamma}}}(A),\quad B^{\frac{\beta}{2}-\alpha-1}[B^2,A]B^{\frac{\beta}{2}-\gamma-1}=T^{B,B}_{\lambda^{\frac{\beta}{2}-\alpha-1}(\lambda^2-\mu^2)\mu^{\frac{\gamma}{2}-\alpha-1}}(A).$$
Clearly,
$$\frac{\lambda^{\beta}-\mu^{\beta}}{\lambda^{\alpha}\mu^{\gamma}}=\phi_{\beta}(\lambda,\mu)\cdot\lambda^{\frac{\beta}{2}-\alpha-1}(\lambda^2-\mu^2)\mu^{\frac{\gamma}{2}-\alpha-1}.$$
By Lemma \ref{phibeta lemma}, the double operator integral with symbol $\phi_{\beta}$ is bounded on $\Bc(H).$ Thus,
\begin{align*}
    B^{-\alpha}[B^{\beta},A]B^{-\gamma}&=T^{B,B}_{\frac{\lambda^{\beta}-\mu^{\beta}}{\lambda^{\alpha}\mu^{\gamma}}}(A)\\
                                       &=T^{B,B}_{\phi_{\beta}(\lambda,\mu)\cdot \lambda^{\frac{\beta}{2}-\alpha-1}(\lambda^2-\mu^2)\mu^{\frac{\gamma}{2}-\alpha-1}}(A)\\
                                       &=T^{B,B}_{\phi_{\beta}(\lambda,\mu)}\Big(T^{B,B}_{\lambda^{\frac{\beta}{2}-\alpha-1}(\lambda^2-\mu^2)\mu^{\frac{\gamma}{2}-\alpha-1}}(A)\Big)=T^{B,B}_{\phi_{\beta}(\lambda,\mu)}\Big(B^{\frac{\beta}{2}-\alpha-1}[B^2,A]B^{\frac{\beta}{2}-\gamma-1}\Big).
\end{align*}
\end{proof}

\begin{lemma}\label{jsquare commutator lemma}
Let $B\geq0$ be a (potentially unbounded) self-adjoint linear operator on a Hilbert space $H$ with $\ker(B)=0$, and let $A$ be a bounded operator on $H.$ Let $\beta\in(0,2)$ and let $\alpha,\gamma\in\mathbb{R}.$ Assume that $A(\dom(B^2))\subset \dom(B^2),$ and that
$$B^{\frac{\beta}{2}-\alpha-1}[B^2,A]B^{\frac{\beta}{2}-\gamma-1}\in\Bc(H),$$
then
$$B^{-\alpha}[B^{\beta},A]B^{-\gamma}\in\Bc(H)$$
and
$$\Big\|B^{-\alpha}[B^{\beta},A]B^{-\gamma}\Big\|_{\infty}\leq c_{\beta}\Big\|B^{\frac{\beta}{2}-\alpha-1}[B^2,A]B^{\frac{\beta}{2}-\gamma-1}\Big\|_{\infty}.$$
\end{lemma}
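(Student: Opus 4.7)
The plan is to reduce the statement to the bounded case of Lemma \ref{doi bounded above and below} via spectral truncation. Let $P_n = \chi_{[1/n,n]}(B)$; since $B$ is self-adjoint with trivial kernel, $P_n$ converges strongly to $I$. Each $P_n$ commutes with every Borel function of $B$, and on the invariant subspace $\Hc_n := P_n H$ the operator $B$ restricts to a bounded self-adjoint operator $B_n$ with $n^{-1} \leq B_n \leq n$. Moreover, $\Hc_n \subset \dom(B^k)$ for every $k$, so $A_n := P_n A P_n$, viewed as an operator on $\Hc_n$, satisfies the hypotheses of Lemma \ref{doi bounded above and below} with $B_n$ in place of $B$.

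Applying that lemma on $\Hc_n$ and using the commutation $P_n f(B) = f(B) P_n$ together with the elementary identity $[B^2, P_n A P_n] = P_n[B^2, A]P_n$, one obtains the compressed identity
$$P_n B^{-\alpha}[B^\beta, A]B^{-\gamma} P_n = T^{B,B}_{\phi_\beta}\bigl(P_n B^{\frac{\beta}{2}-\alpha-1}[B^2, A]B^{\frac{\beta}{2}-\gamma-1} P_n\bigr)$$
as an identity of bounded operators on $H$. The interchange between the truncated DOI $T^{B_n,B_n}_{\phi_\beta}$ and the ambient one $T^{B,B}_{\phi_\beta}$ here rests on the spectral-measure definition of the DOI: for operators of the form $P_n X P_n$, the DOI kernel effectively restricts to the rectangle $[1/n,n]^2$. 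Combined with Lemma \ref{phibeta lemma}, this yields the uniform bound
$$\bigl\|P_n B^{-\alpha}[B^\beta, A]B^{-\gamma} P_n\bigr\|_\infty \leq c_\beta \bigl\|B^{\frac{\beta}{2}-\alpha-1}[B^2, A]B^{\frac{\beta}{2}-\gamma-1}\bigr\|_\infty.$$

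Since $P_n \to I$ strongly and the compressed operators are uniformly bounded, the family $\{P_n \cdot B^{-\alpha}[B^\beta, A]B^{-\gamma} \cdot P_n\}_n$ converges in the strong operator topology to a bounded operator on $H$, which on the dense core $\bigcup_n \Hc_n$ coincides with the a priori defined expression $B^{-\alpha}[B^\beta, A]B^{-\gamma}$, and inherits the same norm estimate. I expect the principal technical obstacle to be the rigorous verification that $T^{B,B}_{\phi_\beta}$ applied to $P_n$-compressed operators coincides with the DOI formed using the truncated spectral measure, so that Lemma \ref{doi bounded above and below} can indeed be lifted from $\Hc_n$ to the claimed compressed identity on $H$; this is a routine but notationally delicate consequence of the spectral theorem and the Hiai--Kosaki integral representation underlying Lemma \ref{phibeta lemma}. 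Once this compatibility is in place, the rest of the proof is immediate.
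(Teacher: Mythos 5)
Your proposal follows essentially the same route as the paper: truncate $B$ via $P_n=\chi_{[1/n,n]}(B)$, apply Lemma~\ref{doi bounded above and below} on the invariant subspace $P_nH$ where $B$ is bounded above and below, use Lemma~\ref{phibeta lemma} for the uniform bound, and pass to the limit. The only cosmetic difference is that the paper keeps the double operator integral at the truncated level as $T^{B_n,B_n}_{\phi_\beta}$ acting on operators on $P_nH$ rather than passing through the ambient $T^{B,B}_{\phi_\beta}$, which sidesteps the compatibility check you flag as the delicate step.
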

\begin{proof} Let $p_n=\chi_{(\frac1n,n)}(B).$ Consider Hilbert space $p_n(H)$ and the operators $A_n=p_nAp_n$ and $B_n=Bp_n$ on $p_n(H).$ Clearly, $\frac1n \leq B_n\leq n$ on $p_n(H).$ Note that
$$p_n\cdot B^{-\alpha}[B^{\beta},A]B^{-\gamma}\cdot p_n=B_n^{-\alpha}[B_n^{\beta},A_n]B_n^{-\gamma},$$
$$p_n\cdot B^{\frac{\beta}{2}-\alpha-1}[B^2,A]B^{\frac{\beta}{2}-\gamma-1}\cdot p_n=B_n^{\frac{\beta}{2}-\alpha-1}[B_n^2,A_n]B_n^{\frac{\beta}{2}-\gamma-1}.$$
By Lemma \ref{doi bounded above and below}, we have
$$B_n^{-\alpha}[B_n^{\beta},A_n]B_n^{-\gamma}=T^{B_n,B_n}_{\phi_{\beta}}\Big(B_n^{\frac{\beta}{2}-\alpha-1}[B_n^2,A_n]B_n^{\frac{\beta}{2}-\gamma-1}\Big).$$
Thus,
$$p_n\cdot B^{-\alpha}[B^{\beta},A]B^{-\gamma}\cdot p_n=T^{B_n,B_n}_{\phi_{\beta}}\Big(p_n\cdot B^{\frac{\beta}{2}-\alpha-1}[B^2,A]B^{\frac{\beta}{2}-\gamma-1}\cdot p_n\Big).$$
By Lemma \ref{phibeta lemma}, we have
$$\Big\|p_n\cdot B^{-\alpha}[B^{\beta},A]B^{-\gamma}\cdot p_n\Big\|_{\infty}\leq c_{\beta}\Big\|p_n\cdot B^{\frac{\beta}{2}-\alpha-1}[B^2,A]B^{\frac{\beta}{2}-\gamma-1}\cdot p_n\Big\|_{\infty}.$$
Hence,
$$\Big\|p_n\cdot B^{-\alpha}[B^{\beta},A]B^{-\gamma}\cdot p_n\Big\|_{\infty}\leq c_{\beta}\Big\|B^{\frac{\beta}{2}-\alpha-1}[B^2,A]B^{\frac{\beta}{2}-\gamma-1}\Big\|_{\infty}.$$
Passing $n\to\infty,$ we complete the proof.
\end{proof}

\begin{lemma}\label{beta2 lemma} Theorem \ref{MSX_commutator_estimate}.\eqref{msxb} holds for $\beta=2.$ That is, for all $\alpha,\gamma\in \mathbb{R}$ such that $\alpha+\gamma=1$ and $f \in C^\infty_c(G)$ we have
\[
    J^{-\alpha}[J^2,M_f]J^{-\gamma} \in \Bc(L_2(G)).
\]
\end{lemma}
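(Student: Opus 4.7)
The plan is to expand $[J^2, M_f] = -[\Delta, M_f]$ directly using the derivation property of each $X_l$ and the identity $\Delta = \sum_{l=1}^m X_l^2$. Since $[X_l, M_f] = M_{X_l f}$, a one-line computation gives $[X_l^2, M_f] = 2 M_{X_l f} X_l + M_{X_l^2 f}$, and summing over $l$ yields
$$[J^2, M_f] = -2\sum_{l=1}^m M_{X_l f} X_l - M_{\Delta f}.$$
It therefore suffices to bound each of the operators $J^{-\alpha} M_{X_l f} X_l J^{-\gamma}$ and $J^{-\alpha} M_{\Delta f} J^{-\gamma}$ on $L_2(G)$.

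The zeroth-order piece $J^{-\alpha} M_{\Delta f} J^{-\gamma}$ is handled immediately by Theorem \ref{MSX_commutator_estimate}.\eqref{msxc}: since $\alpha + \gamma = 1 > 0$ and $\Delta f \in C_c^\infty(G)$, this operator lies in $\mathcal{L}_{d_{\hom},\infty} \subset \mathcal{B}(L_2(G))$.

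For the first-order piece I would factor by inserting the identity $J^{\alpha} J^{-\alpha}$:
$$J^{-\alpha} M_{X_l f} X_l J^{-\gamma} = \bigl(J^{-\alpha} M_{X_l f} J^{\alpha}\bigr)\bigl(J^{-\alpha} X_l J^{-\gamma}\bigr).$$
The first factor is bounded by Theorem \ref{MSX_commutator_estimate}.\eqref{msxa} because the two exponents sum to zero, and the second factor is bounded by the Sobolev estimate \eqref{sobolev_equivalence}, whose hypothesis $-\alpha + 1 \leq \gamma$ holds with equality as $\alpha + \gamma = 1$. So the critical assumption of the lemma is exactly what forces each of the two factors to sit on the boundary of its boundedness range.

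The main subtlety, which is the only non-routine point, is that the factorization must be interpreted on a suitable dense domain (such as $C_c^\infty(G)$) since $J^{\pm \alpha}$ may be unbounded depending on the sign of $\alpha$; one checks that each intermediate composition preserves such a domain, so the insertion of $J^{\alpha}J^{-\alpha}$ is justified there, and the resulting bounded identity extends to all of $L_2(G)$ by density. In contrast to the general case $\beta \in (0,2)$ that will require the double-operator-integral transformer of Lemmas \ref{phibeta lemma} and \ref{jsquare commutator lemma}, here $\beta = 2$ makes the commutator literally a polynomial expression in the $X_l$ and $M_f$, so a direct factorization using the previously established parts \eqref{msxa} and \eqref{msxc} suffices.
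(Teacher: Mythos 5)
Your proof is correct and follows essentially the same route as the paper: expand $[J^2,M_f]$ via the Leibniz rule into first-order and zeroth-order pieces, then factor each first-order piece into a product of a bounded operator coming from \eqref{sobolev_equivalence} and one coming from Theorem~\ref{MSX_commutator_estimate}.\eqref{msxa}. The only cosmetic difference is that the paper writes the first-order term with $X_l$ on the left (factoring $J^{-\alpha}X_lJ^{\alpha-1}\cdot J^{\gamma}M_{X_lf}J^{-\gamma}$), while you keep $M_{X_lf}$ on the left (factoring $J^{-\alpha}M_{X_lf}J^{\alpha}\cdot J^{-\alpha}X_lJ^{-\gamma}$); both use \eqref{msxa} and the Sobolev bound at exactly the boundary case, and your use of \eqref{msxc} for the zeroth-order term $J^{-\alpha}M_{\Delta f}J^{-\gamma}$ is a clean way to finish it off.
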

\begin{proof} 
First, observe that by the Leibniz rule, the multiplier $M_f$ maps $\dom(J^2) = H^2(G)$
into itself and the commutator $[J^2,M_f]$ is meaningful on $\dom(J^2).$

Indeed, 
$$[J^2,M_f]=2\sum_{l=1}^mX_lM_{X_lf}+M_{\Delta f}.$$
Taking into account that $\alpha+\gamma=1,$ we infer that
$$J^{-\alpha}[J^2,M_f]J^{-\gamma}=2\sum_{l=1}^mJ^{-\alpha}X_lJ^{\alpha-1}\cdot J^{\gamma}M_{X_lf}J^{-\gamma}+J^{\gamma-1}M_{\Delta f}J^{-\gamma}.$$
By the triangle and H\"older inequalities, we have
$$\Big\|J^{-\alpha}[J^2,M_f]J^{-\gamma}\Big\|_{\infty}\leq 2\sum_{l=1}^m\Big\|J^{-\alpha}X_lJ^{\alpha-1}\|_{\infty}\|J^{\gamma}M_{X_lf}J^{-\gamma}\|_{\infty}+\|J^{\gamma}M_{\Delta f}J^{-\gamma}\|_{\infty}.$$
The assertion follows now from Theorem \ref{MSX_commutator_estimate}.\eqref{msxa}.
\end{proof}

\begin{proof}[Proof of Theorem \ref{MSX_commutator_estimate}.\eqref{msxb}] Suppose $0<\beta<2.$ By Lemma \ref{jsquare commutator lemma}, we have
\begin{equation}\label{jsquare_applied_to_msxb}
    \Big\|J^{-\alpha}[J^{\beta},M_f]J^{-\gamma}\Big\|_{\infty}\leq c_{\beta}\Big\|J^{\frac{\beta}{2}-\alpha-1}[J^2,M_f]J^{\frac{\beta}{2}-\gamma-1}\Big\|_{\infty}.
\end{equation}
Clearly,
$$(\alpha+1-\frac{\beta}{2})+(\gamma+1-\frac{\beta}{2})=\alpha+\gamma-\beta+2=2-1.$$
Hence, Lemma \ref{beta2 lemma} with the parameter $\alpha$ set as $\alpha+1-\frac{\beta}{2}$ and $\beta$ set as $\gamma+1-\frac{\beta}{2}$ yields
\[
    J^{\frac{\beta}{2}-\alpha-1}[J^2,M_f]J^{\frac{\beta}{2}-\gamma-1} \in \Bc(L_2(G)).
\]
Thus \eqref{jsquare_applied_to_msxb} implies that $J^{-\alpha}[J^{\beta},M_f]J^{-\gamma} \in \Bc(L_2(G))$ and this completes the proof in the case $0 < \beta< 2.$

Suppose now $\beta>0.$ Choose $n\in\mathbb{N}$ such that $\beta<n.$ By Leibniz rule, we write	
$$[J^{\beta},M_f]=\sum_{k=0}^{n-1}J^{\frac{\beta k}{n}}[J^{\frac{\beta}{n}},M_f]J^{\frac{\beta(n-1-k)}{n}}$$
and
$$J^{-\alpha}[J^{\beta},M_f]J^{-\gamma} =\sum_{k=0}^{n-1}J^{\frac{\beta k}{n}-\alpha}[J^{\frac{\beta}{n}},M_f]J^{\frac{\beta(n-1-k)}{n}-\gamma}.$$	
Clearly,
$$(\alpha-\frac{\beta k}{n})+(\gamma-\frac{\beta(n-1-k)}{n})=\alpha+\gamma-\frac{\beta(n-1)}{n}=\frac{\beta}{n}-1.$$
By construction of $n,$ we have $0 < \frac{\beta}{n} < 1,$ and hence for every $0\leq k\leq n-1$ the case just proved implies that
\[
    J^{\frac{\beta k}{n}-\alpha}[J^{\frac{\beta}{n}},M_f]J^{\frac{\beta(n-1-k)}{n}-\gamma} \in \Bc(L_2(G)).
\]
This concludes the argument for $\beta>0.$
	
Suppose now $\beta<0.$ We have
$$J^{-\alpha}[J^{\beta},M_f]J^{-\gamma}=-J^{\beta-\alpha}[J^{-\beta},M_f]J^{\beta-\gamma}.$$
Clearly,
$$(\alpha-\beta)+(\gamma-\beta)=\alpha+\gamma-2\beta=-\beta-1.$$	
This reduces the assertion to the $\beta>0$ case. This concludes the argument for $\beta<0.$
\end{proof}

\begin{proof}[Proof of Theorem \ref{MSX_commutator_estimate}.\eqref{msxd}] We write
$$J^{-\alpha}[J^{\beta},M_f]J^{-\gamma}=J^{\beta-\alpha}M_fJ^{-\gamma}-J^{-\alpha}M_fJ^{\beta-\gamma}.$$
Next, let $\psi\in C^{\infty}_c(G)$ be such that $f=f\psi.$ We have
$$J^{\beta-\alpha}M_fJ^{-\gamma}-M_{\psi}\cdot J^{\beta-\alpha}M_fJ^{-\gamma}=[J^{\beta-\alpha},M_{\psi}]J^{\alpha-\beta+1}\cdot J^{\beta-\alpha-1}M_fJ^{-\gamma},$$
$$J^{-\alpha}M_fJ^{\beta-\gamma}-M_{\psi}\cdot J^{-\alpha}M_fJ^{\beta-\gamma}=[J^{-\alpha},M_{\psi}]J^{\alpha+1}\cdot J^{-\alpha-1}M_fJ^{\beta-\gamma}.$$
The first factors on the right-hand-side are bounded by Theorem \ref{MSX_commutator_estimate}.\eqref{msxb}. The second factors belong to $\mathcal{L}_{\frac{d_{\hom}}{\alpha+\gamma-\beta+1},\infty}$ by Theorem \ref{MSX_commutator_estimate}.\eqref{msxc}. Therefore,
$$J^{-\alpha}[J^{\beta},M_f]J^{-\gamma}-M_{\psi}\cdot J^{-\alpha}[J^{\beta},M_f]J^{-\gamma}\in \mathcal{L}_{\frac{d_{\hom}}{\alpha+\gamma-\beta+1},\infty}.$$
	
Now, set $\theta=\alpha+\gamma+1-\beta$ and $\alpha'=\alpha-\theta.$ We write
$$M_{\psi}\cdot J^{-\alpha}[J^{\beta},M_f]J^{-\gamma}=M_{\psi}J^{-\theta}\cdot J^{-\alpha'}[J^{\beta},M_f]J^{-\gamma}.$$
The first factor on the right-hand-side belongs to $\Lc_{\frac{d_{\hom}}{\theta},\infty}$ by Theorem \ref{MSX_commutator_estimate}.\eqref{msxc}. The second factor is bounded by Theorem \ref{MSX_commutator_estimate}.\eqref{msxb}. The assertion follows now from H\"older inequality.
\end{proof}

\section{Specific Cwikel-type estimates}
A consequence of Theorem \ref{MSX_commutator_estimate}.\eqref{msxc} is that for all $p>0$ and $f \in C^\infty_c(G),$ we have
\[
    M_f(1-\Delta)^{-\frac{d_{\hom}}{2p}} \in \Lc_{p,\infty}.
\]
When $p > 2,$ this assertion can be made quantitative thanks to Corollary \ref{L_p_specific_cwikel}, which supplies the estimate
\[
    \|M_f(1-\Delta)^{-\frac{d_{\hom}}{2p}}\|_{p,\infty} \leq c_{p,G}\|f\|_{L_p(G)}.
\]
We seek similar quantitative results for $p \leq 2.$ Our first result in this direction assumes that $f$ is compactly supported. 

\begin{lemma}\label{less_simple_cwikel}
Let $p\leq 2,$ $q>2$ and $\frac1p=\frac1q+\frac1r.$ If $f \in L_q(G)$ and $f$ is supported in some compact subset $K$ of $G,$ then $M_f(1-\Delta)^{-\frac{d_{\hom}}{2p}}\in \Lc_{p,\infty}$ and
$$\|M_f(1-\Delta)^{-\frac{d_{\hom}}{2p}}\|_{p,\infty} \leq C_{K,p,q}\|f\|_{L_q(G)}.$$
\end{lemma}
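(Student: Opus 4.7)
The plan is to reduce to tools already established by factoring the operator in a way that uses the compact support of $f$. Since $f$ is supported in $K$, I can pick a cut-off $\psi\in C^\infty_c(G)$ with $\psi\equiv 1$ on $K$, so that $f=f\psi$ and hence $M_f=M_fM_\psi$. Writing $J=(1-\Delta)^{1/2}$ as in the previous section, this allows me to insert $J^{-d_{\hom}/q}\cdot J^{d_{\hom}/q}$ and exhibit the factorization
\[
M_fJ^{-d_{\hom}/p} \;=\; \bigl(M_fJ^{-d_{\hom}/q}\bigr)\cdot\bigl(J^{d_{\hom}/q}M_\psi J^{-d_{\hom}/p}\bigr).
\]

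The first factor is handled by the $p>2$ Cwikel estimate for the Bessel potential, i.e.\ by Corollary~\ref{L_p_specific_cwikel} applied with exponent $q>2$ and the bounded function $g(t)=(1+t)^{-d_{\hom}/(2q)}$, which is routinely checked to lie in $L_{q,\infty}(\mathbb{R}_+,t^{d_{\hom}/2-1}\,dt)$. This gives
\[
\|M_fJ^{-d_{\hom}/q}\|_{q,\infty}\;\leq\; c_{q,G}\,\|f\|_{L_q(G)}.
\]
The second factor is of the form $J^{-\alpha}M_\psi J^{-\gamma}$ with $\alpha=-\tfrac{d_{\hom}}{q}$ and $\gamma=\tfrac{d_{\hom}}{p}$, so that $\alpha+\gamma=\tfrac{d_{\hom}}{p}-\tfrac{d_{\hom}}{q}=\tfrac{d_{\hom}}{r}>0$ because $p<q$. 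Since $\psi\in C^\infty_c(G)$, Theorem~\ref{MSX_commutator_estimate}\eqref{msxc} applies and yields
\[
\|J^{d_{\hom}/q}M_\psi J^{-d_{\hom}/p}\|_{r,\infty}\;\leq\; C(\psi,p,q),
\]
where the constant depends on $\psi$, hence ultimately only on $K$, $p$, $q$.

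Combining the two bounds via the weak-Schatten H\"older inequality with $\frac{1}{p}=\frac{1}{q}+\frac{1}{r}$ gives the desired estimate
\[
\|M_f(1-\Delta)^{-d_{\hom}/(2p)}\|_{p,\infty}\;\leq\; c_{p,q}\,\|M_fJ^{-d_{\hom}/q}\|_{q,\infty}\,\|J^{d_{\hom}/q}M_\psi J^{-d_{\hom}/p}\|_{r,\infty}\;\leq\; C_{K,p,q}\,\|f\|_{L_q(G)}.
\]
There is essentially no single hard step: the only subtlety is selecting the factorization so that both the $f$-dependence and the compactness contributions end up in tractable pieces. The role of the compact support is precisely to license the insertion of $M_\psi$, which converts the non-compact factor $(1-\Delta)^{\pm d_{\hom}/(2q)}$ into the weak-Schatten element $J^{-\alpha}M_\psi J^{-\gamma}$ controlled by Theorem~\ref{MSX_commutator_estimate}\eqref{msxc}.
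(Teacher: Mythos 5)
Your proof is correct and follows essentially the same path as the paper's: insert the cut-off $M_\psi$, factor as $M_fJ^{-d_{\hom}/q}\cdot J^{d_{\hom}/q}M_\psi J^{-d_{\hom}/p}$, bound the first factor in $\Lc_{q,\infty}$ via the $q>2$ Cwikel estimate, the second in $\Lc_{r,\infty}$ via Theorem~\ref{MSX_commutator_estimate}\eqref{msxc}, and combine with weak-Schatten H\"older. Your slightly more explicit verification that $g(t)=(1+t)^{-d_{\hom}/(2q)}\in L_{q,\infty}(\mathbb{R}_+,t^{d_{\hom}/2-1}dt)$ is a small improvement in care over the paper's citation of Theorem~\ref{main_nontrivial_cwikel_theorem}\eqref{mncta}, which is stated for $(-\Delta)$ rather than $(1-\Delta)$.
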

\begin{proof}
Let $\psi\in C^\infty_c(G)$ be equal to $1$ on $K,$ so that $M_f = M_fM_{\psi}.$ We now write 
$$M_f(1-\Delta)^{-\frac{d_{\hom}}{2p}}= M_f(1-\Delta)^{-\frac{d_{\hom}}{2q}}\cdot (1-\Delta)^{\frac{d_{\hom}}{2q}}M_{\psi}(1-\Delta)^{-\frac{d_{\hom}}{2p}}.$$
Theorem \ref{MSX_commutator_estimate}.\eqref{msxc} asserts that
$$(1-\Delta)^{\frac{d_{\hom}}{2q}}M_{\psi}(1-\Delta)^{-\frac{d_{\hom}}{2p}} \in \Lc_{r,\infty}.$$
Theorem \ref{main_nontrivial_cwikel_theorem}.\eqref{mncta} and the assumption $q >2$ yield
$$\|M_f(1-\Delta)^{-\frac{d_{\hom}}{2q}}\|_{q,\infty}\leq C_{G,q}\|f\|_{q}.$$
The desired result follows from  from H\"older's inequality and the preceding assertions.
\end{proof}

The classical estimate on $\Rl^d,$ for $d\geq 1$ corresponding to Lemma \ref{less_simple_cwikel} is due to Birman and Solomyak, which implies that if $p<2$ 
and $f$ is supported in a compact $K$ subset of $\Rl^d,$ then
\[
    \|M_f(1-\Delta)^{-\frac{d}{2p}}\|_{p,\infty} \leq C_{K,p}\|f\|_{L_2(K)}.
\]
See  \cite[Lemma 4.2]{LeSZ-cwikel}. Here, $\Delta$ is the usual Laplace operator on $\Rl^d.$
Lemma \ref{less_simple_cwikel} is weaker than this in the sense that we take $f \in L_q(K)$ for some $q>2$ rather than simply $f \in L_2(K).$

The next step we take in weakening the assumptions on $f$ is to remove the requirement that $f$ be compactly supported. This is best understood in comparison
to the classical case. Classically, one proves that if $f$ is supported in the $d$-cube $[0,1]^d$ then
\[
    \|M_f(1-\Delta)^{-\frac{d}{2p}}\|_{p,\infty} \leq C_p\|f\|_{L_2([0,1]^d)}.
\]
Since the Laplace operator $\Delta$ is translation invariant, if $f$ is supported in the set $x+[0,1]^d$ for some $x \in \Rl^d$ we have
\[
    \|M_f(1-\Delta)^{-\frac{d}{2p}}\|_{p,\infty} \leq C_p\|f\|_{L_2(x+[0,1]^d)}.
\]
When $p<2,$ it was originally proved by Birman and Solomyak that
\[
    \|M_f(1-\Delta)^{-\frac{d}{2p}}\|_{p,\infty} \leq C_p\left(\sum_{n\in \Itgr^d} \|f\|_{L_2(n+[0,1]^d)}^p\right)^{\frac1p} =: C_{\beta}\|f\|_{\ell_p(L_2)(\Rl^d)}.
\]
See \cite{LeSZ-cwikel} and references therein.
We wish to obtain a similar estimate in the setting of stratified Lie groups. The main obstacle is to find an analogy for the decomposition
\[
    \Rl^d = \bigcup_{n\in \Itgr^d} n+[0,1)^d.
\]
A direct analogy would be to select a countable subgroup $\Gamma$ of $G$ such that the quotient space $G/\Gamma$ is compact, and consider the decomposition
\[
    G = \bigcup_{\gamma\in \Gamma} \gamma F
\]
where $F$ is a fundamental domain for $\Gamma.$ However, this is not always possible. In fact a simply connected nilpotent Lie group $G$ admits a discrete subgroup
$\Gamma$ such that $G/\Gamma$ is compact if and only if $\mathfrak{g}$ admits a basis for which the structure constants are rational numbers \cite[Theorem 2.12]{Raghunathan1972}.

Instead, we seek a compact subset $K$ of $G$ and a countable subset $\Gamma$ such that
\[
    G = \bigcup_{\gamma\in \Gamma} \gamma K
\]
and such that the sets $\{\gamma K\}_{\gamma\in \Gamma}$ do not overlap too much. Note that in this decomposition, $\Gamma$ is not necessarily a subgroup. To this end we use a new decomposition which we call a \emph{covering of bounded multiplicity}, defined in Lemma \ref{locally_finite_coverings_exist} below.

\subsection{Coverings of bounded multiplicity}

In the following lemmas we use a translation-invariant quasi-metric on $G$, which may be described as follows.
For $g \in G,$ we have $g = \sum_{k=1}^n p_k(g),$ where $p_k(g)$ is the component of $g$ in $\gf_k,$ and $n$ is the number of steps in the stratification of $\gf.$ Denote
$$\rho(g) := \max_{1\leq k\leq n}\|p_k(g)\|_{\infty}^{\frac1k}.$$
Here, $\|\cdot\|_\infty$ refers to the norm as a linear operator on $\gf.$
Note that $\rho(g)=\rho(g^{-1})$ (because $g^{-1}=-g$). Set
$${\rm dist}(g_1,g_2)=\rho(g_1^{-1}g_2),\quad g_1,g_2\in G.$$
Clearly, ${\rm dist}$ is translation-invariant in the sense that
$${\rm dist}(gg_1,gg_2)={\rm dist}(g_1,g_2),\quad g_1,g_2,g\in G.$$
This is a quasi-metric on $G.$ Indeed, there exists a constant $c_G$ such that
$$\rho(g_1g_2)\leq c_G(\rho(g_1)+\rho(g_2)).$$
Indeed, we have
\[
    p_{k}(g_1g_2) = \sum_{j=1}^k p_{k-j}(g_1)p_{j}(g_2)
\]
and therefore
\[
    \|p_k(g_1g_2)\|_{\infty}^{\frac1k} \leq \sum_{j=1}^k \|p_{k-j}(g_1)\|_\infty^{\frac1k} \|p_j(g_2)\|_{\infty}^{\frac1k} \leq \sum_{j=1}^k \rho(g_1)^{\frac{k-j}{k}}\rho(g_2)^{\frac{j}{k}}.
\]
By the convexity of the exponential function we have
\[
    \rho(g_1)^{\frac{k-j}{k}}\rho(g_2)^{\frac{j}{k}} \leq \frac{k-j}{k}\rho(g_1)+\frac{j}{k}\rho(g_2)
\]
Therefore
\[
    \rho(g_1g_2) \leq \max_{1\leq k\leq n} \sum_{j=1}^k \frac{k-j}{k}\rho(g_1)+ \frac{j}{k}\rho(g_2) \leq n(\rho(g_1)+\rho(g_2)).
\]
This inequality implies that ${\rm dist}$ is a quasi-metric. That is,
\begin{align*}
    {\rm dist}(g_1,g_3)&=\rho(g_1^{-1}g_3)=\rho(g_1^{-1}g_2\cdot g_2^{-1}g_3)\\
                       &\leq c_G(\rho(g_1^{-1}g_2)+\rho(g_2^{-1}g_3))=c_G({\rm dist}(g_1,g_2)+{\rm dist}(g_2,g_3)).
\end{align*}
We denote by $B(x,r)\subset G$ the ball in $G$ defined by the quasi-metric $\rho.$ That is, $B(x,r) = \{g \in G\;:\;{\rm dist}(x,g)< r\}.$ 
Since every ball in the metric defined by $\rho$ contains a Euclidean ball, and vice versa, the Haar measure assigns positive and finite measure to every ball in the metric defined by $\rho.$

\begin{lemma}\label{locally_finite_coverings_exist}
Let $G$ be a stratified Lie group. There exists a bounded open subset $U$ of $G$ and a countable subset $\Gamma$ of $G$ such that
$$G = \bigcup_{\gamma\in \Gamma} \gamma U$$
and such that
$$\sup_{x\in G}|\{\gamma\in\Gamma:\ x\in \gamma U\}|<\infty.$$
\end{lemma}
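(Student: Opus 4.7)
\medskip

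The plan is to produce $\Gamma$ and $U$ by a greedy packing argument in the left-invariant quasi-metric $\dist$ introduced just before the statement, using left-invariance of Haar measure to obtain both the covering property and bounded multiplicity. The key observations are that, by definition, $B(\gamma,r) = \gamma B(1,r)$ and $|B(\gamma,r)| = |B(1,r)|$ for every $\gamma \in G$, and that $|B(1,r)|$ is finite and strictly positive for every $r>0$.

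First, I fix any $r>0$ (say $r=1$) and apply Zorn's lemma (or a straightforward transfinite construction) to obtain a maximal subset $\Gamma \subseteq G$ with the property that the balls $\{B(\gamma,r)\}_{\gamma \in \Gamma}$ are pairwise disjoint. Since $G$ is second-countable (being a manifold), any family of pairwise disjoint nonempty open sets is countable, so $\Gamma$ is countable.

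Next, I set $U := B(1, 2c_G r)$, where $c_G$ is the quasi-triangle constant appearing in the inequality $\rho(g_1g_2) \leq c_G(\rho(g_1)+\rho(g_2))$. To verify that $G = \bigcup_{\gamma \in \Gamma}\gamma U$, fix $x \in G$. By maximality of $\Gamma$, the set $\Gamma \cup \{x\}$ fails to have the disjointness property, so there exists $\gamma \in \Gamma$ with $B(x,r) \cap B(\gamma,r) \neq \emptyset$. Picking $y$ in this intersection and applying the quasi-triangle inequality gives $\dist(\gamma,x) \leq c_G(\dist(\gamma,y)+\dist(y,x)) < 2c_G r$, so $x \in B(\gamma, 2c_G r) = \gamma U$.

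For bounded multiplicity, suppose $x \in \gamma U$ for some $\gamma \in \Gamma$; then $\dist(\gamma,x) < 2c_G r$, so by the quasi-triangle inequality the ball $B(\gamma,r)$ is contained in $B(x, R)$ for $R := c_G(2c_G r + r)$, a constant independent of $x$. Since the balls $\{B(\gamma,r)\}_{\gamma \in \Gamma}$ are pairwise disjoint and each has Haar measure equal to $|B(1,r)| > 0$, we obtain
\[
|\{\gamma \in \Gamma : x \in \gamma U\}| \cdot |B(1,r)| \;=\; \sum_{\gamma:\, x\in\gamma U}|B(\gamma,r)| \;\leq\; |B(x,R)| \;=\; |B(1,R)|,
\]
where the last equality again uses left-invariance of Haar measure. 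Hence
\[
\sup_{x \in G}|\{\gamma \in \Gamma : x \in \gamma U\}| \;\leq\; \frac{|B(1,R)|}{|B(1,r)|} \;<\; \infty,
\]
which completes the proof. I do not anticipate a genuine obstacle here; the only point requiring a little care is that $\dist$ is merely a quasi-metric, so one must track the constant $c_G$ in both the choice of $U$ and the definition of $R$.
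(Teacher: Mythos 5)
Your proof is correct and follows essentially the same greedy-packing argument as the paper: Zorn's lemma produces a maximal separated family, maximality yields the covering, and a volume-comparison using the translation-invariance of Haar measure bounds the multiplicity. The only cosmetic difference is that the paper takes $\Gamma$ maximal with respect to $1$-separation of points and packs with balls of radius $\tfrac{1}{2c_G}$, whereas you take $\Gamma$ maximal with respect to pairwise disjointness of $r$-balls and pack with those balls directly; you also explicitly record the (easy) countability of $\Gamma$ via second-countability, a point the paper leaves implicit.
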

\begin{proof} Let
$$D=\{A\subset G:\ {\rm dist}(g_1,g_2)\geq 1\mbox{ whenever }g_1,g_2\in A\}.$$

We claim that every chain in $D$ has a supremum. Indeed, let $\{A_i\}_{i\in\mathbb{I}}$ be a chain in $D$ and let $A=\cup_{i\in\mathbb{I}}A_i.$ Let $g_1,g_2\in A.$ It follows that $g_1\in A_{i_1},$ $g_2\in A_{i_2}$ for some $i_1,i_2\in\mathbb{I}.$ Since $\mathbb{I}$ is a directed set, it follows that there exists $i_3\in\mathbb{I}$ such that $A_{i_1},A_{i_2}\subset A_{i_3}.$ Hence, $g_1,g_2\in A_{i_3}$ and, hence, ${\rm dist}(g_1,g_2)\geq 1.$ Thus, $A\in D.$ 
	
By Zorn's lemma, there exists at least one maximal element of $D.$
	
If $A\in D$ is a maximal element, then we claim that $\{B(g,1)\}_{g\in A}$ covers $G$ with multiplicity at most 
$$c_G'=\frac{\nu(B(0,2c_G))}{\nu(B(0,\frac1{2c_G}))}.$$

Let $g\in G.$ Consider $A\cup\{g\}.$ Since $A$ is maximal, it follows that either $A\cup\{g\}=A$ or $A\cup\{g\}\notin D.$ In the first case, we have
$$g\in B(g,1)\subset \cup_{g'\in A}B(g',1).$$
In the second case, there exists $g'\in G.$ such that ${\rm dist}(g,g')<1.$ It follows that
$$g\in B(g',1)\subset \cup_{g'\in A}B(g',1).$$
Hence, $\{B(g,1)\}_{g\in A}$ covers $G.$
		
Let $g\in G$ and let
$$n(g)=|\{g'\in A:\ {\rm dist}(g,g')<1\}|.$$
Hence, there exist $\{g_k\}_{k=1}^{n(g)}\subset A$ such that ${\rm dist}(g,g_k)<1,$ $1\leq k\leq n(g).$ If $1\leq k_1,k_2\leq n(g)$ are distinct, then ${\rm dist}(g_{k_1},g_{k_2})\geq 1.$ Using quasi-triangle inequality, we obtain
$$B(g_{k_1},\frac1{2c_G})\cap B(g_{k_2},\frac1{2c_G})=\varnothing.$$
Thus,
$$\nu\Big(\cup_{k=1}^{n(g)}B(g_k,\frac1{2c_G})\Big)=n(g)\cdot \nu(B(0,\frac1{2c_G})).$$
On the other hand, it follows from quasi-triangle inequality that
$$B(g_k,\frac1{2c_G})\subset B(g_k,1)\subset B(g,2c_G).$$
Thus,
$$\nu\Big(\cup_{k=1}^{n(g)}B(g_k,\frac1{2c_G})\Big)\leq \nu(B(0,2c_G)).$$
Combining these inequalities, we obtain
$$n(g)\cdot \nu(B(0,\frac1{2c_G}))\leq \nu(B(0,2c_G)).$$
This provides a required bound on $n(g).$
\end{proof}

A pair $(U,\Gamma)$ as in Lemma \ref{locally_finite_coverings_exist} is called a {\it covering of bounded multiplicity}. We remind the reader that it is not necessary that $\Gamma$ be a subgroup of $G.$

\begin{definition} Let $(\Gamma,U)$ be a covering of bounded multiplicity as in Lemma \ref{locally_finite_coverings_exist}. For $p> 0$ and $q>0$ define
$$\|f\|_{\ell_p(L_q)} := \left(\sum_{\gamma\in \Gamma} \|f\|_{L_q(\gamma U)}^p\right)^{\frac1p}.$$
and
$$\ell_p(L_q)(G) := \{f \in L_{q,\loc}(G)\;:\; \|f\|_{\ell_p(L_q)} < \infty\}.$$
\end{definition}

\begin{definition} Define a sequence space $\ell_{p,\log}$ by setting
$$\|a\|_{p,\log}=\left(\sum_{n\geq0}\log(n+2)\mu(n,a)^p\right)^{\frac1p},$$
$$\ell_{p,\log}=\{a\in l_{\infty}:\ \|a\|_{p,\log}<\infty\}.$$
Here, $\{\mu(n,a)\}_{n=0}^\infty$ is the non-increasing rearrangement of the sequence $|a|.$
\end{definition}

\begin{definition} Let $(\Gamma,U)$ be a covering of bounded multiplicity as in Lemma \ref{locally_finite_coverings_exist}. For $p> 0$ and $q>0$ define
$$\|f\|_{\ell_{p,\log}(L_q)(G)} :=\Big\|\{\|f\|_{L_q(\gamma U)}\}_{\gamma\in\Gamma}\Big\|_{p,\log}.$$
and
$$\ell_{p,\log}(L_q)(G) := \{f \in L_{q,\loc}(G)\;:\; \|f\|_{\ell_{p,\log}(L_q)} < \infty\}.$$
\end{definition}

As indicated by the notation, the space $\ell_{p}(L_q)(G)$ is independent of the choice of covering $(\Gamma,U).$ We prove this in Lemma \ref{lpL_q independence lemma}. Similarly, $\ell_{p,\log}(L_q)(G)$ is independent of the choice of covering $(\Gamma,U),$ but we omit the proof.

To demonstrate this, we use the following result. The translation-invariance of the measure $\nu$ is used in an essential way.
\begin{lemma}\label{fgamma cardinality lemma} Let $(\Gamma,U)$ and $(\Gamma',U')$ be coverings of bounded multiplicity. For every $\gamma\in\Gamma,$ set
$$F_{\gamma}=\{\gamma'\in\Gamma': \gamma U\cap \gamma'U'\neq\varnothing\}.$$
We have
$$\sup_{\gamma\in\Gamma}|F_{\gamma}|<\infty.$$
\end{lemma}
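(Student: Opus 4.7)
The plan is to use translation invariance of the Haar measure $\nu$ together with the bounded multiplicity of the covering $(\Gamma',U')$. The key observation is that if $\gamma' \in F_\gamma$, then there exist $u \in U$ and $u' \in U'$ with $\gamma u = \gamma' u'$, which forces $\gamma' \in \gamma U (U')^{-1}$. Consequently,
\[
\bigcup_{\gamma' \in F_\gamma} \gamma' U' \subseteq \gamma \cdot U(U')^{-1} U' =: \gamma W,
\]
where $W = U(U')^{-1} U'$ is a bounded subset of $G$ (since $U$ and $U'$ are bounded) that does not depend on $\gamma$.

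Next, I would count using the Haar measure. Since translation is measure-preserving and $\nu(\gamma' U') = \nu(U')$, one has
\[
|F_\gamma| \cdot \nu(U') = \sum_{\gamma' \in F_\gamma} \nu(\gamma' U') = \int_G \Big(\sum_{\gamma' \in F_\gamma} \chi_{\gamma' U'}\Big) d\nu.
\]
The integrand is supported in $\gamma W$, and is pointwise bounded by the full multiplicity constant $M := \sup_{x \in G} |\{\gamma' \in \Gamma' : x \in \gamma' U'\}|$ of the covering $(\Gamma',U')$, which is finite by the defining property of a covering of bounded multiplicity. Therefore
\[
|F_\gamma| \cdot \nu(U') \leq M \cdot \nu(\gamma W) = M \cdot \nu(W),
\]
again using the translation invariance of $\nu$. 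This yields the uniform bound $|F_\gamma| \leq M \nu(W)/\nu(U')$, independent of $\gamma \in \Gamma$.

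There is no real obstacle here: the argument is a standard volume-packing estimate. The only thing worth verifying is that $W$ is bounded (which follows because $U$, $U'$ are bounded and the group operations are continuous) and that $\nu(U') > 0$ (which holds because $U'$ is open and non-empty). Since both $U$ and $U'$ are fixed throughout, the final constant depends only on the two chosen coverings.
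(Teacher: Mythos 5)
Your proof is correct and follows essentially the same strategy as the paper's: bound $\bigcup_{\gamma'\in F_\gamma}\gamma'U'$ by a left-translate of a fixed bounded set, then count via translation invariance of $\nu$ and the bounded multiplicity of $(\Gamma',U')$. The only cosmetic difference is that the paper expresses the containing set as a quasi-metric ball $B(x_0,r_{U,U'})$ using the homogeneous quasi-distance, while you use the product set $\gamma\, U(U')^{-1}U'$; both yield the same packing estimate.
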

\begin{proof} Fix $x_0\in\gamma U.$ If $\gamma U\cap \gamma' U'\neq\varnothing,$ then there exists $y\in \gamma U\cap \gamma' U'\neq\varnothing.$ Since $x_0,y\in \gamma U,$ it follows that ${\rm dist}(x,y)\leq {\rm diam}(U).$ If $z\in \gamma'U',$ then ${\rm dist}(y,z)\leq {\rm diam}(U').$ Thus,
$${\rm dist}(x_0,z)\leq c_G({\rm diam}(U)+{\rm diam}(U'))\stackrel{def}{=}r_{U,U'}.$$
Hence,
$$\gamma'U'\subset B(x_0,r_{U,U'}),\quad\gamma'\in F_{\gamma}.$$
In other words,
$$A\subset B(x_0,r_{U,U'}),\quad A=\cup_{\gamma'\in F_{\gamma}}\gamma'U'.$$

Let 
$$n'(x)=|\{\gamma'\in\Gamma':\ x\in \gamma' U'\}|\mbox{ and let }n'=\sup_{x\in\Gamma'}n'(x)<\infty.$$
We have
$$\sum_{\gamma'\in F_{\gamma}}\nu(U')=\sum_{\gamma'\in F_{\gamma}}\int_A\chi_{\gamma'U'}=\int_A\sum_{\gamma'\in F_{\gamma}}\chi_{\gamma'U'}\leq\int_A\sum_{\gamma'\in\Gamma'}\chi_{\gamma'U'}.$$
For every $x\in G,$ we have
$$\sum_{\gamma'\in\Gamma'}\chi_{\gamma'U'}=n'(x)\leq n'.$$
Thus,
$$\sum_{\gamma'\in F_{\gamma}}\nu(U')\leq \int_A n'=n'\nu(A)\leq n'\nu(B(x_0,r_{U,U'}))=n'\nu(B(1_G,r_{U,U'})).$$
In other words,
$$|F_{\gamma}|\leq n'\frac{\nu(B(1_G,r_{U,U'}))}{\nu(U')}.$$
\end{proof}

\begin{lemma}\label{lpL_q independence lemma}
The definition of $\ell_p(L_q)(G)$ is independent of the choice of covering $(\Gamma,U),$ in the sense that if $(\Gamma,U)$ and $(\Gamma',U')$ are coverings of bounded multiplicity there exist constants $c,C>0$ (depending on the coverings) such that
    \[
        c\left(\sum_{\gamma\in \Gamma'} \|f\|_{L_q(\gamma U')}^p\right)^{1/p} \leq \left(\sum_{\gamma\in \Gamma} \|f\|_{L_q(\gamma U)}^p\right)^{1/p}\leq C\left(\sum_{\gamma\in \Gamma'} \|f\|_{L_q(\gamma U')}^p\right)^{1/p}.
    \]
\end{lemma}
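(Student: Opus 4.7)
The plan is to prove the right inequality, with the left inequality following by symmetry (swap the roles of $(\Gamma,U)$ and $(\Gamma',U')$). The key tool is Lemma \ref{fgamma cardinality lemma}, together with its symmetric version which states that
\[
    N' := \sup_{\gamma' \in \Gamma'} |\{\gamma \in \Gamma \;:\; \gamma U \cap \gamma' U' \neq \varnothing\}| < \infty;
\]
the proof of this bound is identical to that of Lemma \ref{fgamma cardinality lemma} with the roles of the two coverings swapped. I denote by $N := \sup_{\gamma \in \Gamma}|F_\gamma|$ the constant from Lemma \ref{fgamma cardinality lemma}.

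First, since $\{\gamma' U'\}_{\gamma' \in \Gamma'}$ covers $G$, I would write $\gamma U \subseteq \bigcup_{\gamma' \in F_\gamma} \gamma' U'$, so
\[
    \|f\|_{L_q(\gamma U)}^q \leq \sum_{\gamma' \in F_\gamma} \|f\|_{L_q(\gamma' U')}^q.
\]
Next I would pass from $\ell_q$-sums to $\ell_p$-sums over the finite set $F_\gamma$. If $p \leq q$, subadditivity of $t \mapsto t^{p/q}$ yields
\[
    \|f\|_{L_q(\gamma U)}^p \leq \sum_{\gamma' \in F_\gamma} \|f\|_{L_q(\gamma' U')}^p,
\]
while if $p > q$ the finite-dimensional power-mean inequality combined with $|F_\gamma| \leq N$ gives
\[
    \|f\|_{L_q(\gamma U)}^p \leq N^{p/q - 1}\sum_{\gamma' \in F_\gamma} \|f\|_{L_q(\gamma' U')}^p.
\]
In either case there is a constant $C_1$, depending only on $N$, $p$, and $q$, so that $\|f\|_{L_q(\gamma U)}^p \leq C_1 \sum_{\gamma' \in F_\gamma} \|f\|_{L_q(\gamma' U')}^p$.

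Finally I would sum over $\gamma \in \Gamma$ and interchange the order of summation:
\[
    \sum_{\gamma \in \Gamma} \|f\|_{L_q(\gamma U)}^p \leq C_1 \sum_{\gamma' \in \Gamma'} \bigl|\{\gamma \in \Gamma \;:\; \gamma' \in F_\gamma\}\bigr| \cdot \|f\|_{L_q(\gamma' U')}^p \leq C_1 N' \sum_{\gamma' \in \Gamma'} \|f\|_{L_q(\gamma' U')}^p,
\]
which yields the right-hand inequality with $C = (C_1 N')^{1/p}$. The left-hand inequality is obtained by exchanging the roles of the two coverings. There is no real obstacle here: the entire argument reduces to combining the uniform finiteness of the overlap numbers (Lemma \ref{fgamma cardinality lemma}) with a careful use of either subadditivity or the power-mean inequality, depending on the relative sizes of $p$ and $q$.
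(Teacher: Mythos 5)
Your proof is correct and follows the same strategy as the paper's: bound each local norm $\|f\|_{L_q(\gamma U)}$ by a finite sum of local norms $\|f\|_{L_q(\gamma' U')}$ over the overlapping cells, pass to $p$-th powers using the uniform bound on $|F_\gamma|$, then sum and interchange the order of summation using the symmetric bound on $|F_{\gamma'}|$. Your derivation of the intermediate inequality $\|f\|_{L_q(\gamma U)}^q \leq \sum_{\gamma'\in F_\gamma}\|f\|_{L_q(\gamma'U')}^q$ via the set inclusion $\gamma U\subseteq\bigcup_{\gamma'\in F_\gamma}\gamma'U'$ and countable subadditivity of the integral is slightly cleaner than the paper's case-split on $q\geq1$ versus $q<1$, and your explicit handling of $p\leq q$ versus $p>q$ makes transparent a power-transition step that the paper elides, but the approach is essentially identical.
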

\begin{proof} By symmetry, it suffices to prove the upper bound. We have
$$\|f\|_{L_q(\gamma U)}=\|f\chi_{\gamma U}\|_q.$$
By the definition of $F_{\gamma}$ in Lemma \ref{fgamma cardinality lemma}, we have
$$\chi_{\gamma U}\leq \sum_{\gamma'\in F_{\gamma}}\chi_{\gamma'U'}.$$
If $q\geq1,$ then it follows from the triangle inequality that
$$\|f\|_{L_q(\gamma U)}\leq \|\sum_{\gamma'\in F_{\gamma}}f\chi_{\gamma'U'}\|_q\leq\sum_{\gamma'\in F_{\gamma}}\|f\chi_{\gamma'U'}\|_q=\sum_{\gamma'\in F_{\gamma}}\|f\|_{L_q(\gamma'U')}.$$
If $q\in(0,1),$ then it follows from the $q$-convexity of the $L_q$ quasi-norm that
$$\|f\|_{L_q(\gamma U)}^q\leq \|\sum_{\gamma'\in F_{\gamma}}f\chi_{\gamma'U'}\|_q^q\leq\sum_{\gamma'\in F_{\gamma}}\|f\chi_{\gamma'U'}\|_q^q=\sum_{\gamma'\in F_{\gamma}}\|f\|_{L_q(\gamma'U')}^q.$$	
In either case, it follows from Lemma \ref{fgamma cardinality lemma} that
$$\|f\|_{L_q(\gamma U)}^q\leq c_{p,q,U,U'}\sum_{\gamma'\in F_{\gamma}}\|f\|_{L_q(\gamma'U')}^p.$$
Thus,
$$\sum_{\gamma\in\Gamma}\|f\|_{L_q(\gamma U)}^q\leq c_{p,q,U'U'} \sum_{\gamma\in\Gamma}\sum_{\gamma'\in F_{\gamma}}\|f\|_{L_q(\gamma'U')}^p=c_{p,q,U,U'}\sum_{\gamma'\in\Gamma'}\|f\|_{L_q(\gamma'U')}^p\sum_{\substack{\gamma\in\Gamma\\ \gamma'\in F_{\gamma}}}1.$$

Let
$$F_{\gamma'}=\{\gamma\in\Gamma:\ \gamma U\cap\gamma'U'\neq\varnothing\}.$$
It is immediate that $\gamma'\in F_{\gamma}$ iff $\gamma\in F_{\gamma'}.$ Thus,
$$\sum_{\substack{\gamma\in\Gamma\\ \gamma'\in F_{\gamma}}}1=|F_{\gamma'}|.$$
By the preceding paragraph and Lemma \ref{fgamma cardinality lemma}, we have
$$\sum_{\gamma\in\Gamma}\|f\|_{L_q(\gamma U)}^q\leq c_{p,q,U,U'}\sum_{\gamma'\unboldmath\Gamma'}\|f\|_{L_q(\gamma'U')}^p|F_{\gamma'}|\leq c_{p,q,U,U'}\cdot c_{U',U}\sum_{\gamma'\unboldmath\Gamma'}\|f\|_{L_q(\gamma'U')}^p.$$
\end{proof}
By an identical argument, we may also prove that the space $\ell_{p,\log}(L_q)(G)$ is also independent of the choice of $(\Gamma,U).$

\subsection{Cwikel estimate in $\mathcal{L}_p$ and $\Lc_{p,\infty}$}
We now explain how to remove the assumption of compact support from Lemma \ref{less_simple_cwikel}. This will be a consequence of some operator inequalities, in a similar manner to
\cite{LeSZ-cwikel}. The following result is an immediate combination of \cite[Lemma 3.3.7]{LSZ} and \cite[Proposition 2.7]{LeSZ-cwikel}. We provide a short argument using the terminology
of majorisation and direct sum. For further details see \cite{LeSZ-cwikel}.
\begin{lemma}\label{direct_sum_lemma} Let $0<p<2$ and let $\{T_n\}_{n\geq0}$ be such that
$$T_{n_1}^{\ast}T_{n_2}=0,\quad n_1\neq n_2.$$
We have
$$\|\sum_{n\geq0}T_n\|_p\leq(\sum_{n\geq0}\|T_n\|_p^p)^\frac1p,\quad \|\sum_{n\geq0}T_n\|_{p,\infty}\leq c_p(\sum_{n\geq0}\|T_n\|_{p,\infty}^p)^{\frac1p}.$$
Here, the series in the left hand side converge in in $\mathcal{L}_p$ (respectively, in $\mathcal{L}_{p,\infty}$) provided that numerical series on the right hand side converge.
\end{lemma}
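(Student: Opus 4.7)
The plan is to factor the argument through the direct sum $\bigoplus_n T_n$ on $\bigoplus_n H$, using the orthogonality hypothesis $T_{n_1}^\ast T_{n_2}=0$ $(n_1\neq n_2)$ to make this comparison effective. The first move is to observe that, under the hypothesis, the ranges of the $T_n$ are pairwise orthogonal, so that for the partial sums $S_N:=\sum_{n=0}^N T_n$ every cross term vanishes and
$$S_N^\ast S_N=\sum_{n=0}^N T_n^\ast T_n.$$
Equivalently, $S_N$ is unitarily equivalent (via the identification $\overline{\mathrm{range}(S_N)}\cong \bigoplus_{n=0}^N \overline{\mathrm{range}(T_n)}$) to the column operator $\xi\mapsto(T_0\xi,\ldots,T_N\xi)$, and so $\mu(S_N)$ may be computed from the positive operator $\sum_{n=0}^N T_n^\ast T_n$.

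For the $\Lc_p$ estimate with $0<p\leq 2$, this reduces the claim to a Rotfel'd-type subadditivity
$$\Tr\!\left(\!\left(\sum_n A_n\right)^{\!p/2}\right)\leq\sum_n \Tr\bigl(A_n^{p/2}\bigr),$$
valid for any family $\{A_n\}$ of positive operators, since the function $t\mapsto t^{p/2}$ is concave and vanishes at $0$ (recall $p/2\leq 1$). Specialising to $A_n=T_n^\ast T_n$ and using $\Tr((T_n^\ast T_n)^{p/2})=\|T_n\|_p^p$ gives the first bound for the partial sums.

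For the weak-$\Lc_{p,\infty}$ estimate with $0<p<2$, I would translate via the identity $\|B\|_{p,\infty}^p=\|B^\ast B\|_{p/2,\infty}^{p/2}$, so that the problem becomes one about a sum of positive operators in $\Lc_{p/2,\infty}$. Since $p/2<1$, this quasi-Banach ideal admits an equivalent $(p/2)$-norm (by Aoki--Rolewicz, or concretely via a distribution-function computation on the concatenated sequence $\{\mu(k,T_n)\}_{n,k}$, which also yields the direct-sum bound $\|\bigoplus_n T_n\|_{p/2,\infty}^{p/2}\leq c_p\sum_n \|T_n\|_{p/2,\infty}^{p/2}$). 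The resulting $(p/2)$-triangle inequality applied to $\{T_n^\ast T_n\}$ gives
$$\|S_N\|_{p,\infty}^p=\|S_N^\ast S_N\|_{p/2,\infty}^{p/2}\leq c_p\sum_{n=0}^N \|T_n^\ast T_n\|_{p/2,\infty}^{p/2}=c_p\sum_{n=0}^N \|T_n\|_{p,\infty}^p.$$

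Convergence of the infinite series in each ideal then follows by applying the same bound to tails $S_N-S_M=\sum_{n=M+1}^N T_n$, which still satisfy the orthogonality, so $\{S_N\}$ is Cauchy in the ideal whenever the right-hand series converges; passing $N\to\infty$ yields the claimed inequalities. The delicate point is to ensure the weak-ideal constant $c_p$ is independent of the number of summands when passing from finite to countable sums; this is precisely what the two cited results package (one supplying the direct-sum/majorisation bound in the quasi-Banach setting, the other converting orthogonality into the identity $S_N^\ast S_N=\sum T_n^\ast T_n$), so beyond these ingredients the assembly is immediate.
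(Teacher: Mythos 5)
Your argument follows essentially the same route as the paper: use the orthogonality hypothesis to pass to $|S_N|^2=\sum_n T_n^\ast T_n$, then reduce to a quasi-norm estimate for sums of positive operators in $\Lc_{p/2,\infty}$ (resp.\ a trace estimate for $\Lc_p$). Your treatment of the $\Lc_p$ case via Rotfel'd subadditivity of $t\mapsto t^{p/2}$ is clean, and more explicit than the paper, which addresses only the weak case in its displayed argument.

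There is, however, one real imprecision in the weak case. The crucial step is the inequality
$\big\|\sum_n A_n\big\|_{p/2,\infty}^{p/2}\leq c_p\sum_n\|A_n\|_{p/2,\infty}^{p/2}$ for $A_n=T_n^\ast T_n\geq 0$, with $c_p$ \emph{independent of the number of summands} and with the \emph{sharp} exponent $p/2$. Aoki--Rolewicz cannot supply this: the quasi-triangle constant of $\Lc_{q,\infty}$ is $\kappa=2^{1/q}$, so Aoki--Rolewicz delivers only an $r$-norm with $r=\tfrac{1}{1+\log_2\kappa}=\tfrac{q}{1+q}<q$. For $q=p/2$ this gives an $\ell_{p'}$-type bound on $\{\|T_n\|_{p,\infty}\}$ for some $p'<p$, which is strictly weaker than the statement. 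Likewise, the "distribution-function computation on the concatenated sequence" you mention only gives the direct-sum bound $\|\bigoplus_n A_n\|_{q,\infty}^{q}\leq\sum_n\|A_n\|_{q,\infty}^{q}$; it does not by itself convert the \emph{sum} $\sum_n A_n$ into the direct sum. That conversion is the nontrivial ingredient: for positives one has the Hardy--Littlewood submajorisation $\bigoplus_n A_n\prec\prec\sum_n A_n$ (the paper's citation of \cite[Lemma 3.3.7]{LSZ}), and since both sides have the same trace this gives a \emph{reverse} comparison of tail sums, which for $q<1$ yields $\|\sum_n A_n\|_{q,\infty}\leq c_q\|\bigoplus_n A_n\|_{q,\infty}$ (the paper's citation of \cite[Proposition 2.7]{LeSZ-cwikel}). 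You gesture at "two cited results" packaging exactly this, but as written the mechanism is mislabelled: the missing lemma is the sum-versus-direct-sum comparison for positives in $\Lc_{q,\infty}$, $q<1$, not Aoki--Rolewicz. With that corrected, the assembly (including the Cauchy-sequence argument for the infinite series) is sound and coincides with the paper's proof.
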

\begin{proof} 
Denote for brevity $T=\sum_{n\geq0}T_n.$ We have
$$|T|^2=\sum_{n_1,n_2\geq0}T_{n_1}^{\ast}T_{n_2}=\sum_{n\geq0}|T_n|^2.$$
Using \cite[Lemma 3.3.7]{LSZ}, we have
$$\bigoplus_{n\geq0}|T_n|^2\prec\sum_{n\geq0}|T_n|^2.$$
From \cite[Proposition 2.7]{LeSZ-cwikel}, we obtain
$$\|T\|_{p,\infty}^2=\||T|^2\|_{\frac{p}{2},\infty}\leq c_p\|\bigoplus_{n\geq0}|T_n|^2\|_{\frac{p}{2},\infty}=\|\bigoplus_{n\geq0}T_n\|_{p,\infty}^2.$$
The assertion follows from another standard inequality
$$\|\bigoplus_{n\geq0}T_n\|_{p,\infty}\leq(\sum_{n\geq0}\|T_n\|_{p,\infty}^p)^{\frac1p}.$$
\end{proof}

\begin{proof}[Proof of Theorem \ref{main_nontrivial_cwikel_theorem}.\eqref{mnctb}]
Let $(\Gamma,U)$ be a covering with bounded multiplicity for $G$. Since $\Gamma$ is countable, we choose an enumeration $\Gamma=\{\gamma_k\}_{k\geq0}.$ Set
$$A_{-1}=\varnothing,\quad A_k=(\gamma_nU)\backslash\cup_{l<k}\gamma_lU.$$
Set
$$T_k=M_{f\chi_{A_k}}(1-\Delta)^{-\frac{d_{\rm hom}}{2p}},\quad n\geq0.$$
It is immediate that
$$\sum_{k\geq0}T_k=M_f(1-\Delta)^{-\frac{d_{\rm hom}}{2p}}.$$

By Lemma \ref{direct_sum_lemma}, we have
\begin{align*}
    \|M_f(1-\Delta)^{-\frac{d_{\rm hom}}{2p}}\|_{p,\infty}&\leq c_p^{\frac12}(\sum_{k\geq0}\|M_{f\chi_{A_k}}(1-\Delta)^{-\frac{d_{\rm hom}}{2p}}\|_{p,\infty}^p)^{\frac1p}\\
                                                          &\leq c_p^{\frac12}(\sum_{k\geq0}\|M_{f\chi_{\gamma_kU}}(1-\Delta)^{-\frac{d_{\rm hom}}{2p}}\|_{p,\infty}^p)^{\frac1p}\\
                                                          &=c_p^{\frac12}(\sum_{k\geq0}\|M_{T_{\gamma_k}f\cdot \chi_U}(1-\Delta)^{-\frac{d_{\rm hom}}{2p}}\|_{p,\infty}^p)^{\frac1p}.
\end{align*}
Here, $T_{\gamma_k}f$ is the shift of $f$ by $\gamma_k.$ The assertion follows now from Lemma \ref{less_simple_cwikel}.
\end{proof}

\begin{proof}[Proof of Theorem \ref{main_nontrivial_cwikel_theorem}.\eqref{mnctc}] It is immediate that
$$\Big\|M_f(1-\Delta)^{-\frac{d_{\hom}}{4}}\|_{2,\infty}^2=\|(1-\Delta)^{-\frac{d_{\hom}}{4}}M_{|f|^2}(1-\Delta)^{-\frac{d_{\hom}}{4}}\|_{1,\infty}.$$
Let $(\gamma_k)_{k\geq1}$ be enumeration of $\Gamma$ such that the sequence
$$\{\|f\|_{L_q(\gamma_kU)}\}_{k\geq1}$$
is decreasing. We have
$$1\leq \sum_{k\geq1}\chi_{\gamma_k U}.$$
Thus,
$$(1-\Delta)^{-\frac{d_{\hom}}{4}}M_{|f|^2}(1-\Delta)^{-\frac{d_{\hom}}{4}}\leq \sum_{k\geq1}(1-\Delta)^{-\frac{d_{\hom}}{4}}M_{|f|^2\chi_{\gamma_k U}}(1-\Delta)^{-\frac{d_{\hom}}{4}}.$$
In particular, we have
$$\Big\|M_f(1-\Delta)^{-\frac{d_{\hom}}{4}}\|_{2,\infty}^2\leq\| \sum_{k\geq1}(1-\Delta)^{-\frac{d_{\hom}}{4}}M_{|f|^2\chi_{\gamma_k U}}(1-\Delta)^{-\frac{d_{\hom}}{4}}\|_{1,\infty}.$$
By Proposition 5.4 in \cite{LeSZ-cwikel}, we  have the following replacement for triangle inequality in $\mathcal{L}_{1,\infty}:$
$$\|\sum_{k\geq1}T_k\|_{1,\infty}\leq 4\sum_{k\geq1}(1+\log(k))\|T_k\|_{1,\infty}.$$
Taking into account the obvious inequality
$$4(1+\log(k))\leq 12\log(k+1),\quad k\geq1,$$
we write
\begin{align*}
    \Big\|M_f(1-\Delta)^{-\frac{d_{\hom}}{4}}&\|_{2,\infty}^2\leq 12 \sum_{k\geq1}\log(k+2)\|(1-\Delta)^{-\frac{d_{\hom}}{4}}M_{|f|^2\chi_{\gamma_k U}}(1-\Delta)^{-\frac{d_{\hom}}{4}}\|_{1,\infty}\\
                                             &=12\sum_{k\geq1}\log(k+2)\|(1-\Delta)^{-\frac{d_{\hom}}{4}}M_{T_{\gamma_k}(|f|^2\chi_{\gamma_k U})}(1-\Delta)^{-\frac{d_{\hom}}{4}}\|_{1,\infty}.
\end{align*}
Here, $T_{\gamma_g}f$ is the shift of $f$ by $\gamma_k.$ It follows now from Lemma \ref{less_simple_cwikel} that
\begin{align*}
\Big\|M_f(1-\Delta)^{-\frac{d_{\hom}}{4}}&\|_{2,\infty}^2 \leq c_{q,U}\sum_{k\geq1}\log(k+2)\|T_{\gamma_k}(|f|\chi_{\gamma_k U})\|_{L_q(U)}^2\\
                                         &=c_{q,U}\sum_{k\geq1}\|f\|_{L_q(\gamma_kU)}^2=c_U\|f\|_{\ell_{2,\log}}^2.
\end{align*}
Here, the last inequality is due to the particular choice of the enumeration $\gamma.$
\end{proof}

\section{Spectral asymptotic formulae}
In this section we will prove the asymptotic formula in Theorem \ref{main_asymptotic_formula}. The proof will be performed using the Wiener-Ikehara Tauberian theorem and certain
operator estimates from \cite{SZ-asterisque}. 
It will be helpful to refer to the following

The basic tool we use will be the following:
\begin{lemma}\label{trace_formula}
    Let $f \in \ell_1(L_q)(G)$ and $g \in C_0(\Rl_+)$ be such that $t\mapsto (1+t)^N|g(t)| \in C_0(\Rl_+)$ for some $N > \frac{d_{\hom}}{2}.$
    Then
    \[
        \Tr(M_fg(-\Delta)) = \left(\int_{G} f\right) \tau(g(-\Delta)).
    \]
\end{lemma}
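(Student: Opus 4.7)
The plan is to identify the convolution kernel $\check g$ of $g(-\Delta)$, to show that $\tau(g(-\Delta))=\check g(1)$, then to prove the trace formula for $f\in C_c(G)$ via a Hilbert-Schmidt factorisation, and finally to extend by density. First, the decay $|g(t)|\lesssim(1+t)^{-N}$ with $N>d_{\hom}/2$ admits the factorisation $g=g_1\cdot g_2$ with $g_1(t)=(1+t)^{-N/2}$ and $g_2(t)=(1+t)^{N/2}g(t)$: the first factor lies in $L_2(\Rl_+, t^{d_{\hom}/2-1}\,dt)$ exactly because $N>d_{\hom}/2$, and the second because $|g_2(t)|^2\leq\|(1+t)^N g\|_\infty|g(t)|$ together with $g\in L_1(\Rl_+, t^{d_{\hom}/2-1}\,dt)$. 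Lemma \ref{christ_restatement} therefore provides $\check g_i\in L_2(G)$ with $g_i(-\Delta)=\lambda(\check g_i)$. Since $g(-\Delta)=g_1(-\Delta)g_2(-\Delta)=\lambda(\check g_1\ast\check g_2)$, the convolution kernel of $g(-\Delta)$ is $\check g:=\check g_1\ast\check g_2$; as the convolution of two $L_2$ functions on a Lie group is bounded (Cauchy-Schwarz) and continuous (strong continuity of translation in $L_2$), it is a bounded continuous function on $G$. Combining $\tau(\lambda(h))=h(1)$ with $\lambda(h_1)\lambda(h_2)=\lambda(h_1\ast h_2)$ yields $\tau(g(-\Delta))=\check g(1)$.

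Next, for $f\in C_c(G)$ I write $M_f g(-\Delta)=(M_f\lambda(\check g_1))\cdot\lambda(\check g_2)$. Since $f\in L_2(G)$ and $\check g_i\in L_2(G)$, both factors are Hilbert-Schmidt by Lemma \ref{L_2_cwikel}, so their product is trace class. Applying the Hilbert-Schmidt trace identity with kernels $K_A(x,y)=f(x)\check g_1(xy^{-1})$ and $K_B(x,y)=\check g_2(xy^{-1})$,
\[
    \Tr(M_f g(-\Delta))=\int_G\int_G f(x)\check g_1(xy^{-1})\check g_2(yx^{-1})\,dy\,dx.
\]
The substitution $z=xy^{-1}$ at fixed $x$ (measure-preserving on the unimodular group $G$, and transforming $yx^{-1}$ into $z^{-1}$) factorises the double integral as $\bigl(\int_G f\bigr)\cdot\int_G\check g_1(z)\check g_2(z^{-1})\,dz=\bigl(\int_G f\bigr)(\check g_1\ast\check g_2)(1)=\bigl(\int_G f\bigr)\tau(g(-\Delta))$, establishing the identity for $C_c$ data.

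To extend, I decompose $f=\sum_k f_k$ with $f_k$ supported in $\gamma_k U$, using the covering of bounded multiplicity from Lemma \ref{locally_finite_coverings_exist}. Combining Lemma \ref{L_2_cwikel} with H\"older's inequality on the compact cell $\gamma_k U$ yields a uniform bound $\|M_{f_k} g(-\Delta)\|_1\leq C\|f_k\|_{L_q(\gamma_k U)}$, and summing gives $\|M_f g(-\Delta)\|_1\leq C\|f\|_{\ell_1(L_q)}$. Since $C_c(G)$ is dense in $\ell_1(L_q)(G)$, and $f\mapsto\int_G f$ is continuous in this norm (as $\ell_1(L_q)\hookrightarrow L_1$ via H\"older on each cell), the $C_c$-identity propagates to all of $\ell_1(L_q)(G)$.

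The main obstacle is the uniform cell estimate with constant $C$ independent of $k$. The Hilbert-Schmidt route is cleanest when $q\geq 2$; for smaller $q$ one should either exploit the right-translation invariance of $g(-\Delta)$ (which commutes with every $R_\gamma$) to reduce to a fixed reference cell, or invoke the sharper Schatten bounds of Theorem \ref{cwikel schatten_theorem}, absorbing the excess decay of $g$ (available because $N>d_{\hom}/2$ is strictly supercritical) into a bounded piece of the functional calculus of $-\Delta$.
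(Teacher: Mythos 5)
Your overall strategy (factorise $g$ so that $\check g=\check g_1\ast\check g_2$ is continuous, compute for compactly supported $f$ directly from the kernel, then extend by density) is genuinely different from the paper's proof, which instead uses the trace-norm estimate from Theorem \ref{cwikel schatten_theorem}(iii) to reduce, via Stone--Weierstrass in the weighted sup norm and density of $C^\infty_c$ in $\ell_1(L_q)$, to the single case $g(t)=e^{-st}$, where $g(-\Delta)=\lambda(h_s)$ and the heat kernel is smooth. Your route avoids the exponential reduction and the appeal to regularity of $h_s$, which is a genuine simplification if it can be made to work; however, there is a concrete gap in the middle step.

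You write that, since $f\in L_2(G)$ and $\check g_i\in L_2(G)$, both factors $M_f\lambda(\check g_1)$ and $\lambda(\check g_2)$ are Hilbert--Schmidt by Lemma \ref{L_2_cwikel}. That lemma shows $M_f T$ is Hilbert--Schmidt when $T\in L_2(\VN(G),\tau)$; it does not and cannot show that $\lambda(\check g_2)$ alone is Hilbert--Schmidt. Indeed for a non-compact unimodular $G$ the operator $\lambda(h)$ with $h\in L_2(G)\setminus\{0\}$ is never Hilbert--Schmidt on $L_2(G)$: its kernel is $h(xy^{-1})$, and $\int_G\int_G|h(xy^{-1})|^2\,dx\,dy=\|h\|_2^2\cdot\nu(G)=\infty$. (The noncommutative $L_2$-norm with respect to $\tau$ is not the operator Hilbert--Schmidt norm.) Consequently the formula $\Tr(AB)=\int\int K_A(x,y)K_B(y,x)\,dy\,dx$ is not justified for $A=M_f\lambda(\check g_1)$, $B=\lambda(\check g_2)$, even though your ensuing change of variables does land on the correct answer.

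This is fixable, and the repair is short. Split $f$ as well: put $f_1=\sqrt{|f|}$ and $f_2=\operatorname{sgn}(f)\sqrt{|f|}$ (both in $C_c(G)$, or regularise), and use cyclicity of the trace to write
\[
\Tr\bigl(M_f\lambda(\check g_1)\lambda(\check g_2)\bigr)=\Tr\bigl(\lambda(\check g_2)M_{f_1}\cdot M_{f_2}\lambda(\check g_1)\bigr),
\]
a product of two Hilbert--Schmidt operators with kernels $\check g_2(xy^{-1})f_1(y)$ and $f_2(x)\check g_1(xy^{-1})$. The double-integral trace formula is now legitimate, and the same substitution $z=xy^{-1}$ produces $\bigl(\int_G f\bigr)(\check g_1\ast\check g_2)(1)$. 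Your identification $\tau(g(-\Delta))=\check g(1)$ via the Plancherel identity \eqref{plancherel_identity} is fine.

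On the extension step you are candid that the uniform cell bound is incomplete. Two remarks. First, the $\mathcal{L}_1$-estimate $\|M_fg(-\Delta)\|_1\lesssim\|f\|_{\ell_1(L_q)}$ is exactly what the paper extracts by writing $M_fg(-\Delta)=M_f(1-\Delta)^{-N}\cdot(1-\Delta)^Ng(-\Delta)$ and invoking Theorem \ref{cwikel schatten_theorem}(iii) with $r=2N>d_{\hom}$ and $p=1$; this is cleaner than re-running a Hilbert--Schmidt argument per cell. Second, if you do want a per-cell argument by translation, be careful which translation: $g(-\Delta)$ is affiliated with $\VN(G)=\lambda(G)''$ and therefore commutes with the \emph{right} regular representation, not with $\lambda(\gamma)$. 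Conjugation by $\lambda(\gamma)$ only sends $g(-\Delta)$ to a conjugate operator, so reducing to a reference cell by left translation of $f$ (which is what matches the left-coset cells $\gamma U$ of Lemma \ref{locally_finite_coverings_exist}) is not as automatic as your last paragraph suggests. Using the paper's Schatten estimate sidesteps this entirely.
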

\begin{proof}
    Note that the assumptions on $f$ and $g$ imply that $M_fg(-\Delta)$ belongs to $\Lc_1(L_2(G)).$ Indeed, from Theorem \ref{main_nontrivial_cwikel_theorem}, when $N>\frac{d_{\hom}}{2}$ we have
    \[
        M_f(1-\Delta)^{-N} \in \Lc_1
    \]
    and thus
    \[
        M_fg(-\Delta) = M_f(1-\Delta)^{-N}\cdot (1-\Delta)^Ng(-\Delta)
    \]
    Functional calculus and the assumption on $g$ implies that there exists $N>\frac{d_{\hom}}{2}$ such that $(1-\Delta)^{N}g(-\Delta)$ is bounded,
    and thus $M_fg(-\Delta) \in \Lc_1.$ Moreover, we have the estimate
    \[
        \|M_fg(-\Delta)\|_{1} \leq C_{G}\|f\|_{\ell_1(L_q)(G)}\sup_{t>0} (1+t)^N |g(t)|.
    \]
    The Stone-Weierstrass theorem implies that for every $g \in C_0(\Rl_+)$ such that $t\mapsto (1+t)^{N} |g(t)|\in C_0(\Rl_+),$ there exists
    a sequence $\{g_n\}_{n=0}^\infty \subseteq \mathrm{span}(\{t\mapsto e^{-st}\}_{s >0})$ such that 
    \[
        \lim_{n\to\infty} \sup_{t>0} (1+t)^N |g_n(t)-g(t)| < \infty.
    \]    
    Since $C^\infty_c(G)$ is dense in $\ell_1(L_q)(G),$ it suffices to verify the formula for $f \in C^\infty_c(G)$ and $g(t) = e^{-st}$ for some $s>0.$
    
    To this end, $g(-\Delta) = e^{s\Delta}$ is equal to $\lambda(h_s),$ where $h_s\in C^\infty(G)\cap L_1(G)$ \cite[Theorem 4.2.7]{FischerRuzhansky2016} and $M_fg(-\Delta)$ has kernel function
    \[
        (\gamma,\eta)\mapsto f(\gamma)h_s(\gamma \eta^{-1}),\quad \gamma,\eta \in G.
    \]
    Since $f$ is compactly supported and the kernel is everywhere smooth, the trace of this operator is given by the integral along the diagonal $\gamma=\eta,$
    which yields
    \[
        \Tr(M_fg(-\Delta)) = \left(\int_{G} f\right) h_s(1).
    \]
    By definition, $\tau(g(-\Delta)) = h_s(1),$ and this completes the proof.    
\end{proof}

\begin{corollary}\label{specific_trace_formula}
    Let $z \in \Cplx$ such that $\Re(z)>d_{\hom}$ and let $0\leq f \in C^\infty_c(G).$ There exists a constant $c_G>0$ such that
    \[
        \Tr(M_f^{2z}(1-\Delta)^{-\frac{z}{2}}) = c_G\frac{\Gamma(\frac{z-d_{\hom}}{2})}{\Gamma(\frac{z}{2})}\int_{G} f^{2z}.
    \]
\end{corollary}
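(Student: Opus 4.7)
The plan is to combine Lemma \ref{trace_formula} with the trace formula of Corollary \ref{christ_trace_formula}, then recognise the resulting integral as a Beta function.

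First I would observe that, since $f\geq 0$ is scalar-valued, the operator $M_f^{2z}$ is nothing but $M_{f^{2z}}$, and because $f \in C^\infty_c(G)$ we have $f^{2z} \in C^\infty_c(G) \subset \ell_1(L_q)(G)$ for any $q$. Next, take $g(t) = (1+t)^{-z/2}$, which is smooth and vanishing at infinity; moreover $(1+t)^N |g(t)| = (1+t)^{N - \Re(z)/2}$ lies in $C_0(\Rl_+)$ as soon as $\tfrac{d_{\hom}}{2} < N < \tfrac{\Re(z)}{2}$, which is possible precisely because $\Re(z) > d_{\hom}$. Hence the hypotheses of Lemma \ref{trace_formula} are satisfied, and I obtain
\[
    \Tr\bigl(M_f^{2z}(1-\Delta)^{-z/2}\bigr) = \left(\int_G f^{2z}\right) \tau\bigl((1-\Delta)^{-z/2}\bigr).
\]

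The remaining task is to evaluate $\tau((1-\Delta)^{-z/2})$. This is exactly the setting of Corollary \ref{christ_trace_formula} applied to $g(t) = (1+t)^{-z/2}$: the function $g$ lies in $L_1(\Rl_+, t^{d_{\hom}/2-1}\,dt)$ by the condition $\Re(z) > d_{\hom}$ (the integrand behaves like $t^{d_{\hom}/2 - 1}$ near $0$ and like $t^{(d_{\hom} - \Re(z))/2 - 1}$ near infinity, both integrable). Thus
\[
    \tau\bigl((1-\Delta)^{-z/2}\bigr) = c_G \int_0^\infty (1+t)^{-z/2} \, t^{d_{\hom}/2 - 1}\, dt.
\]

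Finally, the change of variables $u = t/(1+t)$ converts the integral on the right-hand side into the Beta integral
\[
    \int_0^1 u^{d_{\hom}/2 - 1} (1-u)^{(z - d_{\hom})/2 - 1}\, du = B\!\left(\tfrac{d_{\hom}}{2}, \tfrac{z - d_{\hom}}{2}\right) = \frac{\Gamma(d_{\hom}/2)\,\Gamma((z-d_{\hom})/2)}{\Gamma(z/2)}.
\]
Absorbing the factor $\Gamma(d_{\hom}/2)$ into the constant $c_G$ yields the claimed identity. No substantive obstacle is anticipated; the only point requiring care is verifying the convergence and analyticity conditions in the half-plane $\Re(z) > d_{\hom}$, which is exactly where the hypothesis is used.
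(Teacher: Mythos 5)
Your proof is correct and follows essentially the same route as the paper: apply Lemma \ref{trace_formula} with $g(t)=(1+t)^{-z/2}$ to factor the trace, evaluate $\tau((1-\Delta)^{-z/2})$ via Corollary \ref{christ_trace_formula}, and recognise the resulting integral as a Beta function after the substitution $s=t/(1+t)$. The verification that $(1+t)^N|g(t)|\in C_0(\Rl_+)$ for some $N>d_{\hom}/2$ precisely when $\Re(z)>d_{\hom}$ is also what the paper relies on, if only implicitly.
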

\begin{proof}
By Corollary \ref{christ_trace_formula}, the substitution $s = \frac{t}{1+t}$ yields
\begin{align*}
    \tau((1-\Delta)^{-\frac{z}{2}})&=c_G\int_0^\infty (1+t)^{-\frac{z}{2}}t^{\frac{d_{\hom}}{2}-1}dt\\
                                   &= c_G\int_0^1 (1-s)^{\frac{z-d_{\hom}}{2}-1}s^{\frac{d_{\hom}}{2}-1}\,ds = c_G\mathrm{B}(\frac{z-d_{\hom}}{2},\frac{d_{\hom}}{2}).
\end{align*}
Here, ${\rm B}$ is the Beta function.
The assertion follows now from Corollary \ref{christ_trace_formula}, Lemma \ref{trace_formula} and the standard identity
$$\mathrm{B}(\frac{z-d_{\hom}}{2},\frac{d_{\hom}}{2}) =\frac{\Gamma(\frac{z-d_{\hom}}{2})\Gamma(\frac{d_{\hom}}{2})}{\Gamma(\frac{z}{2})}.$$   
\end{proof}
Note that the constant in the above corollary differs from that in Corollary \ref{christ_trace_formula} by a factor of $\Gamma\left(\frac{d_{\hom}}{2}\right).$

\begin{lemma}\label{zeta_function_analytic_continuation}
    The function
    \[
        z\mapsto \Tr(M_f^{2z}(1-\Delta)^{-\frac{z}{2}}),\quad \Re(z)>d_{\hom}
    \]
    admits meromorphic continuation to the half-plane $\Re(z)>d_{\hom}-2,$ with only a simple pole
    at the point $z = d_{\hom}$ and
    \[
        \mathrm{Res}_{z=d_{\hom}} \Tr(M_f^{2z}(1-\Delta)^{-\frac{z}{2}}) = c_G\int_{G} f^{2d_{\hom}}.
    \]
\end{lemma}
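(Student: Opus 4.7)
The plan is to read the meromorphic continuation off directly from Corollary \ref{specific_trace_formula}, which already presents the zeta function in closed form:
$$\Tr(M_f^{2z}(1-\Delta)^{-z/2}) = c_G \frac{\Gamma(\frac{z-d_{\hom}}{2})}{\Gamma(z/2)} \int_G f^{2z}, \quad \Re(z) > d_{\hom}.$$
So the task reduces to analysing the right-hand side as a function of $z$ in the wider half-plane $\Re(z) > d_{\hom} - 2$, identifying its poles and computing the residue at $z = d_{\hom}$.

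First I would handle the gamma factors. Since $1/\Gamma(z/2)$ is entire, the ratio $\Gamma(\frac{z-d_{\hom}}{2})/\Gamma(z/2)$ is meromorphic on $\Cplx$ with simple poles precisely at the poles of $\Gamma(\frac{z-d_{\hom}}{2})$, namely $z = d_{\hom} - 2k$ for $k = 0, 1, 2, \ldots$. In the half-plane $\Re(z) > d_{\hom} - 2$ the only surviving pole is the simple one at $z = d_{\hom}$. Second I would verify that $z \mapsto \int_G f^{2z}$ is holomorphic in this half-plane. With $0 \leq f \in C^\infty_c(G)$, set $f^{2z} := \exp(2z \log f)$ on $\{f>0\}$ and zero on $\{f = 0\}$. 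For $\Re(z) > 0$ we have $|f^{2z}| \leq \|f\|_\infty^{2\Re(z)}\chi_{\supp(f)}$, so the integral converges absolutely, and a routine Morera/dominated convergence argument yields holomorphy on $\Re(z) > 0$. Since $d_{\hom} \geq 2$ for every non-trivial stratified group, the strip $\Re(z) > d_{\hom}-2$ lies inside $\Re(z) > 0$, as required.

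To conclude, I would compute the residue at $z = d_{\hom}$ from the Laurent expansion $\Gamma(w) = w^{-1} + O(1)$ near $w = 0$, which gives $\Gamma(\frac{z-d_{\hom}}{2}) = \frac{2}{z-d_{\hom}} + O(1)$ near $z = d_{\hom}$. Consequently,
$$\mathrm{Res}_{z=d_{\hom}} \Tr(M_f^{2z}(1-\Delta)^{-z/2}) = \frac{2 c_G}{\Gamma(d_{\hom}/2)} \int_G f^{2d_{\hom}},$$
and absorbing the constant $2/\Gamma(d_{\hom}/2)$ into $c_G$, as permitted by the paper's convention that $c_G$ may change from line to line, delivers the stated formula.

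There is no serious obstacle here: the substantive analytic work was already carried out in establishing Corollary \ref{specific_trace_formula}. The only mild subtlety is justifying the holomorphy of the scalar integral $\int_G f^{2z}$, but this is routine given that $f$ is bounded, non-negative, and compactly supported.
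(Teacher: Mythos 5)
Your proposal is correct and follows essentially the same route as the paper: read the explicit formula from Corollary~\ref{specific_trace_formula}, note that $1/\Gamma$ is entire so the poles are those of $\Gamma(\frac{z-d_{\hom}}{2})$ (with the one at $z=d_{\hom}$ being the only one in the target half-plane), observe that $z\mapsto\int_G f^{2z}$ is holomorphic for $\Re(z)>0$, and extract the residue from $\Gamma(w)\sim w^{-1}$. Your added remark that $d_{\hom}\geq 2$ is needed so that the strip $\Re(z)>d_{\hom}-2$ sits inside the domain of holomorphy of $\int_G f^{2z}$ is a small but genuine clarification that the paper's own proof leaves implicit.
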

\begin{proof} Recall that the Gamma function is meromorphic on the complex plane $\Cplx,$ with simple poles located at $\{0,-1,-2,\ldots\},$ and the reciprocal $\frac{1}{\Gamma(z)}$ is entire. Since $f\geq 0,$ the function
\[
    z\mapsto \int_G f^{2z}
\]
is holomorphic in the half-plane $\Re(z)>0.$ Combining these facts, it follows that the function
\[
    z\mapsto \frac{\Gamma(\frac{z-d_{\hom}}{2})}{\Gamma(\frac{z}{2})}\int_{G} f^{2z},\quad \Re(z)>d_{\hom}
\]
admits analytic continuation to the set 
\[
    \{z \in \Cplx\;:\;\Re(z)>0\}\setminus \{z\in\Cplx\;:\; \frac{z-d_{\hom}}{2} = 0,-1,-2,\ldots\} = \{z \in \Cplx\;:\; \Re(z)>0\}\setminus (d_{\hom}-2\Itgr_+).
\]
It follows from the functional equation $\Gamma(z)=\frac{1}{z}\Gamma(z+1)$ that the pole of $\Gamma$ at zero has corresponding residue equal to $\Gamma(1)=1.$ Hence, the analytic continuation of the function
\[
    z\mapsto \frac{\Gamma(\frac{z-d_{\hom}}{2})}{\Gamma(\frac{z}{2})}\int_{G} f^{2z},\quad \Re(z)>d_{\hom}
\]
has a simple pole at $z = d_{\hom}$ with corresponding residue
\[
    \frac{2}{\Gamma(\frac{d_{\hom}}{2})}\int_{G} f^{2d_{\hom}}.
\]
The result now follows from Corollary \ref{specific_trace_formula}
\end{proof}

\begin{lemma}\label{asterisque_conditions}
Let $0\leq f \in C^\infty_c(G)$, and let $A=M_{f}^2,$ and $B=(1-\Delta)^{-\frac 12}.$
We have
\begin{enumerate}[{\rm (i)}]
    \item\label{cond1} $B^{d_{\hom}}A\in\mathcal{L}_{1,\infty};$
    \item\label{cond2} $B^{q-2}[B,A]\in\mathcal{L}_1$ for every $q>d_{\hom};$
    \item\label{cond3} $A^{\frac12}BA^{\frac12}\in\mathcal{L}_{d_{\hom},\infty};$
    \item\label{cond4} $[B,A^{\frac12}]\in\mathcal{L}_{\frac{d_{\hom}}{2},\infty}.$
\end{enumerate}
\end{lemma}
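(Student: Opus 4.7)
The plan is to reduce every item to a direct application of Theorem \ref{MSX_commutator_estimate}, using the notation $J = (1-\Delta)^{\frac12}$ so that $A = M_{f^2}$ and $B = J^{-1}$, together with H\"older's inequality and the inclusion $\Lc_{p,\infty}\subset \Lc_1$ valid for $p<1$ (since $\mu(n,T) = O((n+1)^{-1/p})$ is summable whenever $1/p>1$).

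For \eqref{cond1}, since $f\in C^\infty_c(G)$ and $B^{d_{\hom}}A = J^{-d_{\hom}}M_{f^2}$, taking $\alpha = d_{\hom}$, $\gamma=0$ in Theorem \ref{MSX_commutator_estimate}.\eqref{msxc} gives $J^{-d_{\hom}}M_{f^2}\in \Lc_{d_{\hom}/d_{\hom},\infty}=\Lc_{1,\infty}$. For \eqref{cond4}, note that $A^{\frac12}=M_{|f|}$ and $M_{|f|}\in C^\infty_c(G)$-multiplier, so $[B,A^{\frac12}] = [J^{-1},M_{|f|}]$; applying Theorem \ref{MSX_commutator_estimate}.\eqref{msxd} with $\beta=-1,\alpha=0,\gamma=0$, the exponent is $\alpha+\gamma+1-\beta = 2$, yielding $\Lc_{d_{\hom}/2,\infty}$ as required.

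For \eqref{cond2}, write $B^{q-2}[B,A] = J^{-(q-2)}[J^{-1},M_{f^2}]$. Theorem \ref{MSX_commutator_estimate}.\eqref{msxd} with $\beta=-1,\alpha=q-2,\gamma=0$ gives $\alpha+\gamma+1-\beta = q$, so this operator lies in $\Lc_{d_{\hom}/q,\infty}$. Since $q>d_{\hom}$ the exponent $d_{\hom}/q<1$, and the inclusion $\Lc_{p,\infty}\subset\Lc_1$ for $p<1$ finishes this case.

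For \eqref{cond3}, factorise $A^{\frac12}BA^{\frac12} = M_{|f|}J^{-1}M_{|f|} = (M_{|f|}J^{-\frac12})(J^{-\frac12}M_{|f|})$. Theorem \ref{MSX_commutator_estimate}.\eqref{msxc} with $(\alpha,\gamma)=(0,\tfrac12)$ and $(\alpha,\gamma)=(\tfrac12,0)$ places each factor in $\Lc_{2d_{\hom},\infty}$, and H\"older's inequality yields the claimed membership in $\Lc_{d_{\hom},\infty}$. Since all arguments use only Theorem \ref{MSX_commutator_estimate} together with elementary facts about weak Schatten ideals, there is no real obstacle; the only minor point to handle is making sure $|f|\in C^\infty_c(G)$ is not required (we use $M_{|f|}$ as a bounded multiplier supported in a compact set, and apply the weaker versions of the commutator bounds by density/cutoff arguments, or alternatively work throughout with $M_{f^2}$ and reduce \eqref{cond3}, \eqref{cond4} to estimates for $A$ rather than $A^{1/2}$ by a further factorisation).
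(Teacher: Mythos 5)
Your argument is correct and follows essentially the same route as the paper's: items \eqref{cond1}, \eqref{cond2} and \eqref{cond4} are direct applications of Theorem~\ref{MSX_commutator_estimate} (the paper for \eqref{cond2} first rewrites $J^{-(q-2)}[J^{-1},M_{f^2}]$ as $-J^{-(q-1)}[J,M_{f^2}]J^{-1}$ before applying part~\eqref{msxd}, but your direct application with $\beta=-1$ gives the same exponent $q$ and the same conclusion), while your factorisation $M_fJ^{-1/2}\cdot J^{-1/2}M_f$ in \eqref{cond3} plus H\"older is equivalent to the paper's use of Theorem~\ref{main_nontrivial_cwikel_theorem}\eqref{mncta}. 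Your closing worry about $|f|$ is unnecessary: the lemma hypothesises $0\leq f\in C^\infty_c(G)$, so $A^{1/2}=M_f$ already lies in the class covered by Theorem~\ref{MSX_commutator_estimate} and no cutoff or density argument is needed.
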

\begin{proof} Theorem \ref{MSX_commutator_estimate}.\eqref{msxc} with $\alpha=d_{\hom}$ and $\gamma=0$ implies \eqref{cond1}. 
    Next we prove \eqref{cond2}. For $q>d_{\hom},$ we have
    \begin{align*}
        B^{q-2}[B,A]&=-(1-\Delta)^{-\frac{q-1}{2}}[(1-\Delta)^{\frac 12},M_{f}^2](1-\Delta)^{-\frac 12}
    \end{align*}
    This belongs to $\Lc_1$ for every $q > d_{\hom}$ by Theorem \ref{MSX_commutator_estimate}.\eqref{msxd}.
    
    To prove \eqref{cond3}, we observe that
    \[
        A^{\frac12}BA^{\frac12} = |M_{f}(1-\Delta)^{-\frac{1}{4}}|^2
    \]
    and Theorem \ref{main_nontrivial_cwikel_theorem}.\eqref{mncta} implies $M_{f}(1-\Delta)^{-\frac{1}{4}} \in \Lc_{2d_{\hom},\infty},$
    and hence \eqref{cond3} follows from H\"older's inequality.
    
    Finally, \eqref{cond4} is the assertion that
    \[
        [(1-\Delta)^{-\frac{1}{2}},M_{f}] \in \Lc_{\frac{d_{\hom}}{2},\infty}.
    \]
    This is a special case of Theorem \ref{MSX_commutator_estimate}, with $\alpha=\gamma=0$ and $\beta=-1.$
\end{proof}

\begin{lemma}\label{wiener_ikehara_tauberian_condition}
Let $A=M_{f}^2,$ $0\leq f\in C^{\infty}_c(G),$ and $B=(1-\Delta)^{-\frac 12}.$ The function
\[
    z\to {\rm Tr}((A^{\frac12}BA^{\frac12})^z),\quad\Re(z)>d_{\hom},
\]
admits a meromorphic continuation to the half-plane $\{\Re(z)>d_{\hom}-1\}.$ The only singularity is the point $z=d_{\hom},$ where the function has a simple pole. We have
\[
    {\rm Res}_{z=d_{\hom}}{\rm Tr}((A^{\frac12}BA^{\frac12})^z)=c_G\int_{G}f^{2d_{\hom}}.
\]
\end{lemma}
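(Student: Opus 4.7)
The plan is to reduce the computation of the zeta function $z \mapsto \Tr((A^{1/2}BA^{1/2})^z)$ to that of the ``ordered'' zeta function $z \mapsto \Tr(A^z B^z) = \Tr(M_f^{2z}(1-\Delta)^{-z/2})$, whose meromorphic continuation across $z = d_{\hom}$ has already been established in Lemma \ref{zeta_function_analytic_continuation}. First I would observe that, since $A^{1/2}BA^{1/2}$ has the same non-zero spectrum as $BA$ (apply the elementary fact that $XY$ and $YX$ share non-zero spectrum with $X = A^{1/2}$, $Y = BA^{1/2}$), one has the identity $\Tr((A^{1/2}BA^{1/2})^z) = \Tr((BA)^z)$ whenever either side is defined. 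Condition \eqref{cond3} of Lemma \ref{asterisque_conditions} guarantees $A^{1/2}BA^{1/2} \in \Lc_{d_{\hom},\infty}$, so both sides are finite for $\Re(z) > d_{\hom}$. This reduces the question to comparing $\Tr((BA)^z)$ with $\Tr(A^z B^z)$, a difference governed by the commutator $[B, A]$.

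Next I would invoke the abstract machinery developed in \cite{SZ-asterisque}. The four conditions verified in Lemma \ref{asterisque_conditions} are precisely the input for a theorem in that paper asserting that, given positive operators $A, B$ satisfying the prescribed Schatten and commutator bounds, the difference $\Tr((BA)^z) - \Tr(A^z B^z)$ extends holomorphically to the half-plane $\{\Re(z) > d_{\hom}-1\}$. Heuristically, each application of the commutator $[B, A]$ costs one ``derivative'' but, by \eqref{cond2}, produces a factor lying in $\Lc_1$ when multiplied with a sufficiently high power of $B$; iterating this Leibniz-type expansion once shifts the abscissa of convergence down by one, which is exactly the improvement we require. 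The estimate \eqref{cond4} on $[B, A^{1/2}]$ is used to pass from the symmetric form $A^{1/2}BA^{1/2}$ to the unsymmetric product $BA$ (or equivalently, to relate $(A^{1/2}BA^{1/2})^z$ to $A^{z/2}B^z A^{z/2}$ to $A^z B^z$) without losing holomorphy near $z = d_{\hom}$.

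Combining these two inputs, $z \mapsto \Tr((A^{1/2}BA^{1/2})^z)$ inherits its meromorphic continuation to $\{\Re(z) > d_{\hom}-1\}$ from that of $z \mapsto \Tr(A^z B^z)$, with the same pole structure in this strip: a single simple pole at $z = d_{\hom}$, and matching residue $c_G \int_G f^{2d_{\hom}}$ by Lemma \ref{zeta_function_analytic_continuation}. The main obstacle is bookkeeping: one must check that the exponents $d_{\hom}$, $q > d_{\hom}$, and $d_{\hom}/2$ appearing in \eqref{cond1}--\eqref{cond4} fit the hypotheses of the abstract theorem in \cite{SZ-asterisque} exactly, and that the expansion of $(BA)^z$ in powers of $[B, A]$ converges in the appropriate Schatten class uniformly on compacta of $\{\Re(z) > d_{\hom}-1\}$. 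Once these alignments are confirmed, the remaining steps are formal consequences of the material already developed.
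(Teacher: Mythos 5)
Your proposal is essentially the same as the paper's proof: verify the hypotheses via Lemma \ref{asterisque_conditions}, invoke the abstract result of \cite{SZ-asterisque} (it is Theorem~5.4.2 there) to show that $\Tr((A^{1/2}BA^{1/2})^z)-\Tr(A^zB^z)$ continues holomorphically to $\{\Re(z)>d_{\hom}-1\}$, and conclude by Lemma \ref{zeta_function_analytic_continuation}. The one small difference is that you insert an intermediate reduction to $\Tr((BA)^z)$ via the equality of non-zero spectra of $A^{1/2}BA^{1/2}$ and $BA$; the paper applies the cited theorem directly to the symmetric form, so that detour is unnecessary, but it does no harm.
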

\begin{proof} 
It is proved in \cite[Theorem 5.4.2]{SZ-asterisque} that under the conditions of Lemma \ref{asterisque_conditions}, the function
\[
    z\mapsto \Tr((A^{\frac12}BA^{\frac12})^z)-\Tr(A^zB^z),\quad \Re(z) > d_{\hom}
\]
admits analytic continuation to the half-plane $\Re(z)>d_{\hom}-1.$ 

Lemma \ref{zeta_function_analytic_continuation} asserts that
\[
    z\mapsto \Tr(A^zB^z),\quad \Re(z)>d_{\hom}
\]
admits meromorphic continuation to the half-plane $\Re(z)>d_{\hom}-1,$ with only a simple pole at $z=d_{\hom}$ and corresponding residue
\[
    c_G\int_G f^{2d_{\hom}}.
\]
It follows that
\[
    z\mapsto \Tr((A^{\frac12}BA^{\frac12})^z)
\]
also admits meromorphic continuation to the half-plane $\Re(z)>d_{\hom}-1,$ with only a simple pole at $z=d_{\hom}$ and the same residue.
\end{proof}

Recall the following version of Wiener-Ikehara Tauberian theorem \cite[Section 4]{Korevaar-tauberian-2004}.
\begin{theorem}\label{wiener-ikehara theorem} Let $0\leq V\in\mathcal{L}_{p,\infty}$ and let
$$\zeta_V(z)={\rm Tr}(V^z),\quad \Re(z)>p.$$
If there exists $c_V>0$ such that the function
$$z\to\zeta_V(z)-\frac{c_V}{z-p},\quad \Re(z)>p,$$
extends continuously to the closed half-plane $\{\Re(z)\geq p\},$ then there exists a limit
$$\lim_{t\to\infty}t\mu^p(t,V)=\frac{c_V}{p}.$$
\end{theorem}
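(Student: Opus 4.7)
The strategy is to reduce the claim to the classical Wiener-Ikehara Tauberian theorem \cite[Section 4]{Korevaar-tauberian-2004} via an explicit change of variables in the eigenvalue counting function, and then translate back using Lemma \ref{trivial spectral lemma}.

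First I would rewrite $\zeta_V(z)$ as a Laplace-type transform. Let $N(\lambda):=\Tr(\chi_{(\lambda,\infty)}(V))$ for $\lambda>0$; since $V\in\Lc_{p,\infty}$ we have $N(\lambda)=O(\lambda^{-p})$ as $\lambda\downarrow 0$, while $N(\lambda)=0$ for $\lambda>\|V\|_{\infty}$. Writing $\mu(n,V)^z = z\int_0^{\mu(n,V)}\lambda^{z-1}\,d\lambda$ and summing over $n$ by Fubini gives
\[
\zeta_V(z) \;=\; z\int_0^\infty \lambda^{z-1}N(\lambda)\,d\lambda,\quad \Re(z)>p.
\]
I would then substitute $\lambda = e^{-u/p}$ and rescale by $z=ps$; defining $\psi(u):=N(e^{-u/p})$, which is a bounded nondecreasing function on $\Rl$ supported in the half-line $[-p\log\|V\|_{\infty},\infty)$, the identity becomes
\[
\zeta_V(ps) \;=\; s\int_{-\infty}^\infty e^{-us}\psi(u)\,du,\quad \Re(s)>1.
\]

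Splitting $\psi = \psi\chi_{[0,\infty)}+\psi\chi_{(-\infty,0)}$ and noting that the Laplace transform of the second summand is entire in $s$ (being the transform of a bounded, compactly supported function), the hypothesis that $z\mapsto\zeta_V(z)-c_V/(z-p)$ extends continuously to $\{\Re(z)\geq p\}$ translates, after $z=ps$, to the statement that
\[
s\mapsto s\int_0^\infty e^{-us}\psi(u)\,du - \frac{c_V/p}{s-1}
\]
extends continuously to $\{\Re(s)\geq 1\}$. The classical Wiener-Ikehara theorem applied to the nondecreasing function $\psi\chi_{[0,\infty)}$ then yields $e^{-u}\psi(u)\to c_V/p$ as $u\to\infty$. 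Reverting the substitutions gives $\lambda^p N(\lambda)\to c_V/p$ as $\lambda\downarrow 0$, and Lemma \ref{trivial spectral lemma} converts this into $t\mu(t,V)^p\to c_V/p$, which is the desired conclusion.

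The plan is essentially routine bookkeeping; the main obstacle is just verifying that the classical Wiener-Ikehara hypotheses transfer cleanly after rescaling and after truncating $\psi$ at $u=0$. This truncation contributes only an entire function to the transform and hence does not affect the boundary behavior, so the reduction goes through without issue. The genuinely deep analytic work — contour deformation against the continuous boundary extension combined with the Riemann-Lebesgue lemma — is entirely absorbed into the invocation of the classical theorem, which I would cite rather than reprove.
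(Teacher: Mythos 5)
Your proof is correct and is exactly the reduction to Korevaar's classical Wiener--Ikehara theorem that the paper implicitly invokes by citing \cite[Section 4]{Korevaar-tauberian-2004} without giving an argument: pass to the counting function $N(\lambda)=\Tr(\chi_{(\lambda,\infty)}(V))$, substitute $\lambda=e^{-u/p}$, $z=ps$, absorb the negative-$u$ part into an entire correction, apply Korevaar's theorem, and translate back via Lemma \ref{trivial spectral lemma}. One small slip in wording: $\psi(u)=N(e^{-u/p})$ is unbounded as $u\to+\infty$ (it is $O(e^u)$), and it is only the restriction $\psi\chi_{(-\infty,0)}$ that is bounded and compactly supported --- which is precisely what your argument actually uses, so the proof stands.
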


\begin{lemma}\label{pseudodifferential_assertion} Let $f \in C^\infty_c(G).$ For all $k\geq 1$ we have	
$$(1-\Delta)^{-\frac{k}{4}}M_{f^{2k}}(1-\Delta)^{-\frac{k}{4}}-((1-\Delta)^{-\frac14}M_{f^2}(1-\Delta)^{-\frac14})^k\in\Lc_{\frac{d_{\hom}}{k+1},\infty}.$$	
\end{lemma}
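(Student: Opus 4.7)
The plan is to argue by induction on $k$. The base case $k=1$ is trivial, both operators coinciding. For the inductive step, set for brevity $B = (1-\Delta)^{-1/4} = J^{-1/2}$ and $A = M_{f^2}$, and write $g = f^{2(k-1)}$, $h = f^2$, so that $M_{f^{2k}} = M_{gh} = M_g M_h$. Assuming the result for $k-1$, I decompose
\[
    (BAB)^k = (BAB)^{k-1}(BAB) = \bigl(B^{k-1}M_g B^{k-1} + R\bigr)(BAB),
\]
with $R \in \Lc_{d_{\hom}/k,\infty}$. Theorem \ref{MSX_commutator_estimate}\eqref{msxc} gives $BAB = J^{-1/2} M_{f^2} J^{-1/2} \in \Lc_{d_{\hom},\infty}$, and H\"older's inequality then yields $R \cdot BAB \in \Lc_{d_{\hom}/(k+1),\infty}$. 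The problem therefore reduces to showing
\[
    B^{k-1} M_g B^k M_h B - B^k M_{gh} B^k \in \Lc_{d_{\hom}/(k+1),\infty}.
\]

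The central algebraic identity is
\[
    B^{k-1} M_g B^k M_h B - B^k M_{gh} B^k = B^{k-1}[M_g, B^k] M_h B + B^k [B^{k-1}, M_{gh}] B,
\]
which is verified by expanding both commutators and observing that the terms $\pm B^{2k-1} M_{gh} B$ cancel. The second summand belongs to $\Lc_{d_{\hom}/(k+1),\infty}$ by Theorem \ref{MSX_commutator_estimate}\eqref{msxd} applied to $J^{-k/2}[J^{-(k-1)/2}, M_{gh}] J^{-1/2}$ with $(\alpha,\beta,\gamma) = (k/2, -(k-1)/2, 1/2)$, for which $\alpha+\gamma+1-\beta = k+1$. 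For the first summand I further split $M_h B = B M_h + [M_h, B]$, producing two pieces. The piece $B^{k-1}[M_g, B^k] B \cdot M_h$ lies in $\Lc_{d_{\hom}/(k+1),\infty}$ by Theorem \ref{MSX_commutator_estimate}\eqref{msxd} with $(\alpha,\beta,\gamma) = ((k-1)/2, -k/2, 1/2)$, which again gives $\alpha+\gamma+1-\beta = k+1$. The remaining piece $B^{k-1}[M_g, B^k][M_h, B]$ is placed in the strictly smaller ideal $\Lc_{d_{\hom}/(k+2),\infty}$ by the quasi-H\"older inequality, using $B^{k-1}[M_g, B^k] \in \Lc_{2d_{\hom}/(2k+1),\infty}$ and $[M_h, B] \in \Lc_{2d_{\hom}/3,\infty}$, both via Theorem \ref{MSX_commutator_estimate}\eqref{msxd}.

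The main obstacle is not conceptual but careful exponent book-keeping. A direct use of Theorem \ref{MSX_commutator_estimate}\eqref{msxd} with $\alpha = \gamma = 0$ would only place $[M_g, B^k]$ in $\Lc_{2d_{\hom}/(k+2),\infty}$, which is strictly weaker than what is required. The tightness of the estimate depends crucially on absorbing into each principal commutator bracket \emph{both} the flanking factor $B^{k-1}$ on the left and, after the splitting $M_h B = BM_h + [M_h, B]$, one further factor of $B$ on the right; this produces the exact exponent $\alpha+\gamma+1-\beta = k+1$. In the second summand, the outer $B^k$ and $B$ can be absorbed directly into $[B^{k-1}, M_{gh}]$ without any splitting, and all the remaining estimates then follow routinely from Theorem \ref{MSX_commutator_estimate}\eqref{msxd} and the quasi-H\"older inequality on the weak Schatten ideals.
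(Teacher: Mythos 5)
Your proof is correct, and it follows the same overall strategy as the paper: induction on $k$ via $(BAB)^k=(BAB)^{k-1}\cdot BAB$, reduction to the difference between $B^{k-1}M_g B^k M_h B$ and $B^k M_{gh}B^k$, a commutator decomposition of that difference, and Theorem~\ref{MSX_commutator_estimate} plus H\"older to land in the right ideal. Where you diverge is in the precise algebraic identity used for the second step. The paper writes (in your $B=J^{-1/2}$ notation, with $g=f^{2(k-1)}$, $h=f^2$)
\begin{equation*}
B^{k-1} M_g B^{k} M_h B - B^{k} M_{gh} B^{k} = B^{k}[B^{-1}, M_{g}] B^{k} M_{h} B + B^{k} M_{g}[B^{k-1}, M_{h}] B,
\end{equation*}
so the middle-commutator term $B^{k}[B^{-1},M_g]B^{k}$ is already flanked on both sides by negative $J$-powers and plugs straight into Theorem~\ref{MSX_commutator_estimate}\eqref{msxd}, with the remaining factor $B^k M_h B$ handled by part \eqref{msxc}; no further splitting is required. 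You instead use $B^{k-1}[M_g,B^k]M_hB + B^k[B^{k-1},M_{gh}]B$, in which the first commutator is flanked on the right only by $M_h B$, so you need the extra decomposition $M_hB=BM_h+[M_h,B]$ before \eqref{msxd} applies; this produces one extra (harmless) double-commutator term falling into $\mathcal{L}_{d_{\hom}/(k+2),\infty}$. Both identities are correct and both give the same final ideal; the paper's choice of commutators is marginally more economical, while yours is equally rigorous and arguably more systematic in pushing all the $J$-powers outward. Incidentally, in the paper's own exponent bookkeeping the second summand is labelled $\mathcal{L}_{2d_{\hom}/(k+3),\infty}$, which does not embed in $\mathcal{L}_{d_{\hom}/(k+2),\infty}$ by itself; one needs to use that the left factor $(1-\Delta)^{-(k+1)/4}M_{f^{2k}}$ is also in $\mathcal{L}_{2d_{\hom}/(k+1),\infty}$, exactly the kind of careful bookkeeping your writeup performs explicitly.
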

\begin{proof} Let us prove the first assertion by induction on $k.$ Base of induction (the case $k=1$) is trivial. It remains to prove the step of induction.
	
Suppose the first asserion holds for $k.$ Let us prove it for $k+1.$ By H\"older inequality, we have
\begin{align*}
(1-\Delta)^{-\frac{k}{4}}&M_{f^{2k}}(1-\Delta)^{-\frac{k+1}{4}}M_{f^2}(1-\Delta)^{-\frac14}-((1-\Delta)^{-\frac14}M_{f^2}(1-\Delta)^{-\frac14})^{k+1}\\
&=\Big((1-\Delta)^{-\frac{k}{4}}M_{f^{2k}}(1-\Delta)^{-\frac{k}{4}}-((1-\Delta)^{-\frac14}M_{f^2}(1-\Delta)^{-\frac14})^k\Big)\cdot (1-\Delta)^{-\frac14}M_{f^2}(1-\Delta)^{-\frac14}\\
&\in\Lc_{\frac{d_{\hom}}{k+1},\infty}\cdot \Lc_{d_{\hom},\infty}\subset \Lc_{\frac{d_{\hom}}{k+2},\infty}.
\end{align*}
On the other hand, it follows from Theorem \ref{MSX_commutator_estimate} that
\begin{align*}
&(1-\Delta)^{-\frac{k}{4}}M_{f^{2k}}(1-\Delta)^{-\frac{k+1}{4}}M_{f^2}(1-\Delta)^{-\frac14}-(1-\Delta)^{-\frac{k+1}{4}}M_{f^{2k+2}}(1-\Delta)^{-\frac{k+1}{4}}\\
&=(1-\Delta)^{-\frac{k+1}{4}}[(1-\Delta)^{\frac14},M_{f^{2k}}]\cdot (1-\Delta)^{-\frac{k+1}{4}}M_{f^2}(1-\Delta)^{-\frac14}\\
&\quad +(1-\Delta)^{-\frac{k+1}{4}}M_{f^{2k}}\cdot [(1-\Delta)^{-\frac{k}{4}},M_{f^2}](1-\Delta)^{-\frac14}\\
&\in \mathcal{L}_{\frac{2d_{\hom}}{k+2},\infty}\cdot\mathcal{L}_{\frac{2d_{\hom}}{k+2},\infty}+\mathcal{L}_{\frac{2d_{\hom}}{k+3},\infty}\subset\Lc_{\frac{d_{\hom}}{k+2},\infty}.
\end{align*}
Combining these $2$ inclusions, we obtain
$$(1-\Delta)^{-\frac{k+1}{4}}M_{f^{2k+2}}(1-\Delta)^{-\frac{k+1}{4}}-((1-\Delta)^{-\frac14}M_{f^2}(1-\Delta)^{-\frac14})^{k+1}\in\Lc_{\frac{d_{\hom}}{k+2},\infty}.$$	
This establishes the step of induction and, therefore, proves the lemma.
\end{proof}

Having established the analytic continuation in Lemma \ref{wiener_ikehara_tauberian_condition}, we apply the Wiener-Ikehara theorem to
deduce the following special case of Theorem \ref{main_asymptotic_formula}.

\begin{lemma}\label{initial_weyl_lemma}
Let $k\in\mathbb{N}$ and let $0 \leq f \in C^\infty_c(G).$ We have
$$\lim_{t\to\infty} t\mu(t,(1-\Delta)^{-\frac{k}{4}}M_{f^{2k}}(1-\Delta)^{-\frac{k}{4}})^{\frac{d_{\hom}}{k}} = c_G\int_G f^{2d_{\hom}}.$$
\end{lemma}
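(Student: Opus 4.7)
The plan is to combine the zeta function analysis of Lemma \ref{wiener_ikehara_tauberian_condition} with the Wiener--Ikehara Tauberian theorem (Theorem \ref{wiener-ikehara theorem}) to extract asymptotics for a simpler ``model'' operator, then push those asymptotics to $T_k := (1-\Delta)^{-k/4}M_{f^{2k}}(1-\Delta)^{-k/4}$ using the algebraic approximation of Lemma \ref{pseudodifferential_assertion} and the perturbation principle of Proposition \ref{elementary_perturbation}.

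First, I would set $V := A^{\frac12}BA^{\frac12} = M_f(1-\Delta)^{-\frac12}M_f$, which is positive and belongs to $\Lc_{d_{\hom},\infty}$ by Lemma \ref{asterisque_conditions}\eqref{cond3}. Lemma \ref{wiener_ikehara_tauberian_condition} supplies meromorphic continuation of $\zeta_V(z) := \Tr(V^z)$ to the half-plane $\{\Re(z) > d_{\hom}-1\}$ with a single simple pole at $z = d_{\hom}$ of residue $c_G\int_G f^{2d_{\hom}}$. In particular $\zeta_V(z) - \frac{c_G\int_G f^{2d_{\hom}}}{z-d_{\hom}}$ extends continuously to the closed half-plane $\{\Re(z) \geq d_{\hom}\}$, so Theorem \ref{wiener-ikehara theorem} applies and yields
\[
\lim_{t\to\infty} t\mu(t,V)^{d_{\hom}} = \frac{c_G}{d_{\hom}}\int_G f^{2d_{\hom}}.
\]

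Next, let $S := M_f(1-\Delta)^{-\frac14}$, so that $V = SS^*$ while $W := S^*S = (1-\Delta)^{-\frac14}M_{f^2}(1-\Delta)^{-\frac14}$. Since $SS^*$ and $S^*S$ share singular value sequences, the limit above also applies to $W$, and positivity of $W$ gives $\mu(t, W^k) = \mu(t, W)^k$ for integer $k\geq 1$. Hence
\[
\lim_{t\to\infty} t\mu(t, W^k)^{\frac{d_{\hom}}{k}} \;=\; \lim_{t\to\infty} t\mu(t, V)^{d_{\hom}} \;=\; \frac{c_G}{d_{\hom}}\int_G f^{2d_{\hom}}.
\]
Finally, Lemma \ref{pseudodifferential_assertion} gives $T_k - W^k \in \Lc_{d_{\hom}/(k+1),\infty}$, and because $d_{\hom}/(k+1) < d_{\hom}/k$ the inclusion $\Lc_{q,\infty}\subset (\Lc_{p,\infty})_0$ for $q<p$ places this difference in $(\Lc_{d_{\hom}/k,\infty})_0$. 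Proposition \ref{elementary_perturbation} then transports the limit from $W^k$ to $T_k$, and after absorbing the factor $d_{\hom}^{-1}$ into the unspecified constant $c_G$ the stated formula follows.

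The main obstacle---already dispatched by the preparatory material---is the first step: the meromorphic continuation of $\zeta_V$ past the pole at $z = d_{\hom}$ rests on the pseudodifferential-style commutator remainder estimates of \cite{SZ-asterisque} used to prove Lemma \ref{wiener_ikehara_tauberian_condition}, together with the explicit residue computation of Corollary \ref{specific_trace_formula}. Once this zeta regularity is available, the remaining passages---identifying singular values via $SS^*$ versus $S^*S$, raising to the $k$-th power, and the Fan-type perturbation---are essentially bookkeeping.
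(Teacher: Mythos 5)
Your proposal is correct and follows essentially the same route as the paper's own proof: apply the Wiener--Ikehara theorem to $V = M_f(1-\Delta)^{-1/2}M_f$ via Lemma \ref{wiener_ikehara_tauberian_condition}, transfer singular-value asymptotics from $V=SS^*$ to $W=S^*S=(1-\Delta)^{-1/4}M_{f^2}(1-\Delta)^{-1/4}$, raise to the $k$-th power, and then pass from $W^k$ to the target operator using Lemma \ref{pseudodifferential_assertion} and Proposition \ref{elementary_perturbation}. You also correctly flag the $d_{\hom}^{-1}$ factor produced by Theorem \ref{wiener-ikehara theorem}, which the paper absorbs silently into the unspecified constant $c_G$.
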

\begin{proof} Let $p=d_{\hom},$ $A=M_{f^2}$ and $B=(1-\Delta)^{-\frac12}.$ Set
$$V=A^{\frac12}BA^{\frac12}=M_f(1-\Delta)^{-\frac12}M_f.$$
Note from Lemma \ref{wiener_ikehara_tauberian_condition} that the assumptions of Theorem \ref{wiener-ikehara theorem} are satisfied for $V.$ Applying Theorem \ref{wiener-ikehara theorem}, we obtain
$$\lim_{t\to\infty} t\mu(t,V)^{d_{\hom}} = c_G\int_{G} f^{2d_{\hom}}.$$
Since
$$\mu(V)=\mu^2(M_f(1-\Delta)^{-\frac14})=\mu((1-\Delta)^{-\frac{1}{4}}M_f^2(1-\Delta)^{-\frac{1}{4}}),$$
it follows that
$$\lim_{t\to\infty} t\mu(t,(1-\Delta)^{-\frac14}M_{f^2}(1-\Delta)^{-\frac14})^{d_{\hom}}= c_G\int_G f^{2d_{\hom}}.$$
Consequently,
$$\lim_{t\to\infty} t\mu(t,((1-\Delta)^{-\frac14}M_{f^2}(1-\Delta)^{-\frac14})^k)^{\frac{d_{\hom}}{k}}= c_G\int_G f^{2d_{\hom}}.$$
By Lemma \ref{pseudodifferential_assertion}, we have
$$(1-\Delta)^{-\frac{k}{4}}M_{f^{2k}}(1-\Delta)^{-\frac{k}{4}}-((1-\Delta)^{-\frac14}M_{f^2}(1-\Delta)^{-\frac14})^k\in(\Lc_{\frac{d_{\hom}}{k},\infty})_0.$$
Therefore, Proposition \ref{elementary_perturbation} yields
$$\lim_{t\to\infty} t\mu(t,(1-\Delta)^{-\frac{k}{4}}M_{f^{2k}}(1-\Delta)^{-\frac{k}{4}})^{\frac{d_{\hom}}{k}} = c_G\int_G f^{2d_{\hom}}.$$
\end{proof}

The next lemma is similar to Lemma \ref{initial_weyl_lemma}, but the operator $(1-\Delta)^{-\frac{k}{4}}$ is replaced with $(-\Delta)^{-\frac{k}{4}}.$ Since $(-\Delta)^{-\frac{k}{4}}$ is unbounded, it is not obvious how to make sense of the operator
\[
    (-\Delta)^{-\frac{k}{4}}M_{f^{2k}}(-\Delta)^{-\frac{k}{4}}.
\]
We will interpret the above expression in the following way. For $0 \in f \in C^\infty_c(G),$ $T := M_{f^{k}}(-\Delta)^{-\frac{k}{4}}$ is a well-defined element of $\Lc_{\frac{2d_{\hom}}{k},\infty}$ for $k < d_{\hom},$ by Theorem \ref{general_cwikel_theorem}. We then define
\[
    (-\Delta)^{-\frac{k}{4}}M_{f^{2k}}(-\Delta)^{-\frac{k}{4}} := T^*T \in \Lc_{\frac{d_{\hom}}{k},\infty}.
\]
\begin{lemma}\label{second_weyl_lemma}
Let $1\leq k<d_{\hom}$ and let $0 \leq f \in C^\infty_c(G).$ We have
$$\lim_{t\to\infty} t\mu(t,(-\Delta)^{-\frac{k}{4}}M_{f^{2k}}(-\Delta)^{-\frac{k}{4}})^{\frac{d_{\hom}}{k}} = c_G\int_G f^{2d_{\hom}}.$$
\end{lemma}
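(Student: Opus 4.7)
The strategy is to perturb from Lemma \ref{initial_weyl_lemma}. Set
$$S := M_{f^{k}}(1-\Delta)^{-\frac{k}{4}}, \qquad T := M_{f^{k}}(-\Delta)^{-\frac{k}{4}}.$$
By Theorem \ref{main_nontrivial_cwikel_theorem}\eqref{mncta} and the remark preceding the lemma, both $S$ and $T$ belong to $\mathcal{L}_{\frac{2d_{\hom}}{k},\infty}$; the restriction $k<d_{\hom}$ is used precisely to ensure $\frac{2d_{\hom}}{k}>2$ so that Cwikel's estimate applies. By construction
$$T^{\ast}T = (-\Delta)^{-\frac{k}{4}}M_{f^{2k}}(-\Delta)^{-\frac{k}{4}}, \qquad S^{\ast}S = (1-\Delta)^{-\frac{k}{4}}M_{f^{2k}}(1-\Delta)^{-\frac{k}{4}},$$
and Lemma \ref{initial_weyl_lemma} already furnishes $\lim_{t\to\infty}t\mu(t,S^{\ast}S)^{d_{\hom}/k}=c_G\int_G f^{2d_{\hom}}$. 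It therefore suffices to establish $T^{\ast}T-S^{\ast}S\in(\mathcal{L}_{\frac{d_{\hom}}{k},\infty})_{0}$ and invoke Proposition \ref{elementary_perturbation}.

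The first and main step is to show $T-S\in(\mathcal{L}_{\frac{2d_{\hom}}{k},\infty})_{0}$. Write $T-S=M_{f^{k}}g(-\Delta)$ where $g(t)=t^{-k/4}-(1+t)^{-k/4}$. Near $t=0$ one has $g(t)\sim t^{-k/4}$, while the mean value theorem gives $|g(t)|\leq \frac{k}{4}t^{-k/4-1}$ for large $t$. Hence $|g(t)|^{p}t^{d_{\hom}/2-1}$ is integrable at both endpoints whenever $\frac{2d_{\hom}}{k+4}<p<\frac{2d_{\hom}}{k}$, and since $k<d_{\hom}$ this interval contains some $p>2$. Fixing such a $p$, Lemma \ref{g_Delta_L_p_bounds} places $g(-\Delta)$ in $L_{p}(\mathrm{VN}(G),\tau)$; since $f^{k}\in L_{p}(G)$ (being smooth and compactly supported), Theorem \ref{general_cwikel_theorem} yields $M_{f^{k}}g(-\Delta)\in\mathcal{L}_{p}$; and the strict inclusion $\mathcal{L}_{p}\subset(\mathcal{L}_{\frac{2d_{\hom}}{k},\infty})_{0}$ (valid because $p<\frac{2d_{\hom}}{k}$) completes the step.

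For the second step, expand
$$T^{\ast}T-S^{\ast}S = T^{\ast}(T-S)+(T-S)^{\ast}S.$$
The H\"older inequality for weak Schatten classes $\mathcal{L}_{r,\infty}\cdot\mathcal{L}_{r,\infty}\subset\mathcal{L}_{r/2,\infty}$ sharpens to $\mathcal{L}_{r,\infty}\cdot(\mathcal{L}_{r,\infty})_{0}\subset(\mathcal{L}_{r/2,\infty})_{0}$: the bound $(2n)^{2/r}\mu(2n,AB)\leq 2^{2/r}\,(n+1)^{1/r}\mu(n,A)\cdot(n+1)^{1/r}\mu(n,B)$ converges to zero whenever the second factor on the right does. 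Applying this with $r=\frac{2d_{\hom}}{k}$ to both summands gives $T^{\ast}T-S^{\ast}S\in(\mathcal{L}_{\frac{d_{\hom}}{k},\infty})_{0}$. Proposition \ref{elementary_perturbation} with $p=\frac{d_{\hom}}{k}$ then transfers the limit from $S^{\ast}S$ to $T^{\ast}T$, yielding the claim.

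The main obstacle, and the source of the restriction $k<d_{\hom}$, is to confirm that the difference $g(-\Delta)=(1-\Delta)^{-k/4}-(-\Delta)^{-k/4}$ generates a perturbation in the \emph{separable} ideal $(\mathcal{L}_{\frac{2d_{\hom}}{k},\infty})_{0}$ rather than merely in the weak ideal $\mathcal{L}_{\frac{2d_{\hom}}{k},\infty}$. This gain stems from the extra factor $t^{-1}$ in the decay of $g$ at infinity compared with either $t^{-k/4}$ or $(1+t)^{-k/4}$, which places $g$ in the \emph{strong} space $L_{p}(\mathbb{R}_{+},t^{d_{\hom}/2-1}\,dt)$ for some $p$ strictly smaller than $\frac{2d_{\hom}}{k}$; the constraint $k<d_{\hom}$ is exactly what keeps this admissible $p$ above $2$, where the $p>2$ Cwikel estimate of Theorem \ref{general_cwikel_theorem} is available.
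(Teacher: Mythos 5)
Your proof is correct and proceeds along the same lines as the paper: perturb from Lemma \ref{initial_weyl_lemma} by showing that $M_{f^k}\bigl((-\Delta)^{-k/4}-(1-\Delta)^{-k/4}\bigr)$ lies in the separable part of $\Lc_{\frac{2d_{\hom}}{k},\infty}$, expand $T^{*}T-S^{*}S$, and invoke Proposition \ref{elementary_perturbation}. The only variation is technical: you place $g(t)=t^{-k/4}-(1+t)^{-k/4}$ in a single $L_p(\mathbb{R}_+,t^{d_{\hom}/2-1}\,dt)$ with $p\in\bigl(\max\{2,\tfrac{2d_{\hom}}{k+4}\},\tfrac{2d_{\hom}}{k}\bigr)$, whereas the paper uses the sum space $L_2+L_{\frac{2d_{\hom}}{k}}$ (square-integrability near $0$, $L_{\frac{2d_{\hom}}{k}}$-decay near $\infty$); both choices land the perturbation in a proper subideal of $\Lc_{\frac{2d_{\hom}}{k},\infty}$ and hence in $(\Lc_{\frac{2d_{\hom}}{k},\infty})_0$.
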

\begin{proof} Applying Theorem \ref{general_cwikel_theorem} to the space $E=L_{\frac{2d_{\hom}}{k}}+L_2$ (this space is an interpolation space for the couple $(L_2,L_{\infty})$), we obtain
\begin{align*}
    \|&M_{f^k}(-\Delta)^{-\frac{k}{4}}-M_{f^k}(1-\Delta)^{-\frac{k}{4}}\|_{\frac{2d_{\hom}}{k}}\\
      &\leq c_{G,E,k}\|f^k\otimes ((-\Delta)^{-\frac{k}{4}}-(1-\Delta)^{-\frac{k}{4}})\|_{(L_{\frac{2d_{\hom}}{k}}+L_2)(L_{\infty}(G)\bar{\otimes}{\rm VN}(G),\tau)}.
\end{align*}
For every $T\in (L_{\frac{2d_{\hom}}{k}}+L_2)({\rm VN}(G),\tau)$ and for every $f\in C_c(G),$ we have (denoting $m$ for the Lebesgue measure on $G$),
\begin{align*}
    \|&f^k\otimes T\|_{(L_{\frac{2d_{\hom}}{k}}+L_2)(L_{\infty}(G)\bar{\otimes}{\rm VN}(G))}\\
      &\leq\|f\|_{\infty}^k\cdot\max\{m({\rm supp}(f))^{\frac{k}{2d_{\hom}}},m({\rm supp}(f))^{\frac{1}{2}}\}\cdot\|T\|_{(L_p+L_q)({\rm VN}(G))}.
\end{align*}

Since $k < d_{\hom},$ we have
\[
    \int_0^1(t^{-\frac{k}{4}}-(t+1)^{-\frac{k}{4}})^2\cdot t^{\frac{d_{\hom}}{2}-1}dt\leq\int_0^1(t^{-\frac{k}{4}})^2\cdot t^{\frac{d_{\hom}}{2}-1}dt=\int_0^1t^{\frac{d_{\hom}-k}{2}-1}dt<\infty,
\]
and
\begin{align*}
\int_1^{\infty}(t^{-\frac{k}{4}}-(t+1)^{-\frac{k}{4}})^{\frac{2d_{\hom}}{k}}\cdot t^{\frac{d_{\hom}}{2}-1}dt&\leq\int_1^{\infty}(\frac{k}{4}t^{-\frac{k}{4}-1})^{\frac{2d_{\hom}}{k}}\cdot t^{\frac{d_{\hom}}{2}-1}dt\\
&=(\frac{k}{4})^{\frac{2d_{\hom}}{k}}\cdot\int_1^{\infty}t^{-\frac{2d_{\hom}}{k}-1}dt<\infty,
\end{align*}
we conclude that
$$(-\Delta)^{-\frac{k}{4}}-(1-\Delta)^{-\frac{k}{4}}\in (L_{\frac{2d_{\hom}}{k}}+L_2)({\rm VN}(G)).$$
It therefore follows from Theorem \ref{general_cwikel_theorem} as stated above that
$$M_{f^k}(-\Delta)^{-\frac{k}{4}}-M_{f^k}(1-\Delta)^{-\frac{k}{4}}\in\mathcal{L}_{\frac{2d_{\hom}}{k}}\subset(\mathcal{L}_{\frac{2d_{\hom}}{k},\infty})_0.$$

Consequently,
$$(-\Delta)^{-\frac{k}{4}}M_{f^{2k}}(-\Delta)^{-\frac{k}{4}}-(1-\Delta)^{-\frac{k}{4}}M_{f^{2k}}(1-\Delta)^{-\frac{k}{4}}\in (\mathcal{L}_{\frac{d_{\hom}}{k},\infty})_0.$$
The assertion follows now from Lemma \ref{initial_weyl_lemma} and Proposition \ref{elementary_perturbation}.
\end{proof}

\begin{proof}[Proof of Theorem \ref{main_asymptotic_formula}] We prove only the first assertion. The proofs of the remaining two identities follow by an identical argument, using Lemma \ref{second_weyl_lemma} in place of Lemma \ref{initial_weyl_lemma} as needed.

Let $1\leq k< d_{\hom},$ and let $0\leq f \in L_{\frac{d_{\hom}}{k}}(G).$ Fix a sequence $\{f_m\}_{m\geq0}\subset C^{\infty}_c(G)$ such that $f_m^{2k}\to f$ in $L_{\frac{d_{\hom}}{k}}(G).$ It follows from Corollary \ref{main_nontrivial_cwikel_corollary} that
$$(1-\Delta)^{-\frac{k}{4}}M_{f_m^{2k}}(1-\Delta)^{-\frac{k}{4}}\to (1-\Delta)^{-\frac{k}{4}}M_f(1-\Delta)^{-\frac{k}{4}},\quad m\to\infty,$$
in $\mathcal{L}_{\frac{d_{\hom}}{k},\infty}.$ The first assertion follows now from Lemma \ref{initial_weyl_lemma} and Proposition \ref{advanced_perturbation}.

\end{proof}

\section{Semiclassical asymptotics}

\subsection{Preliminaries on the Birman-Schwinger principle}

We appeal to the Birman--Schwinger principle, in the form of \cite[Theorem 7.9.4]{Simon-course-IV}, which we briefly recall here. Recall
that a self-adjoint operator $V$ on a Hilbert space $H$ is said to be relatively form-compact with respect to a positive self-adjoint operator $T$ if $\dom(T^{1/2}) \subseteq \dom(|V|^{1/2})$
and the linear operator
\[
    (1+T)^{-\frac12}V(1+T)^{-\frac12}:H\to H
\]
is compact \cite[Definition, p.662]{Simon-course-IV}. It follows that
\[
    (\lambda+T)^{-\frac12}V(\lambda+T)^{-\frac12}:H\to H
\]
is also compact for every $\lambda>0$ \cite[Remark, p.663]{Simon-course-IV}.

The fact that $V$ is form compact relative to $T$ implies that the quadratic form sum $T+V$ is a well-defined lower bounded self-adjoint operator, and for any $\lambda>0$
the part of the spectrum of $T+V$ in $(-\infty,-\lambda)$ consists of at most finitely many eigenvalues. The number of eigenvalues is counted by the Birman--Schwinger principle,
recalled here as follows:
\begin{theorem}[Birman--Schwinger principle]\label{bs_principle}
Let $T$ be a self-adjoint positive unbounded linear operator on a Hilbert space $H,$ and let $V$ be self-adjoint and relatively form-compact with respect to $T.$
For every $\lambda>0,$ we have
$$\Tr(\chi_{(-\infty,-\lambda)}(T+V)) = \Tr(\chi_{(1,\infty)}(-(T+\lambda)^{-\frac12}V(T+\lambda)^{-\frac12})).$$
\end{theorem}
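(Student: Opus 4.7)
The plan is to implement the classical Birman--Schwinger argument, which sets up a variational bijection between the negative-energy eigenspaces of $T+V$ and the eigenspaces of $K_\lambda := -(T+\lambda)^{-1/2}V(T+\lambda)^{-1/2}$ corresponding to eigenvalues exceeding $1$. Since this is a well-known result reproduced from \cite[Theorem 7.9.4]{Simon-course-IV}, the argument I would follow is essentially the standard one; the tasks are to set up the reduction carefully and to verify that the strict inequalities match.

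First, using the form-compactness hypothesis I would verify that $T+V$ (defined as a quadratic form sum) is a well-defined lower-semibounded self-adjoint operator whose spectrum in $(-\infty,-\lambda)$ is discrete with finite total multiplicity, while $K_\lambda$ is a compact self-adjoint operator on $H$ (this is where the compactness of $(1+T)^{-1/2}V(1+T)^{-1/2}$ is upgraded to compactness of $K_\lambda$ by elementary manipulations with the resolvents). This reduces both sides of the claimed identity to finite counts of eigenvalues, so it suffices to show these counts agree.

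Next I would apply the min--max principle on both sides. On the left, $\Tr(\chi_{(-\infty,-\lambda)}(T+V))$ equals the maximal dimension of a subspace $W \subseteq Q(T) = \dom((T+\lambda)^{1/2})$ such that
$$\langle (T+V)\phi,\phi\rangle + \lambda\|\phi\|^2 < 0,\qquad \phi\in W\setminus\{0\}.$$
Since $T\geq 0$, the map $\phi = (T+\lambda)^{-1/2}\psi$ is a bijection from $H$ onto $Q(T)$. Writing $W' := (T+\lambda)^{1/2}W$ (of the same dimension as $W$) and using the form-compactness to make sense of $\langle V\phi,\phi\rangle$ as $\langle V(T+\lambda)^{-1/2}\psi,(T+\lambda)^{-1/2}\psi\rangle$, the above inequality rearranges to
$$\langle K_\lambda\psi,\psi\rangle > \|\psi\|^2,\qquad \psi \in W'\setminus\{0\}.$$
Applying the min--max principle in the reverse direction to the compact self-adjoint $K_\lambda$ identifies the supremum of $\dim W'$ subject to this inequality with $\Tr(\chi_{(1,\infty)}(K_\lambda))$, which is the right-hand side.

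The main subtle point will be handling the boundary cases: eigenvalues exactly equal to $-\lambda$ on the left, and eigenvalues of $K_\lambda$ exactly equal to $1$ on the right. Because both sides of the stated identity use strict inequalities ($(-\infty,-\lambda)$ and $(1,\infty)$), these boundary eigenvalues are excluded in parallel, and the bijection $\phi \leftrightarrow \psi$ is non-degenerate on each subspace $W$ so no extra multiplicity is created or lost. The remaining steps --- the compactness upgrade and the forward/reverse min--max applications --- are routine once the bijection is in place.
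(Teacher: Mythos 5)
Your proposal is correct and follows the standard min--max/variational argument for the Birman--Schwinger principle. Note that the paper does not give its own proof of this theorem but simply quotes it from \cite[Theorem 7.9.4]{Simon-course-IV}; your reconstruction matches the argument found there, including the key substitution $\psi = (T+\lambda)^{1/2}\phi$ identifying the relevant quadratic form conditions, the dimension-preserving bijection between subspaces, and the observation that the strict inequalities on both sides are paired consistently.
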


The following simple lemma is a well-known consequence of the inequality
\begin{equation}\label{perturbation_inequality}
    \Tr(\chi_{(t+s,\infty)}(T+S)) \leq \Tr(\chi_{(t,\infty)}(T))+\Tr(\chi_{(s,\infty)}(S)),\quad t,s>0,\; T=T^*,S=S^* \in \Kc.
\end{equation}
See e.g. \cite[Chapter 9, Theorem 9]{Birman-Solomyak-book}. We present a short argument for convenience.
\begin{lemma}\label{bs approximation lemma} Let $\{A_n\}_{n\geq0}$ be a sequence of self-adjoint compact operators. If $A_n\to A$ in the uniform norm, then
$$\limsup_{n\to\infty}\Tr(\chi_{(1,\infty)}(A_n))\leq \Tr(\chi_{[1,\infty)}(A))$$
and
$$\Tr(\chi_{(1,\infty)}(A))\leq \liminf_{n\to\infty}\Tr(\chi_{(1,\infty)}(A_n)).$$
\end{lemma}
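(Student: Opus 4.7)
The plan is to apply the standard perturbation inequality \eqref{perturbation_inequality} once in each direction, exploiting the hypothesis $\|A_n - A\|_{\infty} \to 0$ to kill the error terms for large $n$. The key elementary observation is that for any $\epsilon > 0$, once $\|A_n - A\|_{\infty} < \epsilon$ we have $\Tr(\chi_{(\epsilon, \infty)}(A_n - A)) = 0$, since the self-adjoint operator $A_n - A$ has spectrum contained in $[-\epsilon, \epsilon]$.

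For the $\limsup$ inequality, I would decompose $A_n = A + (A_n - A)$ and apply \eqref{perturbation_inequality} with $t = 1 - \epsilon$ and $s = \epsilon$ (choosing $0 < \epsilon < 1$) to obtain
$$\Tr(\chi_{(1, \infty)}(A_n)) \leq \Tr(\chi_{(1-\epsilon, \infty)}(A))$$
for all sufficiently large $n$. Taking $\limsup$ and then letting $\epsilon \downarrow 0$ yields the first inequality; here one uses that $\chi_{(1-\epsilon, \infty)}(A) \downarrow \chi_{[1, \infty)}(A)$ strongly as $\epsilon \downarrow 0$, and that each projection $\chi_{(1-\epsilon, \infty)}(A)$ has finite rank by compactness of $A$, so the traces converge.

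For the $\liminf$ direction, the asymmetry between $[1, \infty)$ in the first conclusion and $(1, \infty)$ in the second forces me to exploit compactness of $A$ to produce a spectral gap. Since the spectrum of $A$ in $(1, \infty)$ consists of finitely many eigenvalues of finite multiplicity bounded away from $1$, there exists $\eta > 0$ such that $\chi_{(1, \infty)}(A) = \chi_{(1+\eta, \infty)}(A)$. I would then decompose $A = A_n + (A - A_n)$ and apply \eqref{perturbation_inequality} with $t = 1 + \eta/2$ and $s = \eta/2$, obtaining, for $n$ with $\|A - A_n\|_{\infty} < \eta/2$,
$$\Tr(\chi_{(1, \infty)}(A)) = \Tr(\chi_{(1+\eta, \infty)}(A)) \leq \Tr(\chi_{(1+\eta/2, \infty)}(A_n)) \leq \Tr(\chi_{(1, \infty)}(A_n));$$
taking $\liminf$ on both sides completes the argument. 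The only point requiring any thought is this spectral-gap step, which is the reason the two conclusions use projections with opposite endpoint conventions; the rest of the proof is routine.
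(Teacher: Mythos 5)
Your proof is correct and takes essentially the same approach as the paper: both rely on \eqref{perturbation_inequality} with one operator being $A_n-A$, and on the discreteness of the spectrum of the compact operator $A$ away from $0$. The only cosmetic difference is in the $\liminf$ step, where the paper parametrizes by $\epsilon>0$ and takes a supremum at the end, whereas you fix a single $\eta$ exploiting the spectral gap above $1$; these are the same observation packaged slightly differently.
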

\begin{proof} Fix $0 < \varepsilon<1.$ For all $n\geq 0$ we have
$$\Tr(\chi_{(1,\infty)}(A_n))\leq \Tr(\chi_{(1-\epsilon,\infty)}(A))+\Tr(\chi_{(\epsilon,\infty)}(A_n-A)).$$
Since $A_n-A\to0$ in the uniform norm, it follow that for sufficiently large $n$ we have $\|A_n-A\|_{\infty}<\varepsilon.$ Thus,
$$\Tr(\chi_{(1,\infty)}(A_n))\leq \Tr(\chi_{(1-\epsilon,\infty)}(A))$$
for all sufficiently large $n.$ Passing $n\to\infty,$ we obtain
$$\limsup_{n\to\infty} \Tr(\chi_{(1,\infty)}(A_n))\leq \Tr(\chi_{(1-\epsilon,\infty)}(A)).$$
Taking the infimum over $\epsilon>0,$ we obtain the first inequality.

To see the second inequality, again fix $\varepsilon>0.$ We have
$$\Tr(\chi_{(1+\epsilon,\infty)}(A))\leq \Tr(\chi_{(1,\infty)}(A_n))+\Tr(\chi_{(\epsilon,\infty)}(A-A_n)).$$
Since $A_n-A\to0$ in the uniform norm, it follows that $\|A_n-A\|_{\infty}<\epsilon$ for all sufficiently large $n$. Thus,
$$\Tr(\chi_{(1+\epsilon,\infty)}(A))\leq \Tr(\chi_{(1,\infty)}(A_n))$$
for all sufficiently large $n.$ Passing $n\to\infty,$ we obtain
$$\Tr(\chi_{(1+\epsilon,\infty)}(A))\leq \liminf_{n\to\infty} \Tr(\chi_{(1,\infty)}(A_n)).$$
Taking the supremum over $\epsilon>0,$ we obtain the second inequality.
\end{proof}
Combining this assertion with the Birman-Schwinger principle yields the following:
\begin{corollary}\label{bs corollary} Let $T$ be a self-adjoint positive unbounded linear operator on a Hilbert space $H.$ Let $V$ be a self-adjoint linear operator relatively compact with respect to $T.$ Suppose that
\begin{enumerate}[{\rm (i)}]
\item $T$ has a trivial kernel and $T^{-\frac12}VT^{-\frac12}$ is compact.
\item if $\epsilon\downarrow 0,$ then
$$(T+\epsilon)^{-\frac12}V(T+\epsilon)^{-\frac12}\to T^{-\frac12}VT^{-\frac12}$$
in the uniform norm.
\end{enumerate}
We have
$$\Tr(\chi_{(1,\infty)}(-T^{-\frac12}VT^{-\frac12}))\leq \Tr(\chi_{(-\infty,0)}(T+V))\leq \Tr(\chi_{[1,\infty)}(-T^{-\frac12}VT^{-\frac12})).$$		
\end{corollary}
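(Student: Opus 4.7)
The strategy is to apply the Birman--Schwinger principle (Theorem \ref{bs_principle}) at a shifted level $\lambda>0$, and then to pass to the limit $\lambda\downarrow 0$. For each $\lambda>0$, set
\[
B_\lambda := -(T+\lambda)^{-\frac12}V(T+\lambda)^{-\frac12}, \qquad B_0:=-T^{-\frac12}VT^{-\frac12}.
\]
Theorem \ref{bs_principle} gives, for every $\lambda>0$,
\[
\Tr(\chi_{(-\infty,-\lambda)}(T+V)) = \Tr(\chi_{(1,\infty)}(B_\lambda)).
\]
By hypothesis (ii), $B_\lambda \to B_0$ in the operator norm as $\lambda\downarrow 0$, and by hypothesis (i) $B_0$ is compact, so Lemma \ref{bs approximation lemma} is applicable to the right-hand side.

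Next I would handle the left-hand side. As $\lambda\downarrow 0$, the spectral projections $\chi_{(-\infty,-\lambda)}(T+V)$ increase monotonically (in the operator ordering) to the projection $\chi_{(-\infty,0)}(T+V)$. Since each of these projections is finite-rank (relative form-compactness of $V$ with respect to $T$ combined with the Birman--Schwinger identity yields finiteness), monotone convergence for the trace gives
\[
\lim_{\lambda\downarrow 0}\Tr(\chi_{(-\infty,-\lambda)}(T+V)) \;=\; \Tr(\chi_{(-\infty,0)}(T+V)).
\]
In particular this limit exists in $[0,\infty]$, and equals both $\liminf$ and $\limsup$ of $\Tr(\chi_{(1,\infty)}(B_\lambda))$ as $\lambda\downarrow 0$.

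Combining these observations with Lemma \ref{bs approximation lemma}, I would conclude
\[
\Tr(\chi_{(1,\infty)}(B_0)) \;\leq\; \liminf_{\lambda\downarrow 0}\Tr(\chi_{(1,\infty)}(B_\lambda)) \;=\; \Tr(\chi_{(-\infty,0)}(T+V)) \;=\; \limsup_{\lambda\downarrow 0}\Tr(\chi_{(1,\infty)}(B_\lambda)) \;\leq\; \Tr(\chi_{[1,\infty)}(B_0)),
\]
which is precisely the two-sided bound claimed in the corollary.

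The only non-routine point is the identification of the monotone limit of $\Tr(\chi_{(-\infty,-\lambda)}(T+V))$ with $\Tr(\chi_{(-\infty,0)}(T+V))$; this uses that the negative part of the spectrum of $T+V$ consists of finitely many eigenvalues with finite multiplicity (a consequence of relative form-compactness of $V$ with respect to $T$, already invoked in the statement of Theorem \ref{bs_principle}), so no mass can escape to $0$ from below in a pathological way. Everything else reduces to Theorem \ref{bs_principle} and Lemma \ref{bs approximation lemma}.
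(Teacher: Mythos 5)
Your proposal is correct and follows essentially the same route as the paper: apply Theorem \ref{bs_principle} at a shifted level tending to $0$, identify the limit of the left-hand side with $\Tr(\chi_{(-\infty,0)}(T+V))$ by monotone convergence, and invoke Lemma \ref{bs approximation lemma} on the right-hand side. One small remark: the parenthetical claim that the negative spectrum of $T+V$ is finite is neither needed nor automatic (eigenvalues may accumulate at $0$ from below); normality of the trace already gives $\lim_{\lambda\downarrow 0}\Tr(\chi_{(-\infty,-\lambda)}(T+V))=\Tr(\chi_{(-\infty,0)}(T+V))$ in $[0,\infty]$ without it.
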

\begin{proof} By Theorem \ref{bs_principle}, we have
$$\Tr(\chi_{(-\infty,-\frac1n)}(T+V)) = \Tr( \chi_{(1,\infty)}(A_n)),\quad A_n=-(T+\frac1n)^{-\frac12}V(T+\frac1n)^{-\frac12},\quad n\geq 1.$$	
Thus,
$$\Tr(\chi_{(-\infty,0)}(T+V))=\lim_{n\to\infty}\Tr(\chi_{(-\infty,-\frac1n)}(T+V))=\lim_{n\to\infty}\Tr( \chi_{(1,\infty)}(A_n)).$$
Setting
$$A=-T^{-\frac12}VT^{-\frac12}$$
and noting that $A_n\to A$ in the uniform norm, we infer the assertion from Lemma \ref{bs approximation lemma}.
\end{proof}

\subsection{Asymptotic Birman-Schwinger principle}

It follows from Corollary \ref{bs corollary}, that given $h>0$ we may replace $T$ with $hT$ to conclude that
\[
    \Tr(\chi_{(h,\infty)}(-T^{-\frac12}VT^{-\frac12}) \leq \Tr(\chi_{(-\infty,0)}(hT+V)) \leq \Tr(\chi_{[h,\infty)}(-T^{-\frac12}VT^{-\frac12})).
\]
In the event that $T^{-\frac12}VT^{-\frac12} \in \Lc_{q,\infty}$ for some $q>0$ we have
\[
    \lim_{h\to 0} h^{q}\Tr(\chi_{(h,\infty)}(-T^{-\frac12}VT^{-\frac12})) = \lim_{h\to 0} h^{q}\Tr(\chi_{[h,\infty)}(-T^{-\frac12}VT^{-\frac12}))
\]
if either limit exists; this follows from Lemma \ref{trivial spectral lemma}.
It follows that
\[
    \lim_{h\to 0} h^q\Tr(\chi_{(-\infty,0)}(hT+V)) = \lim_{h\to 0} h^q\Tr(\chi_{(h,\infty)}(-T^{-\frac12}VT^{-\frac12}))
\]
whenever the limit on the right hand side exists. 

The following result is not likely to be novel, however it is convenient to give a proof here in the present context.
\begin{theorem}\label{bs asymptotic} Let $T$ be a self-adjoint positive unbounded linear operator on a Hilbert space $H.$ Let $V$ be a self-adjoint linear operator, relatively form compact with respect to $H.$ Suppose that $p>2$ and
\begin{enumerate}[{\rm (i)}]
\item $T$ has a trivial kernel and $V_{\pm}^{\frac12}T^{-\frac12}\in\mathcal{L}_{p,\infty};$
\item $\Im(V_{\pm}^{\frac12}T^{-\frac12}) \in (\mathcal{L}_{p,\infty})_0;$
\end{enumerate}
It follows that
$$\lim_{h\downarrow0}h^{\frac{p}{2}}\Tr(\chi_{(-\infty,0)}(hT+V))=\lim_{t\to\infty}t\mu\Big(t,T^{-\frac12}V_-T^{-\frac12}\Big)^{\frac{p}{2}}$$
provided that the limit on the right hand side exists.
\end{theorem}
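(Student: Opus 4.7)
The plan is to combine the Birman--Schwinger principle (Corollary \ref{bs corollary}) with an asymptotic identification of the positive part of $K := -T^{-1/2}VT^{-1/2}$ with $N := T^{-1/2}V_-T^{-1/2}$. First, I will verify that Corollary \ref{bs corollary} applies after rescaling $T \mapsto hT$: hypotheses (i)--(ii) force $X := V_+^{1/2}T^{-1/2}$ and $Y := V_-^{1/2}T^{-1/2}$ to lie in $\Lc_{p,\infty}$, so $K = Y^*Y - X^*X$ is compact and self-adjoint, and the required norm-continuity of $(T+\epsilon)^{-1/2}V(T+\epsilon)^{-1/2}$ follows from the factorisation $V_\pm^{1/2}(T+\epsilon)^{-1/2} = V_\pm^{1/2}T^{-1/2}\cdot T^{1/2}(T+\epsilon)^{-1/2}$, the first factor being compact and the second converging strongly to the identity. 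The corollary then delivers, for each $h>0$, the sandwich
\[
\Tr(\chi_{(h,\infty)}(K)) \leq \Tr(\chi_{(-\infty,0)}(hT+V)) \leq \Tr(\chi_{[h,\infty)}(K)).
\]

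Since $h>0$, $\chi_{(h,\infty)}(K)=\chi_{(h,\infty)}(K_+)$ and similarly for the closed interval. With $P := T^{-1/2}V_+T^{-1/2}$, one has $0 \leq K_+ \leq |K| \leq N+P \in \Lc_{p/2,\infty}$, so $K_+ \in \Lc_{p/2,\infty}$. Applying Lemma \ref{trivial spectral lemma} with exponent $p/2$, the sandwich converts into
\[
\lim_{h\downarrow 0} h^{p/2}\Tr(\chi_{(-\infty,0)}(hT+V)) = \lim_{t\to\infty} t\mu(t,K_+)^{p/2}
\]
whenever the right-hand limit exists. In view of Proposition \ref{elementary_perturbation}, the entire theorem reduces to showing $K_+ - N \in (\Lc_{p/2,\infty})_0$, so that the hypothesised existence of $\lim t\mu(t,N)^{p/2}$ transfers to $\lim t\mu(t,K_+)^{p/2}$ with the same value.

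A direct computation gives the identity $K_+ - N = -\tfrac{1}{2}\bigl((N+P) - |N-P|\bigr)$, where the operator $(N+P) - |N-P|$ is positive because $|N-P|\leq N+P$; the task is to show it belongs to $(\Lc_{p/2,\infty})_0$. Condition (ii) is equivalent to $[V_\pm^{1/2},T^{-1/2}] \in (\Lc_{p,\infty})_0$, which combined with the orthogonality $V_+^{1/2}V_-^{1/2} = 0$ yields
\[
YX^* = V_-^{1/2}T^{-1}V_+^{1/2} = -Y^*[V_+^{1/2},T^{-1/2}] + [V_-^{1/2},T^{-1/2}]X^* \in (\Lc_{p/2,\infty})_0.
\]
H\"older's inequality then gives $NP+PN = Y^*(YX^*)X + X^*(XY^*)Y \in (\Lc_{p/4,\infty})_0$, and therefore $(N+P)^2 - |N-P|^2 = 2(NP+PN) \in (\Lc_{p/4,\infty})_0$. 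The principal obstacle is the final step: deducing $(N+P) - |N-P| \in (\Lc_{p/2,\infty})_0$ from the membership of its square. For this I plan to invoke a Powers--Stormer-type implication valid for positive operators $0 \leq W \leq U$, namely that $U^2 - W^2 \in (\Lc_{q,\infty})_0$ implies $U - W \in (\Lc_{2q,\infty})_0$, which I would prove via the integral representation $U - W = \tfrac{1}{\pi}\int_0^\infty \lambda^{1/2}(W^2+\lambda)^{-1}(U^2-W^2)(U^2+\lambda)^{-1}\,d\lambda$ together with a singular-value majorisation of the integrand. Proposition \ref{elementary_perturbation} then closes the argument.
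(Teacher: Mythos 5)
Your overall strategy runs parallel to the paper's: both apply Corollary \ref{bs corollary} (whose hypotheses you verify by the same factorisation argument used in Lemma \ref{lap lemma}), convert the resulting sandwich via Lemma \ref{trivial spectral lemma}, and reduce to an asymptotic perturbation argument. The divergence occurs at the heart of the proof. The paper's Lemma \ref{last section commutator lemma} shows $(T^{-\frac12}VT^{-\frac12})_- - V_-^{\frac12}T^{-1}V_-^{\frac12}\in(\Lc_{p/2,\infty})_0$ by first using the commutator identity to replace $T^{-\frac12}V_\pm T^{-\frac12}$ with $V_\pm^{\frac12}T^{-1}V_\pm^{\frac12}$ up to a $(\Lc_{p/2,\infty})_0$ error, then applying the Davies Lipschitz inequality for positive parts (\eqref{positive part lipschitz}), and finally exploiting the fact that $V_-^{\frac12}T^{-1}V_-^{\frac12}$ and $V_+^{\frac12}T^{-1}V_+^{\frac12}$ live on orthogonal subspaces (since $V_+^{\frac12}V_-^{\frac12}=0$), so the positive part of their difference is read off directly. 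Instead, you stay with $N=T^{-\frac12}V_-T^{-\frac12}$ and $P=T^{-\frac12}V_+T^{-\frac12}$ (which do \emph{not} live on orthogonal subspaces after conjugation by $T^{-\frac12}$), reach the membership $(N+P)^2-|N-P|^2\in(\Lc_{p/4,\infty})_0$ by a correct commutator computation (your identity for $YX^*$ and the factorisations of $NP,PN$ check out), and then want to pass from squares to the operators themselves. The paper thus trades the Birman--Koplienko--Solomyak step for the Davies Lipschitz estimate plus orthogonality, which is cleaner.

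Two concrete issues in your write-up. First, the side claim that $(N+P)-|N-P|\geq 0$ ``because $|N-P|\leq N+P$'' is false in general: the operator inequality $|N-P|\leq N+P$ does not hold for arbitrary positive $N,P$ (one can cook up $2\times 2$ counterexamples), though since you never actually use the positivity this does not break the argument. Second, and more substantively, the final step---deducing $(N+P)-|N-P|\in(\Lc_{p/2,\infty})_0$ from $(N+P)^2-|N-P|^2\in(\Lc_{p/4,\infty})_0$---is left as a plan, not a proof. The integral representation you quote is correct, but majorising the singular values of that operator-valued integral so as to produce the claimed $(\Lc_{q,\infty})_0\Rightarrow(\Lc_{2q,\infty})_0$ implication is precisely the content of a weak-type Birman--Koplienko--Solomyak inequality; this is a genuine theorem with nontrivial content and cannot be waved through. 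Until that lemma is supplied, there is a real gap. If you want to salvage the integral-representation route, it is easier to mimic the paper: conjugate with $V_\pm^{\frac12}$ rather than $T^{-\frac12}$ so that the orthogonality of $V_+$ and $V_-$ gives the positive part for free, and then you only need the Davies Lipschitz inequality for the positive part, which is already recorded as \eqref{positive part lipschitz}.
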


\begin{remark}
    Note that
    \[
        [T^{-\frac12},V_{\pm}^{\frac12}] = -2i\Im(V_{\pm}^{\frac12}T^{-\frac12})
    \]
    provided that the composition $T^{-\frac12}V_{\pm}^{\frac12}$ makes sense.    
\end{remark}

By the assumptions in Theorem \ref{bs_asymptotic}, we have $V_{\pm}^{\frac12}T^{-\frac12}\in \Lc_{p,\infty}.$ We shall define the operator $T^{-\frac12}VT^{-\frac12}$ as
\[
    T^{-\frac12}VT^{-\frac12} := (V_+^{\frac12}T^{-\frac12})^*V_+^{\frac12}T^{-\frac12}-(V_-^{\frac12}T^{-\frac12})^*V_-^{\frac12}T^{-\frac12}.
\]
\begin{lemma}\label{lap lemma} Let $T$ and $V$ be as in Theorem \ref{bs asymptotic}. We have
$$(T+\epsilon)^{-\frac12}V(T+\epsilon)^{-\frac12}\to T^{-\frac12}VT^{-\frac12},\quad \epsilon\downarrow0,$$
in the uniform norm.
\end{lemma}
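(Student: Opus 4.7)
The plan is to reduce the norm convergence of the sandwiched operators to norm convergence of the ``one-sided'' quantities $V_{\pm}^{1/2}(T+\epsilon)^{-1/2}$, and then factor out the hard piece $V_{\pm}^{1/2}T^{-1/2}$, which by hypothesis is compact. The residual factor will be shown to converge strongly to the identity by elementary spectral calculus, and we use the classical fact that the composition of a compact operator with a uniformly bounded, strongly null sequence converges to zero in operator norm.

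More precisely, the first step is a reduction. Writing $V = V_+ - V_-$, the definition
\[
T^{-\tfrac12}VT^{-\tfrac12} = (V_+^{\tfrac12}T^{-\tfrac12})^{\ast}(V_+^{\tfrac12}T^{-\tfrac12}) - (V_-^{\tfrac12}T^{-\tfrac12})^{\ast}(V_-^{\tfrac12}T^{-\tfrac12}),
\]
together with the analogous identity for $(T+\epsilon)^{-\frac12}V(T+\epsilon)^{-\frac12}$ and the joint continuity of the map $A\mapsto A^{\ast}A$ on bounded subsets of $\mathcal{B}(H)$, reduces the lemma to the claim that
\[
V_{\pm}^{\tfrac12}(T+\epsilon)^{-\tfrac12} \;\longrightarrow\; V_{\pm}^{\tfrac12}T^{-\tfrac12} \quad \text{in the operator norm as } \epsilon\downarrow 0.
\]

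The second step is the factorisation
\[
V_{\pm}^{\tfrac12}(T+\epsilon)^{-\tfrac12} \;=\; \bigl(V_{\pm}^{\tfrac12}T^{-\tfrac12}\bigr)\cdot\bigl(T^{\tfrac12}(T+\epsilon)^{-\tfrac12}\bigr).
\]
Since $\ker(T)=0$, functional calculus gives $T^{-\tfrac12}T^{\tfrac12}=I$ on $\dom(T^{\tfrac12})$, and since $(T+\epsilon)^{-\tfrac12}$ sends $H$ into $\dom((T+\epsilon)^{\tfrac12})=\dom(T^{\tfrac12})$, the identity above holds as a composition of bounded operators on all of $H$ (the hypothesis that $V$ is relatively form-compact with respect to $T$ guarantees $\dom(T^{\tfrac12})\subseteq\dom(V_{\pm}^{\tfrac12})$, so the unbounded $V_{\pm}^{\tfrac12}$ is applied only to vectors in its form domain).

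The third step is to observe that $T^{\tfrac12}(T+\epsilon)^{-\tfrac12}$ converges strongly to $I$. Indeed, by the spectral theorem, for $\xi\in H$ one has
\[
\bigl\|(T^{\tfrac12}(T+\epsilon)^{-\tfrac12}-I)\xi\bigr\|^{2} \;=\; \int_{0}^{\infty}\Bigl(\tfrac{\lambda^{1/2}}{(\lambda+\epsilon)^{1/2}}-1\Bigr)^{2}\,d\langle E(\lambda)\xi,\xi\rangle,
\]
and dominated convergence (using $\ker(T)=0$ so that the spectral measure has no atom at $0$, and the integrand is bounded by $1$) yields strong convergence to $0$ of $B_{\epsilon}:=T^{\tfrac12}(T+\epsilon)^{-\tfrac12}-I$, with uniform bound $\|B_{\epsilon}\|_{\infty}\le 2$.

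The final step combines the last two: $V_{\pm}^{\tfrac12}T^{-\tfrac12}$ is compact because it belongs to $\Lc_{p,\infty}$, so taking adjoints and using the classical fact that a uniformly bounded SOT-null sequence converges uniformly on norm-precompact sets, we conclude
\[
\bigl\|V_{\pm}^{\tfrac12}T^{-\tfrac12}\cdot B_{\epsilon}\bigr\|_{\infty} \;=\; \bigl\|B_{\epsilon}^{\ast}\cdot (V_{\pm}^{\tfrac12}T^{-\tfrac12})^{\ast}\bigr\|_{\infty} \;\longrightarrow\; 0,
\]
which, by Step~1, is exactly what was required. The main obstacle to watch for is purely bookkeeping: to justify the factorisation in Step~2 as a genuine operator identity on $H$ despite $V_{\pm}^{\tfrac12}$ and $T^{-\tfrac12}$ being unbounded; once the form-compactness assumption is invoked to align the domains, the remainder of the argument is a standard compact-plus-SOT-limit calculation.
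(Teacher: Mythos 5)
Your proof is correct, and it is essentially the same argument as the paper's: both rely on factoring out the compact piece coming from the $\Lc_{p,\infty}$-hypothesis, observing that $A_\epsilon = T^{1/2}(T+\epsilon)^{-1/2}$ is a contraction converging strongly to $I$, and concluding via the standard ``compact times SOT-null is norm-null'' principle. The only cosmetic difference is that you work with the one-sided factors $V_\pm^{1/2}(T+\epsilon)^{-1/2}$ (closing the loop with $A\mapsto A^*A$ continuity), while the paper conjugates the already-defined compact operator $X = T^{-1/2}VT^{-1/2}$ directly as $A_\epsilon X A_\epsilon$ and passes through $\Lc_p$; both routes are equally valid and the core mechanism is identical.
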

\begin{proof} Denote for brevity
$$X=T^{-\frac12}VT^{-\frac12},\quad A_{\epsilon}=T^{\frac12}(T+\epsilon)^{-\frac12}.$$
By assumption, we have $X\in\mathcal{L}_{\frac{p}{2},\infty}\subset\mathcal{L}_p.$ Clearly, each $A_{\epsilon}$ is a contraction and $A_{\epsilon}\to 1$ in the strong operator topology as $\epsilon\downarrow0.$ Thus, $A_{\epsilon}XA_{\epsilon}\to X$ in $\mathcal{L}_p$ and, therefore, in $\mathcal{L}_{\infty}.$ Since
$$A_{\epsilon}XA_{\epsilon}=(T+\epsilon)^{-\frac12}V(T+\epsilon)^{-\frac12},$$
the assertion follows.
\end{proof}

Recall (see \cite{Davies-jlms-1988}) the Lipschitz inequality for the absolute value function: if $A$ and $B$ are self-adjoint operators, then
$$\||A|-|B|\|_{q,\infty} \lesssim_q \|A-B\|_{q,\infty},\quad 1<q<\infty.$$
The following is a straightforward consequence:
\begin{equation}\label{positive part lipschitz}
\|A_+-B_+\|_{q,\infty}\leq c_q\|A-B\|_{q,\infty},\quad 1<q<\infty.
\end{equation}
Recall that $A_+ = \frac{1}{2}(A+|A|)$ is the positive part of $A,$ and similarly $B_+$ is the positive part of $B.$ This comes with the associated implication that if $A$ and $B$ are self-adjoint operators such that $A-B \in (\Lc_{q,\infty})_0$ then $A_+-B_+ \in (\Lc_{q,\infty})_0.$

\begin{lemma}\label{last section commutator lemma}
Let $T$ and $V$ be as in Theorem \ref{bs asymptotic}. We have
$$(T^{-\frac12}VT^{-\frac12})_- - V_{-}^{\frac12}T^{-1}V_{-}^{\frac12} \in (\mathcal{L}_{\frac{p}{2},\infty})_0.$$
\end{lemma}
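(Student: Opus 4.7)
Set $A_\pm := V_\pm^{1/2}T^{-1/2}$, $R := A_+^*A_+ = T^{-1/2}V_+T^{-1/2}$, and $S := A_-^*A_- = T^{-1/2}V_-T^{-1/2}$, so that the operator $X := T^{-1/2}VT^{-1/2}$ defined just before Lemma~\ref{lap lemma} equals $R-S$, while $V_-^{1/2}T^{-1}V_-^{1/2} = A_-A_-^*$. The plan is to split
$$X_- - A_-A_-^* \;=\; (X_- - S) \;+\; (S - A_-A_-^*)$$
and show each summand lies in $(\mathcal{L}_{p/2,\infty})_0$. The inputs are the hypothesis $A_\pm \in \mathcal{L}_{p,\infty}$, the hypothesis $[V_\pm^{1/2},T^{-1/2}] = A_\pm - A_\pm^* = 2i\,\Im A_\pm \in (\mathcal{L}_{p,\infty})_0$, and the two-sided orthogonality $V_+^{1/2}V_-^{1/2} = 0 = V_-^{1/2}V_+^{1/2}$ coming from the disjoint spectral supports of $V_\pm$.

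For the first summand, the heart of the matter is a near-orthogonality bound $A_+A_-^* \in (\mathcal{L}_{p/2,\infty})_0$. Starting from $A_+A_-^* = V_+^{1/2}T^{-1/2}\cdot T^{-1/2}V_-^{1/2}$ and successively commuting $T^{-1/2}$ past $V_-^{1/2}$ on the right and then past $V_+^{1/2}$ on the left, the surviving ``$V_+^{1/2}V_-^{1/2}$-adjacent'' term drops out and one is left with
$$A_+A_-^* \;=\; [V_+^{1/2},T^{-1/2}]\,A_- \;-\; A_+\,[V_-^{1/2},T^{-1/2}],$$
which lies in $(\mathcal{L}_{p,\infty})_0\cdot\mathcal{L}_{p,\infty}\subset(\mathcal{L}_{p/2,\infty})_0$ by H\"older. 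Since also $A_+^*A_- = T^{-1/2}V_+^{1/2}V_-^{1/2}T^{-1/2} = 0$ (and symmetrically $A_-^*A_+ = 0$), the products factor as $RS = A_+^*(A_+A_-^*)A_-$ and $SR = A_-^*(A_-A_+^*)A_+$, giving $RS+SR \in (\mathcal{L}_{p/4,\infty})_0$ by a second H\"older application. The algebraic identity
$$|X|^2 - (R+S)^2 \;=\; (R-S)^2-(R+S)^2 \;=\; -2(RS+SR)$$
then puts $|X|^2 - (R+S)^2$ in $(\mathcal{L}_{p/4,\infty})_0$, and the Ando--Zhan inequality $\|C^{1/2}-D^{1/2}\|_{2q,\infty} \le \|C-D\|_{q,\infty}^{1/2}$ for positive compact $C,D$ (applied with $C = |X|^2$, $D = (R+S)^2$, $q = p/4$) upgrades this to $|X|-(R+S) \in (\mathcal{L}_{p/2,\infty})_0$. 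Since $X_- = \tfrac{1}{2}(|X|-X)$, rearranging gives $X_- - S = \tfrac{1}{2}\bigl(|X|-(R+S)\bigr)$, which disposes of the first summand.

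For the second summand, decomposing $A_- = \Re A_- + i\,\Im A_-$ and expanding gives $A_-^*A_- - A_-A_-^* = 2i[\Im A_-,\Re A_-]$, and this commutator lies in $(\mathcal{L}_{p,\infty})_0\cdot\mathcal{L}_{p,\infty} \subset (\mathcal{L}_{p/2,\infty})_0$ by H\"older. Adding the two estimates completes the proof. The chief obstacle I anticipate is the Ando--Zhan square-root step: one needs not only the norm inequality but the preservation of the separable part $(\mathcal{L}_{p/2,\infty})_0$. This follows from the underlying Ky~Fan weak majorization $\sum_{k\le n}\mu(k,C^{1/2}-D^{1/2}) \le \sum_{k\le n}\mu(k,C-D)^{1/2}$ combined with $(n+1)\mu(n,C^{1/2}-D^{1/2}) \le \sum_{k\le n}\mu(k,C^{1/2}-D^{1/2})$, which together convert an $o(n^{-1/q})$ rate on $\mu(n,C-D)$ into an $o(n^{-1/(2q)})$ rate on $\mu(n,C^{1/2}-D^{1/2})$. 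A secondary, routine point is justifying the commutator manipulations when $V_\pm^{1/2}$ are defined only via functional calculus of a possibly unbounded $V$; this can be handled by spectral cutoffs $\chi_{[0,n]}(|V|)$ and a standard approximation argument.
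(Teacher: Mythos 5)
Your proof is correct, but it takes a genuinely different and noticeably longer route than the paper.

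Your second summand, $S-A_-A_-^* = T^{-\frac12}V_-T^{-\frac12}-V_-^{\frac12}T^{-1}V_-^{\frac12}$, is handled by the same commutator computation the paper uses: writing it as $2i[\Re A_-,\Im A_-]$ is algebraically identical to the paper's $[T^{-\frac12},V_-^{\frac12}]A_--A_-[T^{-\frac12},V_-^{\frac12}]$ (note your sign; $A^*A-AA^*=2i[\Re A,\Im A]$, not $2i[\Im A,\Re A]$, though this is harmless). The real divergence is in the first summand $X_--S$. Here the paper does something much simpler: it runs the \emph{same} commutator identity with both signs to show that $-X$ is close in $(\Lc_{p/2,\infty})_0$ to $A_-A_-^*-A_+A_+^*=V_-^{\frac12}T^{-1}V_-^{\frac12}-V_+^{\frac12}T^{-1}V_+^{\frac12}$, applies the operator-Lipschitz estimate \eqref{positive part lipschitz} for $t\mapsto t_+$ that it has already set up (Davies' result), and then observes that $(A_-A_-^*-A_+A_+^*)_+=A_-A_-^*$ \emph{exactly}, because $V_+^{\frac12}$ and $V_-^{\frac12}$ have orthogonal ranges. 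Your argument never exploits this exact orthogonality on the $V_\pm^{1/2}T^{-1}V_\pm^{1/2}$ side; instead you compare $X$ to $R-S$ (where $R$ and $S$ are only near-orthogonal, since conjugation by $T^{-1/2}$ destroys orthogonality of ranges), then pass through $|X|^2-(R+S)^2 = -2(RS+SR)$ and an Ando-type square-root inequality for $t\mapsto t^{1/2}$. Both routes need an operator-Lipschitz/majorization ingredient — the paper's is $t\mapsto t_+$ via \eqref{positive part lipschitz}, yours is $t\mapsto t^{1/2}$ via Ando — and both ultimately rest on orthogonality of $V_\pm^{1/2}$ and the $\Im A_\pm\in(\Lc_{p,\infty})_0$ hypothesis. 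What your route costs is the extra layer of multiplicative estimates ($A_+A_-^*$, then $RS+SR$) and a separate justification that Ando's weak-majorization inequality preserves the separable part $(\Lc_{p/2,\infty})_0$ — a point you correctly identify as the chief obstacle and dispose of via Ky~Fan. What the paper's route buys is brevity: one commutator identity, one application of a Lipschitz estimate already in hand, one exact computation of a positive part. I recommend comparing to the paper's version; the insight worth internalising is that the $V_\pm^{1/2}T^{-1}V_\pm^{1/2}$ operators carry exact orthogonality, so comparing to \emph{them} rather than to $T^{-1/2}V_\pm T^{-1/2}$ avoids the square-root step entirely.
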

\begin{proof} First, we write
$$T^{-\frac12}V_{\pm}T^{-\frac12}-V_{\pm}^{\frac12}T^{-1}V_{\pm}^{\frac12}=[T^{-\frac12},V_{\pm}^{\frac12}]\cdot V_{\pm}^{\frac12}T^{-\frac12}-V_{\pm}^{\frac12}T^{-\frac12}\cdot[T^{-\frac12},V_{\pm}^{\frac12}].$$
The assumptions that $[V_{\pm},T^{-\frac12}] \in (\Lc_{p,\infty})_0$, $V_{\pm}^{\frac12}T^{-\frac12}\in \Lc_{p,\infty}$ and the H\"older inequality yield
$$T^{-\frac12}V_{\pm}T^{-\frac12}-V_{\pm}^{\frac12}T^{-1}V_{\pm}^{\frac12}\in(\mathcal{L}_{\frac{p}{2},\infty})_0.$$   
Consequently,
$$-T^{-\frac12}VT^{-\frac12}-V_{-}^{\frac12}T^{-1}V_{-}^{\frac12}+V_+^{\frac12}T^{-1}V_+^{\frac12}\in(\mathcal{L}_{\frac{p}{2},\infty})_0.$$    
Since $p>2,$ we may apply \eqref{positive part lipschitz} with $q = \frac{p}{2}$ yielding
$$\Big(-T^{-\frac12}VT^{-\frac12}\Big)_+-\Big(V_{-}^{\frac12}T^{-1}V_{-}^{\frac12}-V_+^{\frac12}T^{-1}V_+^{\frac12}\Big)_+\in(\mathcal{L}_{\frac{p}{2},\infty})_0.$$    
By definition, the operators $V_-$ and $V_+$ have orthogonal range. Given that $V_-^{\frac12}T^{-1}V_-^{\frac12}$
and $V_+^{\frac12}T^{-1}V_+^{\frac12}$ are positive operators acting between orthogonal subspaces, it follows that
$$\Big(V_{-}^{\frac12}T^{-1}V_{-}^{\frac12}-V_+^{\frac12}T^{-1}V_+^{\frac12}\Big)_+= V_{-}^{\frac12}T^{-1}V_{-}^{\frac12}.$$
Therefore,
$$\Big(-T^{-\frac12}VT^{-\frac12}\Big)_+-V_{-}^{\frac12}T^{-1}V_{-}^{\frac12}\in(\mathcal{L}_{\frac{p}{2},\infty})_0$$    
and the assertion follows.
\end{proof}

Below we use the standard fact
\begin{lemma}\label{fedor's fix lemma} If $A,B\in\mathcal{L}_{\frac{p}{2},\infty}$ are self-adjoint operators with $A-B\in(\mathcal{L}_{\frac{p}{2},\infty})_0,$ then
$$\liminf_{h\downarrow0}h^{\frac{p}{2}}\Tr(\chi_{(h,\infty)}(B))=\liminf_{h\downarrow0}h^{\frac{p}{2}}\Tr(\chi_{(h,\infty)}(A)).$$
The same equality holds for the upper limits.
\end{lemma}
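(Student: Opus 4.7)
The plan is to reduce the statement, which concerns traces of spectral projections of signed self-adjoint operators, to an equivalent statement about the singular value asymptotics of their positive parts, and then invoke the standard $K.$~Fan-type stability argument in $(\mathcal{L}_{p/2,\infty})_0$.

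First I would observe that for any self-adjoint $A \in \mathcal{L}_{p/2,\infty}$ and any $h>0$ we have $\chi_{(h,\infty)}(A) = \chi_{(h,\infty)}(A_+)$, and since $A_+ \geq 0$ lies in $\mathcal{L}_{p/2,\infty}$, Lemma \ref{trivial spectral lemma} (applied with $p$ replaced by $p/2$) converts the problem into showing
\[
    \liminf_{t\to\infty} t\mu(t,A_+)^{p/2} = \liminf_{t\to\infty} t\mu(t,B_+)^{p/2},
\]
together with the corresponding equality of limsups.

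Next I would pass from the hypothesis $A-B \in (\mathcal{L}_{p/2,\infty})_0$ to $A_+ - B_+ \in (\mathcal{L}_{p/2,\infty})_0$ using the Lipschitz-type inequality \eqref{positive part lipschitz} with $q = p/2$; this step requires $p/2 > 1$, which is satisfied in the context in which the lemma is applied (namely Theorem \ref{bs asymptotic}, where $p>2$). Thus we are reduced to showing that if $X, Y \geq 0$ lie in $\mathcal{L}_{p/2,\infty}$ with $X-Y \in (\mathcal{L}_{p/2,\infty})_0$, then $\liminf_{t\to\infty} t^{2/p}\mu(t,X) = \liminf_{t\to\infty} t^{2/p}\mu(t,Y)$, and similarly for limsups.

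The final step is the Fan inequality $\mu(s+t, X) \leq \mu(s, X-Y) + \mu(t, Y)$. Setting $s = \varepsilon t$ and multiplying by $((1+\varepsilon)t)^{2/p}$ gives
\[
    ((1+\varepsilon)t)^{2/p}\mu((1+\varepsilon)t, X) \leq \left(\tfrac{1+\varepsilon}{\varepsilon}\right)^{2/p}(\varepsilon t)^{2/p}\mu(\varepsilon t, X-Y) + (1+\varepsilon)^{2/p} t^{2/p}\mu(t, Y).
\]
Since $(\varepsilon t)^{2/p}\mu(\varepsilon t, X-Y) \to 0$ as $t\to\infty$ by the hypothesis $X-Y \in (\mathcal{L}_{p/2,\infty})_0$, taking $\liminf_{t\to\infty}$ and then $\varepsilon \downarrow 0$ yields one inequality; swapping $X$ and $Y$ gives the reverse, and the same argument applied to limsup yields the remaining case. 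Raising to the $(p/2)$-th power gives the statement of the lemma.

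I do not foresee a genuine obstacle here; the argument is essentially the liminf/limsup analogue of Proposition \ref{elementary_perturbation}, and the only mildly delicate point is the use of \eqref{positive part lipschitz} to transfer the $(\mathcal{L}_{p/2,\infty})_0$ perturbation from $A-B$ to $A_+ - B_+$ before Fan's inequality can be applied.
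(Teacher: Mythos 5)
Your proof is correct but takes a genuinely different route from the paper's. The paper works directly with the spectral counting functions: applying \eqref{perturbation_inequality} to $A=B+(A-B)$ with thresholds $h$ and $h\varepsilon$ gives $\Tr(\chi_{(h(1+\varepsilon),\infty)}(A)) \leq \Tr(\chi_{(h,\infty)}(B))+\Tr(\chi_{(h\varepsilon,\infty)}(A-B))$, and since $A-B\in(\mathcal{L}_{p/2,\infty})_0$ the last term is $o(h^{-p/2})$; taking $\liminf$, sending $\varepsilon\downarrow0$, and swapping $A,B$ finishes the argument. Your proof instead passes to the positive parts, uses Lemma \ref{trivial spectral lemma} to convert everything to singular value asymptotics, transfers the $(\mathcal{L}_{p/2,\infty})_0$ hypothesis from $A-B$ to $A_+-B_+$ via Davies' Lipschitz estimate \eqref{positive part lipschitz}, and then runs the Ky Fan argument (essentially the liminf/limsup analogue of Proposition \ref{elementary_perturbation}). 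Both proofs are valid, but the paper's is shorter and more general: the inequality \eqref{perturbation_inequality} holds for any self-adjoint compact $T,S$ and so the paper's argument works for any $p>0$, while your route genuinely needs $p/2>1$ to invoke \eqref{positive part lipschitz} --- a restriction you correctly flag and which is harmless in the only application (Theorem \ref{bs asymptotic}, where $p>2$). In exchange, your argument makes the relationship to the Fan stability machinery (Proposition \ref{elementary_perturbation}) more transparent; the paper instead obtains the same effect by working on the counting-function side from the start.
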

\begin{proof} Fixing $\varepsilon>0$ and applying \eqref{perturbation_inequality} yields
$$\Tr(\chi_{(h(1+\varepsilon),\infty)}(A)) \leq \Tr(\chi_{(h,\infty)}(B))+\Tr(\chi_{(h\varepsilon,\infty)}(A-B)).$$
Therefore,
$$\Tr(\chi_{(h(1+\varepsilon),\infty)}(A)) \leq \Tr(\chi_{(h,\infty)}(B))+ o(h^{-\frac{p}{2}}),\quad h\downarrow0.$$
Consequently,
$$\liminf_{h\downarrow0}h^{\frac{p}{2}}\Tr(\chi_{(h(1+\varepsilon),\infty)}(A)) \leq \liminf_{h\downarrow0}h^{\frac{p}{2}}\Tr(\chi_{(h,\infty)}(B)).$$
In other words,
$$(1+\epsilon)^{-\frac{p}{2}}\liminf_{h\downarrow0}h^{\frac{p}{2}}\Tr(\chi_{(h,\infty)}(A)) \leq \liminf_{h\downarrow0}h^{\frac{p}{2}}\Tr(\chi_{(h,\infty)}(B)).$$
Sending $\varepsilon\to 0,$ we conclude that
$$\liminf_{h\downarrow0}h^{\frac{p}{2}}\Tr(\chi_{(h,\infty)}(A)) \leq \liminf_{h\downarrow0}h^{\frac{p}{2}}\Tr(\chi_{(h,\infty)}(B)).$$
Swapping $A$ and $B,$ we obtain the opposite inequality. This completes the proof.
\end{proof}

\begin{lemma}\label{last section estimate from below lemma} Let $T$ and $V$ be as in Theorem \ref{bs asymptotic}. We have
$$\liminf_{h\to0}h^{\frac{p}{2}}N(0,hT+V)\geq \liminf_{h\to0}h^{\frac{p}{2}}{\rm Tr}(\chi_{(h,\infty)}(T^{-\frac12}V_-T^{-\frac12})).$$   
\end{lemma}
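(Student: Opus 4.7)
The plan is to apply the Birman--Schwinger principle (Corollary \ref{bs corollary}) to the pair $(hT,V)$, which gives the one-sided inequality
\[
\Tr(\chi_{(1,\infty)}(-(hT)^{-\frac12}V(hT)^{-\frac12}))\leq \Tr(\chi_{(-\infty,0)}(hT+V))=N(0,hT+V).
\]
The hypotheses of Corollary \ref{bs corollary} transfer from $T$ to $hT$ without trouble: $hT$ still has trivial kernel, $(hT)^{-\frac12}V(hT)^{-\frac12}=h^{-1}T^{-\frac12}VT^{-\frac12}$ is defined as in Lemma \ref{last section commutator lemma} and is compact (being in $\Lc_{p/2,\infty}$), and the norm-continuity statement $(hT+\varepsilon)^{-\frac12}V(hT+\varepsilon)^{-\frac12}\to (hT)^{-\frac12}V(hT)^{-\frac12}$ as $\varepsilon\downarrow 0$ is exactly Lemma \ref{lap lemma} applied to $hT.$

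Rewriting the left-hand side using the homogeneity of the spectral projection, $\chi_{(1,\infty)}(-h^{-1}T^{-\frac12}VT^{-\frac12})=\chi_{(h,\infty)}(-T^{-\frac12}VT^{-\frac12})$, and observing that for $h>0$ the projection $\chi_{(h,\infty)}$ of a self-adjoint operator only registers its positive part, I would rewrite
\[
\Tr(\chi_{(h,\infty)}(-T^{-\frac12}VT^{-\frac12}))=\Tr\bigl(\chi_{(h,\infty)}\bigl((-T^{-\frac12}VT^{-\frac12})_+\bigr)\bigr).
\]
Since $(-A)_+=A_-$ for any self-adjoint $A$, Lemma \ref{last section commutator lemma} becomes
\[
(-T^{-\frac12}VT^{-\frac12})_+ - V_-^{\frac12}T^{-1}V_-^{\frac12}\in (\Lc_{\frac{p}{2},\infty})_0.
\]

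Now I would invoke Lemma \ref{fedor's fix lemma} with these two operators to replace $(-T^{-\frac12}VT^{-\frac12})_+$ by $V_-^{\frac12}T^{-1}V_-^{\frac12}$ inside the $\liminf$:
\[
\liminf_{h\downarrow0}h^{\frac{p}{2}}\Tr\bigl(\chi_{(h,\infty)}\bigl((-T^{-\frac12}VT^{-\frac12})_+\bigr)\bigr)=\liminf_{h\downarrow0}h^{\frac{p}{2}}\Tr\bigl(\chi_{(h,\infty)}(V_-^{\frac12}T^{-1}V_-^{\frac12})\bigr).
\]
Finally, setting $A:=V_-^{\frac12}T^{-\frac12}$ one has $V_-^{\frac12}T^{-1}V_-^{\frac12}=AA^*$ and $T^{-\frac12}V_-T^{-\frac12}=A^*A$; these two positive operators share the same nonzero singular values and hence the same spectral distribution function, so the right-hand side equals $\liminf_{h\downarrow0}h^{\frac{p}{2}}\Tr(\chi_{(h,\infty)}(T^{-\frac12}V_-T^{-\frac12}))$. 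Chaining these equalities with the initial Birman--Schwinger inequality yields the claim.

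The only genuinely non-mechanical point is verifying that Corollary \ref{bs corollary} actually applies to $(hT,V)$ — in particular, making sure the object $T^{-\frac12}VT^{-\frac12}$ (defined only as a formal difference in the text before Lemma \ref{lap lemma}) really is the norm limit required by hypothesis (ii) of Corollary \ref{bs corollary}. This is where Lemma \ref{lap lemma} is essential, and the rest of the argument is just bookkeeping among the three self-adjoint operators $-T^{-\frac12}VT^{-\frac12}$, $V_-^{\frac12}T^{-1}V_-^{\frac12}$, and $T^{-\frac12}V_-T^{-\frac12}$.
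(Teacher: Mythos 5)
Your proof is correct and follows essentially the same route as the paper's: apply the lower bound from Corollary \ref{bs corollary} to the pair $(hT,V)$, rewrite the Birman--Schwinger counting function in terms of $(-T^{-\frac12}VT^{-\frac12})_+ = (T^{-\frac12}VT^{-\frac12})_-$, use Lemma \ref{last section commutator lemma} together with Lemma \ref{fedor's fix lemma} to replace this with $V_-^{\frac12}T^{-1}V_-^{\frac12}$ inside the $\liminf$, and finish via $\mu(AA^*)=\mu(A^*A)$ with $A=V_-^{\frac12}T^{-\frac12}$.
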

\begin{proof} By Lemma \ref{lap lemma}, the assumptions in Corollary \ref{bs corollary} hold. Corollary \ref{bs corollary} asserts that
$$N(0,hT+V)\geq \Tr(\chi_{(1,\infty)}(-(hT)^{-\frac12}V(hT)^{-\frac12}))=\Tr(\chi_{(h,\infty)}(-T^{-\frac12}VT^{-\frac12})).$$
Using the identity
$$\Tr(\chi_{(h,\infty)}(-A))=\Tr(\chi_{(h,\infty)}(A_-)),\quad A=A^* \in \Kc(H)$$
we write
$$N(0,hT+V) \geq \Tr(\chi_{(h,\infty)}((T^{-\frac12}VT^{-\frac12})_-)).$$

Setting
$$A=(T^{-\frac12}VT^{-\frac12})_-,\quad B=V_{-}^{\frac12}T^{-1}V_{-}^{\frac12}$$
it follows from Lemma \ref{last section commutator lemma} that $A-B \in (\Lc_{\frac{p}{2},\infty})_0.$ Lemma \ref{fedor's fix lemma} now yields
$$\liminf_{h\to0}h^{\frac{p}{2}}N(0,hT+V)$$
$$\geq\liminf_{h\to0}h^{\frac{p}{2}}\Tr(\chi_{(h,\infty)}((T^{-\frac12}VT^{-\frac12})_-))=\liminf_{h\to0}h^{\frac{p}{2}}{\rm Tr}(\chi_{(h,\infty)}(V_-^{\frac12}T^{-1}V_-^{\frac12})).$$ 
Since
$$\mu\Big(V_-^{\frac12}T^{-1}V_-^{\frac12}\Big)=\mu\Big(T^{-\frac12}V_-T^{-\frac12}\Big),$$
the assertion follows.
\end{proof}

\begin{lemma}\label{last section estimate from above lemma} Let $T$ and $V$ be as in Theorem \ref{bs asymptotic}. We have
$$\limsup_{h\to0}h^{\frac{p}{2}}N(0,hT+V)\leq \limsup_{h\to0}h^{\frac{p}{2}}{\rm Tr}(\chi_{(h,\infty)}(T^{-\frac12}V_-T^{-\frac12})).$$   
\end{lemma}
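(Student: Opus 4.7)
The proof is the mirror image of Lemma \ref{last section estimate from below lemma}, using the upper inequality of Corollary \ref{bs corollary} instead of the lower one. The plan is to start from
\[
    \Tr(\chi_{(-\infty,0)}(hT+V)) \leq \Tr(\chi_{[1,\infty)}(-(hT)^{-\frac12}V(hT)^{-\frac12})),
\]
which is valid by Corollary \ref{bs corollary} (the hypotheses of which hold by Lemma \ref{lap lemma}). Rescaling $(hT)^{-\frac12}V(hT)^{-\frac12}=h^{-1}T^{-\frac12}VT^{-\frac12}$ converts this into
\[
    N(0,hT+V)\leq \Tr\bigl(\chi_{[h,\infty)}(-T^{-\frac12}VT^{-\frac12})\bigr)=\Tr\bigl(\chi_{[h,\infty)}((T^{-\frac12}VT^{-\frac12})_-)\bigr),
\]
using the elementary identity $\Tr(\chi_{[h,\infty)}(-A))=\Tr(\chi_{[h,\infty)}(A_-))$ for self-adjoint compact $A$.

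Next I will replace the operator $(T^{-\frac12}VT^{-\frac12})_-$ by $V_-^{\frac12}T^{-1}V_-^{\frac12}$. By Lemma \ref{last section commutator lemma}, the difference lies in $(\Lc_{\frac{p}{2},\infty})_0$. The limsup version of Lemma \ref{fedor's fix lemma}, whose proof is obtained verbatim by replacing every $\liminf$ with $\limsup$ and reversing the direction of the $\varepsilon$-estimate, is therefore applicable and yields
\[
    \limsup_{h\downarrow0}h^{\frac{p}{2}}\Tr\bigl(\chi_{[h,\infty)}((T^{-\frac12}VT^{-\frac12})_-)\bigr)=\limsup_{h\downarrow0}h^{\frac{p}{2}}\Tr\bigl(\chi_{[h,\infty)}(V_-^{\frac12}T^{-1}V_-^{\frac12})\bigr).
\]
Strictly speaking Lemma \ref{fedor's fix lemma} is stated with $\chi_{(h,\infty)}$, but by Lemma \ref{trivial spectral lemma} the upper limits of $h^{p/2}\Tr(\chi_{(h,\infty)}(A))$ and $h^{p/2}\Tr(\chi_{[h,\infty)}(A))$ agree for positive $A\in\Lc_{p/2,\infty}$, so this causes no difficulty.

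Finally, the operators $V_-^{\frac12}T^{-1}V_-^{\frac12}$ and $T^{-\frac12}V_-T^{-\frac12}$ have identical singular value functions (they are of the form $S^*S$ and $SS^*$ for $S=V_-^{\frac12}T^{-\frac12}$), so
\[
    \limsup_{h\downarrow0}h^{\frac{p}{2}}\Tr\bigl(\chi_{[h,\infty)}(V_-^{\frac12}T^{-1}V_-^{\frac12})\bigr)=\limsup_{h\downarrow0}h^{\frac{p}{2}}\Tr\bigl(\chi_{(h,\infty)}(T^{-\frac12}V_-T^{-\frac12})\bigr),
\]
which yields the desired upper bound after chaining the three displayed inequalities. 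The only step requiring any care is the application of Lemma \ref{fedor's fix lemma} in its limsup form, since the argument based on inequality \eqref{perturbation_inequality} must be slightly re-arranged; all other ingredients are lifted directly from the proof of Lemma \ref{last section estimate from below lemma}.
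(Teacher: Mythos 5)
Your proof is correct, but you take a longer route than the paper. After the Birman--Schwinger step you write $\Tr(\chi_{[h,\infty)}(-T^{-1/2}VT^{-1/2}))=\Tr(\chi_{[h,\infty)}((T^{-1/2}VT^{-1/2})_-))$ and then invoke Lemma \ref{last section commutator lemma} together with a limsup version of Lemma \ref{fedor's fix lemma} to trade $(T^{-1/2}VT^{-1/2})_-$ for $V_-^{1/2}T^{-1}V_-^{1/2}$, finishing with the $\mu(S^*S)=\mu(SS^*)$ identity. The paper instead observes that, by the very definition $T^{-1/2}VT^{-1/2}=(V_+^{1/2}T^{-1/2})^*V_+^{1/2}T^{-1/2}-(V_-^{1/2}T^{-1/2})^*V_-^{1/2}T^{-1/2}$, one has the operator inequality $-T^{-1/2}VT^{-1/2}\leq T^{-1/2}V_-T^{-1/2}$, and hence $\Tr(\chi_{[h,\infty)}(-T^{-1/2}VT^{-1/2}))\leq\Tr(\chi_{[h,\infty)}(T^{-1/2}V_-T^{-1/2}))$ by monotonicity of counting functions; Lemma \ref{trivial spectral lemma} then converts $[h,\infty)$ to $(h,\infty)$ in the limsup. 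This monotonicity shortcut is available precisely because the inequality runs in the favourable direction for the upper bound; for the lower bound (Lemma \ref{last section estimate from below lemma}) it does not, which is why there the commutator-plus-Fedor's-fix machinery is genuinely needed, and it is that proof you mirrored. Your argument is valid and self-contained, but the paper's is strictly shorter and avoids reproving the $\Lc_{p/2,\infty}$-perturbation estimate for the upper bound where a one-line operator inequality suffices.
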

\begin{proof} By Lemma \ref{lap lemma}, the assumptions in Corollary \ref{bs corollary} hold. Corollary \ref{bs corollary} asserts that
$$N(0,hT+V)\leq \Tr(\chi_{[1,\infty)}(-(hT)^{-\frac12}V(hT)^{-\frac12}))=\Tr(\chi_{[h,\infty)}(-T^{-\frac12}VT^{-\frac12})).$$
Recalling that
$$\Tr(\chi_{[h,\infty)}(B))\leq \Tr(\chi_{[h,\infty)}(A)),\quad B\leq A,$$
and setting
$$A=T^{-\frac12}V_-T^{-\frac12},\quad B=-T^{-\frac12}VT^{-\frac12},$$
we obtain
$$N(0,hT+V)\leq \Tr(\chi_{[h,\infty)}(T^{-\frac12}V_-T^{-\frac12})).$$
Passing $h\to0,$ Lemma \ref{trivial spectral lemma} completes the proof.
\end{proof}

\begin{proof}[Proof of Theorem \ref{bs asymptotic}] 
Combining Lemmas \ref{last section estimate from above lemma} and \ref{last section estimate from below lemma}, we have
\begin{align*}
    \liminf_{h\to 0}\; &h^{\frac{p}{2}}{\rm Tr}(\chi_{(h,\infty)}(T^{-\frac12}V_-T^{-\frac12})) \leq \liminf_{h\to 0} h^{\frac{p}{2}}N(0,hT+V)\\
    &\quad \leq \limsup_{h\to 0} h^{\frac{p}{2}}N(0,hT+V) \leq \limsup_{h\to 0} h^{\frac{p}{2}}{\rm Tr}(\chi_{(h,\infty)}(T^{-\frac12}V_-T^{-\frac12}))
\end{align*}
From Lemma \ref{trivial spectral lemma}, we have
\begin{align*}
    \liminf_{t\to\infty}\;& t\mu\Big(t,T^{-\frac12}V_-T^{-\frac12}\Big)^{\frac{p}{2}} \leq \liminf_{h\to 0} h^{\frac{p}{2}}N(0,hT+V)\\
    &\quad \leq \limsup_{h\to 0} h^{\frac{p}{2}}N(0,hT+V) \leq \limsup_{t\to\infty} t\mu\Big(t,T^{-\frac12}V_-T^{-\frac12}\Big)^{\frac{p}{2}}.
\end{align*}
Hence,
\[
    \lim_{h\to 0} h^{\frac{p}{2}}N(0,hT+V) = \lim_{t\to\infty} t\mu\Big(t,T^{-\frac12}V_-T^{-\frac12}\Big)^{\frac{p}{2}}
\]
if the limit on the right hand side exists.
\end{proof}

\subsection{Semiclassical asymptotics for stratified Lie groups}

Recall from the proof of Lemma \ref{g_Delta_L_p_bounds} the theorem of Christ \cite[Proposition 3.1]{Christ1991} that
for all $g \in L_2(\Rl_+,t^{\frac{d_{\hom}}{2}-1}\,dt)$ we have $g(-\Delta) \in L_2(\VN(G),\tau)$ and there is a non-zero constant $c$ such that
\[
    \|g(-\Delta)\|_{L_2(\VN(G),\tau)} = c\|g\|_{L_2(\mathbb{R}_+,t^{\frac{d_{\hom}}{2}-1}\,dt)}.
\]
A consequence of this identity is that the spectrum of $-\Delta$ is purely absolutely continuous, and in particular $-\Delta$ has trivial kernel.

\begin{lemma}\label{semi-classical verification lemma} Let $G$ be a stratified Lie group with $d_{\hom}>2.$ Let $T=-\Delta$ and let $V=M_f,$ where $f\in L_{\frac{d_{\hom}}{2}}(G)$ is real-valued.
The assumptions in Theorem \ref{bs asymptotic} hold.
\end{lemma}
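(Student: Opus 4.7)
The plan is to verify the hypotheses of Theorem \ref{bs asymptotic} with $p = d_{\hom}$; the condition $p > 2$ is then precisely the standing hypothesis $d_{\hom} > 2$. Self-adjointness of $V = M_f$ is immediate from $f$ being real-valued. For relative form compactness, Corollary \ref{main_nontrivial_cwikel_corollary}(i) applied with exponent $d_{\hom}/2 > 1$ yields
$$
(1-\Delta)^{-\frac12} M_f (1-\Delta)^{-\frac12} \in \Lc_{d_{\hom}/2,\infty} \subset \Kc(L_2(G)),
$$
and the domain inclusion $\dom((-\Delta)^{1/2}) \subset \dom(|M_f|^{1/2})$ follows routinely from boundedness of $M_{|f|^{1/2}}(1-\Delta)^{-1/2}$, a consequence of Theorem \ref{main_nontrivial_cwikel_theorem}(i) applied to $|f|^{1/2}\in L_{d_{\hom}}(G)$. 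That $-\Delta$ has trivial kernel is extracted from Lemma \ref{christ_restatement}: for any $\lambda_0 \in \Rl$ the spectral projection $\chi_{\{\lambda_0\}}(-\Delta) \in \VN(G)$ satisfies $\tau(|\chi_{\{\lambda_0\}}(-\Delta)|^2) = c_G^2\int_0^\infty \chi_{\{\lambda_0\}}(t)^2 t^{d_{\hom}/2-1}\,dt = 0$, and faithfulness of $\tau$ forces $\chi_{\{\lambda_0\}}(-\Delta) = 0$, so $-\Delta$ has no point spectrum. Clause (i) of Theorem \ref{bs asymptotic} is then direct: writing $V_\pm^{1/2} = M_{f_\pm^{1/2}}$ with $f_\pm^{1/2} \in L_{d_{\hom}}(G)$, Theorem \ref{main_nontrivial_cwikel_theorem}(i) with $p = d_{\hom}$ delivers $M_{f_\pm^{1/2}}(-\Delta)^{-1/2} \in \Lc_{d_{\hom},\infty}$.

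The substantive work is clause (ii), which requires that $\Im(V_\pm^{1/2} T^{-1/2}) = \tfrac{1}{2i}[M_{f_\pm^{1/2}}, (-\Delta)^{-1/2}]$ lie in $(\Lc_{d_{\hom},\infty})_0$. I would proceed by approximation: choose real-valued $\{g_n\} \subset C_c^\infty(G)$ with $g_n \to f_\pm^{1/2}$ in $L_{d_{\hom}}(G)$. By clause (i), $M_{g_n - f_\pm^{1/2}}(-\Delta)^{-1/2} \to 0$ in $\Lc_{d_{\hom},\infty}$, and taking adjoints (valid since $g_n$ and $f_\pm^{1/2}$ are real) shows
$$
[M_{g_n}, (-\Delta)^{-1/2}] \longrightarrow [M_{f_\pm^{1/2}}, (-\Delta)^{-1/2}]\quad\text{in}\ \Lc_{d_{\hom},\infty}.
$$
Since $(\Lc_{d_{\hom},\infty})_0$ is closed in $\Lc_{d_{\hom},\infty}$, it suffices to prove $[M_g, (-\Delta)^{-1/2}] \in (\Lc_{d_{\hom},\infty})_0$ for every $g \in C_c^\infty(G)$.

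For this I would split
$$
[M_g, (-\Delta)^{-1/2}] = [M_g, (1-\Delta)^{-1/2}] + [M_g, K], \qquad K := (-\Delta)^{-1/2} - (1-\Delta)^{-1/2} = h(-\Delta),
$$
where $h(t) = t^{-1/2} - (t+1)^{-1/2}$. The first commutator lies in $\Lc_{d_{\hom}/2,\infty} \subset (\Lc_{d_{\hom},\infty})_0$ by Theorem \ref{MSX_commutator_estimate}(iv) with $\alpha = \gamma = 0$, $\beta = -1$. For the second, an elementary calculation shows that $h \in L_p(\Rl_+, t^{d_{\hom}/2-1}\,dt)$ exactly when $d_{\hom}/3 < p < d_{\hom}$; the hypothesis $d_{\hom} > 2$ ensures the interval $(\max(2, d_{\hom}/3), d_{\hom})$ is nonempty, so one may fix such a $p \geq 2$. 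Lemma \ref{g_Delta_L_p_bounds} then places $K \in L_p(\VN(G),\tau)$, and Corollary \ref{L_p_specific_cwikel} yields $M_g K \in \Lc_p \subset (\Lc_{d_{\hom},\infty})_0$ (using $p < d_{\hom}$). Taking adjoints gives $K M_g = (M_g K)^* \in (\Lc_{d_{\hom},\infty})_0$ as well, hence $[M_g, K] \in (\Lc_{d_{\hom},\infty})_0$. The principal obstacle is this last step: the commutator machinery of Theorem \ref{MSX_commutator_estimate} is tailored to the Bessel potential $J = (1-\Delta)^{1/2}$ rather than to $(-\Delta)^{1/2}$, so one must handle the resolvent difference $K$ directly via functional calculus, balancing the singularity of $h$ at the origin against its cubic decay at infinity with the Plancherel weight $t^{d_{\hom}/2-1}\,dt$---and it is precisely here that the restriction $d_{\hom} > 2$ enters.
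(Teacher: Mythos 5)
Your argument is correct and follows the same broad strategy as the paper---split $(-\Delta)^{-1/2}$ into $(1-\Delta)^{-1/2}$ plus the resolvent difference $K$, handle $[M_g,(1-\Delta)^{-1/2}]$ via Theorem \ref{MSX_commutator_estimate}.\eqref{msxd}, handle $M_gK$ via the Cwikel machinery, and then pass from $g\in C^\infty_c(G)$ to $f_\pm^{1/2}\in L_{d_{\hom}}(G)$ by density and the continuity of $M_\cdot(-\Delta)^{-1/2}$ from Theorem \ref{main_nontrivial_cwikel_theorem}.\eqref{mncta}. The only substantive difference is how you control $M_gK$: you observe directly that $h(t)=t^{-1/2}-(1+t)^{-1/2}$ lies in the \emph{single} space $L_p(\Rl_+, t^{d_{\hom}/2-1}\,dt)$ for any $p$ with $\max(2,d_{\hom}/3)<p<d_{\hom}$ (such $p$ exists precisely because $d_{\hom}>2$) and then apply Corollary \ref{L_p_specific_cwikel}. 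The paper instead invokes the argument already made in the proof of Lemma \ref{second_weyl_lemma}, which places $h$ in the \emph{sum} space $L_{d_{\hom}}+L_2$ and uses the general-$E$ form of Theorem \ref{general_cwikel_theorem}. Your version is slightly more self-contained and arguably cleaner, since it uses only the specialized $L_p$ corollary rather than the abstract interpolation-couple formulation; the paper's version has the economy of reusing work already done for Lemma \ref{second_weyl_lemma}. Both yield $M_gK\in\Lc_p\subset(\Lc_{d_{\hom},\infty})_0$ (resp.\ $\Lc_{d_{\hom}}\subset(\Lc_{d_{\hom},\infty})_0$), so the two are essentially equivalent. Your preliminary checks of form-compactness and trivial kernel are correct but not carried out in the paper, which treats them as already noted in the surrounding discussion.
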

\begin{proof} 
Note that $V_{\pm}^{\frac12}\in L_{d_{\hom}}(G).$ The first assumption follows from Theorem \ref{main_nontrivial_cwikel_theorem}.

To prove the second assumption, fix a sequence $\{f_k\}_{k\geq0}\subset C^{\infty}_c(G)$ such that $f_k\to f_+^{\frac12}$ in $L_{d_{\hom}}(G).$ As established in the proof of the Lemma \ref{second_weyl_lemma}, we have
$$M_{f_k}(-\Delta)^{-\frac12}-M_f(1-\Delta)^{-\frac12}\in \mathcal{L}_{d_{\hom}}\subset (\mathcal{L}_{d_{\hom},\infty})_0.$$
Thus,
$$[M_{f_k},(-\Delta)^{-\frac12}-(1-\Delta)^{-\frac12}]\in (\mathcal{L}_{d_{\hom},\infty})_0.$$
By Theorem \ref{MSX_commutator_estimate}, we have 
$$[M_{f_k},(1-\Delta)^{-\frac12}]\in \mathcal{L}_{\frac{d_{\hom}}{2},\infty}\subset (\mathcal{L}_{d_{\hom},\infty})_0.$$
Thus,
$$[M_{f_k},(-\Delta)^{-\frac12}]\in (\mathcal{L}_{d_{\hom},\infty})_0.$$

On the other hand, Theorem \ref{main_nontrivial_cwikel_theorem}.\eqref{mncta} implies that
$$[M_{f_k},(-\Delta)^{-\frac12}]\to [M_{f_+}^{\frac12},(-\Delta)^{-\frac12}]$$
in $\mathcal{L}_{d_{\hom},\infty}.$ Hence,
$$[M_{f_+}^{\frac12},(-\Delta)^{-\frac12}]\in(\mathcal{L}_{d_{\hom},\infty})_0.$$
Similarly, one can prove the commutator estimate for $f_-.$ Thus, the second assumption in Theorem \ref{bs asymptotic} also holds.
\end{proof}

\begin{proof}[Proof of Corollary \ref{semi-classical corollary}] By Lemma \ref{semi-classical verification lemma}, Theorem \ref{bs asymptotic} holds for $T=-\Delta$ and $V = M_f$ where $f \in L_{\frac{d_{\hom}}{2}}(G)$ is real-valued, provided that $d_{\hom}>2.$ The assertion follows immediately from that theorem and Theorem \ref{main_asymptotic_formula}.
\end{proof}

\section{Application to Scattering Theory}\label{scattering_section}
Finally we indicate the use of the $\Lc_1$-Cwikel estimate of Theorem \ref{main_nontrivial_cwikel_theorem} in potential scattering theory. This line of argument is quite standard and closely follows \cite[Chapter 6]{Simon-trace-ideals-2005}, however we have included it in order to illustrate the benefit of proving Cwikel-type estimates in new setting.

If $A$ and $B$ are self-adjoint operators on a Hilbert space $H,$ where $A$ has purely absolutely continuous spectrum, then the wave operators $\Omega_{\pm}(A,B)$ are defined as
\[
    \Omega_{\pm}(A,B) := \mathrm{s}-\lim_{t\to \pm\infty} e^{-itB}e^{itA}.
\]
Here, $\mathrm{s}-\lim$ denotes the limit in the strong operator topology. The wave operators $\Omega_{\pm}(A,B)$
are said to be complete if the range of $\Omega_{\pm}(A,B)$ is exactly $H.$ Birman's theorem \cite[Corollary 6.7]{Simon-trace-ideals-2005} asserts that for $\Omega_{\pm}(A,B)$ to exist and be complete it suffices that
\[
    \dom(|A|^{1/2}) = \dom(|B|^{1/2})
\]
(i.e., $A$ and $B$ have the same form domain), and that for all $a>0$ we have
\[
    \chi_{(-a,a)}(A)(A-B)\chi_{(-a,a)}(B)\in \Lc_1.
\]
Therefore, a sufficient condition is that there exists $N>0$ such that
\[
    (A+i)^{-N}(A-B)(B+i)^{-N} \in \Lc_1.
\]
In the traditional setting of scattering theory, we consider $A = -\Delta,$ the usual Laplace operator on $\mathbb{R}^d,$ 
and $B = -\Delta+V,$ where $V = M_f$ is an operator of pointwise mutliplication by $f \in \ell_1(L_2)(\mathbb{R}^d).$ In this
case the classical estimates for $M_f(1-\Delta)^{-N}$ due to Birman and Solomyak suffice to deduce the existence and completeness of the wave operators,
see \cite[Chapter 6]{Simon-trace-ideals-2005}.

The following simple lemma suffices for our purposes.
\begin{lemma}\label{abstract birman verification lemma} Let $A$ and $B$ be self-adjoint operators with equal form domain, and let $V := A-B$ be their difference. If
\begin{enumerate}[{\rm (i)}]
\item there exists $p\in\mathbb{N}$ such that
$$V(A+i)^{-1}\in\mathcal{L}_p;$$
\item there exists $N\in\mathbb{N}$ such that
$$(A+i)^{-N}V(A+i)^{-N}\in\mathcal{L}_1,\quad V(A+i)^{-N},(A+i)^{-N}V\in\mathcal{L}_2;$$
\end{enumerate}
then
$$(A+i)^{-MN}V(B+i)^{-MN}\in\mathcal{L}_{\frac{p}{M}},\quad 1\leq M\leq p.$$
\end{lemma}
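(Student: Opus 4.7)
The plan is to combine the second resolvent identity
\begin{equation*}
    R_B = R_A + R_A V R_B,\qquad R_A := (A+i)^{-1},\ R_B := (B+i)^{-1},
\end{equation*}
with H\"older's inequality for Schatten ideals. The strategy is to expand $R_B^{MN}$ combinatorially via this identity and then distribute the resulting $V$-factors together with adjacent resolvent powers into the $\mathcal{L}_p$, $\mathcal{L}_2$, and $\mathcal{L}_1$ factors furnished by hypotheses (i) and (ii).

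First, I would enhance the hypotheses. Taking adjoints in (i) (using that $V$ is symmetric) yields $(A-i)^{-1}V\in\mathcal{L}_p$, and multiplying by the bounded Cayley-type factor $(A+i)^{-1}(A-i)=I-2iR_A$ gives $R_AV\in\mathcal{L}_p$; composing with powers of $R_A$ then produces $VR_A^j, R_A^jV\in\mathcal{L}_p$ for every $j\ge 1$. To transfer these inclusions to $R_B$, I would rearrange $VR_B = VR_A + VR_AVR_B$ into $(I - VR_A)VR_B = VR_A$. The operator $I-VR_A$ is invertible, since the eigenvalue equation $VR_A\psi=\psi$ forces, via $\phi:=R_A\psi\in\dom(A)$ and $V\phi=\psi$, the relation $B\phi=-i\phi$, which is excluded by the self-adjointness of $B$. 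Hence $VR_B = (I-VR_A)^{-1}VR_A \in \mathcal{L}_p$, and composing with bounded resolvents yields $VR_B^j, R_B^jV\in\mathcal{L}_p$ for all $j\ge 1$. A similar argument combined with (ii) gives $V R_B^N, R_B^N V \in \mathcal{L}_2$ and $R_B^N V R_B^N \in \mathcal{L}_1$.

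Second, I would expand $R_B^{MN}$ via
\begin{equation*}
    R_B^{MN} = \prod_{k=1}^{MN}\bigl(R_A + R_A V R_B\bigr)
\end{equation*}
into a finite sum indexed by subsets $S\subseteq\{1,\ldots,MN\}$, with each summand containing exactly $|S|$ copies of $V$. Multiplying by $R_A^{MN}V$ on the left, the leading term ($S=\emptyset$) becomes $R_A^{MN}VR_A^{MN} = R_A^{(M-1)N}\cdot R_A^NVR_A^N\cdot R_A^{(M-1)N}\in\mathcal{L}_1\subseteq\mathcal{L}_{p/M}$ by hypothesis (ii). For a summand with $\ell:=|S|\ge 1$ copies of $V$ (hence $\ell+1$ after the outer multiplication), I would distribute the $2MN+\ell$ available resolvent factors among the $V$'s: each $V$ paired with $N$ resolvents on each side contributes an $\mathcal{L}_2$-factor (and a fully flanked $V$ contributes an $\mathcal{L}_1$-factor), while any $V$ paired with just one resolvent contributes an $\mathcal{L}_p$-factor. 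H\"older's inequality then places the product in $\mathcal{L}_{p/M}$ or a smaller Schatten ideal.

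The main obstacle is the Schatten-class transfer from $R_A$ to $R_B$ in the first step: the rearrangement $VR_B=(I-VR_A)^{-1}VR_A$ requires $VR_B$ to be well-defined as a bounded operator, which is ensured by the equal form-domain hypothesis via a KLMN-type argument. The remainder reduces to careful H\"older bookkeeping in the finite resolvent expansion; the accounting works uniformly for $1\le M\le p$ because, for each $\ell\ge 1$, the number of available resolvents ($2MN+\ell$) is sufficient to supply every $V$ with enough neighbouring powers of $R_A$ or $R_B$ to land the entire product in $\mathcal{L}_{p/M}$.
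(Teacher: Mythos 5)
Your strategy shares a starting point with the paper's proof---both begin from a resolvent-identity expansion of $(B+i)^{-MN}$ and then apply H\"older---but the paper proceeds by induction on $M$, using the telescoping identity
$$(B+i)^{-MN}-(A+i)^{-MN}=-\sum_{k=0}^{MN-1}(B+i)^{-k-1}V(A+i)^{k-MN},$$
which keeps all $B$-resolvents on one side of $V$ and all $A$-resolvents on the other. You instead expand $R_B^{MN}=\prod_{k=1}^{MN}(R_A+R_AVR_B)$ in a single pass, which produces terms in which each inner $V$ sits between a power of $R_A$ and a single $R_B$ (followed by further $R_A$'s). That structural difference is where your bookkeeping breaks down.

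Concretely, take $S=\{1\}$, so after multiplying by $R_A^{MN}V$ the term is
$$R_A^{MN}\,V\,R_A\,V\,R_B\,R_A^{MN-1}.$$
The second $V$ is preceded by a single $R_A$ and immediately followed by $R_B$, so neither $(A+i)^{-N}V(A+i)^{-N}\in\Lc_1$ nor your transferred $R_B^NVR_B^N\in\Lc_1$, nor any of the $\Lc_2$ hypotheses, can be applied to it; the only usable inclusion is $VR_B\in\Lc_p$ (or $R_AV\in\Lc_p$). The first $V$ can grab at most $R_A^N$ from the left and a single $R_A$ from the right, landing it in $\Lc_2$ via $(A+i)^{-N}V\in\Lc_2$. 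H\"older then gives $\Lc_2\cdot\Lc_p=\Lc_{2p/(p+2)}$, and the containment $\Lc_{2p/(p+2)}\subseteq\Lc_{p/M}$ requires $M\le p/2+1$. For $p>2$ and $M$ close to $p$ (in particular $M=p\ge3$), this fails, so the claim that ``for each $\ell\ge1$, the $2MN+\ell$ available resolvents are sufficient'' is not correct: the resolvents are there, but they are the wrong kind and in the wrong positions relative to the $V$'s.

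The fix is exactly what the paper's induction supplies: for the terms where the first $V$ eats up almost all $A$-resolvents, one needs the intermediate information $(A+i)^{-(M-1)N}V(B+i)^{-(M-1)N}\in\Lc_{p/(M-1)}$, which is not derivable from your one-shot expansion without iterating the resolvent identity further (an iteration count that grows with $p$). A secondary issue: your transferred claim $VR_B^N\in\Lc_2$ needs its own iterated argument for $p>4$, since the naive perturbation $VR_B^N-VR_A^N=\sum_kVR_B^{k+1}VR_A^{N-k}$ only lands in $\Lc_{p/2}$, and $\Lc_{p/2}\not\subseteq\Lc_2$ there; and the invertibility argument for $I-VR_A$ implicitly uses $\phi=R_A\psi\in\dom(B)$, which is not guaranteed when $A$ and $B$ merely share a form domain. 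Neither of these is fatal on its own, but the H\"older accounting gap is: as written, the proposal does not establish the lemma for $M>p/2+1$.
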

\begin{proof} 
We suppose $p\geq 2$ (otherwise there is nothing to prove). We prove the assertion by induction on $M.$ The $M=1$ case is trivial. Suppose the assertion is holds for $M-1,$ where $1\leq M-1<p$ and let us prove it for $M.$ 
Clearly,
$$(A+i)^{-MN}V(B+i)^{-MN}=(A+i)^{-MN}V(A+i)^{-MN}+(A+i)^{-MN}V\cdot\Big((B+i)^{-MN}-(A+i)^{-MN}\Big).$$
By the resolvent identity, we have
$$(B+i)^{-MN}-(A+i)^{-MN}=-\sum_{k=0}^{MN-1}(B+i)^{-k-1}V(A+i)^{k-MN}.$$
Therefore
\begin{align}
    (A+i)^{-MN}V(B+i)^{-MN}&=(A+i)^{-MN}V(A+i)^{-MN}\nonumber\\
                           &\quad-\sum_{k=0}^{MN-1}(A+i)^{-MN}V(B+i)^{-k-1}V(A+i)^{k-MN}\label{iterated_resolvent_identity}.
\end{align}
The first summand belongs to $\Lc_{\frac{p}{M}},$ because
\begin{align*}
    (A+i)^{-MN}&V(A+i)^{-MN}=(A+i)^{-(M-1)N}\cdot (A+i)^{-N}V(A+i)^{-N}\cdot (A+i)^{-(M-1)N}\\
               &\in\mathcal{L}_{\infty}\cdot\mathcal{L}_1\cdot\mathcal{L}_{\infty}=\mathcal{L}_1\subseteq\mathcal{L}_{\frac{p}{M}}.
\end{align*}
To see that the sum over $k$ on the right hand side of \eqref{iterated_resolvent_identity} belongs to the same ideal,
first we consider the case $0\leq k\leq (M-1)N.$ We have
\begin{align*}
    (A+i)^{-MN}&V(B+i)^{-k-1}V(A+i)^{k-MN}\\
               &=(A+i)^{-(M-1)N}\cdot (A+i)^{-N}V\cdot (B+i)^{-k-1}\cdot V(A+i)^{-N}\cdot (A+i)^{k-(M-1)N}\\
               &\in\mathcal{L}_{\infty}\cdot\mathcal{L}_2\cdot\mathcal{L}_{\infty}\cdot\mathcal{L}_2\cdot\mathcal{L}_{\infty}\\
               &=\mathcal{L}_1\subseteq\mathcal{L}_{\frac{p}{M}}.
\end{align*}
Next, we consider $(M-1)N+1\leq k\leq MN-1.$ In this case we rewrite the $k$th summand on the right hand side of \eqref{iterated_resolvent_identity}
as
\begin{align*}
    &(A+i)^{-MN}V(B+i)^{-k-1}V(A+i)^{k-MN}\\
    &=(A+i)^{-N} (A+i)^{-(M-1)N}V(B+i)^{-(M-1)N} (B+i)^{(M-1)N-k-1} V(A+i)^{-1} (A+i)^{k+1-MN}.
\end{align*}
Since $(A+i)^{-(M-1)N}V(B+i)^{-(M-1)N} \in \Lc_{\frac{p}{M-1}}$ by the inductive hypothesis, it follows from H\"{o}lder's inequality that
$$(A+i)^{-MN}V(B+i)^{-k-1}V(A+i)^{k-MN}\in\mathcal{L}_{\infty}\cdot\mathcal{L}_{\frac{p}{M-1}}\cdot\mathcal{L}_{\infty}\cdot\mathcal{L}_p\cdot\mathcal{L}_{\infty}=\mathcal{L}_{\frac{p}{M}}.$$
Hence, every term on the right hand side of \eqref{iterated_resolvent_identity} belongs to $\Lc_{\frac{p}{M}},$ and therefore.
$$(A+i)^{-MN}V(B+i)^{-MN}\in\mathcal{L}_{\frac{p}{M}}.$$
This completes the induction, and hence we deduce the result.
\end{proof}

\begin{theorem} Let $G$ be a stratified Lie group with $d_{\hom}>4,$ and let $f\in(L_{\frac{d_{\hom}}{2}}\cap L_1)(G)$ be real valued. The wave operators exist and are complete for the couple $(-\Delta,-\Delta+M_f),$ where $-\Delta+M_f$ is understood as a quadratic form sum.
\end{theorem}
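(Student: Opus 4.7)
The plan is to reduce to Birman's criterion \cite[Corollary 6.7]{Simon-trace-ideals-2005} via Lemma \ref{abstract birman verification lemma}. Set $A=-\Delta$, $V=M_f$, and $B=A+V$. First I would verify that $V$ is form-compact relative to $A$: since $d_{\hom}>4$ makes $p:=d_{\hom}/2>1$ admissible in Corollary \ref{main_nontrivial_cwikel_corollary}.(i), we obtain
$$(1-\Delta)^{-1/2}M_f(1-\Delta)^{-1/2}\in\Lc_{d_{\hom}/2,\infty}\subset\Kc(L_2(G)).$$
Consequently $V$ is infinitesimally form-bounded relative to $A$, so $B=A\dot{+}V$ is a well-defined self-adjoint operator with $\dom(|A|^{1/2})=\dom(|B|^{1/2})=H^1(G)$, which handles the first half of Birman's hypothesis.

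Next I would verify the two hypotheses of Lemma \ref{abstract birman verification lemma}, choosing integers $p>d_{\hom}/2$ and $N>d_{\hom}/4$ (both available since $d_{\hom}>4$). Hypothesis (i) follows from Theorem \ref{main_nontrivial_cwikel_theorem}.\eqref{mncta} applied with exponent $d_{\hom}/2$: this gives $M_f(1-\Delta)^{-1}\in\Lc_{d_{\hom}/2,\infty}\subseteq\Lc_p$, and the boundedness of $(1-\Delta)(A+i)^{-1}$ (by functional calculus) upgrades this to $V(A+i)^{-1}\in\Lc_p$. For hypothesis (ii), interpolation in $f\in L_1\cap L_{d_{\hom}/2}$ (using $2<d_{\hom}/2$) yields $f\in L_2(G)$, so Theorem \ref{cwikel schatten_theorem}.(ii) with $r=2N>d_{\hom}/2$ supplies $M_f(1-\Delta)^{-N}\in\Lc_2$; composing with the bounded factor $(1-\Delta)^N(A+i)^{-N}$ and its adjoint gives $V(A+i)^{-N},(A+i)^{-N}V\in\Lc_2$. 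Moreover, $f\in L_1(G)$ means $|f|^{1/2}\in L_2(G)$, so the same theorem produces $M_{|f|^{1/2}}(A+i)^{-N}\in\Lc_2$; writing $V=M_{|f|^{1/2}}\,M_{\sgn(f)}\,M_{|f|^{1/2}}$ and applying H\"older's inequality then delivers $(A+i)^{-N}V(A+i)^{-N}\in\Lc_1$.

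Lemma \ref{abstract birman verification lemma} with $M=p$ now yields $(A+i)^{-pN}V(B+i)^{-pN}\in\Lc_1$. Since $\chi_{(-a,a)}(A)(A+i)^{pN}$ and $(B+i)^{pN}\chi_{(-a,a)}(B)$ are bounded for every $a>0$ by functional calculus,
$$\chi_{(-a,a)}(A)(A-B)\chi_{(-a,a)}(B)\in\Lc_1,$$
which is precisely the trace-class hypothesis in \cite[Corollary 6.7]{Simon-trace-ideals-2005}; hence the wave operators $\Omega_{\pm}(-\Delta,-\Delta+M_f)$ exist and are complete. I expect the principal obstacle to be the Schatten-exponent bookkeeping inside hypothesis (ii) of Lemma \ref{abstract birman verification lemma}, in particular the polar splitting $V=M_{|f|^{1/2}}M_{\sgn(f)}M_{|f|^{1/2}}$, which is exactly what forces the $L_1$ hypothesis on $f$ in addition to the natural $L_{d_{\hom}/2}$ assumption supplied by the Cwikel estimate.
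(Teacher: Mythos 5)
Your proposal follows the same route as the paper: verify the two hypotheses of Lemma~\ref{abstract birman verification lemma} via the Cwikel estimates of Theorems~\ref{main_nontrivial_cwikel_theorem} and~\ref{cwikel schatten_theorem}, feed the conclusion into Birman's trace-class criterion, and conclude. Your version is slightly more explicit than the paper's in two spots (and is correct in both): you spell out the polar-decomposition $V=M_{|f|^{1/2}}M_{\sgn(f)}M_{|f|^{1/2}}$ and H\"older step to obtain $(A+i)^{-N}V(A+i)^{-N}\in\Lc_1$ from $|f|^{1/2}\in L_2$, where the paper merely asserts the conclusion, and you allow $N$ and $p$ to be chosen independently ($N>d_{\hom}/4$, $p>d_{\hom}/2$) rather than taking $N=p$ as the paper does; the only omission is that you never note that $-\Delta$ has purely absolutely continuous spectrum (immediate from Lemma~\ref{christ_restatement}), which is needed for the wave operators $\Omega_{\pm}(-\Delta,\cdot)$ to be meaningful in the first place.
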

\begin{proof} 
We use the notations $A=-\Delta,$ $B=-\Delta+M_f$ and $V=M_f.$ It follows immediately from Lemma \ref{christ_restatement} that spectrum of $A$ is absolutely continuous.

By Theorem \ref{main_nontrivial_cwikel_theorem}, we have
$$\|V(A+i)^{-1}\|_{\frac{d_{\hom}}{2},\infty}\leq c_G\|f\|_{\frac{d_{\hom}}{2}}.$$
Let $p\in\mathbb{N}$ be such that $p>2$ and $p>\frac{d_{\hom}}{2}.$ It follows that $V(A+i)^{-1}\in\mathcal{L}_p.$ 
The assumption on $f$ implies that in particular $f\in L_2(G).$ By Theorem \ref{cwikel schatten_theorem}, we have
$$V(A+i)^{-p}, (A+i)^{-p}V\in\mathcal{L}_2.$$
Due to the assumption that $f\in L_1(G),$ Theorem \ref{cwikel schatten_theorem} yields
$$(A+i)^{-p}V(A+i)^{-p}\in\mathcal{L}_1.$$
By Lemma \ref{abstract birman verification lemma} it follows that
$$(A+i)^{-p^2}V(B+i)^{-p^2}\in\mathcal{L}_1.$$
Multiplying on the left by $\chi_{(-a,a)}(A)$ and on the right by $\chi_{(-a,a)}(B),$ we obtain 
$$\chi_{(-a,a)}(A)(A-B)\chi_{(-a,a)}(B)\in \Lc_1.$$
The assertion follows now from Birman's theorem.
\end{proof}

\begin{remark}
    Since the spectrum of $-\Delta$ is purely absolutely continuous, it follows
    that the scattering matrix of the pair $(-\Delta,-\Delta+M_f)$ is unitary
    on $L_2(G).$
\end{remark}

\end{document}